\DeclareMathAlphabet{\mathsl}{OT1}{cmr}{m}{sl}
\newcommand{\Q}{\mathbb{Q}}
\newcommand{\Z}{\mathbb{Z}}
\newcommand{\N}{\mathbb{N}}
\newcommand{\grB}{\mathsf{B}}
\newcommand{\grD}{\mathsf{D}}
\newcommand{\grE}{\mathsf{E}}
\newcommand{\grM}{\mathsf{M}}
\newcommand{\grN}{\mathsf{N}}
\newcommand{\grV}{\mathsf{V}}
\newcommand{\id}{\operatorname{\mathsl{id}}}
\renewcommand{\min}{\operatornamewithlimits{\mathsl{min}}}
\renewcommand{\max}{\operatornamewithlimits{\mathsl{max}}}
\renewcommand{\deg}{\operatorname{\mathsl{deg}}}
\renewcommand{\dim}{\operatorname{\mathsl{dim}}}
\renewcommand{\ker}{\operatorname{\mathsl{ker}}}
\renewcommand{\int}{\operatorname{\mathsl{int}}}
\renewcommand{\tilde}{\widetilde}
\DeclareMathOperator{\End}{\operatorname{\mathsl{End}}}
\DeclareMathOperator{\grEnd}{\operatorname{\mathsf{End}}}
\DeclareMathOperator{\gr}{\operatorname{\mathsf{gr}}}
\DeclareMathOperator{\ind}{\operatorname{\mathsl{ind}}}
\DeclareMathOperator{\im}{\operatorname{\mathsl{im}}}
\DeclareMathOperator{\Gal}{\operatorname{\mathcal{G}}}
\DeclareMathOperator{\inv}{\operatorname{\mathsl{inv}}}
\DeclareMathOperator{\charac}{\operatorname{\mathsl{char}}}
\DeclareMathOperator{\srk}{\operatorname{\mathsl{srk}}}
\def\hsp{\mspace{1mu}}
\newcommand{\DIM}[2]{[#1{\hsp:\hsp}#2]}
\newcommand{\IND}[2]{\lvert#1{\hsp:\hsp}#2\rvert}
\def\tsum{\textstyle\sum\limits}
\def\tprod{\textstyle\prod\limits}
\def\tbigoplus{\textstyle\bigoplus\limits}
\def\tbigcup{\textstyle\bigcup\limits}
\def\tbigsqcup{\textstyle\bigsqcup\limits}
\def\tbigcap{\textstyle\bigcap\limits}
\def\inv{^{-1}}
\def\half{\frac12}
\def\ov{\overline}
\newcommand{\jdotfont}{}
\font\jdotfont lcircle10  scaled 913 % 833=\magstep-2 913=\magstep-1
\newcommand{\osbullet}{\jdotfont\char113} % 'blob' (2pt)
\newlength{\sbwd} \settowidth{\sbwd}{\osbullet}
\newcommand{\csbullet}{\kern.5\sbwd\osbullet\kern-.5\sbwd}
\newcommand{\alg}{\text{\textsl{alg}}}
\newcommand{\sep}{\text{\textsl{sep}}}
\renewcommand{\gcd}{\textsl{gcd}}
\newcommand{\divh}[1]{{\mathbb H}(#1)}
\newcommand{\chii}{\raise 1.5pt\hbox{$\chi$}}
\newcommand{\mat}{\mathbb M}
\newtheorem{theorem}{Theorem}[section]
\newtheorem{lemma}[theorem]{Lemma}
\newtheorem{proposition}[theorem]{Proposition}
\newtheorem{corollary}[theorem]{Corollary}
\theoremstyle{definition}
\newtheorem{definition}[theorem]{Definition}
\newtheorem{example}[theorem]{Example}
\newtheorem{setup}[theorem]{Setup}
\newtheorem{remark}[theorem]{Remark}
\theoremstyle{remark}
\numberwithin{equation}{section}
\newtheoremstyle{note}% name
  {3pt}%      Space above
  {3pt}%      Space below
  {}%         Body font
  {}%         Indent amount (empty = no indent, \parindent = para indent)
  {\bf}% Thm head font
\theoremstyle{note}
\begin{document}

\title{Value functions and Dubrovin valuation rings on simple algebras}

%    Information for first author
\author{Mauricio A. Ferreira}
%    Address of record for the research reported here
\address{Departamento de Ci\^encias Exatas, Universidade Estadual de Feira
de Santana, Avenida Transnordestina, S/N, Novo Horizonte, Feira de
Santana, Bahia 44036-900 \ \ Brazil}
%%
%%\address{IMECC-UNICAMP
%%Rua S\`ergio Buarque de Holanda, 651
%%Cidade Universitária - Campinas - SP 
%%13.083-859   \ \ Brazil}
%%    Current address
%% \curraddr{Departamento de Ci\`incias Exatas, UEFS, Feira de Santana, Bahia, Brazil}
\email{maferreira@uefs.br}
%%    \thanks will become a 1st page footnote.
\thanks{Some of the results in this paper are based on the first author's doctoral
dissertation written under supervision of the second author and Antonio Jos\' e
Engler. The first author was partially supported by FAPESP, Brazil 
\mbox{(Grant
06/00157-3)} during his graduate studies. This author would like to thank
the second author and UCSD for their hospitality during his visit in 2009.}
%
%%    Information for second author
\author{Adrian R. Wadsworth}
\address{Department of Mathematics 0112, University of California, San Diego,
9500 Gilman Drive, La~Jolla, California 92093-0112  \ \ USA}
\email{arwadsworth@ucsd.edu}
\thanks{Some of the research for this paper was carried out during the second
author's visit to the University of Campinas, Brazil, during  July-August, 
2010, which was made possible by a grant from FAPESP, Brazil.  The second 
author would like 
thank Prof. Antonio Jos\'e Engler and the first author for their hospitality 
during that visit.} 

%    General info
%\subjclass[2000]{Primary 54C40, 14E20; Secondary 46E25, 20C20}

%\date{January 1, 2001 and, in revised form, June 22, 2001.}

%\dedicatory{This paper is dedicated to our advisors.}

%\keywords{Differential geometry, algebraic geometry}

\begin{abstract}
In this paper we prove relationships between two generalizations
of commutative valuation theory for noncommutative central simple
algebras:  (1) Dubrovin valuation rings; and (2)~the value
functions called gauges introduced by Tignol and 
Wadsworth in \cite{TW1} and \cite{TW2}.  We show that if 
$v$ is a valuation on a field $F$ with associated valuation ring 
$V$ and $v$ is defectless in a central simple $F$-algebra  $A$, 
and $C$ is a subring of $A$, then the following are equivalent:
(a)~$C$~
 is the gauge ring of some minimal $v$-gauge
on $A$, i.e., a gauge with the minimal number of simple components of 
$C/J(C)$; (b)~$C$~is integral over $V$ with  
$C = B_1 \cap \ldots \cap B_\xi$ where each 
$B_i$ is a Dubrovin valuation ring of $A$ with center~$V$,  
and the~$B_i$~satisfy Gr\"ater's Intersection Property. 
Along the way we prove the existence of minimal gauges whenever
possible and we show how gauges on simple algebras are built from
gauges on central simple algebras.
\end{abstract}

\maketitle
\begin{center}
%\today
\end{center}

\section*{Introduction} 

Valuation theory has been a very useful tool in the study of 
finite-dimensional division algebras, particularly in the construction 
of examples, such as noncrossed products and division algebras with 
nontrivial reduced Whitehead group $SK_1$.   
(See \cite{W3} for a survey of valuation theory on division 
algebras.) But there has been some 
difficulty in applying valuation theory in noncommutative settings
because division algebras do not have many valuations and 
simple algebras that are not division algebras do not have valuations.
This has led to efforts to find structures similar to but less 
restrictive than valuations that would exist more widely.

One such approach was initiated by Dubrovin in \cite{Du1} and 
\cite{Du2}.  By generalizing the idea of places in commutative 
valuation theory, he defined what are now called Dubrovin 
valuation rings.   Such rings share many of the distinctive 
properties of commutative   valuation rings, and it is known that
for every central simple algebra $A$ over a field $F$  and every 
valuation ring $V$ of $F$ there is a Dubrovin valuation ring 
$B$ of $A$ with center $V$, and $B$ is unique up to isomorphism.
However, there is in general no valuation associated with 
such a $B$ (except in the integral case, see below).  The 
substantial theory of Dubrovin valuation rings is the topic
of the book \cite{MMU}.  

Another approach was initiated rather recently by Tignol and 
the second author in \cite{TW1} and \cite{TW2} by the introduction
of {\it gauges}, which are a kind of value function on a semisimple
algebra~$S$ finite-dimensional over a field $F$, but satisfying weaker
axioms than for a valuation.  (See \S1 below for the definition
of a gauge.) The theory of gauges is still developing,
but it has already become a useful complement to the classical 
valuation theory of division algebras.  A gauge $\alpha$ on~ 
$S$ always extends some valuation $v$ on $F$, so $\alpha$ is called 
a $v$-gauge.  Just as the valuation $v$ induces a filtration on~
$F$, so $\alpha$~induces a filtration on $S$ yielding an associated graded
ring $\gr_\alpha(S)$, which is a finite-rank semisimple graded
algebra over the  graded field $\gr_v(F)$.  The associated graded
algebra captures much of the essential information about $\alpha$ on 
$S$, but $\gr_\alpha(S)$ is often much easier to work with than 
 with $S$ itself.  Gauges are easy to construct in many cases,  
and they have good behavior with respect to tensor products and
scalar extensions of algebras.

The question naturally arises what kind of connections there
may be between Dubrovin valuation rings and gauges.  A limited
answer was provided in \cite{TW1}:  Morandi had shown in \cite{M2}
that a Dubrovin valuation ring $B$ of a central simple $F$-algebra $A$ 
has a special kind of associated value function $\mu_B$ if and only  
if $B$ is integral over its center $Z(B)$; when this 
occurs, $B$~determines~$\mu_B$ and vice versa since 
$B = \{ a\in A \mid \mu_B(a) \ge 0\}$.   
Here, $Z(B)$  is a valuation
ring $V$ of $F$, say with associated valuation~$v$.  In 
\cite[Prop.~2.5]{TW1}  it was shown that the gauge ring 
${R_\alpha= \{ a\in A \mid \alpha(a)\ge 0 \}}$ of a $v$-gauge 
$\alpha$ on $A$ is a Dubrovin valuation ring if and
only if its residue ring $R_\alpha /J(R_\alpha)$ is a simple
ring.  ($J(\raisebox{1pt}{$\scriptstyle\bullet$})$~denotes the Jacobson radical of 
\raisebox{1pt}{$\scriptstyle\bullet$}.) 
Moreover, when this occurs,  $R_\alpha$ is integral over 
its center $V$ and $\alpha = \mu_{R_\alpha}$.  Furthermore,
if~$v$~ is defectless in~$A$, then every Dubrovin valuation ring
$B$ of~$A$ with center $V$ and integral over~$V$ is the gauge
ring of the  $v$-gauge $\mu_B$ on $A$. 
Since it is known that gauge rings~ 
$R_\alpha$ are always integral over their centers, and 
Dubrovin valuation rings $B$ always satisfy $B/J(B)$ is simple, this 
result says that Dubrovin valuation rings and gauge rings
coincide \lq\lq whenever possible." 
When $V$ has rank 
\mbox{(= Krull dimension)} $1$, 
then every Dubrovin valuation ring of $A$ with center~$V$
is integral over $V$.  
But, when $V$ has rank $2$ or more, 
the Dubrovin valuation rings of $A$ with center~$V$ are very 
often not integral over $V$.  

We show in this paper that 
there are still significant, and somewhat surprising,
connections between the Dubrovin theory and gauges even when
the Dubrovin valuation rings are not integral over their centers.
We use the special intersections of Dubrovin valuation rings
analyzed by Gr\"ater in \cite{G}.  He showed that to 
every valuation ring $V$ of a field $F$ and every central 
simple $F$-algebra $A$ there is a subring $C$ of $A$,
with $C$ integral over $V$ and  determined
uniquely up to isomorphism, such that $C = B_1 \cap \ldots \cap
B_\xi$, where the $B_i$ are each Dubrovin valuation rings of 
$A$ with center $V$, and the~$B_i$ are related by satisfying
a special \lq \lq Intersection Property."  We dub such a 
$C$ a {\it Gr\"ater~ring} for~$V$ in $A$.  Gr\"ater proved
that among other nice properties $C$ is a noncommutative 
B\'ezout ring and that the $B_i$ are determined from 
$C$ as the localizations of $C$ with respect to its maximal ideals.
The number $\xi$ of $B_i$ in the intersection (= the number of maximal
ideals of $C$) is an invariant $\xi = \xi_{V,[A]}$ of $V$ and the 
Brauer class $[A]$ of $A$.  Gr\"ater called this number the 
\lq\lq extension number."  The same number had appeared earlier
in \cite{W} in the \lq\lq Ostrowski Theorem" for   Dubrovin 
valuation rings. The extension number equals $1$ if and only if 
some (hence every) Dubrovin valuation ring of~$A$ with center $V$
is integral over $V$. 

Let $\alpha$ be a $v$-gauge on  the central simple $F$-algebra $A$.  
The degree zero piece of  
the associated graded ring $\gr_\alpha(A)$, denoted $A_0^\alpha$,
coincides with $R_\alpha/J(R_\alpha)$, and is a semisimple ring
finite-dimensional over the residue ring $\ov F^{\,v}$ of the valuation  
$v$.  Let 
$$
\omega(\alpha) \, = \, \text{the number of simple components of 
the semisimple ring $A_0^\alpha$}.
$$
We  show in Th.~\ref{minimalgauge1} that $\omega(\alpha) \ge \xi_{V,[A]}$, 
where $V$ is the valuation ring of $v$.  We call $\alpha$ a {\it minimal~gauge}
if equality holds.
 We  show in 
Th.~\ref{teoremaprincipal} that if $\alpha$ is a minimal gauge on $A$ then 
its gauge ring $R_\alpha$ is a Gr\"ater ring. Conversely, 
we prove in Th.~\ref{teoremaprincipalvolta}, that if $v$ is defectless in 
$A$ then a Gr\"ater ring of $A$ with center $V$ is the gauge ring of  some minimal
$v$-gauge.   

Defect in valuation theory refers to the failure of
equality in the Fundamental Inequality.  The  background results we need on 
defect are given in \S\ref{subsec:defect}.  The defect is trivial for 
valuations of residue characteristic $0$ or of prime residue 
characteristic not dividing the index of a central simple algebra.
It is not hard to prove  that if a semisimple $F$-algebra 
$S$ has a $v$-gauge, then the valuation $v$ must be defectless in 
$S$ (see Prop.~\ref{prop:gaugeimpliesdefectless}).  In 
Th.~\ref{existenceminimalgauges} we prove the very  nontrivial converse
that if $v$ is defectless in~$S$, then $S$ necessarily has a minimal $v$-gauge.
This is of interest in itself, and is also essential for the proof 
of Th.~\ref{teoremaprincipalvolta}.  

Our approach to proving results about gauges and Gr\"ater rings
with respect to a central valuation~$v$ often involves working 
back from corresponding objects for a coarser valuation $w$.
A $v$-gauge $\alpha$ on a central simple $F$-algebra $A$ has a coarsening  
to a $w$-gauge $\beta$ on $A$ with an induced $v/w$-gauge on the 
residue ring  $A_0^\beta$, see Prop.~\ref{gaugealphazero}.  However, 
the semisimple $\ov F^{\,w}$-algebra $A_0^\beta$ need not be 
central simple.
Therefore, it has been necessary to determine how gauges on 
semisimple algebras are related to gauges for 
central simple algebras.
We do this in \S 2. Reduction from the semisimple case to the 
simple case is very easy. The simple case turns out to have a 
nice description
that takes some work to prove:  
 If $S$ is a finite-dimensional simple $F$-algebra, 
and $v$ is a
valuation on $F$, let $v_1, \ldots, v_r$ be the extensions of 
$v$ to the center $Z(S)$.  If~$\alpha$~is a $v$-gauge on 
$F$, we show in Th.~\ref{thm:gaugeismin} that there are uniquely determined 
$v_i$-gauges 
$\alpha_i$ on $S$ such that $\alpha = \min(\alpha_1, \dots, \alpha_r)$.
Moreover, in Th.~\ref{thm:compatible} we give a necessary and sufficient 
compatibility condition on $v_i$-gauges $\alpha_i$ so 
that $\min(\alpha_1, \ldots, \alpha_r)$ is a $v$-gauge.
This is a notable result  in its own right for  the 
still-developing theory of gauges. 

For valuations on fields, the valuation is determined 
(up to equivalence) by the valuation ring.  This is likewise
true for the Morandi value functions determined by 
Dubrovin valuation rings integral over their centers.
We give an example in \S5 to show that the corresponding property 
does not hold for minimal gauges.  In our example there are
infinitely many nonisomorphic minimal gauges on a central simple 
algebra all with the same gauge ring $R_\alpha$.

There is a book in preparation \cite{TW3} that will give a more extensive 
treatment of gauges than is available in the currently published literature.  
We have adapted \S\ref{subsec:defect} below on defect and the result in the Appendix
from that book because the material is essential for this paper and
there is no adequate published reference available.  Also, the existence of 
gauges for defectless algebras was first proved in that book.  What is needed here
to complete the results in \S 3 below is the existence of {\it minimal} gauges
for defectless algebras.  This is proved in \S 4 by an argument that is 
substantially different from the one in \cite{TW3}.

\section{Value functions on semisimple algebras}

For the convenience of the reader, we recall a few basic properties of 
gauges proved in \cite{TW1} and \cite{TW2}. First, a few words of notation:
If $R$ is any ring (with $1$), we write: $R^\times$ for the group of units
of $R$; $Z(R)$ for the center of $R$; $J(R)$ for the Jacobson radical of
$R$; and $\mat _k(R)$ for the $k\times k$-matrix ring over $R$.
Fix  for now a divisible totally ordered abelian group $\Gamma$, chosen to be 
sufficiently large to contain the
values of all valuations and the degrees of all gradings we consider.
If $v\colon F \to \Gamma\cup \{\infty\}$ is a
valuation on a field $F$, let $V$ be the corresponding valuation
ring, $V = \{ x\in F \, | \, v(x) \geq 0\}$; then $J(V) =
\{ x\in F \, | \, v(x) > 0\}$, which is the unique maximal 
ideal of $V$. We write~
$\overline{F}^{\,v}$ for the residue field $V /J(V)$ and
 $\Gamma_{v}$~for the value group~
$v(F^\times)$, which is a subgroup of $\Gamma$. 
We say that $v$ and $V$ are trivial if $V = F$.  For basic assumed
background on valuations on fields we refer to Bourbaki 
\cite[Ch. 6]{B} or Engler-Prestel \cite{EP}. 
For valuations on division rings we use analogous notation and
terminology to that for fields.  A good reference for valuation 
theory on division rings is the paper~\cite{JW}.   

Let $D$ be a division ring finite-dimensional over its center. A valuation
$v\colon D \rightarrow \Gamma\cup \{\infty \}$ defines a filtration on $D$: for
each $\gamma \in \Gamma$, we set
\[
D_{\geq \gamma} \,=\, \{ d \in D \; | \; v(d) \geq \gamma \},
\quad D_{> \gamma} \,=\, \{ d \in D \; | \; v(d)  > \gamma \}, \quad
\text{ and } \quad D_{\gamma} \,=\, D_{\geq \gamma} / D_{>\gamma}.
\]
The associated graded ring for $v$ on $D$ is
$$
\gr_v(D) \,= \,  \tbigoplus_{\gamma \in \Gamma} D_{\gamma}. 
$$
It is a \emph{graded division ring} because every nonzero homogeneous
element in $\gr(D)$ is invertible.
(If $D$ is commutative, then $\gr(D)$
is also commutative and is then called a \emph{graded field}.)
The grade set $\Gamma_{\gr(D)}$ of $\gr(D)$ is 
$
 \{\gamma\in \Gamma\;|\; D_\gamma\ne \{0\} \}, 
$
which is a subgroup of $\Gamma$ coinciding  with $\Gamma_v$.  Note
that $D_0$~is a division ring.  Also, since $\Gamma_v$ is
torsion-free, $D^\times = \bigcup_{\gamma\in\Gamma_v}D_\gamma 
\setminus
\{0\}$. Now, let $\grM = \bigoplus_{\gamma\in \Gamma}\grM_\gamma$ be
a graded
right $\gr(D)$-module, i.e., $\grM$ is a right  $\gr(D)$-module with 
$ {\grM_\gamma\!\cdot\! D_\delta} \subseteq \grM_{\gamma + \delta}$ for all~
$\gamma, \delta \in \Gamma$.  Then, $\grM$ is a free $\gr(D)$-module
with well-defined rank and a homogeneous base.  We therefore
call~$\grM$ a right graded $\gr(D)$-vector space, and write
$\dim_{\gr(D)}\grM$ for the rank of $\grM$ as   a $\gr(D)$-module.
If $\grN$~ is another graded $\gr(D)$-vector space, then a
$\gr(D)$-homomorphism
$\psi \colon \grM\to \grN$ is called a graded homomorphism if
$\psi(\grM_\gamma) \subseteq \grN_\gamma$ for all $\gamma\in \Gamma$.
We write
$\grM \cong_g \grN$  if
there is a graded isomorphism $\grM \to \grN$. If 
$m \in \grM\setminus \{0\}$ is homogeneous,
then there is a unique $\gamma\in \Gamma$ with $ m \in \grM_\gamma$;
we call~$\gamma$ the degree of $m$, and write $  \gamma = \deg m$.
For any 
$\delta \in \Gamma$, we write $\grM(\delta)$ for the 
$\gr(D)$-vector space obtained from $\grM$ by shifting the grading by 
$\delta$:
\begin{equation} \label{eq:shift}
\grM(\delta) \, = \,\tbigoplus\limits_{\gamma\in \Gamma}\grM(\delta) _\gamma
\qquad \text{where} \qquad \grM(\delta)_\gamma \, = \, \grM_{\gamma+ \delta}.
\end{equation}

Now let $M$ be a right $D$-vector space. A $v$-\emph{value function} on
$M$ is a map $\alpha\colon M \rightarrow \Gamma\cup \{\infty \}$ such that 
\begin{itemize}
\item[(i)]
$\alpha(x) = \infty$ if and only if $x =0$;
\item[(ii)]
 $\alpha(x+y) \geq \min \big(\alpha(x), \alpha(y)\big)$
for $x,y \in M;$
\item[(iii)] $\alpha(xd) = \alpha(x) + v(d)$ for all
$x \in M$ and $d \in D$.
\end{itemize}
We write $\Gamma_{\alpha}$ to denote the value set
$\alpha(A\setminus \{0\})$, which need not be a group, but it is a
union of cosets of $\Gamma_{v}$.
The $v$-value function $\alpha$ is called a $v$-\emph{norm} if
$M$ is finite-dimensional and contains a 
$D$-vector space base $(x_i)_{i=1}^n$ such that
\begin{equation}\label{definicaonorma}
\alpha\big(\tsum_{i=i}^n x_id_i\big) =
\min\limits_{1\leq i \leq n}\big(v(d_i) + \alpha(x_i)\big) \quad
\text{for } d_1, \ldots, d_n \in D.
\end{equation}
Such a base $(x_i)_{i=1}^n$ is called a {\it splitting base} of $M$ for $\alpha$.
Just as with valuations, the value function~$\alpha$ defines a filtration
on $M$: for each
$\gamma \in \Gamma$, we set
$$
M_{\geq \gamma} \,=\, \{ x \in M \; | \; \alpha(x) \geq \gamma \}, \quad
M_{> \gamma} \,=\, \{ x \in M \; | \; \alpha(x) > \gamma \},
\quad \text{and} \quad M_{\gamma} \,=\, M_{\geq \gamma} / M_{>\gamma}.
$$
(When we need to specify the value function, we write
$M_{\geq \gamma}^\alpha, M_{> \gamma}^\alpha$, and $M_\gamma^\alpha$.)
The associated graded object
\[
\gr_\alpha(M) =  \tbigoplus_{\gamma \in \Gamma} M_{\gamma}
\]
is a graded right $\gr(D)$-vector space with
$\dim_{\gr(D)}\gr_\alpha(M) \le \dim _DM$ (see \cite[Cor.~2.3]{RTW}). 
 We will write  $\gr(M)$ instead of $\gr_\alpha(M)$ when the value
function  $\alpha$ is clear.
Every nonzero element $x \in M$ has an image $\tilde {x}^{\,\alpha}$ in~
$\gr(M)$, defined by
$\tilde {x}^{\,\alpha} = x+ M_{>\alpha(x)} \in M_{\alpha(x)}$. We will
often
write simply $\tilde x$ instead of $\tilde {x}^{\,\alpha}$ when
  $\alpha$~is clear. A $D$-base $(x_i)_{i=1}^k$ of $M$ is
a splitting base of $M$ for $\alpha$ if and only if
$(\tilde x_i)_{i=1}^k$ is a $\gr(D)$-base of~$\gr(M)$
(see \cite[Cor.~2.3]{RTW}). Hence, 
$\alpha$ is a $v$-norm on $M$ if and only if
$\dim_{\gr(D)}\gr(M) = \dim_{D}M$.

A $v$-value function $\alpha$ on a finite-dimensional $F$-algebra $A$ is
\emph{surmultiplicative} if $\alpha(1) = 0$ and
$\alpha(xy) \geq \alpha(x) + \alpha(y)$ for $x,y \in A.$
For such an $\alpha$, set
\begin{equation}\label{eq:defRalpha}
R_\alpha \,=\, \{ x \in A \: | \: \alpha(x) \geq 0 \}
\qquad\text{and} \qquad
J_\alpha = \{ x \in A \: | \: \alpha(x) > 0\}.
\end{equation}
It is clear from the axioms for $\alpha$ that
$R_\alpha$ is a subring of $A$
and $J_\alpha$ is a two-sided ideal
of $R_\alpha$.  
One can check further that $1+ J_\alpha \subseteq R_\alpha^\times$;
hence $J_\alpha$ lies in the Jacobson radical $J(R_\alpha)$.
Thus, if $R_\alpha/J_\alpha$ is semisimple, then
$J_\alpha = J(R_\alpha)$. Also, 
if $F = Z(A)$ and $\Gamma_\alpha$ lies in the 
divisible hull of $\Gamma_v$ then  $Z(R_\alpha) = R_\alpha \cap F = V$,
the valuation ring of $v$,
 and
$J(R_\alpha) \cap F = J(V)$. Moreover, if in addition
  $\alpha$ is  a $v$-norm, then it follows from \cite[Lemma~1.20 and
Th.~3.1]{TW1} that $R_\alpha$ is  integral over $V$. Because
$\alpha$ is surmultiplicative, the
multiplication in $A$ induces a multiplication on the graded $\gr(F)$-vector
space $\gr_\alpha(A)$ such that for $x,y \in A$
\begin{equation} \label{multiplicacaograduada}
{\tilde x}\,{\tilde y}\,=\, xy + A_{>\alpha(x)+ \alpha(y)} \,=\, \left \{
\begin{array}{cl}
\widetilde{xy} & \text{\; if \:} \alpha(xy) = \alpha(x) + \alpha(y),\\
0 & \text{\; if \:} \alpha(xy) > \alpha(x) + \alpha(y).
\end{array}\right.
\end{equation}
Thus, $\gr_\alpha(A)$ is a graded algebra over $\gr(F)$.
We write $\DIM{\gr_\alpha(A)}{\gr(F)}$ for $\dim_{\gr(F)}\gr_\alpha(A)$.

Let $F$ be a field with a valuation $v$ and valuation ring $V$.
A surmultiplicative $v$-value function $\alpha$ on a finite-dimensional
$F$-algebra $A$ is called a $v$-\emph{gauge} if $\alpha$ is a $v$-norm
and $\gr_\alpha(A)$ is a graded semisimple $\gr(F)$-algebra, i.e.,
$\gr_\alpha(A)$ has no nonzero homogeneous nilpotent ideal. In this
case, $R_\alpha$ is  called the \emph{gauge ring} of $\alpha$.
If $\alpha$ is a $v$-gauge on a semisimple $F$-algebra $A$ and 
$\alpha'$ is a $v$-gauge on a semisimple $F$-algebra $A'$ we say 
that $\alpha$ and $\alpha'$ are isomorphic $v$-gauges if there 
is an $F$-algebra isomorphism $\eta \colon A \to A'$ such that 
$\alpha'\circ \eta = \alpha$.

For finite-dimensional graded algebras over a graded field there 
are structure theorems analogous to the Wedderburn theorems
in the ungraded context.  The graded theory is developed in~
\mbox{\cite[\S1]{HW}}.

\begin{example}\label{ex;endgauge}
One of the basic constructions of gauges is that of End-gauges
on endomorphism rings  determined by norms on vector spaces, which we now
recall.
Let $D$ be a finite-dimensional division $F$-algebra and let $M$ be a
finite-dimensional right $D$-vector space. Suppose
the valuation $v$ on~$F$ extends to a
valuation $w$ on $D$ and let $\alpha$ be a $w$-norm on $M$.
Then there is a well-defined surmultiplicative $v$-value function
$\End(\alpha)$
on the endomorphism ring $\End_D(M)$, given 
 by
\begin{equation}\label{endgauge}
\End(\alpha)(f) \,=\, \min_{m\, \in\, M \setminus\{0\}}
\big( \alpha(f(m)) -\alpha(m)\big)
\end{equation}
(see \cite[Prop.~1.19]{TW1}).
Moreover,
$$
\gr_{\End(\alpha)}(\End_D(M)) \, \cong_g \,
\grEnd_{\gr_w(D)}\big(\gr_\alpha(M)\big).
$$
(The endomorphism ring $\grEnd_{\gr_w(D)}\big(\gr_\alpha(M)\big)$ has
a natural grading in which a map ${g \colon\gr_\alpha(M)
\to \gr_\alpha(M)}$ has degree $\gamma$ if $g(M_\delta) \subseteq
M_{\gamma+ \delta}$ for all $\delta \in \Gamma$.)
Thus $\gr_{\End(\alpha)}(\End_D(M))$ is graded simple, i.e., it has
no proper nonzero homogeneous ideals.
It follows by dimension count that $\End(\alpha)$ is a $v$-gauge if and only if $w$ on $D$
is defectless over $v$, i.e.,
${\DIM{\gr_w(D)}{\gr_v(F)}= \DIM DF}$ (see \cite[Prop.~1.19]{TW1}).
Since $\dim_D M <\infty$ the grade set $\Gamma_\alpha
= \alpha(M\setminus\{0\})$ consists of finitely many cosets
of $\Gamma_w$, say $\Gamma_\alpha  = \bigsqcup _{i =1}^k\gamma_i + \Gamma_w$,
a disjoint union.  Then $\gr_\alpha(M)$ has a canonical graded
$\gr(D)$-vector space decomposition
$$
\gr_\alpha(M) \, = \, \tbigoplus\limits_{i = 1}^k \grM_i,
\qquad \text{where }\ \grM_i \, = \,
\tbigoplus\limits_{\delta\, \in\, \gamma_i
+ \Gamma_w} M_\delta \, \cong_g \, \gr(D) \otimes_{D_0}M_{\gamma_i}.
$$
An endomorphism of $\gr_\alpha(M)$ of degree $0$ sends each
$M_\delta$ to itself.  Thus,  the degree-$0$ component of
$\gr_{\End(\alpha)}(\End_D(M))$ is
\begin{equation}\label{eq:end0}
\End_D(M)_0 \,\cong\, \big(\grEnd_{\gr(D)}(\gr(M))\big)_0 \,\cong \,
\tprod\limits
_{i=1}^k\big(\grEnd_{\gr(D)}(\grM_i)\big)_0 \, \cong\,
\tprod_{i = 1}^k \End_{D_0}(M_{\gamma_i}).
\end{equation}
Thus, $\big(\End_{\gr(D)}(\gr(M))\big)_0$  is semisimple with exactly
$k$ simple components, each Brauer equivalent to the division ring $D_0$.
Note that the number $k$ of simple components equals the number of 
cosets of $\Gamma_w$ in $\Gamma_\alpha$.

The graded ring $\grEnd_{\gr(D)}(\gr(M))$ can be viewed as a 
matrix ring with shifted grading as follows:  Let $(m_1,\ldots,
m_n)$ be any splitting base of $M$ with respect to $\alpha$,
and let $\gamma_i = \alpha(m_i)$, for each $i$.  Then 
$(\tilde{m_1}, \ldots, \tilde {m_n})$ is a homogeneous base
of the $\gr(D)$-vector space $\gr(M)$ with $\deg \tilde{m_i}
= \gamma_i$.  Let $\grD = \gr(D)$.  Each $\tilde{m_i}$ spans
a $1$-dimensional graded $\grD$-subspace of $\grM$, and 
$\tilde{m_i}\grD \cong_g \grD(\gamma_i)$, which
 is $\grD$~with its grading shifted by 
$\gamma_i$, as in \eqref{eq:shift}.  Thus, as graded
$\grD$-vector spaces, ${\gr(M)= \bigoplus_{i=1}^n\tilde{m_i}\grD 
\cong_g \bigoplus_{i=1}^n \grD(\gamma_i)}$. Then, as 
graded $\gr(F)$-algebras,
\begin{equation}\label{eq:shiftmatrix}
\grEnd_{\gr(D)}(\gr(M)) \, \cong_g \, \grEnd_\grD\big(\grD(\gamma_1) \oplus \ldots
\oplus \grD(\gamma_n)\big)  \, \cong_g \, \mat_n(\grD)(\gamma_1, \ldots,
\gamma_n),
\end{equation}
where $\mat_n(\grD)(\gamma_1, \ldots,\gamma_n)$ is the matrix ring 
$\mat_n(\grD)$ but with grading shifted so that its $\delta$-component
cosists of the matrices with each $ij$-entry $\grD_{\gamma_j-\gamma_i
+\delta}$, for all $\delta\in \Gamma$.  Since 
$\grD_{\gamma_j-\gamma_i+\delta}\ne \{0\}$ if and only if 
$\delta \in \gamma_i - \gamma_j + \Gamma_w$, we have
\begin{equation}\label{eq:Gammaalpha}
\Gamma_{\End(\alpha)} \, = \, \Gamma_{\grEnd_{\gr(D)}(\gr(M))}
\, = \tbigcup_{i = 1} ^n\tbigcup _{j = 1}^n \gamma_i - \gamma_j + \Gamma_w.
\end{equation} 
\end{example}

We will need in \S4 the following more general construction of 
End-gauges:

\begin{lemma}\label{lemma1existence}
Let $A$ be a semisimple $F$-algebra with a $v$-gauge $\alpha$. 
Let $M$ be a free right $A$-module with base $(m_1, \ldots, m_n)$. 
Take any $\gamma_1, \ldots , \gamma_n \in \Gamma$, and let 
$\eta\colon M \rightarrow \Gamma \cup \{ \infty\}$ be 
the \lq\lq$\alpha$-$v$-norm" defined by
\begin{equation}\label{eq:etadef}
\eta\big(\tsum_{i=1}^n m_ia_i\big) \,=\, 
\min\limits_{1 \leq i \leq n} \big(\gamma_i + \alpha(a_i)\big), 
\quad \text{ for all  } \;a_1, \ldots, a_n \in A.
\end{equation}
Let $E = \End_A(M)$ and let $\psi$ be the $v$-value function 
$\End(\eta)$ on $E$ defined by
\[
\psi(f) \,=\, \min\limits_{1 \leq i \leq n}
\big(\eta(f(m_i)) - \eta(m_i)\big).
\]
Then,
\begin{equation}\label{eq:psif}
\psi(f) = \min\limits_{m \in M \setminus\{0\}}
\big(\eta(f(m)) - \eta(m)\big),
\end{equation}
and $\psi$ is a $v$-gauge on $E$. Moreover, $\gr_\eta(M)$ is a free right 
$\gr_\alpha(A)$-module with base 
$(\widetilde{m}_1, \ldots, \widetilde{m}_n)$, and 
$\gr_\psi(E) \cong_g \grEnd_{\gr_\alpha(A)}(\gr_\eta(M)).$
\end{lemma}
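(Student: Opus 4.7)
The plan is to prove the four assertions essentially simultaneously, following the model of \cite[Prop.~1.19]{TW1} for the division-algebra case but allowing $\alpha$ to be a gauge on a semisimple (not necessarily division) algebra $A$.

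First I would dispose of the claim that $\gr_\eta(M)$ is free over $\gr_\alpha(A)$ with base $(\widetilde{m}_1, \ldots, \widetilde{m}_n)$. The defining formula \eqref{eq:etadef} says outright that $(m_1, \ldots, m_n)$ satisfies the splitting-base condition \eqref{definicaonorma} (with $(A,\alpha)$ in place of $(D,v)$), from which it follows directly by unraveling the filtration that $(\widetilde{m}_1, \ldots, \widetilde{m}_n)$ is a homogeneous free $\gr_\alpha(A)$-base of $\gr_\eta(M)$, with $\deg \widetilde{m}_i = \gamma_i$.

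Next I would verify \eqref{eq:psif}. The inequality $\psi(f) \le \min_{m\ne 0}(\eta(f(m)) - \eta(m))$ is trivial. For the reverse, given $m = \tsum_i m_i a_i \ne 0$, note that $\eta(xa) \ge \eta(x) + \alpha(a)$ for $x \in M$, $a \in A$ follows from \eqref{eq:etadef} and surmultiplicativity of $\alpha$. Then
\[
\eta(f(m)) \,\ge\, \min\limits_i \eta(f(m_i) a_i) \,\ge\, \min\limits_i \big(\eta(f(m_i)) + \alpha(a_i)\big) \,\ge\, \psi(f) + \min\limits_i (\gamma_i + \alpha(a_i)) \,=\, \psi(f) + \eta(m).
\]
Granting \eqref{eq:psif}, the axioms of a surmultiplicative $v$-value function for $\psi$, together with $\psi(\id) = 0$, follow formally as in \cite[Prop.~1.19]{TW1}.

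The core of the argument is to show $\psi$ is a $v$-gauge and to establish the graded isomorphism. I would use the canonical graded $\gr_v(F)$-algebra homomorphism
\[
\Phi \colon \gr_\psi(E) \,\longrightarrow\, \grEnd_{\gr_\alpha(A)}(\gr_\eta(M)),
\]
sending $\widetilde{f}$ (of degree $\delta$) to the induced graded endomorphism $\widetilde{x} \mapsto \widetilde{f(x)}$; this is well-defined because $f(M_{\ge \gamma}) \subseteq M_{\ge \gamma + \delta}$ by \eqref{eq:psif}. Injectivity of $\Phi$ is immediate from \eqref{eq:psif}: if $\Phi(\widetilde{f}) = 0$ then $\eta(f(m)) > \eta(m) + \psi(f)$ for every nonzero $m \in M$, contradicting the definition of $\psi$. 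For the dimensions,
\[
\DIM{\gr_\psi(E)}{\gr_v(F)} \,\le\, \DIM{E}{F} \,=\, n^2 \DIM{A}{F},
\]
while the $\gr_\alpha(A)$-freeness of $\gr_\eta(M)$ of rank $n$ yields a shifted matrix-ring description in the spirit of \eqref{eq:shiftmatrix}, so
\[
\DIM{\grEnd_{\gr_\alpha(A)}(\gr_\eta(M))}{\gr_v(F)} \,=\, n^2 \DIM{\gr_\alpha(A)}{\gr_v(F)} \,=\, n^2 \DIM{A}{F},
\]
using that $\alpha$ is a $v$-norm. Consequently $\Phi$ is forced to be a graded $\gr_v(F)$-algebra isomorphism, and simultaneously $\psi$ is a $v$-norm on $E$. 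Finally, since $\gr_\alpha(A)$ is graded semisimple (as $\alpha$ is a $v$-gauge), the shifted matrix ring $\mat_n(\gr_\alpha(A))(\gamma_1, \ldots, \gamma_n) \cong_g \grEnd_{\gr_\alpha(A)}(\gr_\eta(M))$ is graded semisimple by the graded Wedderburn theory of \cite[\S1]{HW}, so $\gr_\psi(E)$ is graded semisimple, whence $\psi$ is a $v$-gauge. The main bookkeeping obstacle here is checking that the graded semisimplicity of $\gr_\alpha(A)$ genuinely passes through to shift-graded matrix rings; this is the one step requiring the graded structure theory rather than direct computation.
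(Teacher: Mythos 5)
The proposal goes wrong at the decisive step. You build the injective graded homomorphism $\Phi\colon \gr_\psi(E) \to \grEnd_{\gr_\alpha(A)}(\gr_\eta(M))$ and then claim that the two dimension facts $\DIM{\gr_\psi(E)}{\gr_v(F)} \le \DIM EF = n^2\DIM AF$ and $\DIM{\grEnd_{\gr_\alpha(A)}(\gr_\eta(M))}{\gr_v(F)} = n^2\DIM AF$ ``force'' $\Phi$ to be an isomorphism and $\psi$ to be a $v$-norm. They do not: both facts only bound $\dim\gr_\psi(E)$ from \emph{above} by the dimension of the target, and an injection of a space of dimension $\le d$ into a space of dimension $d$ need not be onto. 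A priori it could happen that $\psi$ fails to be a norm, $\dim\gr_\psi(E) < n^2\DIM AF$, and $\Phi$ is injective but not surjective; ruling this out is exactly the content of the lemma, so the argument as written is circular at this point. To close the gap you must either prove surjectivity directly or prove independently that $\psi$ is a $v$-norm. The paper does the former: for each pair $j,k$ and each nonzero $a\in A$ it takes the endomorphism $h$ with $h(m_k)=m_ja$ and $h(m_i)=0$ for $i\ne k$, computes $\psi(h) = \gamma_j-\gamma_k+\alpha(a)$ and $\widehat h(\widetilde{m_k}) = \widetilde{m_j}\,\widetilde a$, and observes that such maps generate $\grEnd_{\gr_\alpha(A)}(\gr_\eta(M))$ as an abelian group because $\gr_\eta(M)$ is free on $(\widetilde{m_1},\ldots,\widetilde{m_n})$; surjectivity follows, and only then does the dimension count (now read in the other direction) give $\dim\gr_\psi(E) = \DIM EF$, i.e., that $\psi$ is a norm. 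Alternatively, you could first exhibit a splitting base of $E$ for $\psi$ (e.g., the maps sending $m_k\mapsto m_ja_l$ and $m_i\mapsto 0$ for $i\ne k$, where $(a_l)$ is a splitting base of $A$ for $\alpha$); once $\psi$ is known to be a norm, your equal-dimension argument does yield surjectivity.

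Two smaller remarks. In your proof of \eqref{eq:psif} the labels are swapped: the trivial direction is $\min_{m\ne 0}\big(\eta(f(m))-\eta(m)\big)\le\psi(f)$, since the $m_i$ occur among the nonzero $m$, while your displayed chain of inequalities proves the nontrivial direction $\psi(f)\le\eta(f(m))-\eta(m)$ for all $m$; the content is complete, only the prose needs fixing. Also, the freeness of $\gr_\eta(M)$ over $\gr_\alpha(A)$ on $(\widetilde{m_1},\ldots,\widetilde{m_n})$ is stated as immediate; it is true and can be done directly, but it deserves a short verification (as in the paper, via a splitting base $(a_l)$ of $A$, noting that $(m_ia_l)$ is a splitting base of $M$ for $\eta$ over $F$ and regrouping), especially since this freeness is what makes the surjectivity argument above work.
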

\begin{proof}
Note that for all $m, m'\in M$ and $a \in A$,
\[
\eta(ma) \,\geq\, \eta(m) + \alpha(a) \quad \text{and} 
\quad \eta(m+m') \,\geq\, \min (\eta(m),\eta(m')).
\]
Now, take any $f \in E$ and nonzero $m \in M$, say 
$m = \tsum_{i=1}^n m_ia_i$ with $a_i \in A$. Then,
\begin{eqnarray*}
\eta(f(m)) - \eta(m) 
& = & 
\eta\big(\tsum_{i=1}^n f(m_i)a_i\big) - \eta(m) \ \geq\ 
 \min\limits_{1 \leq i \leq n}
\big(\eta(f(m_i)) + \alpha(a_i)\big) - \eta(m)\\
                     & = & 
\min\limits_{1 \leq i \leq n}
\big[\big(\eta(f(m_i)) - \eta(m_i)\big) + 
\big(\eta(m_i) + \alpha(a_i)\big)\big]- \eta(m)\\
                     & \geq &
 \min\limits_{1 \leq i \leq n}\big(\eta(f(m_i)) - \eta(m_i)\big) 
+ \min\limits_{1 \leq i \leq n} \big(\eta(m_i) + \alpha(a_i)\big)- \eta(m)\\
                     & = & 
\min\limits_{1 \leq i \leq n}\big(\eta(f(m_i)) - \eta(m_i)\big) 
\ =\  \psi(f).
\end{eqnarray*}
Thus, $\eta(f(m)) - \eta(m) \geq \psi(f)$. Hence,
\[
\psi(f) \,=\, \min\limits_{1 \leq i \leq n}
\big(\eta(f(m_i)) - \eta(m_i)\big) \,\geq\, 
\min\limits_{m \,\in M \setminus\{0\}}\big(\eta(f(m)) - \eta(m)\big) 
\,\geq \,\psi(f),
\]
so equality holds throughout, proving \eqref{eq:psif}. 

 Note that for the identity map $\id_M$, clearly 
$\psi(\id_M) = 0$.  Also, let $f,g \in E$. 
Then,
\begin{eqnarray*}
\psi(f \circ g) 
& = & 
\min\limits_{m\, \in M \setminus\{0\}}\big(\eta(f(g(m))) - \eta(m)\big)\\
                & = & 
\min\limits_{m\, \in M \setminus\{0\}}
\big[\big(\eta(f(g(m))) - \eta(g(m))\big) + 
\big(\eta(g(m))- \eta(m)\big)\big]\\
                 & \geq & 
\min\limits_{m \,\in M \setminus\{0\}}\big(\eta(f(g(m))) - \eta(g(m))\big) 
+ \min\limits_{m \,\in M \setminus\{0\}} \big(\eta(g(m))- \eta(m)\big)\\
                 & \geq & 
\psi(f) + \psi(g).
\end{eqnarray*}
Thus, $\psi$ is surmultiplicative.

Now consider the graded structures.  Note that as $\alpha$ is a $v$-norm, 
$\eta$ is a $v$-value function on $M$. So, $\gr_\eta(M)$ is a graded 
$\gr_v(F)$-vector space. Formula \eqref{eq:etadef} shows 
$\eta(ma) \geq \eta(m) + \alpha(a)$ for all $m \in M$ and $a \in A$. 
Hence, the right $A$-module structure of $M$ induces a well-defined 
$\gr_\alpha(A)$-module action on $\gr_\eta(M)$ such that
\[
\widetilde{m}\,\widetilde{a} \,=\, ma + M_{>\eta(m)+ \alpha(a)} \,= \,
\left \{
\begin{array}{cl}
\widetilde{ma} & \text{\; if \:} \eta(ma) = \eta(m) + \alpha(a);\\
0 & \text{\; if \:} \eta(ma) > \eta(m) + \alpha(a).
\end{array}\right.
\]
In particular, $\widetilde{m_i}\,\widetilde{a} = \widetilde{m_ia}$. Let 
$(a_j)_{j=1}^r$ be a splitting base of $A$ for $\alpha$. By formula 
\eqref{eq:etadef}, $(m_ia_j)_{i,j=1}^{n,r}$~is a splitting base of 
$M$ for $\eta$. Hence,
\[
\gr_\eta(M)\,=\,  
\tbigoplus_{i=1}^n\tbigoplus_{j=1}^r  \widetilde{m_ia_j} \gr_v(F) \,=\,
  \tbigoplus_{i=1}^n\tbigoplus_{j=1}^r  
\widetilde{m_i}\,\widetilde{a_j} \gr_v(F) \, =\,  
\tbigoplus_{i=1}^n \widetilde{m_i} \big( \tbigoplus_{j=1}^r  
\widetilde{a_j} \gr_v(F)\big) \,=\,  \tbigoplus_{i=1}^n  \widetilde{m_i} 
\gr_\alpha(A).
\]
Therefore, $\gr_\eta(M)$ is a free $\gr_\alpha(A)$-module of rank $n$ with 
base $(\widetilde{m_1}, \ldots , \widetilde{m_n}).$

Take any nonzero $f \in E$. Formula \eqref{eq:psif} above shows that 
$\eta(f(m)) \geq \psi(f) + \eta(m)$, for all $m \in M$, with equality for 
some $m \in M \setminus \{0\}$. From this one can see that $f$ induces a 
well-defined map $\widehat{f}\colon \gr_\eta(M) \rightarrow \gr_\eta(M)$ 
defined on homogeneous elements by 
\[
\widehat{f}(\widetilde{m}) \,=\, \left \{
\begin{array}{cl}
\widetilde{f(m)} & \text{\; if \:} \eta(f(m)) = \psi(f) + \eta(m);\\
0 & \text{\; if \:} \eta(f(m)) > \psi(f) + \eta(m).
\end{array}\right.
\]
Routine calculations show that $\widehat{f}$ is a $\gr_v(F)$-vector space 
endomorphism of $\gr_\eta(M)$ which maps each $M_\gamma$ into 
$M_{\gamma+ \psi(f)}$. The definition of $\psi$ shows that 
$\widehat{f}(\widetilde{m_i})\neq 0$ for some $i$; so $\widehat{f} \neq 0$. 
Moreover, for any $m \in M$ and $a \in A$, it is easy to check that
\[
\widehat{f}(\widetilde{m}\hsp\widetilde{a} ) \,=\, \left \{
\begin{array}{cl} 
\widetilde{f(ma)} & \text{\; if \:} 
\eta(f(ma)) = \psi(f) + \eta(m) + \alpha(a)\\
0 & \text{\; if \:} \eta(f(ma)) > \psi(f) + \eta(m)+ \alpha(a)
\end{array}\right.  \ =\ \widehat{f}(\widetilde{m})\hsp\widetilde{a}, 
\]
from which it follows that 
$\widehat{f} \in \grEnd_{\gr_\alpha(A)}(\gr_\eta(M))$ and $\widehat{f}$ 
is homogeneous of degree $\psi(f)$. Moreover, for nonzero $g \in E$ and 
any $m \in M,$
\[
\widehat{g} \circ \widehat{f}(\widetilde{m}) \,=\, \left \{
\begin{array}{cl}
\widetilde{g(f(m))} & \text{\; if \:} 
\eta(g(f(m))) =\psi(g)+ \psi(f) + \eta(m),\\
0 & \text{\; if \:} \eta(g(f(m))) >\psi(g)+ \psi(f) + \eta(m),
\end{array}\right.
\]
hence,
\begin{equation}\label{eq:gcircf}
\widehat{g} \circ \widehat{f}\,=\, \left \{
\begin{array}{cl}
\widehat{g \circ f} & \text{\; if \:} 
\psi(g \circ f)\, =\,\psi(g)+ \psi(f);\\
0 & \text{\; if \:} \psi(g \circ f) \,>\,\psi(g)+ \psi(f).
\end{array}\right.
\end{equation}
Also, if $\psi(g)> \psi(f)$, then $\widehat{f+g} = \widehat{f}$. Therefore, 
$\widehat{f}$ depends only on the image $\widetilde{f}$ of $f$ in 
$\gr_\psi (E)$. Thus, there is a well-defined map 
$\iota\colon \gr_{\psi}(E) \rightarrow \grEnd_{\gr_\alpha(A)}(\gr_\eta(M))$ 
given on homogeneous elements by $\iota(\widetilde{f}) = \widehat{f}$ for 
$f \in E$. Using \eqref{eq:gcircf}, one can check that $\iota$ is a graded 
$\gr_v(F)$-algebra homomorphism, and $\iota$ is injective since it is 
injective on each homogeneous component of $\gr_\psi(E)$.

For surjectivity of $\iota$, take any $j,k \in \{1, \ldots, n\}$ and any 
nonzero $a \in A$. Define $h \in E$ by $h(m_k) = m_ja$ and $h(m_i) = 0$ 
for $i \neq k$. Then, as $\eta(m_ja) = \gamma_j + \alpha(a)$, we have 
$\psi(h) = \gamma_j - \gamma_k + \alpha(a)$  and 
$\widehat{h}(\widetilde{m_k}) = \widetilde{m_ja} = 
\widetilde{m_j}\hsp \widetilde{a}$ while $\widehat{h}(\widetilde{m_i}) = 0$ 
for $i \neq k$. Of course, $\widehat{h} = \iota(\hsp\widetilde{h}\hsp) \in 
\im(\iota)$. Because maps such as $\widehat{h}$ generate 
$\grEnd_{\gr_\alpha(A)}(\gr_\eta(M))$ as an abelian group, the map $\iota$ 
is surjective, so a graded isomorphism. Because $\gr_\alpha(A)$ is graded 
semisimple (as $\alpha$ is a $v$-gauge) and $\gr_\eta(M)$ is a free 
$\gr_\alpha(A)$-module, $\grEnd_{\gr_\alpha(A)}(\gr_\eta(M))$ is graded 
semisimple by the graded Wedderburn theory (see \cite[\S1, especially 
Prop.~1.3]{HW}). 
 Hence, $\gr_\psi(E)$ is graded semisimple. Furthermore,
\begin{eqnarray*}
[\gr_\psi(E): \gr_v(F)]
& = &
[\grEnd_{\gr_\alpha(A)}(\gr_\eta(M)):\gr_v(F)]  \ = \  
n^2[\gr_\alpha(A): \gr_v(F)]\\
                       & = &
n^2[A: F]  \ = \   [E:F].
\end{eqnarray*}
Thus, $\psi$ is a surmultiplicative $v$-norm on $E$ with $\gr_\psi(E)$ 
graded semisimple, showing that $\psi$ is a $v$-gauge on $E$.
\end{proof}

The following proposition 
shows how to construct gauges on direct products of 
algebras.

\begin{proposition}\label{gaugesemisimple}
Let $A_1, \ldots , A_k$ be finite-dimensional simple $F$-algebras with
respective $v$-gauges $\alpha_1, \ldots , \alpha_k$, and let
$A= A_1 \times \ldots \times A_k$. The map
$\alpha\colon A \rightarrow \Gamma \cup \{\infty\}$ defined by
\begin{equation}\label{gaugeminsemisimple}
\alpha(x_1, \ldots , x_k) \,=\, \min\big(\alpha_1(x_1), \ldots,
\alpha_k(x_k)\big), \quad \text{for} \quad x_1 \in A_1, \ldots, x_k
\in A_k,
\end{equation}
is a $v$-gauge on $A$ and there exists a canonical identification of
$\gr(F)$-algebras
\[
\gr_\alpha (A) \,=\,
\gr_{\alpha_1} (A_1) \times \ldots \times \gr_{\alpha_k}(A_k).
\]
\end{proposition}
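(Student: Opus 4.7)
My plan is to verify the four required conditions (value function, surmultiplicative, $v$-norm, graded semisimple quotient) by exploiting the fact that in a direct product everything decomposes componentwise. Throughout, I write $x = (x_1, \ldots, x_k)$ and $y = (y_1, \ldots, y_k)$ for typical elements of $A$.

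First, I would check that $\alpha$ is a $v$-value function. Each of (i)--(iii) in the definition follows immediately from the same property of the $\alpha_i$ applied coordinatewise: for example, $\alpha(x) = \infty$ iff $\alpha_i(x_i) = \infty$ for every $i$ iff $x = 0$, and
\[
\alpha(x+y) \,=\, \min_i \alpha_i(x_i + y_i) \,\geq \,\min_i \min\big(\alpha_i(x_i), \alpha_i(y_i)\big) \,=\, \min\big(\alpha(x), \alpha(y)\big).
\]
Next I would verify surmultiplicativity. Since $1_A = (1_{A_1}, \ldots, 1_{A_k})$ and each $\alpha_i(1_{A_i})=0$, we have $\alpha(1) = 0$. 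Since multiplication in $A$ is componentwise, the surmultiplicativity of each $\alpha_i$ gives
\[
\alpha(xy) \,=\, \min_i \alpha_i(x_i y_i) \,\geq\, \min_i\big(\alpha_i(x_i) + \alpha_i(y_i)\big)\, \geq\, \alpha(x) + \alpha(y),
\]
where the last inequality uses $\alpha_j(x_j) \geq \alpha(x)$ and $\alpha_j(y_j) \geq \alpha(y)$ for every $j$.

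The key observation, and the heart of the proof, is the identification of the associated graded algebra. For every $\gamma \in \Gamma$ the definition \eqref{gaugeminsemisimple} gives
\[
A_{\geq \gamma}^\alpha \,=\, \tprod_{i=1}^k (A_i)_{\geq \gamma}^{\alpha_i} \qquad \text{and} \qquad A_{>\gamma}^\alpha \,=\, \tprod_{i=1}^k (A_i)_{>\gamma}^{\alpha_i},
\]
so the quotient is $A_\gamma^\alpha = \prod_i (A_i)_\gamma^{\alpha_i}$. Summing over $\gamma$ and interchanging the (finite) product with the direct sum yields
\[
\gr_\alpha(A) \,=\, \tbigoplus_{\gamma \in \Gamma}\tprod_{i=1}^k (A_i)_\gamma^{\alpha_i} \,=\, \tprod_{i=1}^k \gr_{\alpha_i}(A_i)
\]
as graded $\gr(F)$-vector spaces. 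Compatibility with the graded multiplication from \eqref{multiplicacaograduada} is immediate from componentwise multiplication in $A$, so this is a graded $\gr(F)$-algebra identification.

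Finally I would harvest the two remaining conclusions from this identification. Taking dimensions,
\[
\DIM{\gr_\alpha(A)}{\gr(F)} \,=\, \tsum_{i=1}^k \DIM{\gr_{\alpha_i}(A_i)}{\gr(F)} \,=\, \tsum_{i=1}^k \DIM{A_i}{F} \,=\, \DIM{A}{F},
\]
since each $\alpha_i$ is a $v$-norm; hence $\alpha$ is itself a $v$-norm. And any homogeneous two-sided ideal of $\prod_i \gr_{\alpha_i}(A_i)$ is of the form $\prod_i I_i$ for homogeneous two-sided ideals $I_i \subseteq \gr_{\alpha_i}(A_i)$, which is nilpotent iff each $I_i$ is nilpotent; since each $\gr_{\alpha_i}(A_i)$ is graded semisimple, $\gr_\alpha(A)$ is graded semisimple. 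Thus $\alpha$ is a $v$-gauge. There is no real obstacle here; the argument is entirely bookkeeping once one notices that the filtration pieces $A_{\geq \gamma}^\alpha$ split as products because the minimum condition \eqref{gaugeminsemisimple} decouples across coordinates.
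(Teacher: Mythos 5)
Your proof is correct and follows essentially the same route as the paper: componentwise verification of the value-function and surmultiplicativity axioms, the identification $A_{\geq\gamma} = \prod_i (A_i)_{\geq\gamma}$ (and likewise for $>\gamma$) giving $\gr_\alpha(A) \cong \prod_i \gr_{\alpha_i}(A_i)$, a dimension count for the norm condition, and semisimplicity from the factors. The only cosmetic difference is at the last step, where you decompose a homogeneous nilpotent ideal as a product of ideals via the central idempotents, while the paper simply projects it onto each factor; both arguments are fine.
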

\begin{proof}
It is easily checked that $\alpha$ is a surmultiplicative $v$-value
function on $A$. For $\gamma \in \Gamma$, we have
$\alpha(x_1, \ldots , x_k) \geq \gamma$ if and only if
$\alpha_i(x_i) \geq \gamma$ for $i = 1,\ldots, k$. Hence
$A_{\geq \gamma}= A_{1,{\geq \gamma}} \times \ldots
\times A_{k,{\geq \gamma}}$. Similarly, $A_{>\gamma}= A_{1,{> \gamma}}
\times \ldots \times A_{k,{> \gamma}}$. Therefore,
\begin{equation} \label{identificacao}
\gr_\alpha (A) \,=\, \gr_{\alpha_1} (A_1) \times \ldots \times
\gr_{\alpha_k}(A_k).
\end{equation}
By counting dimensions, we have
\[
[\gr_\alpha(A): \gr(F)]  \,=\,  \sum_{i=1}^k [\gr_{\alpha_i}(A_i):\gr(F)].
\]
Since $[\gr_{\alpha_i}(A_i):\gr(F)] = [A_i:F]$ for $i = 1,\ldots, k$, it
follows that $[\gr_{\alpha}(A):\gr(F)] = [A:F]$. Finally, the projections
of a homogeneous nilpotent two-sided ideal of $\gr_\alpha(A)$ are
homogeneous nilpotent two-sided ideals of each $\gr_{\alpha_i}(A_i)$,
which is trivial since $\gr_{\alpha_i}(A_i)$ is assumed semisimple.
Therefore, $\gr_\alpha(A)$ is also semisimple.
\end{proof}

Every $v$-gauge on $A$ as in Prop.~\ref{gaugesemisimple} is given by the
formula \eqref{gaugeminsemisimple}; more precisely, if
${\beta: A \rightarrow \Gamma \cup \{\infty\}}$ is a $v$-gauge, then for
each $i=1,\ldots, k$ the map $\beta_i\colon
A_i \rightarrow \Gamma \cup \{\infty\}$ defined by
\begin{equation}\label{betai}
\beta_i(x) \,=\, \beta(0, \ldots,0,x,0, \ldots,0) \qquad (x
\text{ in the $i$-th position})
\end{equation}
is a $v$-gauge on $A_i$ and
\[
\beta(x_1, \ldots , x_k) \,=\, \min\big(\beta_1(x_1), \ldots,
\beta_k(x_k)\big), \quad \text{for} \quad x_1 \in A_1,
\ldots, x_k \in A_k,
\]
(see \cite[Prop.~1.6]{TW1} for a proof).

\subsection{Composition of gauges}

Let $v\colon F\to\Gamma\cup\{\infty\}$ be a valuation on a field $F$,
where $\Gamma$ is a divisible totally ordered abelian group, and let $V$
be the valuation ring of $v$.  A valuation $v'$ on $F$ is said to be 
equivalent to $v$ if its valuation ring is also $V$.  Recall that
a valuation $w$ on $F$ is said to be a {\it coarsening} of $v$ if
its valuation ring $W$ contains $V$.  When this occurs, the maximal 
ideal $J(W)$ of~$W$ is a prime ideal of $V$, and $W$ is the 
localization $V_{J(W)}$ of $V$ at $J(W)$.  As is well-known,
the map $w \mapsto J(W)$ gives a one-to-one 
correspondence between the equivalence classes of coarsenings
of $v$ and the set of prime ideals of $V$.  Given $v$ and $w$, the 
ring $U = V/J(W)$ is a valuation ring of $\ov F^{\,w} = 
W/J(W)$, and a valuation $u$ on $\ov F^{\, w}$ with ring
$U$ is called the {\it residue valuation} determined by 
$v$ and $w$; we sometimes denote  
$u$ by $v/w$.  Its residue field is $\ov {\ov F^{\,w}}^{\,u}
= \ov F^{\,v}$.  From the perspective of $w$,
the valuation $v$ is called a {\it refinement} of $w$,
and $v$ is determined up to equivalence by 
$w$~and~$u$, since $V = \pi_w\inv(U)$ where $\pi_w\colon
W \to \ov F^{\,w}$ is the canonical projection; we call
$v$ the composite of $w$~and~$u$, and write $v = u*w$. 

We now look at coarsenings from the perspective of the 
valuation $v \colon F \to \Gamma\cup \{\infty\}$.  
Let $\Delta \subseteq \Gamma$ be any convex subgroup, 
i.e., a subgroup of $\Gamma$ satisfying 
 for all $\gamma\in \Gamma$, $\delta \in \Delta$,
if $0 \le \gamma \le \delta$ then 
$\gamma \in \Delta$.  So, $\Delta$
is a divisible group.  
Let $\Lambda = \Gamma/\Delta$, which is 
a divisible totally ordered abelian group under
the ordering induced from  $\Gamma$.
Let $\varepsilon\colon \Gamma \to \Lambda$ be the canonical map,
which we extend to $\Gamma\cup\{\infty\}$ by setting
$\varepsilon(\infty) = \infty$.  By composing
$v$ with $\varepsilon$, we obtain a  valuation $w$ on $F$
\[
w\, =\, \varepsilon\circ v\colon F\, \longrightarrow\,\Lambda\cup\{\infty\},
\]  
which is a coarsening of $v$.
Let $W$ be the valuation ring of $w$. The residue valuation
$u = v/w$ is  
$$
u \colon \ov F^{\,w} \,\longrightarrow\,\Delta\cup\{\infty\} \qquad 
\text{ given by }
\ \ u(c+J(W)) \,=\, \begin{cases} v(c) & \text{ if }w(c) = 0,\\
\infty &\text{ if } w(c) > 0,
\end{cases} 
\ \ \text{ for all } \ c\in W.
$$  
Note that all coarsenings of $v$ (up to equivalence) are obtainable 
this way:  If $w'$ is a coarsening of~
$v$, let $\Delta_0 = \{v(c) \mid w(c) = 0\}$, and let 
$\Delta'$ be the convex hull of $\Delta_0$ in $\Gamma$.  Then 
$w'$ is equivalent to the coarsening of $v$ determined by
$\Delta'$. 

 Let
\begin{equation}\label{eq:defdivhull}
\divh{\Gamma_v} \, = \Gamma_v \otimes_\Z \Q, 
\end{equation}
which is the
divisible hull of $\Gamma_v$.
Recall that the ordering on $\Gamma_v$ extends uniquely to 
$\divh{\Gamma_v}$, making the latter into a divisible totally 
ordered abelian group.  Moreover, 
we may view $\divh{\Gamma_v}$ as a subgroup of $\Gamma$, since
there is a unique monomorphism
$\divh{\Gamma_v} \to \Gamma$ extending  the inclusion 
$\Gamma_v \hookrightarrow \Gamma$.  Since valuations extending $v$ on 
algebraic extensions of $F$ all have value groups lying in 
$\divh{\Gamma_v}$,  there is generally no loss  for us to 
assume $\Gamma = \divh{\Gamma_v}$.  Recall that the convex subgroups
of $\divh{\Gamma_v}$ are in one-to-one correspondence with 
 the convex subgroups of $\Gamma_v$, which are in one-to-one 
correspondence with prime ideals of $V$, which are in turn 
in one-to-one correspondence with equivalence classes of coarsenings
of $v$.

Now let $A$ be a finite-dimensional $F$-algebra and let
$\alpha\colon A \rightarrow \Gamma\cup \{\infty \}$
be a surmultiplicative $v$-value function. With $\Delta$,
$\Lambda$,
$\varepsilon$, and $w$ as above, the composition of $\alpha$
with $\varepsilon$ yields a surmultiplicative $w$-value function
\[
\beta\,=\,\varepsilon \circ \alpha\colon A \,\longrightarrow\, \Lambda \cup
\{\infty \}.
\]
If $\alpha$ is a $v$-gauge, then $\beta$ is a $w$-gauge, by
\cite[Prop.~4.3]{TW2}. In this case, $\beta$ is called a \emph{coarsening}
of~$\alpha$. 
If $\Gamma_\alpha \subseteq\divh{ \Gamma_v}$,
then $w = \beta|_F$ determines $\Delta \cap \divh{\Gamma_v}$, 
which determines $\beta$; we then call $\beta$
the  {\it $w$-coarsening} of $\alpha$.

\begin{proposition}\label{coarsergaugering}
Let $\alpha$ be a $v$-gauge on a central simple $F$-algebra $A$ such
that $\Gamma_\alpha$
lies in the divisible hull $\divh{ \Gamma_v}$ of $\Gamma_v$. Let $w$ be
any valuation on $F$ which is a coarsening  of $v$, and let~
$W$~be the valuation ring of $w$. Let $\beta$ be the
$w$-coarsening of $\alpha$. Then the gauge ring $R_\beta$ is a central
localization of $R_\alpha$ by $P = J(W)$, that is,
$R_\beta = {R_\alpha\!\cdot \! V_P}$.
\end{proposition}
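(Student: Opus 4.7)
My plan is to prove the equality $R_\beta = R_\alpha \cdot V_P$ by establishing the two inclusions separately. Since $V \subseteq Z(R_\alpha)$, elements of $R_\alpha \cdot V_P$ can be written as $y b^{-1}$ with $y \in R_\alpha$ and $b \in V \setminus P$, equivalently $b \in V$ with $w(b) = 0$. Recall also that $V_P = W$, so this is the same as products $yt$ for $y \in R_\alpha$ and $t \in W$.

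For the inclusion $R_\alpha \cdot V_P \subseteq R_\beta$, I would use axiom (iii) of the $v$-value function $\alpha$, which gives $\alpha(xt) = \alpha(x) + v(t)$ for $x \in A$ and $t \in F$. Composing with $\varepsilon$ yields $\beta(xt) = \beta(x) + w(t)$. Hence $x \in R_\alpha$ (so $\beta(x) \geq 0$) together with $t \in W$ (so $w(t) \geq 0$) forces $xt \in R_\beta$, and the claim follows as $R_\beta$ is additively closed.

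For the reverse inclusion $R_\beta \subseteq R_\alpha \cdot V_P$, take any nonzero $y \in R_\beta$. If $\alpha(y) \geq 0$ then $y \in R_\alpha$, so I may assume $\alpha(y) < 0$. Then $\beta(y) = \alpha(y) + \Delta \geq 0$ in $\Gamma/\Delta$ forces $\alpha(y) \in \Delta$, because the only other way to have $\alpha(y) + \Delta \geq 0$ would be $\alpha(y) > \Delta$, which is incompatible with $\alpha(y) < 0$. Now I invoke the hypothesis $\Gamma_\alpha \subseteq \divh{\Gamma_v}$: choose $n \geq 1$ with $n\alpha(y) \in \Gamma_v$, and pick $c \in F^\times$ with $v(c) = -n\alpha(y) > 0$. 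Since $\alpha(y) \in \Delta$ and $\Delta$ is a group, $v(c) = -n\alpha(y) \in \Delta$, so $w(c) = 0$, giving $c \in V \setminus P$. Then $\alpha(yc) = \alpha(y) + v(c) = -(n-1)\alpha(y) \geq 0$, so $yc \in R_\alpha$ and $y = (yc)\hsp c^{-1} \in R_\alpha \cdot V_P$, as desired.

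The only delicate point is the last case, where one would naively wish to pick $c \in V \setminus P$ with $v(c) = -\alpha(y)$; this need not be realizable since $\alpha(y)$ may lie in $\divh{\Gamma_v} \setminus \Gamma_v$. The workaround of replacing $-\alpha(y)$ by its integer multiple $-n\alpha(y) \in \Gamma_v$ is painless, because overshooting the target value of $v(c)$ keeps $yc$ inside $R_\alpha$ while preserving $v(c) \in \Delta$; this is precisely where the hypothesis $\Gamma_\alpha \subseteq \divh{\Gamma_v}$ is used.
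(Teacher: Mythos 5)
Your proof is correct and follows essentially the same route as the paper: the easy inclusion via $R_\alpha \subseteq R_\beta$ and $W \subseteq R_\beta$, and for the reverse inclusion the observation that a nontrivial case only arises when $\alpha(y) \in \Delta$, handled by multiplying by a central element $c$ with $v(c) = n\hsp|\alpha(y)| \in \Gamma_v \cap \Delta$ and writing $y = (yc)c^{-1}$. The only cosmetic difference is that you split cases on the sign of $\alpha(y)$ rather than on whether $\beta(y) > 0$ or $\beta(y) = 0$, which amounts to the same thing.
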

\begin{proof}
For each $x \in A$, if
$\alpha(x) \geq 0$, then $\beta(x) = \varepsilon (
\alpha(x)) \geq 0.$ Thus, $R_\alpha \subseteq R_\beta.$ Since
$W = V_P \subseteq R_\beta$, we have $R_\alpha\!\cdot\! V_P \subseteq R_\beta$. For
the reverse inclusion, let $b \in
R_\beta$. Since $\beta(x) > 0$ implies $\alpha(x) > 0$, we only have to
consider the case $\beta(x) = 0$. In this case, $\alpha(x) \in \Delta$.
Since $\Gamma_\alpha \subseteq \divh{\Gamma_v}
%\Gamma_v \otimes_{\mathbb{Z}} \mathbb{Q},
$,
 there exists a positive
integer $n$ such that
$n\alpha(x) \in \Gamma_v$. Thus, $n\alpha(x) \in \Gamma_v \cap \Delta$.
Let $c \in F$ such that $v(c) = n\hsp|\alpha(x)|.$ Hence, we have
$w(c)=\varepsilon(v(c))=0$. Since
$-n\hsp|\alpha(x)|\leq\alpha(x)\leq  n\hsp|\alpha(x)|$, it follows that
$\alpha(cx) = v(c) + \alpha(x) \geq 0.$  Therefore,
$x~=~(xc)c^{-1}~\in~{R_\alpha\!\cdot\!V_P}$. 
\end{proof}

 For the coarsening $\beta$ of the $v$-gauge $\alpha$ on $A$ as above,  
let $A_0^\beta$ be the degree zero part of $\gr_\beta(A)$, which is a
finite-dimensional semisimple $\overline{F}^{\,w}$-algebra. 
Thus,
\begin{align*}
A_0^\beta \, &= \, A_{\ge 0}^\beta\,\big/\, A_{>0}^\beta\\
&=\,\{x \in A\mid \alpha(x)\in \Delta  
\text{ or } 
\alpha(x) >\delta\text{ for all $\delta \in \Delta$}\}\,\big/\,
\{x\in A\mid \alpha(x) >\delta\text{ for all $\delta \in \Delta$}\}.
\end{align*}

For $u = v/w$,
we can define a
$u$-value function on $A_0^\beta$:
\begin{equation} \label{alphazero}
\alpha_{0}\colon
A^{\beta}_0\,\longrightarrow\,\Delta\cup\{\infty\}\quad\text{by} \quad
x+A^{\beta}_{>0}\,\mapsto\,
\begin{cases}
  \alpha(x)&\text{if $\beta(x)= 0$},\\
  \infty&\text{if $\beta(x)>0 $}.
\end{cases}
\end{equation}
This is well-defined by \cite[Lemma 4.1]{TW2}. 
Note that  $\Gamma_{\alpha_0} = \Delta\cap \Gamma_v$.    
For any $\delta \in  \Delta$
we have 
$$
(A_0^\beta)_\delta \, = 
\,(A_0^\beta)_{\ge\delta} \,\big/\, (A_0^\beta)_{>\delta} \,
= \,\big( A^\alpha_{\ge \delta}/ A^\beta_{>0}\big)  \big /
\big( A^\alpha_{>\delta}/ A^\beta_{>0}\big)
\, \cong \,A^\alpha_{\ge \delta}/ A^\alpha_{>\delta} \, = \,
A^\alpha_\delta.
$$
Hence,
\begin{equation} \label{gradedalphazero}
\gr_{\alpha_0}(A^\beta_0) \,\cong_g\,
\tbigoplus_{\delta \in   \Delta} A_\delta^\alpha,
\end{equation}
which is easily checked to be a graded ring isomorphism.
Thus, we may view $\gr_{\alpha_0}(A^\beta_0)$
as  a graded subring of $\gr_\alpha(A) =
\bigoplus_{\gamma \in \Gamma}
A^\alpha_{\gamma}.$

\begin{proposition} \label{gaugealphazero}
The value function $\alpha_0$ is a $u$-gauge on $A_0^\beta$.
\end{proposition}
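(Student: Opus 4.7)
The plan is to verify the three defining conditions for a $u$-gauge: that $\alpha_0$ is a surmultiplicative $u$-value function on $A_0^\beta$, that it is a $u$-norm (i.e., $[\gr_{\alpha_0}(A_0^\beta):\gr_u(\overline F^{\,w})] = [A_0^\beta:\overline F^{\,w}]$), and that $\gr_{\alpha_0}(A_0^\beta)$ is graded semisimple over $\gr_u(\overline F^{\,w})$.

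The first condition is a routine case analysis from the definition \eqref{alphazero}. For surmultiplicativity, given $\bar x, \bar y \in A_0^\beta$ with representatives $x, y \in A_{\ge 0}^\beta$, one has $\bar x \bar y = \overline{xy}$, and either $\beta(xy)=0$ (so $\alpha_0(\bar x \bar y) = \alpha(xy) \ge \alpha(x)+\alpha(y) = \alpha_0(\bar x) + \alpha_0(\bar y)$ by surmultiplicativity of $\alpha$), or $\beta(xy)>0$ (so $\alpha_0(\bar x \bar y) = \infty$ and the inequality is vacuous). Compatibility with scalars in $\overline F^{\,w}$, the triangle inequality, and the equivalence $\alpha_0(\bar x) = \infty \Leftrightarrow \bar x = 0$ are equally direct.

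For the $u$-norm property I would construct a splitting base of $A_0^\beta$ for $\alpha_0$ starting from a splitting base $(x_1,\ldots,x_n)$ of $A$ for $\alpha$. Setting $\gamma_i = \alpha(x_i)$, for each $i$ with $\varepsilon(\gamma_i) \in \Gamma_w = \varepsilon(\Gamma_v)$ I would pick $c_i \in F^\times$ with $\varepsilon(v(c_i)) = -\varepsilon(\gamma_i)$, so that $\beta(x_i c_i) = 0$ and the class $\overline{x_i c_i}$ lies in $A_0^\beta$ with $\alpha_0$-value $\gamma_i + v(c_i) \in \Delta$. A short computation---using that $\alpha$ is a norm and that $\beta(\sum x_{i_j}c_{i_j}a_j) > 0$ forces each lift $a_j \in W$ of a scalar $\bar a_j \in \overline F^{\,w}$ to satisfy $w(a_j) > 0$---verifies that these classes are $\overline F^{\,w}$-linearly independent and satisfy the splitting condition. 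The dimension count $[A_0^\beta : \overline F^{\,w}] = |\{i : \varepsilon(\gamma_i) \in \Gamma_w\}|$, which drops out from the fact (by \cite[Prop.~4.3]{TW2}) that $\beta$ is a $w$-norm with homogeneous $\gr_w(F)$-basis $(\widetilde{x_i})$, shows that this set spans, completing the verification.

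For graded semisimplicity I would exploit the identification \eqref{gradedalphazero}, which realises $\gr_{\alpha_0}(A_0^\beta)$ as the graded subring $\bigoplus_{\delta \in \Delta} A_\delta^\alpha$ of $\gr_\alpha(A)$---equivalently, the $\Lambda$-degree-$0$ piece of $\gr_\alpha(A)$ under the coarsening $\varepsilon\colon\Gamma\to\Lambda$. Since $\gr_\alpha(A)$ is $\Gamma$-graded semisimple, the graded Wedderburn theory of \cite[\S 1]{HW} gives a decomposition of it into $\Gamma$-graded simple matrix algebras over graded division rings with shifted grading as in \eqref{eq:shiftmatrix}; an analysis parallel to \eqref{eq:end0}, carried out for the coarser $\Lambda$-grading on each factor, identifies the $\Lambda$-degree-$0$ component as a product of matrix rings over the $\Delta$-graded division ring formed from the degrees lying in $\Delta$, hence $\Delta$-graded semisimple. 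The principal obstacle is handling this interplay between the $\Gamma$- and $\Lambda$-gradings cleanly, so that graded semisimplicity of the $\Lambda$-degree-$0$ piece with respect to its residual $\Delta$-grading can be read off the graded Wedderburn decomposition without circularity.
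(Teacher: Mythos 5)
Your proposal is correct and follows essentially the same route as the paper: the paper handles surmultiplicativity in one line, quotes \cite[Prop.~4.3]{TW2} for the $u$-norm property (which your splitting-base construction simply re-proves directly in this special case), and then proves graded semisimplicity exactly as you outline --- reduce via graded Wedderburn to the graded simple case $\gr_\alpha(A) \cong_g \grEnd_\grD(\grV)$, group a homogeneous $\grD$-basis of $\grV$ by cosets of $\Gamma_\grD + \Delta$ to split the $\Delta$-graded part into factors $\grB_i$, and identify each factor with a graded simple algebra over the $\Delta$-part of $\grD$. The ``interplay of gradings'' you flag as the principal obstacle is resolved in the paper precisely by introducing $\grD' = \bigoplus_{\delta \in \Delta}\grD_\delta$ and $\grV_i' = \bigoplus_{\gamma \in \lambda_i + \Delta}\grV_\gamma$, and showing $\grB_i \cong_g \grEnd_{\grD'}(\grV_i')$ via $\grV_i \cong \grV_i' \otimes_{\grD'} \grD$.
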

\begin{proof} 
We  know from \cite[Prop.~4.3]{TW2} that $\alpha_0$ is a $u$-norm.
Moreover,  $\alpha_0$ is surmultiplicative since $\alpha$
is surmultiplicative.  Thus, it
remains only to prove that $\gr_{\alpha_0}(A_0^\beta)$ is graded
semisimple. Since $\gr_\alpha(A)$ is graded semisimple, it suffices
 to consider the case when $\gr_\alpha(A)$ is
graded simple. In this case, by the  graded version of Wedderburn's
Theorem
(see \cite[Prop.~1.3]{HW}), we can identify $\gr_\alpha(A)$ with
$\grEnd_\grD(\grV)$, where $\grD$ is a graded division ring and $\grV$ is
a finite-dimensional right graded $\grD$-vector space.
(The grading on $\grEnd_\grD(\grV)$ is given by:  for $\eta \in \Gamma$,
an $f \in \grEnd_\grD(\grV)$ is homogeneous of degree~$\eta$ if and only
if
$f(V_\gamma) \subseteq V_{\gamma + \eta}$ for all $\gamma \in \Gamma$.)
Let
$\{b_1, \ldots, b_n\}$ be a homogeneous $\grD$-base of $\grV$, and let~
$\gamma_i = \mathsl{deg}(b_i)$. Since $\Gamma_\grV$ is a finite union of
cosets of $\Gamma_\grD$, we can write $\{b_1, \ldots, b_n\}$ as a disjoint
union $\bigsqcup_{\ell = 1}^k S_r$, where $b_i$ and
$b_j$ are in the same $S_r$ if and only if
$\gamma_i - \gamma_j \in \Gamma_\grD + \Delta$.
Let~$\grV_i$ be the graded $\grD$-vector subspace of $\grV$ generated by
$S_i$.
We have a direct sum decomposition $\grV =
\bigoplus_{i=1}^k \grV_i$. By \eqref{gradedalphazero}, each
nonzero homogeneous
 $f \in \gr_{\alpha_0}(A_0^\beta)$ is a homogeneous element of
$\grEnd_\grD(\grV)$ satisfying also  $\deg(f) \in \Delta$. Hence,
$f$ maps each $\grV_i$ to itself. Thus, we can write
\begin{equation}
\gr_{\alpha_0}(A^\beta_0) \,=\, \tprod_{i=1}^k \grB_i \qquad \text{where}
\qquad \grB_i \,=\,
\tbigoplus_{\gamma \in \Delta}\grEnd_\grD(\grV_i)_\gamma.
\end{equation}
The proof is completed by showing that each $\grB_i$ is a simple graded
algebra.
Let
\[
\grD' \,=\, \tbigoplus_{\gamma \in \Delta} \grD_\gamma,
\]
which is a graded division subring of $\grD$. Now fix an index
$i\in \{1, \ldots, k\}$.
The grade set of $\grV_i$
lies in some  coset $\lambda_i + (\Gamma_\grD + \Delta)$. Let
\[
\grV'_i \,=\, \tbigoplus_{\gamma \,\in\, \lambda_i + \Delta} \grV_\gamma
\,\subseteq\,
\tbigoplus_{\gamma \in \lambda_i + \Gamma_\grD +\Delta} \grV_\gamma
\, = \,\grV_i,
\]
so $\grV_i'$  is a graded $\grD'$-vector subspace of $\grV_i$. Let
$\Gamma_\grD = \bigsqcup_{j \in J} \delta_j + (\Delta \cap \Gamma_\grD)$,
a disjoint union of cosets of $\Delta \cap \Gamma_D$. Then we also have
$\Gamma_\grD + \Delta = \bigsqcup_{j \in J} \delta_j + \Delta$, which is
again a disjoint union. For each $j \in J$, pick some $d_j \in
\grD_{\delta_j} \setminus \{0\}$. Then, as $\Gamma_{\grD'd_j} = \delta _j
+ (\Delta \cap \Gamma_\grD)$, we have
$$
\grD \, = \, \tbigoplus\limits_{j \in J} \grD'd_j.
$$
Also, as $\grV_i'\,d_j =
\bigoplus _{ \gamma \in \lambda_i + \delta_j + \Delta}\grV_\gamma$,
$$
\grV_i \, = \, \tbigoplus\limits_{j \in J} \grV_i'\,d_j \, = \,
\tbigoplus\limits_{j \in J}   \grV_i'\otimes _ {\grD'}\grD'd_j
\, = \,  \grV_i\otimes_{\grD'}\grD .
$$
 Any map in $\grB_i$ sends $\grV_i'$ to itself,
since $\Gamma_{\grB_i}\subseteq \Delta$.
 Thus, there exists a homomorphism of graded rings
$\psi\colon\grB_i \rightarrow \grEnd_{\grD'}(V_i')$,
given  by $g \mapsto g|_{V_i'}$.
This map $\psi$ has an inverse given by  sending $h \in
\grEnd_{\grD'}(\grV_i')$ to
$h\otimes\id_\grD  \in \grEnd_\grD(\grV_i'\otimes_{\grD'}\grD)$.
Note that if $h$ is homogeneous, then $\deg(h) \in \Delta$,
as $\Gamma_{\grV_i'} \subseteq \lambda_i + \Delta$.
Hence,  $h\otimes\id_\grD$ is homogeneous
with $\deg(h\otimes\id_\grD)= \deg(h) \in \Delta$,
showing that $h\otimes\id_\grD \in \grB_i$.
Therefore, $\grB_i \cong_g \grEnd_{\grD'}(\grV_i')$, which is a
simple graded algebra.
\end{proof}

\subsection{Defectlessness of valuations in semisimple algebras}
\label{subsec:defect}

We now develop the notion of defectlessness of a valuation $v$ on $F$ in a
finite-dimensional semisimple $F$-algebra $A$. 
We will see in \S4 that defectlessness is the condition
required for the existence of $v$-gauges on~$A$.
First, we review the
concept of defect on division algebras. 
A good reference for the division algebra case is 
\cite{M1}. Let $D$ be a finite-dimensional 
division algebra over a field~$F$. For a valuation $w$ on $D$, extending
a valuation $v$ on $F$, we have the \lq\lq fundamental inequality"
\begin{equation}\label{eq:divisioninequality}
\DIM{D}{F} \, \geq \,  \DIM{\,\overline{D}}{\overline{F}\,} \cdot
\IND{\Gamma_{w}}{\Gamma_v}.
\end{equation}
When  equality holds in \eqref{eq:divisioninequality}, we say the
valuation $w$ on $D$ is \emph{defectless over $F$}.
%In this case, its easy to check that $w$ is also defectless over $Z(D)$, and $w|_{Z(D)}$ is defectless over $F$. Thus, if $w$ is defectless over $F$, then $v$ extends uniquely to $Z(D)$.
If $v$ extends uniquely to $Z(D)$, we define the \emph{defect}
$\partial_{D/F}$ of $D$ over $F$ by
\begin{equation}\label{defectdivision}
\partial_{D/F}\, =\ \frac{\DIM{D}{F}}{\DIM{\,\overline{D}}{\overline{F}\,}
  \cdot \IND{\Gamma_w}{\Gamma_v}}.
\end{equation}
In particular, $\partial_{D/Z(D)}$ is always defined; we call it
simply the \emph{defect of $D$} and use the simpler notation
$\partial_D$ for $\partial_{D/Z(D)}$. When $\partial_{D/F}$ is defined
we have
\begin{equation}
\label{eq:transdefect}
\partial_{D/F} \, =\,  \partial_D \cdot \partial_{Z(D)/F}
\end{equation}
by the transitivity formulas for residue degrees and  and indices
of value groups. In fact,
 for $\overline{p}=\charac
\overline{F}$, we have  $\partial_{D/F}=\overline{p}^\ell$ for some
integer $\ell\geq0$ if $\overline{p}\neq0$, and $\partial_{D/F}=1$
if $\overline{p}=0$. This result is known as \emph{Ostrowski's Theorem}
and was proved by Draxl in \cite[Th.~ 2]{D} for $v$ Henselian and in
general by Morandi in \cite[Th.~3]{M1}.  Hence in particular, 
if $\ov p = 0$ or if $\ov p \ne 0$ and $\ov p \nmid \DIM DF$,
then $\partial_{D/F} = 1$. 

\begin{def}
\label{def:vdeflessin}
  Let $A$ be a (finite-dimensional) semisimple $F$-algebra, let
  $(F_h,v_h)$ be a Henselization of $(F,v)$, and let
  $A_h=A\otimes_FF_h$. Since $F_h$ is a separable extension of $F$,
  the $F_h$-algebra $A_h$ is semisimple; hence, it has a
  decomposition into simple components 
  \[
  A_h \,\cong \,\mat_{n_1}(D_1)\times \ldots\times \mat_{n_r}(D_r)
  \]
  for some integers $n_1$, \ldots, $n_r$ and some division algebras
  $D_1$, \ldots, $D_r$ over $F_h$. We say that $v$ is
\emph{defectless in
    $A$} if for each $i=1$, \ldots, $r$ the unique valuation on $D_i$
  extending $v_h$ is defectless over~$F$, i.e., $\partial_{D_i/F_h}=1$
  for $i=1$, \ldots, $r$. 
\end{def}

It is clear from the definition that $v$ is defectless in $A$ if and
only if $v$ is defectless in each simple component of $A$, and that this
condition holds if and only if $v_h$ is defectless in $A_h$. We single
out two particular cases:
\begin{itemize}
\item[$\scriptstyle{\bullet}$]
  If $K$ is a finite-degree field extension of $F$ and $v_1, \ldots, v_n$
are all the extensions of $v$ of $F$ to~$K$, then $v$ is
  defectless in $K$ if and only if equality holds in the Fundamental
  Inequality, i.~e.
  \[
  \DIM KF \,=\, \tsum_{i=1}^n
\DIM{\,\overline{K}^{\,v_i}}{ \,\overline{F}^{\,v}\,}
\cdot | \Gamma_{v_i}: \Gamma_{v}|.
  \]
  This  follows readily from   Th.~\ref{tensorhens} in the 
Appendix. 
\item[$\scriptstyle{\bullet}$]
  If $A$ is a central simple $F$-algebra, then $v$ is defectless in
  $A$ if and only if the valuation on the division algebra associated
  to $A_h$ is defectless over $F_h$. In particular, if $v$ is
  defectless in $A$, it is also defectless in every algebra
  Brauer-equivalent to $A$.
\end{itemize}

In simple algebras that are not central, we have the following
reduction to the central case:

\begin{proposition}
  \label{prop:deflessredtocent}
  Let $A$ be a simple $F$-algebra, and let $v_1$, \ldots, $v_r$ be the
  valuations on $Z(A)$ extending $v$. The following conditions are
  equivalent:
  \begin{enumerate}
  \item[(a)]
  $v$ is defectless in $A$;
  \item[(b)]
  $v$ is defectless in $Z(A)$ and $v_1$, \ldots, $v_r$ are each defectless
  in $A$.
  \end{enumerate}
\end{proposition}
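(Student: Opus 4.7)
The plan is to reduce everything to the central simple case by passing to a Henselization and invoking the transitivity formula \eqref{eq:transdefect} together with Ostrowski's Theorem. Set $K = Z(A)$, so that $A$ is a central simple $K$-algebra, and let $(F_h,v_h)$ be a Henselization of $(F,v)$. By Theorem~\ref{tensorhens} in the Appendix, the tensor product $K\otimes_F F_h$ decomposes as $K\otimes_F F_h \cong K_{h_1}\times\ldots\times K_{h_r}$, where $K_{h_i}$ is a Henselization of $K$ with respect to $v_i$. Consequently,
\[
A\otimes_F F_h \,\cong \,\tprod_{i=1}^r A\otimes_K K_{h_i},
\]
and each $A\otimes_K K_{h_i}$ is a central simple $K_{h_i}$-algebra, so it is isomorphic to $\mat_{n_i}(D_i)$ for a central division $K_{h_i}$-algebra~$D_i$. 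Thus this product is precisely the Wedderburn decomposition of~$A_h$ appearing in Def.~\ref{def:vdeflessin}, so $v$ is defectless in~$A$ if and only if $\partial_{D_i/F_h}=1$ for all $i$.

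Next I would apply the transitivity formula \eqref{eq:transdefect}. Since $Z(D_i) = K_{h_i}$, we have
\[
\partial_{D_i/F_h}\, =\, \partial_{D_i}\cdot\partial_{K_{h_i}/F_h} \, =\, \partial_{D_i/K_{h_i}}\cdot\partial_{K_{h_i}/F_h}.
\]
By Ostrowski's Theorem each factor on the right is a nonnegative power of $\overline{p}=\charac\overline{F}^{\,v}$ (or $1$ if $\overline{p}=0$), so $\partial_{D_i/F_h}=1$ if and only if both $\partial_{D_i/K_{h_i}}=1$ and $\partial_{K_{h_i}/F_h}=1$. Now the condition that $\partial_{K_{h_i}/F_h}=1$ for every $i$ is, via the first bullet point following Def.~\ref{def:vdeflessin} together with the decomposition of $K\otimes_F F_h$ displayed above, exactly the condition that $v$ is defectless in $K=Z(A)$.

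To finish, I would match up the remaining condition $\partial_{D_i/K_{h_i}}=1$ with the defectlessness of $v_i$ in $A$. Viewing $A$ as a central simple $K$-algebra, its Henselization at $v_i$ is $A\otimes_K K_{h_i}\cong\mat_{n_i}(D_i)$, and by the second bullet point following Def.~\ref{def:vdeflessin}, $v_i$ is defectless in $A$ if and only if $\partial_{D_i/K_{h_i}}=1$. Putting everything together, (a) holds if and only if both halves of (b) hold. The main technical obstacle is to be sure that the two Henselization processes are compatible: that the simple components of $A\otimes_F F_h$ coincide with the simple components of the algebras $A\otimes_K K_{h_i}$ obtained from the local Henselizations at each $v_i$. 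This is what Theorem~\ref{tensorhens} supplies, and once it is in hand the rest is just the transitivity of defect.
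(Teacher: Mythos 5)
Your proposal is correct and follows essentially the same route as the paper: decompose $A\otimes_F F_h$ via Theorem~\ref{tensorhens} into central simple algebras over the Henselizations $K_{h_i}$ of $(Z(A),v_i)$, then apply the transitivity formula \eqref{eq:transdefect} $\partial_{D_i/F_h}=\partial_{D_i}\cdot\partial_{K_{h_i}/F_h}$ and the fact (Ostrowski) that each factor is a power of $\overline{p}$, so the product is $1$ exactly when both factors are. The paper's proof is just a more compressed version of the same argument, leaving the final matching of conditions implicit where you spell it out.
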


\begin{proof}
  For each $i=1$, \ldots, $r$, let $(Z_i,v_{i,h})$ be a Henselization
  of $(Z(A),v_i)$. From Th.~\ref{tensorhens} in the Appendix, we have
  \[
  Z(A)_h\,\cong \,Z_1\times\ldots\times Z_r.
  \]
  Since $Z(A_h)=Z(A)_h$, the number of prime components of $A_h$ is
  $r$, and we have division algebras $D_1$, \ldots, $D_r$ with centers
  $Z_1$, \ldots, $Z_r$ respectively such that 
  \[
  A_h\,\cong\, \mat_{n_1}(D_1)\times\ldots\times \mat_{n_r}(D_r)
  \]
  for some integers $n_1$, \ldots, $n_r$. Now, we have
  $\partial_{D_i/F_h} = \partial_{D_i}\cdot \partial_{Z_i/F_h}$
  (cf.~\eqref{eq:transdefect}); hence $\partial_{D_i/F_h}=1$ if and
  only if $\partial_{D_i}=1$ and $\partial_{Z_i/F_h}=1$. The
  equivalence of (a) and (b) follows.
\end{proof}

When $\Gamma_F\cong\Z$, $v$ is defectless in any central simple
$F$-algebra and $v$ is defectless in a semisimple $F$-algebra $A$
if and only if it is defectless in $Z(A)$. 
Also, by  Ostrowski's Theorem, if $\charac\ov F=0$,
then $v$ is defectless in every
semisimple $F$-algebra;
if $\charac\ov F=\ov p\neq0$, then $v$ is defectless in every
central simple $F$-algebra $A$ whose index $\ind(A)$ is not
divisible by $\ov p$.

\begin{lemma}\label{lem:deflessinK}
Let $w$ be any coarsening of the valuation $v$ on $F$, and let
$K$ be a finite-degree field extension of $F$.
If $v$ is defectless in $K$, then $w$ is defectless in~$K$.
\end{lemma}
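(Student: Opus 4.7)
The plan is to exploit the factorization $v=u*w$ with $u=v/w$ the residue valuation on $\ov F^{\,w}$, and apply the fundamental inequality separately on the two layers. Let $w_1,\ldots,w_m$ be the extensions of $w$ to $K$, and for each $j$ let $u_j^{(1)},\ldots,u_j^{(r_j)}$ be the extensions of $u$ from $\ov F^{\,w}$ to the finite-degree field $\ov K^{\,w_j}$. By the coarsening/refinement bijection recalled in \S1.a, the composites $v_{j,\ell}:=u_j^{(\ell)}*w_j$ run (without repetition) through the complete list of extensions of $v$ to $K$, and they satisfy the transitivity identities
\[
\ov K^{\,v_{j,\ell}} \,=\, \overline{\ov K^{\,w_j}}^{\,u_j^{(\ell)}} \qquad\text{and}\qquad \IND{\Gamma_{v_{j,\ell}}}{\Gamma_v} \,=\, \IND{\Gamma_{w_j}}{\Gamma_w}\cdot\IND{\Gamma_{u_j^{(\ell)}}}{\Gamma_u}.
\]
The residue identity is transitivity of residues for composite valuations, and the index identity is multiplicativity applied to the short exact sequence $0\to\Gamma_v\cap\Delta\to\Gamma_v\to\Gamma_w\to0$ and its counterpart for $v_{j,\ell}$, where $\Delta\subseteq\Gamma$ is the convex subgroup defining $w$.

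With these identities in hand, I would expand the defectlessness equality for $v$ on $K$ and then contract via two applications of the fundamental inequality:
\begin{align*}
\DIM KF \,&=\, \tsum_{j,\ell}\DIM{\ov K^{\,v_{j,\ell}}}{\ov F^{\,v}}\cdot\IND{\Gamma_{v_{j,\ell}}}{\Gamma_v}\\
&=\, \tsum_{j}\IND{\Gamma_{w_j}}{\Gamma_w}\tsum_\ell\DIM{\overline{\ov K^{\,w_j}}^{\,u_j^{(\ell)}}}{\ov F^{\,v}}\cdot\IND{\Gamma_{u_j^{(\ell)}}}{\Gamma_u}\\
&\leq\, \tsum_j \IND{\Gamma_{w_j}}{\Gamma_w}\cdot\DIM{\ov K^{\,w_j}}{\ov F^{\,w}}\\
&\leq\, \DIM KF,
\end{align*}
where the first inequality is the fundamental inequality for $u$ on each of the finite-degree extensions $\ov K^{\,w_j}/\ov F^{\,w}$ (which are finite by the fundamental inequality for $w$), and the second is the fundamental inequality for $w$ on $K/F$. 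Since the two ends agree, both intermediate inequalities must be equalities; equality in the last one is precisely defectlessness of $w$ in $K$.

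There is no substantive obstacle: the argument is essentially transitivity of defect under composition of valuations. The only care required is bookkeeping the two-layer extension structure and the residue/value-group transitivity, all of which is standard from the description of coarsenings in \S1.a.
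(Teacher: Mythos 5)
Your proof is correct and follows essentially the same route as the paper's: both group the extensions of $v$ to $K$ by their $w$-coarsenings, use transitivity of residue fields together with the multiplicative index formula $\IND{\Gamma_{v_{j,\ell}}}{\Gamma_v}=\IND{\Gamma_{u_j^{(\ell)}}}{\Gamma_u}\,\IND{\Gamma_{w_j}}{\Gamma_w}$ (which the paper derives via the Snake Lemma applied to the evident diagram of value groups), and then squeeze the defectlessness equality for $v$ between two applications of the Fundamental Inequality. The only difference is presentational: you phrase the extensions as composites $u_j^{(\ell)}*w_j$, while the paper phrases them as refinements $v_{ij}$ of the $w_j$ with residue valuations $v_{ij}/w_j$; these are the same objects under the coarsening/refinement correspondence.
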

\begin{proof}
Let $w_1$, \ldots , $w_\ell$ be the extensions of $w$ to $K$.  For
$j \in\{ 1,\ldots,
\ell\}$, let $v_{1j}, v_{2j}$,  \ldots, $v_{k_jj}$ be the extensions of $v$ to $K$ that
are refinements of $w_j$.  Thus, the $v_{ij}$ are all the extensions of $v$ to
$K$.  Let $v/w$ denote the valuation on $\ov F^{\,w}$ induced by
$v$ and likewise $v_{ij}/w_j$  the valuation on $\ov K^{\,w_j}$ induced by
$v_{ij}$. Note that $v_{1j}/w_j, v_{2j}/w_j$, \ldots, $v_{k_jj}/w_j$ are all the extensions of $v/w$
to $\ov K^{\,w_j}$.
For each $i,j$ there is a commutative diagram of value groups
with exact rows:
$$
  \xymatrix{
  0\ar[r]
& \Gamma_{v/w}  \ar[r] \ar[d]
&\Gamma_{v} \ar[r]  \ar[d]
& \Gamma_{w}\ar[r] \ar[d]
& 0\\
  0\ar[r]
&  \Gamma_{v_{ij}/w_j} \ar[r]
& \Gamma_{v_{ij}} \ar[r]
& \Gamma_{w_j} \ar[r]
& 0
  }
$$
Because the map $\Gamma_{w} \to \Gamma_{w_j}$ is injective,
the Snake Lemma  yields  a short
exact sequence of cokernels of the
columns:
$$
  \xymatrix{
  0\ar[r]
& \Gamma_{v_{ij}/w_j}
\big/\Gamma_{v/w}\ar[r]
&\Gamma_{v_{ij}} \big/\Gamma_{v} \ar[r]
& \Gamma_{w_j}\big/\Gamma_{w}\ar[r]
& 0
  }
$$
Hence,
\begin{equation}\label{diag:valgpseq}
\big|\Gamma_{v_{ij}}\!:\! \Gamma_{v}\big| \, = \,
\big|\Gamma_{v_{ij}/w_j}\!: \Gamma_{v/w}\big| \,
\big|\Gamma_{w_j}\!:\! \Gamma_{w}\big|.
\end{equation}

Since $v$ is defectless in $K$, we have
$$
\DIM KF \, = \, \tsum_{j = 1}^\ell \tsum _{i = 1}^{k_j} \big[\, \ov K^{\,v_{ij}}
\!: \! \ov F^{\,v}\big] \, \big|\Gamma_{v_{ij}}\!:\! \Gamma_{v}\big|.
$$
Equation  \eqref{diag:valgpseq} together with the Fundamental Inequality
for each~$\ov K^{\,w_j}$ over~$\ov F^{\,w}$ and for $K$ over~$F$ then yield,
\begin{align*}
\DIM KF \, &= \,  \tsum_{j = 1}^\ell \tsum _{i = 1}^{k_j}
\big[\, \ov K^{\,v_{ij}}\!: \! \ov F^{\,v}\big] \,
\big|\Gamma_{v_{ij}/w_j}\!: \Gamma_{v/w}\big| \,
\big|\Gamma_{w_j}\!:\! \Gamma_{w}\big|\\
&=\,\tsum_{j = 1}^\ell\bigg( \tsum _{i = 1}^{k_j}
\big[\,\ov{\ov K^{\,w_j}}^{\,v_{ij}/w_j}: \ov{\ov F^{\,w}}^{\,v/w}\,\big] \,
\big|\Gamma_{v_{ij}/w_j}\!: \Gamma_{v/w}\big|
\bigg)
\big|\Gamma_{w_j}\!:\! \Gamma_{w}\big|\\
&\le \, \tsum_{j = 1}^\ell
\big[\, \ov K^{\,w_j}\!: \! \ov F^{\,w}\,\big] \,
\big|\Gamma_{w_j}\!:\! \Gamma_{w}\big| \ \le \ \DIM KF.
\end{align*}
The last inequality must therefore be an equality, showing that
$w$ is defectless in~$K$.
\end{proof}

\begin{proposition} 
\label{prop:coarsdefless}
Let $w$ be any coarsening of the valuation $v$ on $F$.
Let $A$ be a semisimple $F$-algebra. If $v$ is defectless in $A$,
then $w$ is defectless in $A$.
\end{proposition}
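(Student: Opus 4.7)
The plan is to peel off successive layers of complexity in $A$, ultimately reducing the proposition to a Henselian division-algebra computation that parallels the proof of Lemma~\ref{lem:deflessinK}. First, since $v$ (resp.\ $w$) is defectless in $A$ iff it is defectless in every simple component, we may assume $A$ is simple. By Prop.~\ref{prop:deflessredtocent}, the defectlessness of $v$ in $A$ splits into defectlessness of $v$ in $Z(A)$ together with defectlessness of each extension $v_i$ of $v$ to $Z(A)$ in $A$ viewed as a central simple $Z(A)$-algebra. Lemma~\ref{lem:deflessinK} takes care of the first condition for $w$; and since each extension $w_j$ of $w$ to $Z(A)$ is refined by some $v_i$, the central simple case (applied over $Z(A)$ to the pair $(v_i,w_j)$) together with a second application of Prop.~\ref{prop:deflessredtocent} reduces the statement to the case of $A$ central simple over $F$.

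Assume now $A$ is central simple over $F$, and write $A\otimes_F F_h\cong\mat_n(D)$, where $(F_h,v_h)$ is the Henselization of $(F,v)$ and $D$ is a central division $F_h$-algebra with $\partial_{D/F_h,v_h}=1$. The coarsening $w_h$ of $v_h$ on $F_h$ corresponding to $w$ is itself Henselian (coarsenings of Henselian valuations are Henselian), and the residue field $\ov{F_h}^{\,w_h}$ is Henselian under the induced valuation $u=v_h/w_h$. I apply the Fundamental Inequality twice---once to $D$ over $F_h$ at $w_h$ and once to $\ov{D}^{\,w_h}$ over $\ov{F_h}^{\,w_h}$ at $u$---and combine them via the snake-lemma identity
\[
\big|\Gamma_D^{\,v_h}\!:\!\Gamma_{F_h}^{\,v_h}\big|
\,=\,\big|\Gamma_D^{\,u}\!:\!\Gamma_{F_h}^{\,u}\big|\cdot
\big|\Gamma_D^{\,w_h}\!:\!\Gamma_{F_h}^{\,w_h}\big|,
\]
obtained from the short exact sequences of value groups exactly as in the proof of Lemma~\ref{lem:deflessinK}. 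The hypothesis $\partial_{D/F_h,v_h}=1$ then forces equality in both Fundamental Inequalities, and in particular yields $\partial_{D/F_h,w_h}=1$.

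It remains to descend this conclusion to the Henselization of $F$ at $w$. Let $L=(F,w)_h$; the canonical embedding $L\hookrightarrow F_h$ realizes $F_h$ as the Henselization of $L$ at the refinement of $w$ induced by $v$. Writing $A\otimes_F L\cong\mat_m(D')$ with $D'$ central division over $L$, we have $D'\otimes_L F_h\cong\mat_k(D)$ with $n=mk$, so $[D':L]=k^2[D:F_h]$. Combining this with the equality from the previous paragraph,
\[
[D':L]\,=\,k^2\,\big[\ov D^{\,w_h}\!:\!\ov{F_h}^{\,w_h}\big]\cdot\big|\Gamma_D^{\,w_h}\!:\!\Gamma_F^{\,w}\big|.
\]
Comparison with the Fundamental Inequality for $D'$ over $L$ at $w_h$ then needs the two identifications
\[
\ov{D'}^{\,w_h}\otimes_{\ov F^{\,w}}\ov{F_h}^{\,w_h}\,\cong\,\mat_k\bigl(\ov{D}^{\,w_h}\bigr),
\qquad \Gamma_{D'}^{\,w_h}\,=\,\Gamma_D^{\,w_h},
\]
which yield $\big[\ov{D'}^{\,w_h}\!:\!\ov F^{\,w}\big]=k^2\big[\ov D^{\,w_h}\!:\!\ov{F_h}^{\,w_h}\big]$; then a direct count delivers $\partial_{D'/L,w_h}=1$, i.e.\ $w$ defectless in $A$.

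The main obstacle is precisely this last descent: one must recognize the ``natural'' $w$-gauge on $D'\otimes_L F_h$ obtained from the tensor product of the unique $w$-valuations on $D'$ and on $F_h$ as equivalent to an End-gauge on $\mat_k(D)$ coming from the unique $w$-valuation $w_D$ on $D$. This graded-ring comparison---matching residue algebras and grade sets across two a priori different constructions of the same $\mat_k(D)$---is the only nontrivial ingredient; every other step is either a reduction or a transparent application of the Fundamental Inequality and the snake lemma.
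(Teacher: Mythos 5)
Your reduction to the central simple case and the first half of your Henselian argument are sound and essentially parallel the paper: the paper also passes to $(F_{h,v},v_h)$, observes that the coarsening $w'$ of $v_h$ is Henselian and contains a Henselization $(F_{h,w},w_h)$ of $(F,w)$, and first shows the division algebra $D$ of $A\otimes_F F_{h,v}$ is defectless for $w'$ (the paper cites Morandi [M$_4$, Lemma~1] for this; your two-fundamental-inequalities-plus-snake-lemma argument, mirroring Lemma~\ref{lem:deflessinK}, is a legitimate self-contained substitute). The genuine gap is exactly the step you flag as ``the main obstacle'': the descent from $(F_{h,v},w')$ to $(L,w_h)=(F_{h,w},w_h)$. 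You do not prove it, and worse, the two identifications you propose to count with, namely $\ov{D'}^{\,w_{D'}}\otimes_{\ov F^{\,w}}\ov{F_{h,v}}^{\,w'}\cong\mat_k\bigl(\ov{D}^{\,w_D}\bigr)$ and $\Gamma_{w_{D'}}=\Gamma_{w_D}$, are false in general. Take $F=\Q((t))$ with $w$ the $t$-adic valuation (so $w$ is already Henselian and $L=F$, $D'=$ the division algebra of $A\otimes_FL$ is just the division part of $A$), let $v$ be the composite of $w$ with the $5$-adic valuation on $\ov F^{\,w}=\Q$, and let $A=(-1,t)_F$. Then $D'=A$ is nicely semiramified: $\ov{D'}^{\,w}= \Q(i)$ and $\Gamma_{w_{D'}}=\half\Z$. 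But $-1$ becomes a square in $F_{h,v}$ (its residue $4$ is a square in $\F_5$), so $D=F_{h,v}$, $k=2$, $\Gamma_{w_D}=\Z\ne\half\Z$, and $\ov{D'}\otimes_{\Q}\ov{F_{h,v}}^{\,w'}\cong \Q(i)\otimes_\Q\Q_{5,h}$ is a product of two fields, not $\mat_2\bigl(\ov D\bigr)$; yet $w$ is of course defectless in $A$ here. So under inertial scalar extension the residue algebra and value group of the division algebra are \emph{not} separately preserved in the way your count requires -- only the defect is -- and your final ``direct count'' cannot go through as written.

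What the paper does at this point is precisely what your identities try to shortcut: it invokes Morandi's theorem [M$_1$, Th.~2] that $(F_{h,v},w')$ is an \emph{inertial} extension of $(F_{h,w},w_h)$, and then the result [JW, Remark~3.4] on how defectlessness transfers across inertial extensions of Henselian fields, to conclude that $D_{h,w}$ is defectless for $w_h$ from the defectlessness of $D_{h,v}$ for $w'$. If you want to avoid that citation, you would need to redo the (nontrivial) comparison of value groups and residue algebras under inertial scalar extension at the level of Brauer classes and defects, not via the naive matrix-size bookkeeping $\ov{D'}\otimes\ov{F_{h,v}}^{\,w'}\cong\mat_k(\ov D)$; note also that $\ov{F_{h,v}}^{\,w'}$ is typically an infinite extension of $\ov F^{\,w}$ (it is a Henselization of $(\ov F^{\,w},u)$), so a dimension count over $\ov F^{\,w}$ is not even well posed in your last display. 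As stated, the identities you assume are roughly equivalent to the conclusion you are trying to reach, so the argument is both incomplete and, in the form given, incorrect.
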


\begin{proof}
Assume first that $A$ is central simple over $F$.  Let
$(F_{h,v}, v_h)$ be a Henselization of $(F,v)$.
Let $w'$ be the valuation on $F_{h,v}$
with ring ${W\!\cdot\!V_h}$.  So, $w'$ is the 
extension of $w$ which is a coarsening of $v_h$.  Since $v_h$ is
Henselian,
its coarsening $w'$ is also Henselian, by \cite[Cor.~4.1.4, p.~90]{EP}, 
so
$(F_{h,v}, w')$ contains a Henselization
$(F_{h,w},w_h)$ of $(F,w)$.  It follows from \cite[Th.~2]{M1}
that
$w'$ is  inertial (=~unramified) over $w_h$.
Let $D_{h,v}$ (resp.~$D_{h,w}$) be the central division algebra over
$F_{h,v}$ (resp.~$F_{h,w}$) associated to $A\otimes_F F_{h,v}$
(resp.~$A\otimes_F F_{h,w}$). Since $v$ is defectless in $A$, $D_{h,v}$
is defectless for $v_h$;  it is then also defectless for the coarser
valuation $w'$ by \cite[Lemma~1]{M4}.  Then, by
\cite[Remark~3.4]{JW} 
applied to the inertial extension $(F_{h,v}, w')$ of $(F_{h,w}, w_h)$, 
$D_{h,w}$ is defectless for $w_h$.
Hence, $w$ is defectless in $A$, as desired. 

Now assume only that $A$ is simple.  Let $K = Z(A)$, and let $v_1,$
 \ldots, $v_r$ be the extensions of $v$ to~$K$, and $w_1,$ \ldots,
$w_\ell$ the extensions of $w$ to $K$.  For $i \in \{ 1, \ldots, r\}$,
let $j(i) \in \{ 1, \ldots, \ell\}$ be the index such that $w_{j(i)}$
is the $w$-coarsening of $v_i$.  Since $v$ is defectless in
$A$, $v$ is defectless in $K$ and each $v_i$ is defectless in $A$.
 Then, $w$ is defectless in $K$ by
Lemma~\ref{lem:deflessinK}, and each $w_{j(i)}$ is
defectless in $A$ by the central simple case just considered.
  Since $w_{j(1)},$ \ldots, $w_{j(r)}$ are all the extensions of
$w$ to~$K$, it follows by Prop.~\ref{prop:deflessredtocent}
that $w$ is defectless in $A$.
This completes the proof for $A$~simple, and the general case for $A$
semisimple follows easily by considering the simple components of $A$.
\end{proof}

The following result says that defectlessness is a necessary condition
for the existence of a $v$-gauge on an arbitrary semisimple $F$-algebra.
We will see in Th.~\ref{existenceminimalgauges} below that this necessary condition is also
sufficient.

\begin{proposition}\label{prop:gaugeimpliesdefectless}
Let $A$ be any semisimple $($finite-dimensional$)$ $F$-algebra.
 If $A$ has a $v$-gauge, then $v$ is defectless in $A$.
\end{proposition}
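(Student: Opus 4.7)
The plan is to reduce in steps: semisimple $\to$ simple $\to$ central simple $\to$ central simple over a Henselian base, where the conclusion follows from the rigidity of valuations on division algebras.

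\textbf{Reductions.} By the converse to Prop.~\ref{gaugesemisimple} noted in \eqref{betai}, a $v$-gauge on $A = A_1 \times \cdots \times A_k$ restricts to a $v$-gauge on each simple component $A_i$. Defectlessness is inherited by direct factors, so we may assume $A$ is simple. Now let $K := Z(A)$ and let $v_1, \ldots, v_r$ be the extensions of $v$ to $K$. By Th.~\ref{thm:gaugeismin}, $\alpha = \min(\alpha_1, \ldots, \alpha_r)$ where $\alpha_i$ is a $v_i$-gauge on the central simple $K$-algebra $A$. The restriction $\alpha|_K = \min(v_1, \ldots, v_r)$ is a $v$-gauge on $K$, whose graded-dimension identity
\[
[\gr_{\alpha|_K}(K)\!:\!\gr(F)] \,=\, \tsum_{i=1}^r[\ov K^{\,v_i}\!:\!\ov F^{\,v}]\cdot|\Gamma_{v_i}\!:\!\Gamma_v| \,=\, [K\!:\!F]
\]
is the Fundamental Inequality turned into an equality; so $v$ is defectless in $K$. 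By Prop.~\ref{prop:deflessredtocent} we are reduced to showing each $v_i$ is defectless in $A$, so we may assume $A$ is central simple over $F$.

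\textbf{Henselian case.} Let $(F_h, v_h)$ be a Henselization. As $F_h/F$ is immediate, $\gr_{v_h}(F_h) = \gr_v(F)$. Choosing a splitting base $(a_j)$ of $A$ for $\alpha$, the formula $\alpha_h(\sum a_j \otimes x_j) := \min_j(\alpha(a_j) + v_h(x_j))$ defines a $v_h$-gauge on $A_h := A \otimes_F F_h$ with $\gr_{\alpha_h}(A_h) \cong_g \gr_\alpha(A)$. Write $A_h \cong \mat_n(D)$ with $D$ a central division $F_h$-algebra carrying the unique valuation $w$ extending $v_h$; we must show $\partial_D = 1$. The strategy is to restrict $\alpha_h$ to a corner $eA_he \cong D$ for a primitive idempotent $e = E_{11}$: a Henselian rigidity argument forces the shifted restriction $\alpha_h|_{eA_he} - \alpha_h(e)$ to coincide with the unique valuation $w$ on $D$. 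Consequently, the right $D$-module $M := A_h e \cong D^n$ inherits a $w$-value function $\eta$ from $\alpha_h$, and the gauge property of $\alpha_h$ forces $\eta$ to be a $w$-norm on $M$. Then $[\gr_\eta(M)\!:\!\gr_w(D)] = \dim_D M = n$ and the identity
\[
n^2[D\!:\!F_h] \,=\, [\gr_{\alpha_h}(A_h)\!:\!\gr(F_h)] \,=\, n \cdot [\gr_\eta(M)\!:\!\gr(F_h)] \,=\, n^2 [\gr_w(D)\!:\!\gr(F_h)]
\]
yields $[\gr_w(D)\!:\!\gr(F_h)] = [D\!:\!F_h]$, i.e., $\partial_D = 1$.

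\textbf{Main obstacle.} The key technical point is the Henselian rigidity in the final step: that any surmultiplicative $v_h$-value function on $D$ normalized to vanish at $1$ must coincide with the unique valuation $w$. This follows from the Henselian uniqueness of extensions of $v_h$ to $D$, but requires a Newton-polygon-style argument to convert the surmultiplicative inequality $\mu(xy) \ge \mu(x) + \mu(y)$ into the full multiplicative equality of a valuation. The subsequent propagation of this rigidity from $D$ to $\mat_n(D)$---using the decomposition $A_h = \bigoplus_{i=1}^n A_h E_{ii}$ compatibly with a splitting base of $\alpha_h$---is also nontrivial, as that decomposition need not a priori be splitting-base-compatible with $\alpha_h$.
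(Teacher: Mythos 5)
Your reductions (semisimple $\to$ simple $\to$ central simple $\to$ Henselian base) match the paper's in substance, though you route them through Th.~\ref{thm:gaugeismin}, which is heavier than necessary: the paper gets defectlessness of $v$ in $K=Z(A)$ directly by observing that $\alpha|_K$ is a $v$-norm whose graded ring is a central graded subalgebra of the graded semisimple ring $\gr_\alpha(A)$, hence $\alpha|_K$ is a $v$-gauge, and then citing \cite[Cor.~1.9]{TW1}; and it reduces to the Henselian situation via $A\otimes_F F_h\cong \prod_i (A\otimes_K K_{h,v_i})$ (Th.~\ref{tensorhens}) together with Prop.~\ref{gaugesemisimple}. (Using Th.~\ref{thm:gaugeismin} is not circular, since its proof does not invoke this proposition, but it is an avoidable dependence on later material.)

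The genuine gap is in your \textbf{Henselian case}, which you yourself flag as the ``main obstacle.'' The paper disposes of this step by citing \cite[Th.~3.1]{TW1}: over a Henselian field every gauge on a central simple algebra is an End-gauge, which forces the associated division algebra to be defectless. You instead propose to reprove this via a ``Henselian rigidity'' claim, and that claim, as you state it, is false: a surmultiplicative $v_h$-value function on $D$ vanishing at $1$ need not be the valuation $w$. For example, if $f\colon \Gamma_w\to\Q$ is order-preserving, superadditive, fixes $0$, and restricts to the identity on $\Gamma_{v_h}$ (such $f$ exist whenever $\Gamma_w\ne\Gamma_{v_h}$, e.g.\ $f(\gamma)=\gamma-\tfrac14$ on the nontrivial coset when $\IND{\Gamma_w}{\Gamma_{v_h}}=2$), then $f\circ w$ is a surmultiplicative $v_h$-value function with value $0$ at $1$ which is not a valuation; its graded ring has a nonzero homogeneous nilpotent ideal, so uniqueness only holds once one knows the function in question is a gauge (\cite[Cor.~3.2]{TW1}) --- and showing that the corner restriction of $\alpha_h$ is a gauge is precisely the hard content of \cite[Th.~3.1]{TW1}. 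There are further unaddressed points in the same step: the shifted restriction $\alpha_h|_{eA_he}-\alpha_h(e)$ is not obviously surmultiplicative when $\alpha_h(e)<0$ (for an idempotent one only knows $\alpha_h(e)\le 0$); the claim that $\eta$ on $M=A_he$ is a $w$-norm is asserted, not proved; and the dimension identity $[\gr_{\alpha_h}(A_h):\gr(F_h)]=n\,[\gr_\eta(M):\gr(F_h)]$ presupposes that the decomposition of $A_h$ into columns is splitting for $\alpha_h$, which, as you note, is not known a priori. So either cite \cite[Th.~3.1]{TW1} at this point (which collapses your argument to the paper's) or supply complete proofs of these steps; as written the proof is incomplete exactly where the real difficulty lies.
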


\begin{proof}
$A$ has a $v$-gauge if and only if each simple component of $A$
has a $v$-gauge, by Prop.~\ref{gaugesemisimple}.
Also, by definition,
$v$ is defectless in $A$ if and only if $v$ is defectless in each simple
component of~$A$.  Thus, we may assume that $A$ is simple.
Let $K= Z(A)$, and let $v_1,$ \ldots, $v_r$ be the extensions of~
$v$ to the field $K$.  Let $(F_h, v_h)$ be a Henselization of
$(F,v)$, and let $(K_{h,v_i}, v_{i,h})$ be a Henselization of~
$(K,v_i)$.  Let $\alpha$ be a $v$-gauge on $A$. Then the restriction
$\alpha|_K$ is a surmultiplicative $v$-norm on~$K$ with
$\gr_{\alpha|_K}(K)$ graded semisimple, since it is a central graded
subalgebra of the graded semisimple algebra $\gr_\alpha(A)$.
Hence $\alpha|_K$ is a $v$-gauge on $K$, so $v$ is defectless in
$K$ by \cite[Cor.~1.9]{TW1}.
Moreover, $\alpha\otimes v_h$ is a
$v_h$-gauge on $A\otimes_FF_h$ and
\[
A\otimes_FF_h\ \cong\ (A\otimes_{K}K_{h,v_1}) \times\ldots\times
(A\otimes_{K}K_{h,v_r}).
\]
By Prop.~\ref{gaugesemisimple} each
$A\otimes_{K}K_{h,v_i}$ carries a gauge, hence the corresponding
central division algebra over~$K_{h,v_i}$ is defectless by
\cite[Th.~3.1]{TW1}.
\end{proof}

\section{Gauges on simple algebras}

Throughout this section, let $v$ be an arbitrary valuation on a field $F$,
and let $A$ be a
(finite-dimensional) simple $F$-algebra. Let $K$ be the center of $A$,
so $K$ is a finite-degree field extension of $F$.  Let $v_1, \ldots, v_r$
be all the extensions of $v$ to $K$, and let $V_i$ be the
valuation ring of $v_i$.  Our goal
is to characterize $v$-gauges on $A$ in terms of $v_i$-gauges for
$i = 1,\ldots, r$.  This will be achieved in Th.~\ref{thm:gaugeismin}
and Th.~\ref{thm:compatible}.

\begin{proposition}\label{prop:uniqueext}
Let $L$ be a field with $F\subseteq L \subseteq K$, and suppose
$v$ has a unique extension to a valuation $v_L$ of $L$.  If $A$ has a
$v$-gauge
$\alpha$, then $\alpha|_L = v_L$, which is a $v$-gauge on $L$, and
$\alpha$ is
a $v_L$-gauge.  Thus, whenever $v_L$ is a $v$-gauge on~$L$,
the $v$-gauges on $A$
are the same as the $v_L$-gauges on $A$.
\end{proposition}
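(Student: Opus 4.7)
The plan is to restrict $\alpha$ first to the center $K$, then further to $L$, use the hypothesis of a unique extension of $v$ to $L$ to pin down the restriction, and finally transport the gauge structure back to $A$ under the change of scalars $F\subseteq L$.

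By the argument given inside the proof of Prop.~\ref{prop:gaugeimpliesdefectless}, the restriction $\beta:=\alpha|_K$ is a $v$-gauge on $K$: it is manifestly a surmultiplicative $v$-value function, and $\gr_\beta(K)$ embeds as a central graded $\gr_v(F)$-subalgebra of the graded semisimple algebra $\gr_\alpha(A)$, from which it inherits graded semisimplicity and the $v$-norm equality $\DIM{\gr_\beta(K)}{\gr_v(F)}=[K:F]$. By \cite[Cor.~1.9]{TW1}, every $v$-gauge on a finite-degree field extension of $(F,v)$ equals the $\min$ of some collection of valuations extending $v$; thus $\beta=\min(u_1,\ldots,u_t)$ for certain extensions $u_1,\ldots,u_t$ of $v$ to $K$. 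This is the only step in the argument that is not formal, and it is the place where I expect to spend the real work.

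Restricting further from $K$ to $L$ gives $\alpha|_L=\beta|_L=\min\big((u_1)|_L,\ldots,(u_t)|_L\big)$. But each $(u_i)|_L$ is a valuation on $L$ extending $v$, and by hypothesis the only such valuation is $v_L$. Therefore $\alpha|_L=v_L$, and in particular $v_L$ is itself a $v$-gauge on~$L$.

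Finally, I would verify that $\alpha$ is a $v_L$-gauge on $A$. Axioms (i) and (ii) of a $v_L$-value function are immediate, while axiom (iii), $\alpha(xd)=\alpha(x)+v_L(d)$ for $d\in L^\times$, follows by applying surmultiplicativity both to $xd$ and to $(xd)d^{-1}$. Surmultiplicativity of $\alpha$ is unaffected by the change of scalars, and $\gr_\alpha(A)$ remains graded semisimple. The $v_L$-norm condition drops out of the multiplicativity of graded dimensions,
\[
\DIM{\gr_\alpha(A)}{\gr_{v_L}(L)}\,=\,\frac{\DIM{\gr_\alpha(A)}{\gr_v(F)}}{\DIM{\gr_{v_L}(L)}{\gr_v(F)}}\,=\,\frac{[A:F]}{[L:F]}\,=\,[A:L],
\]
using that $\alpha$ is a $v$-norm on $A$ and that $v_L$ is a $v$-gauge on $L$. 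The same calculation, read in reverse, shows that any $v_L$-gauge on $A$ is automatically a $v$-gauge whenever $v_L$ is already a $v$-gauge on $L$, since the value-function axioms for $v$ then follow from those for $v_L$ by restriction, surmultiplicativity is preserved, and $\gr_\alpha(A)$ is the same object in either guise. This yields the claimed equality of the two classes of gauges on $A$.
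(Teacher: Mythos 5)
Your proof is correct and takes essentially the same route as the paper: identify $\alpha|_L$ with $v_L$ via \cite[Cor.~1.9]{TW1} (you do this by detouring through the center $K$, while the paper applies that corollary to $L$ directly after checking $\alpha|_L$ is a $v$-gauge), then upgrade $\alpha$ to a $v_L$-value function using $\alpha(d^{-1})=-\alpha(d)$ for $d\in L^\times$ together with surmultiplicativity, and conclude by the same graded-dimension count, which read backwards also gives the converse. The one step you should make explicit is that $v_L=\alpha|_L$ is itself a $v$-gauge on $L$ (the statement asserts this and your dimension formula uses $[\gr_{v_L}(L):\gr_v(F)]=[L:F]$): it follows because $\alpha|_L$ is a $v$-norm, being the restriction of the $v$-norm $\alpha$ to the $F$-subspace $L$ (\cite[Prop.~2.5]{RTW}), and $\gr_{v_L}(L)$ is a graded field, hence graded semisimple.
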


\begin{proof}
The restriction $\alpha|_L$ of $\alpha$ to $L$ is clearly
surmultiplicative, and is a $v$-norm on~$L$ by
\cite[Prop.~2.5]{RTW} since $L$ is an $F$-subspace of $A$.
Moreover, as $L\subseteq Z(A)$, we have $\gr_{\alpha\rvert_L}(L) \subseteq
Z(\gr_\alpha(A))$.  Because $\gr_\alpha(A)$ is graded semisimple, it
contains no nonzero central homogeneous nilpotent elements.  Therefore,
the commutative $\gr(F)$-algebra $\gr_{\alpha|_L} (L)$ is semisimple,
and hence $\alpha|_L$ is a $v$-gauge on $L$.  Because $v_L$ is the unique
extension of $v$ to~$L$, \cite[Cor.~1.9]{TW1} shows that
$\alpha|_L = v_L$. Hence, for $c\in L^\times$, we have
$$
\alpha(c\inv) \, = \, v_L(c\inv) \,=\, - v_L(c) \,=\, -\alpha(c).
$$
So, a short computation (cf.~ \cite[Lemma~1.3]{TW1}) shows that
 $\alpha(ca) = \alpha(c) + \alpha(a)$
for all $a\in A$.  This proves that the $v$-value function~$\alpha$ on
$A$ is actually a $v_L$-value function.  Since  $v_L$ and $\alpha$
are $v$-norms, we have
\begin{align*}
\DIM LF \, \DIM{\gr_\alpha(A)}{\gr_{v_L}(L)}\, &=\,
\DIM{\gr_{v_L}(L)}{\gr_v(F)}\, \DIM{\gr_\alpha(A)}{\gr_{v_L}(L)}\, \\
&= \, \DIM{\gr_\alpha(A)}{\gr_v(F)} \, = \, \DIM AF \, = \,
\DIM LF \,\DIM AL.
\end{align*}
Hence, $\DIM{\gr_\alpha(A)}{\gr_{v_L}(L)} = \DIM AL$, showing that
$\alpha$ is a $v_L$-norm.  The other conditions needed for $\alpha$ to
be a $v_L$-gauge hold because it is a $v$-gauge.  Conversely,
whenever $v_L$ is a $v$-gauge (hence a $v$-norm) and $\beta$ is a
$v_L$-gauge on $A$, then
$$
\DIM{\gr_\beta(A)}{\gr_v(F)} \, = \, \DIM{\gr_\beta(A)}{\gr_{v_L}(L)} \,
\DIM{\gr_{v_L}(L)}{\gr_v(F)}\, = \, \DIM AL \, \DIM LF \, = \, \DIM AF,
$$
so $\beta$ is also a
$v$-norm and hence a $v$-gauge on $A$.
\end{proof}

\begin{theorem}\label{thm:gaugeismin}
Let $\alpha$ be a $v$-gauge on $A$.  Then there exist $v_i$-gauges $\alpha_i$ on
$A$ for $i = 1,\ldots, r$ such that
\begin{equation*}
\alpha(a) \, =\,  \min\big(\alpha_1(a), \ldots, \alpha_r(a)\big) \quad \text{for
all } a\in A.
\end{equation*}
Furthermore,
\begin{equation}\label{eq:grisocomps}
\gr_\alpha(A) \,\cong_g\, \gr_{\alpha_1}(A) \times \ldots\times
\gr_{\alpha_r}(A).
\end{equation}
Hence, the semisimple $\ov F$-algebra $\gr_\alpha(A)_0$ has at least
$r$ simple components. Moreover, the $\gr_{\alpha_i}(A)$ are the
graded simple components of $\gr_\alpha(A)$ and
$$
\DIM{\gr_{\alpha_i}(A)}{\gr(F)} \ = \ \DIM{A}{K} \ \DIM{\gr_{v_i}(K)}{\gr(F)}.
$$
\end{theorem}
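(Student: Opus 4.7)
The plan is to reduce the problem to the field case on $K = Z(A)$, produce from that reduction a family of central idempotents of $\gr_\alpha(A)$, and then construct the $\alpha_i$ individually by passing to a Henselization of $F$ where the idempotents lift to actual elements of the algebra.

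First I would show that $\alpha|_K$ is a $v$-gauge on $K$, following the argument at the end of the proof of Prop.~\ref{prop:gaugeimpliesdefectless}: $\alpha|_K$ is a surmultiplicative $v$-norm on the $F$-subspace $K$, and the commutative graded subalgebra $\gr_{\alpha|_K}(K) \subseteq Z(\gr_\alpha(A))$ (since $K = Z(A)$) contains no nonzero homogeneous nilpotents, because $\gr_\alpha(A)$ has no nonzero central homogeneous nilpotents. The field case of gauge theory (\cite[Cor.~1.9]{TW1}) then gives
\[
\alpha|_K \,=\, \min(v_1, \ldots, v_r), \qquad \gr_{\alpha|_K}(K) \,\cong_g\, \tprod_{i=1}^r \gr_{v_i}(K),
\]
with primitive idempotents $e_1, \ldots, e_r \in \gr_{\alpha|_K}(K)_0$. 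Since $\gr_{\alpha|_K}(K)$ is central in $\gr_\alpha(A)$, each $e_i$ is a central idempotent of $\gr_\alpha(A)$, giving a graded decomposition $\gr_\alpha(A) = \prod_{i=1}^r \grA_i$, where $\grA_i := e_i \gr_\alpha(A)$ is a graded $\gr_{v_i}(K)$-algebra.

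Next I would pass to a Henselization $(F_h, v_h)$ of $(F, v)$. By Th.~\ref{tensorhens} in the Appendix, $K \otimes_F F_h \cong \prod_{i=1}^r K_{h,v_i}$, whence $A_h := A \otimes_F F_h \cong \prod_i A_i'$ with each $A_i' := A \otimes_K K_{h,v_i}$ central simple over the Henselian field $K_{h,v_i}$. The scalar extension $\tilde\alpha = \alpha \otimes v_h$ is a $v_h$-gauge on $A_h$ with $\gr_{\tilde\alpha}(A_h) \cong_g \gr_\alpha(A)$, since Henselization preserves the graded data. By Prop.~\ref{gaugesemisimple}, $\tilde\alpha = \min(\beta_1, \ldots, \beta_r)$ with $\beta_i$ a $v_{i,h}$-gauge on $A_i'$, and matching the two product decompositions of $\gr_{\tilde\alpha}(A_h)$ identifies $\gr_{\beta_i}(A_i') \cong_g \grA_i$. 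I then define $\alpha_i \colon A \to \Gamma \cup \{\infty\}$ by $\alpha_i(a) = \beta_i(a \otimes 1_{K_{h,v_i}})$.

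Verification proceeds as follows. Surmultiplicativity, $\alpha_i|_K = v_i$, and $\alpha = \min_i \alpha_i$ are inherited by restriction of the corresponding identities for $\beta_i$. The canonical injection $\gr_{\alpha_i}(A) \hookrightarrow \gr_{\beta_i}(A_i')$ is surjective by an approximation argument using $\gr_{v_{i,h}}(K_{h,v_i}) = \gr_{v_i}(K)$: for any $x = \sum_j a_j \otimes c_j \in A_i'$, replacing each $c_j \in K_{h,v_i}$ by a sufficiently close $c_j' \in K$ produces $x' = (\sum_j a_j c_j') \otimes 1 \in A \otimes 1$ with $\beta_i(x - x')$ arbitrarily large. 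Hence $\gr_{\alpha_i}(A) \cong_g \grA_i$ has $\gr_{v_i}(K)$-dimension $\DIM{A_i'}{K_{h,v_i}} = \DIM{A}{K}$, so $\alpha_i$ is a $v_i$-norm; graded simplicity of $\grA_i$ (from the Henselian central simple case, \cite[Th.~3.1]{TW1}) completes the verification that $\alpha_i$ is a $v_i$-gauge. The dimension formula $\DIM{\gr_{\alpha_i}(A)}{\gr(F)} = \DIM{A}{K}\DIM{\gr_{v_i}(K)}{\gr(F)}$, the graded product decomposition, and the at-least-$r$-simple-components property of $\gr_\alpha(A)_0 = \prod_i (\grA_i)_0$ follow immediately. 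The main obstacle is the Henselization step---both the preservation of graded data for $\alpha$ extended to $A_h$ and the approximation identification of $\gr_{\alpha_i}(A)$ with $\gr_{\beta_i}(A_i')$---which is standard for Henselization of valued algebras but requires careful graded bookkeeping.
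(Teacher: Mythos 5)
Your overall route is the same as the paper's: pass to a Henselization $(F_h,v_h)$, decompose $A\otimes_F F_h$ into its simple factors $A_i'$ over the Henselizations of $(K,v_i)$, restrict the component gauges $\beta_i$ of $\alpha\otimes v_h$ back to $A$ to define $\alpha_i$, and then identify $\gr_{\alpha_i}(A)$ with $\gr_{\beta_i}(A_i')$. The one step where you diverge is the identification itself, and that is where there is a genuine gap. You argue that the canonical injection $\gr_{\alpha_i}(A)\hookrightarrow\gr_{\beta_i}(A_i')$ is surjective because any $c_j\in K_{h,v_i}$ can be replaced by a ``sufficiently close'' $c_j'\in K$ so that $\beta_i(x-x')$ is \emph{arbitrarily large}. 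That is a density statement: $K$ would have to be dense in its Henselization for the valuation topology. This is true in rank $1$ (the Henselization sits inside the completion), but it fails in general for rank $\ge 2$, which is exactly the case of interest in this paper. Indeed, if $w$ is a proper nontrivial coarsening of $v_i$ and the residue valuation $u=v_i/w$ on $\ov K^{\,w}$ is not Henselian, then the $w$-residue field of $K_{h,v_i}$ contains the Henselization of $(\ov K^{\,w},u)$, so there are elements $c\in K_{h,v_i}$ with $w$-residue outside $\ov K^{\,w}$; for such $c$ no $c'\in K$ satisfies $v_{i,h}(c-c')>\gamma$ once $\gamma$ has positive image in $\Gamma_w$. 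Immediacy of the Henselization only gives the weaker ``one step better'' approximation $v_{i,h}(c-c')>v_{i,h}(c)$, and turning that into $\beta_i(x-x')>\beta_i(x)$ coordinatewise would require knowing in advance that a splitting base of $A_i'$ for $\beta_i$ can be chosen inside $A\otimes 1$ --- which is essentially the statement being proved, so the argument as written is either false (density) or circular (immediacy plus a splitting base).

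The paper closes exactly this gap without any approximation, by a global squeeze: the composite $\gr_\alpha(A)\to\tprod_i\gr_{\alpha_i}(A)\to\tprod_i\gr_{\beta_i}(B_i)$ is a graded isomorphism, because $\gr_\alpha(A)\cong_g\gr_{\alpha\otimes v_h}(A\otimes_FF_h)$ by \cite[Cor.~1.26]{TW1} and the latter is $\tprod_i\gr_{\beta_i}(B_i)$ by Prop.~\ref{gaugesemisimple}; since the first map is injective (from $\alpha=\min_i\alpha_i$) and each $\gr_{\alpha_i}(A)\to\gr_{\beta_i}(B_i)$ is injective (from the definition of $\alpha_i$), all maps in the diagram are forced to be isomorphisms. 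The norm property of $\alpha_i$, the graded simplicity, and the dimension formula then follow as you indicate (using that the Henselization is immediate, so $\gr_{v_i}(K)=\gr_{w_i}(L_i)$). Your opening reduction via $\alpha|_K$ and the central idempotents of $\gr_\alpha(A)$ is correct but not needed once the diagram argument is in place. If you replace your approximation step by this dimension-count argument (or some equivalent), the rest of your write-up goes through.
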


We call the $\alpha_i$ of the theorem the {\it $v_i$-component} of
$\alpha$,
for $i = 1,\ldots, r$.  We will see in Cor.~\ref{cor:compunique}
 below that
the $\alpha_i$ are uniquely determined by $\alpha$.

\begin{proof}
 Let $(F_h, v_h)$ be the Henselization of $(F,v)$, and let
$$
B \, = \, A\otimes_F F_h \qquad\text{and}\qquad L  \,= \,Z(B)
\,=\, K\otimes_F F_h.
$$
Then, $L$ is a direct product of finitely many fields. Let
$e_1, \ldots, e_r$ be the primitive idempotents of~$L$, so
$$
L\, = \, L_1\times\ldots\times L_r\qquad \text{where} \qquad L_i \,=\, e_iL,\
$$
and each $L_i$ is a field. The $L_i$ are indexed by the $v_i$, as
we will explain below.
Since ${B \cong A\otimes _K L}$, the ring
$B$ is a product of  algebras
$$
B\,  = \, B_1\times \ldots \times B_r\qquad \text{where}\qquad
B_i \,= \,e_iB\, \cong A \otimes_K L_i.
$$
So, each $B_i$ is a central simple $L_i$-algebra.
We identify $K, \, A, \, F_h$ with their isomorphic copies
$K\otimes 1,\ A\otimes 1,\  1\otimes F_h$ in $B$. But, we do
{\it not} identify them with their isomorphic copies
$e_iK,\, e_iA, \, e_iF_h$ in $B_i$.
For each $i$ we have canonical inclusions
$$
p_i\colon A \hookrightarrow B_i, \ \ a \mapsto e_i(a\otimes 1) \ \ \quad
\text{and} \ \ \quad q_i\colon F_h \hookrightarrow B_i, \ \
c \mapsto e_i(1 \otimes c).
$$
Thus, $B_i$ has
subalgebras $p_i(A)$, $p_i(K)$, and~$L_i$, with
\begin{equation*}
B_i \, = \, p_i(A) \otimes_{p_i(K)}L_i \,\cong \, A\otimes_K L_i,
\qquad \text{hence,} \qquad \DIM{B_i}{L_i}\, = \, \DIM{A}{K}.
\end{equation*}

Each field $L_i$ is a compositum of fields, $L_i = {p_i(K) \!\cdot\! 
q_i(F_h)}$.  The Henselian valuation $v_h$ on $F_h$ has an isomorphic
(Henselian) valuation $v_h\circ q_i\inv$ on $q_i(F_h)$, which extends
uniquely to a Henselian valuation $w_i$ on $L_i$.  This pulls back to
a valuation $w_i\circ p_i$ on $K$ which extends $v$ on $F$.  The proof of
 Th.~\ref{tensorhens} in the Appendix shows
that the valuations $w_1\circ p_1,$ \ldots,
$w_r \circ p_r$ are all distinct and are all the extensions of $v$ to $K$.
Thus, after renumbering the $e_i$ if necessary, we can assume
$w_i\circ p_i= v_i$ for $i = 1,\ldots,r$.  That is,
\begin{equation}
\label{eq:videf}
v_i(d) \,=\, w_i(e_i(d\otimes 1))\qquad \text{for all $d\in K$}.
\end{equation}
From Th.~\ref{tensorhens}, we have also
that $(L_i, w_i)$ is a Henselization of~$(K, v_i)$.

 Let $\beta=
\alpha \otimes v_h$, which is a $v_h$-gauge on $B$ with
\begin{equation}\label{eq:gralpha'alpha}
\gr_{\beta}(B) \,\cong_g \,\gr_\alpha(A) \otimes_{\gr (F)}\gr(F_h)
\,\cong_g \,\gr_\alpha(A)
\end{equation}
 by \cite[Cor.~1.26]{TW1}.  Let
$\beta_i = \beta|_{B_i}$, which is a $v_h$-gauge
on $B_i$ via the embedding $q_i\colon F_h \to B_i$.  By
Prop.~\ref{gaugesemisimple},
\begin{equation}\label{eq:alpha'}
\beta(b) \, =\, \min_{1\le i\le r}\big( \beta_i(e_ib)\big) \qquad
\text{for all } \  b\in B,
\end{equation}
and
\begin{equation}\label{eq:gralpha'}
\gr_{\beta}(B)\,\cong_g\, \tprod\limits_{i=1}^r\gr_{\beta_i}(B_i).
\end{equation}
Since $w_i$ is the unique extension of the Henselian valuation $v_h$ to
$L_i$, Prop.~\ref{prop:uniqueext} above shows that each~$\beta_i$ is a
$w_i$-gauge. The structure theorem \cite[Th.~3.1]{TW1} for gauges on simple algebras
with respect to Henselian valuations shows that
$\beta_i$ is an End-gauge as in Ex.~\ref{ex;endgauge}; hence,
$\gr_{\beta_i}(B_i)$ is graded simple.

Define $v$-value functions $\alpha_1,\ldots, \alpha_r$ on $A$ by
$$
\alpha_i(a) \, = \, \beta(p_i(a)) \, =\, \beta_i(e_i(a\otimes 1)).
$$
Then, each $\alpha_i$ is surmultiplicative as $\beta_i$ is
surmultiplicative, and for  all $a\in A$,
\begin{equation}\label{eq:alphaformula}
\alpha(a) \, = \, \beta(a\otimes 1)  \,= \, \min_{1\le i\le r}
\big( \beta_i(e_i(a\otimes 1))\big)  \,= \, \min_{1\le i \le r}
\big(\alpha_i(a)\big).
\end{equation}
Furthermore, as $\beta_i$ is a $w_i$-value function, for all
$c\in K$ and $a\in A$,
\begin{align*} 
\alpha_i(ca) \,& = \,\beta_i(e_i(ca\otimes 1))  \, =
\, \beta_i\big( e_i(c\otimes 1) \cdot e_i(a\otimes 1)\big)\\ 
&=\, w_i(e_i(c\otimes 1)) \,+\, \beta_i(e_i(a\otimes 1)) \,
=\, v_i(c) + \alpha_i(a).
\end{align*}
Thus, $\alpha_i$ is a $v_i$-value function on $A$. The following
diagram shows the algebras related to $B_i$ and the associated
value functions being considered here.
\begin{equation}\label{diag:Bi}
\begin{split}
\xymatrix{
&B_i,\beta_i\\
A,\alpha_i\ar@{ (->}[ur]^{p_i}&&L_i,w_i\ar@{ - }[ul] \\
&K,v_i\ar@{ - }[ul] \ar@{ (->}[ur]^{p_i}&&F_h,v_h\ar@{_{(}->}[ul]_{q_i}
}
\end{split}
\end{equation}

Now, for all $\gamma\in
\Gamma$, the definition of the
$\beta_i$ and \eqref{eq:alpha'} and \eqref{eq:alphaformula}
above show that for each $i$ we have a
commutative diagram
\begin{equation*} 
\begin{CD}
A^{\hsp{\scriptstyle\alpha}\phantom{\beta}}_{\scriptscriptstyle{\ge}
\scriptstyle\gamma}
@>>> A^{\alpha_i}_{\scriptscriptstyle{\ge} \scriptstyle\gamma}\\
@VVV @VVV \\
B^{\beta}_{\scriptscriptstyle{\ge} \scriptstyle\gamma} @>{e_i\cdot}>>
B_{i,\,\scriptscriptstyle{\ge} \scriptstyle\gamma}^{\,\beta_i}
\end{CD}
\qquad\text{ given by }\qquad
\begin{CD}
a @>>> a\\
@VVV @VVV \\
a\otimes 1 @>>> e_i(a\otimes 1)
\end{CD}.
\end{equation*}
There is a corresponding commutative diagram with $>\gamma$ replacing
$\ge \gamma$, hence an induced commutative diagram of corresponding
factor groups;   these together yield a commutative
diagram of graded $\gr(F)$-algebra homomorphisms:
\begin{equation}\label{cd:graded}
\begin{CD}
\gr_\alpha(A) @>>>\tprod\limits_{i=1}^r \gr_{\alpha_i}(A) \\
@V{\cong}VV @VVV\\
\gr_{\beta}(B) @>{\cong}>>\tprod\limits_{i=1}^r\gr_{\beta_i}(B_i)
\end{CD}
\end{equation}
Here, the top map is injective by \eqref{eq:alphaformula}; the left map
is the  isomorphism of \eqref{eq:gralpha'alpha}; the right map is
injective since  for each $i$, the definition of $\alpha_i$ shows that
${\gr_{\alpha_i}(A)
\to \gr_{\beta_i}(B)}$ is injective; and the bottom map is the
 isomorphism of \eqref{eq:gralpha'}.  Therefore, all the maps in this
diagram must be isomorphisms.  Hence, for each~$i$,
${\gr_{\alpha_i}(A) \cong_g
\gr_{\beta_i}( B_i)}$, which is graded simple as
we saw above after~\eqref{eq:gralpha'}.  Since
the Henselization $(L_i,w_i)$ (resp.~$(F_h,v_h)$) is
an immediate extension of
$(K, v_i)$ (resp.~$(F,v)$), we have
$$
\DIM{L_i}{F_h} \,\hsp \ge\ \DIM{\gr_{w_i}(L_i)}{\gr_{v_h}(F_h)}
\ = \ \DIM{\gr_{v_i}(K)}{\gr(F)}.
$$
So, as $\beta_i$ is a $v_h$-norm,
\begin{align}\label{eq:gralphai}
\begin{split}
\DIM{\gr_{\alpha_i}(A)}{\gr_{v_i}(K)}\ \DIM{\gr_{v_i}(K)}{\gr(F)} \ &= \
\DIM{\gr_{\alpha_i}(A)}{\gr(F)} \\
&= \ \DIM{\gr_{\beta_i}(B_i)}{\gr_{v_h}(F_h)}\ = \ \DIM{B_i}{F_h} \\
&= \ \DIM{B_i}{L_i}\, \DIM{L_i}{F_h} \ = \ \DIM{A}{K}\,\DIM{L_i}{F_h}\\
&\ge \ \DIM{A}{K}\ \DIM{\gr_{v_i}(K)}{\gr(F)},
\end{split}
 \end{align}
and hence $\DIM{\gr_{\alpha_i}(A)}{\gr_{v_i}(K)} \ge \DIM AK$.
Since the reverse inequality holds for any $v_i$-value function, we have
$\DIM{\gr_{\alpha_i}(A)}{\gr_{v_i}(K)} = \DIM AK$.  Hence,
$\alpha_i$ is a $v_i$-norm on $A$; with the
graded simplicity noted above, this yields that $\alpha_i$
is a $v_i$-gauge.  Furthermore, equality holds in
\eqref{eq:gralphai}, yielding
 $\DIM{\gr_{\alpha_i}(A)}{\gr(F)} = \DIM AK \,\DIM{\gr_{v_i}(K)}{\gr(F)}$.
The isomorphism for $\gr_\alpha(A)$ in the theorem is the top
isomorphism in the commutative diagram~\eqref{cd:graded}.
Since each $\gr_{\alpha_i}(A)$ is graded simple,
the $\gr_{\alpha_i}(A)$~are the graded simple components of
$\gr_\alpha(A)$.
Because  $\gr_\alpha(A)$ has $r$ graded simple components,
its degree zero part must have at least $r$ simple components.
\end{proof}

Let $v$ be a valuation on some division algebra $D$, and let
$\alpha$, $\beta$, $\eta_1,$ \ldots,
$\eta_r$ be $v$-value functions on a finite-dimensional $D$-vector space
$M$. We write
${\alpha \le \beta}$\  if $\alpha(z) \le \beta (z)$ for all
$z\in M$.  Likewise, we write $ \alpha = \min\big(\eta_1,
\ldots, \eta_r\big)$ if  ${\alpha(z) = \min\big(\eta_1(z), \ldots,
\eta_r(z)\big)}$ for all $z\in M$.  It is easy to construct examples
of $v$-norms $\alpha,$ $\beta$ on $M$
with $\alpha \le \beta$ and $\alpha \ne \beta$.  By contrast, we will see
in Cor.~\ref{cor:gaugeineq} below 
for gauges on semisimple algebras that $\alpha\le \beta$ implies
$\alpha = \beta$.  This will be proved by showing a minimality property
 characterizing the components of a gauge on a simple algebra.

\begin{lemma}\label{surmultprod.lem}
 Let $A$ and $B$ be $F$-algebras with respective surmultiplicative
$v$-value
functions $\alpha$ and~$\beta$. Suppose there is an $F$-algebra
homomorphism $f\colon A \rightarrow B$ such that
\begin{equation}\label{inequalityf}
 \beta(f(a))\, \geq \,\alpha(a) \quad \text{for all } a \in A.
\end{equation}
 Then $f$ induces a graded $\gr(F)$-algebra homomorphism
 \[
 \widehat{f}\,\colon \gr_\alpha(A) \,\longrightarrow \,\gr_\beta(B)
 \]
 such that  $\widehat{f}(\tilde x) =
f(x) + B_{> \alpha(x)} \in B_{\alpha(x)}$ for all $x \in A$. Moreover,
$\widehat{f}$ is injective if and only if equality holds in
\eqref{inequalityf}.
 \end{lemma}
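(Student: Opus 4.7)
The plan is to verify directly that $f$ preserves the filtrations on $A$ and $B$, push it to quotients, and then check the algebra axioms and the injectivity criterion by elementary bookkeeping.

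First I would observe that the hypothesis $\beta(f(a)) \ge \alpha(a)$ translates to the filtration containments $f(A^\alpha_{\ge \gamma}) \subseteq B^\beta_{\ge \gamma}$ and $f(A^\alpha_{>\gamma}) \subseteq B^\beta_{>\gamma}$ for every $\gamma \in \Gamma$, together with $f(A^\alpha_{\ge \gamma}) \subseteq f(A^\beta_{\ge \gamma+\infty\cdot 0}) \subseteq B^\beta_{\ge \gamma}$ being a consequence (trivially). So $f$ descends, for each $\gamma$, to an additive map
\[
\widehat{f}_\gamma\colon A^\alpha_\gamma \,=\, A^\alpha_{\ge \gamma}\big/A^\alpha_{>\gamma} \,\longrightarrow\, B^\beta_{\ge \gamma}\big/B^\beta_{>\gamma} \,=\, B^\beta_\gamma, \qquad x + A^\alpha_{>\gamma} \,\longmapsto\, f(x) + B^\beta_{>\gamma}.
\]
Summing over $\gamma\in\Gamma$ then yields the claimed additive homogeneous map $\widehat{f}$ of degree $0$, and by construction $\widehat{f}(\widetilde{x}) = f(x) + B^\beta_{>\alpha(x)}\in B^\beta_{\alpha(x)}$ for each nonzero $x\in A$.

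Next I would verify that $\widehat{f}$ is a graded $\gr(F)$-algebra homomorphism. Additivity on each homogeneous component is immediate, and $\widehat{f}(1_{\gr(A)}) = \widehat{f}(\widetilde{1_A}) = f(1_A) + B^\beta_{>0} = 1_B + B^\beta_{>0} = 1_{\gr(B)}$ since $\alpha(1) = 0 = \beta(1)$. For multiplicativity I would argue directly from the formula in \eqref{multiplicacaograduada}: given $x,y \in A\setminus \{0\}$, the product $\widetilde{x}\,\widetilde{y}$ is by definition the image of $xy$ in $A^\alpha_{\alpha(x)+\alpha(y)}$, and similarly $\widehat{f}(\widetilde{x})\,\widehat{f}(\widetilde{y})$ is the image of $f(x)f(y) = f(xy)$ in $B^\beta_{\alpha(x)+\alpha(y)}$. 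Applying $\widehat{f}$ to $\widetilde{x}\widetilde{y}$ and tracking whether each graded product is zero or not (using $\alpha(xy) \ge \alpha(x)+\alpha(y)$ and $\beta(f(xy))\ge \alpha(xy)$) shows both sides give the same class of $f(xy)$ in $B^\beta_{\alpha(x)+\alpha(y)}$. Finally, $\widehat{f}$ is $\gr(F)$-linear since its restriction to $\gr_{\alpha|_F}(F)$ is just the identity composed with the inclusion $\gr(F) \hookrightarrow \gr(B)$.

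For the injectivity statement, the kernel of $\widehat{f}$ is the direct sum of the kernels of the homogeneous pieces $\widehat{f}_\gamma$, and $\widehat{f}_\gamma$ kills the class of $x\in A^\alpha_{\ge \gamma}$ precisely when $f(x)\in B^\beta_{>\gamma}$. Thus $\widehat{f}$ is injective if and only if, for every $x \in A\setminus \{0\}$ with $\alpha(x) = \gamma$, we have $\beta(f(x)) = \gamma$, i.e., equality holds in \eqref{inequalityf}. Conversely, if $\beta(f(x_0)) > \alpha(x_0)$ for some nonzero $x_0$, then $\widetilde{x}_0 \ne 0$ in $\gr_\alpha(A)$ while $\widehat{f}(\widetilde{x}_0)=0$, so $\widehat{f}$ fails to be injective. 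The main thing to be careful about is keeping track of when the products $\widetilde{x}\widetilde{y}$ and $\widehat{f}(\widetilde{x})\widehat{f}(\widetilde{y})$ vanish due to strict surmultiplicative inequality; but since both vanishings are controlled by the same comparison $\alpha(xy)$ versus $\alpha(x)+\alpha(y)$ (via the intermediary $\beta(f(xy)) \ge \alpha(xy)$), the two cases line up, and no further input is needed.
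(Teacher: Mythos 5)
Your proposal is correct and follows essentially the same route as the paper: you define $\widehat{f}$ componentwise from the filtration containments $f(A^\alpha_{\ge\gamma})\subseteq B^\beta_{\ge\gamma}$, $f(A^\alpha_{>\gamma})\subseteq B^\beta_{>\gamma}$, check multiplicativity on homogeneous elements (both sides being the class of $f(xy)$ in $B^\beta_{\alpha(x)+\alpha(y)}$, where the paper instead runs an explicit zero/nonzero case analysis), and read off the injectivity criterion from the gradedness of the kernel. The stray clause ``$f(A^\alpha_{\ge \gamma}) \subseteq f(A^\beta_{\ge \gamma+\infty\cdot 0})$'' is meaningless as written but harmless, since the two containments you state before it are exactly what is needed.
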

\begin{proof}
For any $\delta \in \Gamma$, we have $f(A_{\geq \delta}) \subseteq
B_{\geq \delta}$ and $f(A_{> \delta}) \subseteq B_{> \delta}$, hence
$f$ induces a map ${(\widehat{f})_\delta\colon A_\delta
\rightarrow B_\delta}$ given by
$(\widehat{f})_\delta(x+ A_{> \delta}) = f(x) + B_{> \delta}.$ Then set
$\widehat{f} = \bigoplus_{\delta \in \Gamma} (\widehat{f})_\delta
\colon \gr_\alpha(A) \to \gr_\beta(B)$.
Clearly, $\widehat{f}$ is a graded $\gr(F)$-vector space homomorphism.
To see that $\widehat f$ is multiplicative,
 it suffices to check this for homogeneous elements.  That is,
for any nonzero $x,y \in A$ we need
\begin{equation}\label{fxytilda}
\widehat{f}({\tilde x}\, {\tilde y}) \, =\,
 \widehat{f}(\tilde x) \widehat{f}(\tilde y).
\end{equation}
When the left expression in \eqref{fxytilda} is nonzero, it equals
$\widetilde{f(xy)}$, and when the right expression is nonzero it equals
$\widetilde{f(x)} \widetilde{f(y)} = \widetilde{f(x)f(y)} =
\widetilde{f(xy)}$. So, equality indeed holds in \eqref{fxytilda} when
each side is nonzero. Now we have
\begin{equation}\label{betafxy}
\beta(f(xy))\, \geq\, \alpha(xy) \,\geq\, \alpha(x) + \alpha(y)
\end{equation}
and
\begin{equation}\label{betafxy2}
\beta(f(xy)) \,=\,  \beta(f(x)f(y)) \,\geq\ \beta(f(x))+ \beta(f(y))
\,\geq \,\alpha(x) + \alpha(y).
\end{equation}
The left expression in \eqref{fxytilda} is nonzero if and only if
$\alpha(xy) = \alpha(x) + \alpha(y)$ (so
$\tilde x \,\tilde y = \widetilde{xy} \neq 0)$ and
$\beta(f(xy)) = \alpha(xy)$, i.e., equality holds
throughout~\eqref{betafxy}. The right expression in \eqref{fxytilda} is
nonzero if and only if $\beta(f(x)) = \alpha(x)$ and
$\beta(f(y)) = \alpha(y)$, and
$\beta(f(x)f(y)) = \beta(f(x)) +  \beta(f(x))$, i.e., equality holds
throughout \eqref{betafxy2}. Each of these conditions holds if and only if
$ \beta(f(xy)) = \alpha(x) + \alpha(y)$. Thus, we have equality in
\eqref{fxytilda} in all cases. Now note that $\widehat{f}(\tilde x) = 0$
if and only if $\beta(f(x)) > \alpha(x)$. Since $\ker(\widehat{f})$
is a homogeneous two-sided ideal of $\gr_\alpha(A)$, the stated condition
for injectivity of $\widehat{f}$ holds.
\end{proof}

\begin{theorem}\label{thm:gaugeineq}
Let $\alpha$ be any $v$-gauge on the simple
$F$-algebra $A$, and, as
 in Th.~\ref{thm:gaugeismin} above,
let $\alpha_i$
be the $v_i$-component of $\alpha$
for $i = 1, \ldots, r$.
 Suppose $\eta$ is a $v_k$-gauge on~$A$, for some $k$.
If~$\alpha \le \eta$,  then ${\eta = \alpha_k}$.
\end{theorem}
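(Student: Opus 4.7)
The plan is to exploit the hypothesis $\alpha\le\eta$ by applying Lemma~\ref{surmultprod.lem} to the identity map $\id_A\colon A\to A$, producing a graded $\gr(F)$-algebra homomorphism $\Phi\colon\gr_\alpha(A)\to\gr_\eta(A)$ that sends $\widetilde{a}^{\,\alpha}$ to $\widetilde{a}^{\,\eta}$ when $\eta(a)=\alpha(a)$ and to $0$ otherwise. By Th.~\ref{thm:gaugeismin}, $\gr_\alpha(A)\cong_g\prod_{i=1}^{r}\gr_{\alpha_i}(A)$ with each factor graded simple, so $\ker\Phi$ is a graded two-sided ideal, and hence $\ker\Phi=\prod_{i\in S}\gr_{\alpha_i}(A)$ for some $S\subseteq\{1,\ldots,r\}$. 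My strategy is to show $S=\{1,\ldots,r\}\setminus\{k\}$, so that $\Phi$ induces an injection $\bar\Phi\colon\gr_{\alpha_k}(A)\to\gr_\eta(A)$, and then to argue that $\bar\Phi$ is a graded isomorphism, from which $\eta=\alpha_k$ will follow.

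To identify $S$, I will restrict $\Phi$ to the central graded subring $\gr_{\alpha|_K}(K)$. Since $\alpha|_K=\min_i v_i$ and $v$ is defectless in $K$ (combining Prop.~\ref{prop:gaugeimpliesdefectless} with Prop.~\ref{prop:deflessredtocent}), applying Th.~\ref{thm:gaugeismin} to the $v$-gauge $\alpha|_K$ on the simple $F$-algebra $K$ gives $\gr_{\alpha|_K}(K)\cong_g\prod_i\gr_{v_i}(K)$. Moreover, the identification $\gr_{\alpha_i}(A)\cong_g\gr_{\beta_i}(B_i)$ from the proof of Th.~\ref{thm:gaugeismin}, with $B_i$ central simple over its Henselization $L_i$, shows that $Z(\gr_{\alpha_i}(A))\cong_g\gr_{w_i}(L_i)\cong_g\gr_{v_i}(K)$; hence $\gr_{\alpha|_K}(K)$ is precisely the graded center $Z(\gr_\alpha(A))$, so the primitive central idempotents $\epsilon_1,\ldots,\epsilon_r$ of $\gr_\alpha(A)$ all lie in $\gr_{\alpha|_K}(K)$. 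Because $\eta|_K=v_k$, a direct computation on $\widetilde{c}^{\,\alpha|_K}$ for $c\in K^\times$ shows that $\Phi|_{\gr_{\alpha|_K}(K)}$ coincides with the canonical projection onto the $k$-th factor $\gr_{v_k}(K)$; thus $\Phi(\epsilon_k)=1$ and $\Phi(\epsilon_j)=0$ for every $j\ne k$, giving $\ker\Phi=\prod_{j\ne k}\gr_{\alpha_j}(A)$.

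A dimension count forces $\bar\Phi$ to be a graded isomorphism: Th.~\ref{thm:gaugeismin} yields $\DIM{\gr_{\alpha_k}(A)}{\gr(F)}=\DIM{A}{K}\,\DIM{\gr_{v_k}(K)}{\gr(F)}$, and the same dimension is obtained for $\gr_\eta(A)$ because $\eta$ is a $v_k$-norm. Tracing the factorization $\gr_\alpha(A)\twoheadrightarrow\gr_{\alpha_k}(A)\xrightarrow{\,\bar\Phi\,}\gr_\eta(A)$ forces $\eta(a)=\alpha_k(a)$ whenever $\alpha(a)=\alpha_k(a)$: the image of $\widetilde{a}^{\,\alpha}$ under the projection is the nonzero $\widetilde{a}^{\,\alpha_k}$, its image under $\bar\Phi$ is nonzero by injectivity, and the formula for $\Phi$ then forces this image to be $\widetilde{a}^{\,\eta}$. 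To extend the equality $\eta(a)=\alpha_k(a)$ to all nonzero $a\in A$, I will invoke weak approximation for the distinct extensions $v_1,\ldots,v_r$ of $v$ to $K$ to find $c\in K^\times$ with $v_k(c)$ sufficiently small relative to the $v_j(c)$ so that $\alpha_k(ca)=\alpha(ca)$; applying the previous case to $ca$ and using the $K^\times$-equivariance of both $\eta$ and $\alpha_k$ (as $v_k$-value functions on $A$) then yields $\eta(a)=\alpha_k(a)$. The main technical hurdles I anticipate are the identification $Z(\gr_\alpha(A))=\gr_{\alpha|_K}(K)$ that locates the primitive idempotents in the central subring, and the invocation of weak approximation to handle elements outside the locus where the minimum defining $\alpha$ is achieved at $k$.
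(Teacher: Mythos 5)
Your reduction to the $k$-th graded component is sound (and different in spirit from the paper's argument), but the final step --- extending $\eta(a)=\alpha_k(a)$ from the locus $\{a\mid \alpha(a)=\alpha_k(a)\}$ to all of $A$ by ``weak approximation'' --- has a genuine gap. You need $c\in K^\times$ with $v_k(c)+\alpha_k(a)\le v_j(c)+\alpha_j(a)$ for all $j\ne k$. If $v_j$ and $v_k$ are \emph{dependent}, with finest common coarsening $v_{jk}$ and associated convex subgroup $\Delta_{jk}$, then every $c\in K^\times$ satisfies $v_j(c)-v_k(c)\in\Delta_{jk}$ (both values have the same image $v_{jk}(c)$), so no choice of $c$ works unless the image of $\alpha_k(a)-\alpha_j(a)$ in $\Gamma/\Delta_{jk}$ is $\le 0$. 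That is exactly the assertion that $\alpha_j$ and $\alpha_k$ have the same $v_{jk}$-coarsening, i.e.\ Cor.~\ref{cor:coarscomponents} --- which the paper deduces \emph{from} Th.~\ref{thm:gaugeineq}, so invoking it here is circular, and approximation theorems for incomparable valuations (Ribenboim, or \cite[Th.~3.2.7]{EP}) do not supply it: they only prescribe values subject to precisely this compatibility constraint. The repair is available inside what you have already proved: the set where $\alpha=\alpha_k$ contains a splitting base of $A$ for $\alpha_k$ (pull back a homogeneous $\gr_{v_k}(K)$-basis of the $k$-th factor of $\gr_\alpha(A)$), and writing an arbitrary $a=\sum a_jc_j$ over such a base gives $\eta(a)\ge\min_j(\eta(a_j)+v_k(c_j))=\alpha_k(a)$, i.e.\ $\alpha_k\le\eta$; then Lemma~\ref{surmultprod.lem} applied to $\id_A$ for this inequality, together with graded simplicity of $\gr_{\alpha_k}(A)$, forces $\alpha_k=\eta$. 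This is the paper's proof, and your argument essentially converges to it once the approximation step is abandoned.

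A secondary point: your identification $Z(\gr_\alpha(A))=\gr_{\alpha|_K}(K)$, via $Z(\gr_{\alpha_i}(A))\cong_g\gr_{v_i}(K)$, is false in general. Gauges exist for wild but defectless extensions, and there the degree-zero part of the graded center contains the purely inseparable part of $Z(\overline{D})$ over the residue field (by \cite[Prop.~1.7]{JW} the conjugation action fixes exactly that part), which can be strictly larger than $\overline{K}^{\,v_i}$. Fortunately you only need the weaker fact that the $r$ primitive central idempotents of $\gr_\alpha(A)$ lie in $\gr_{\alpha|_K}(K)$: since $\gr_{\alpha|_K}(K)\cong_g\prod_i\gr_{v_i}(K)$ provides $r$ nonzero orthogonal central idempotents summing to $1$, and $\gr_\alpha(A)$ has exactly $r$ graded simple components, these must be the primitive ones; your computation of $\Phi$ on elements $\tilde c$, $c\in K^\times$, then correctly shows $\ker\Phi=\prod_{j\ne k}\gr_{\alpha_j}(A)$. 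So that portion of your proof stands once the center claim is removed.
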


\begin{proof}
Pick a homogeneous base $\big(b_1, \ldots, b_n\big)$ of
$\gr_{\alpha_k}(A)$
as a graded $\gr_{v_k}(K)$-vector space, and let $\gamma_j = \deg(b_j)$.
Then, pick $a_1, \ldots, a_n\in A$ with each
${\tilde {a_j}^{\!\alpha}\mapsto
(0,\ldots, 0, b_j, 0, \ldots, 0)}$ ($b_j$ in the $k$-th position) under
the
isomorphism $\gr_\alpha(A) \cong_g \prod_{i = 1}^r \gr_{\alpha_i}(A)$ of
Th.~\ref{thm:gaugeismin}.  This means that for all $j = 1, \ldots, n$
and $i = 1,\ldots, r$,
$$
\alpha_i(a_j) \,>\,\gamma_j \ \text{ for } i\ne k, \quad \alpha(a_j)
\,=\, \alpha_k(a_j)
\,=\, \gamma_j, \quad \text{ and }\quad \tilde {a_j}^{\alpha_k} =
b_j\text{ in }
\gr_{\alpha_k}(A).
$$
Since $\big(\tilde {a_1}^{\alpha_k}, \ldots, \tilde {a_n}^{\alpha_k}\big)$
is a homogeneous base of the graded $\gr_{v_k}(K)$-vector space 
$\gr_{\alpha_k}(A)$,  ${(a_1, \ldots, a_n)}$ is a splitting base of the $K$-vector
space $A$ for the $v_k$-gauge $\alpha_k$ (see the comments  
preceding \eqref{eq:defRalpha} above).
  Furthermore,
${\alpha_k(a_j)  = \alpha(a_j) \le \eta(a_j)}$, for all $j$.  Therefore,
for any $a \in A$, writing $a = \sum_{j=1}^n a_jc_j$ with $c_j \in K$,
we have
\begin{align*}
\alpha_k(a) \, &= \, \min_{1\le j \le n}\big(\alpha_k(a_j) + v_k(c_j)\big)
\, \le \, \min_{1\le j \le n}\big(\eta(a_j) + v_k(c_j)\big) \\
&\le \, \eta \big(\tsum\limits_{j = 1}^n a_jc_j\big) \, = \, \eta(a).
\end{align*}
Thus, $\alpha_k \le \eta$ as $v_k$-gauges on $A$.

Since $\alpha_k \le \eta$,
Lemma~\ref{surmultprod.lem} (with $f = \id_A$) shows that there is a
well-defined graded
$\gr_{v_k}(K)$-algebra homomorphism $\varphi\colon
\gr_{\alpha_k}(A) \to \gr_\eta(A)$ given by
${\tilde a^{\alpha_k} \mapsto
a +A^\eta_{>\alpha_k(a)}\in A^\eta_{\alpha_k(a)}}$ for all $a\in A$.
But, as $\alpha_k$ is a $v_k$-gauge on the central simple $K$-algebra
$A$, \cite[Cor.~3.7]{TW1} shows that $\gr_{\alpha_k}(A)$
is a simple graded algebra.  Hence $\varphi$ must be injective.
Again by Lemma~\ref{surmultprod.lem} we have $\alpha_k = \eta$.
\end{proof}

\begin{corollary}\label{cor:compunique}
Let $\alpha$ be any $v$-gauge on
the simple $F$-algebra $A$,  and,
as
in Th.~\ref{thm:gaugeismin} above,
let
$\alpha_i$ be the
$v_i$-component of $\alpha$ for $i = 1,  \ldots, r$.
Suppose $\alpha = \min \big(\eta_1, \ldots,\eta_r\big)$
for some $v_i$-gauges $\eta_i$.  Then each $\eta_i = \alpha_i$.
\end{corollary}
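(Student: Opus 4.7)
The plan is to deduce this immediately from Theorem \ref{thm:gaugeineq}. The hypothesis $\alpha = \min(\eta_1, \ldots, \eta_r)$ means that for every $a \in A$,
\[
\alpha(a) \,=\, \min\big(\eta_1(a), \ldots, \eta_r(a)\big) \,\le\, \eta_i(a) \qquad \text{for each } i = 1, \ldots, r.
\]
Thus $\alpha \le \eta_i$ as $v$-value functions on $A$. Since $\eta_i$ is by assumption a $v_i$-gauge on $A$, Theorem \ref{thm:gaugeineq} (applied with $k = i$ and $\eta = \eta_i$) yields $\eta_i = \alpha_i$.

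So the entire argument consists of observing that a minimum is bounded above by each of its arguments, and then invoking the already-proved minimality/uniqueness theorem once for each index. There is no real obstacle here; Theorem \ref{thm:gaugeineq} does all the work, and this corollary is just the iterated application. No further computation with splitting bases, graded components, or Henselizations is required at this stage.
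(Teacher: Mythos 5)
Your proposal is correct and is exactly the paper's own argument: from $\alpha = \min(\eta_1,\ldots,\eta_r)$ one gets $\alpha \le \eta_i$ for each $i$, and Theorem~\ref{thm:gaugeineq} then gives $\eta_i = \alpha_i$. Nothing further is needed.
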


\begin{proof}
For each $i$, we have $\alpha \le \eta_i$.  Hence, $\eta_i = \alpha _i$
by the preceding theorem.
\end{proof}

\begin{corollary}\label{cor:gaugeineq}
Let $\alpha$ and $\eta$ be $v$-gauges on a semisimple $F$-algebra
$C$.  If $\alpha \le \eta$, then $\alpha = \eta$.
\end{corollary}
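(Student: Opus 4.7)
The plan is to reduce to the case when $C$ is simple and then exploit the minimality result of Theorem~\ref{thm:gaugeineq}.

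First, I would reduce to the simple case. Write $C = A_1 \times \ldots \times A_k$ as a product of simple $F$-algebras. By the second remark after Prop.~\ref{gaugesemisimple} (formula \eqref{betai}), the $v$-gauges $\alpha$ and $\eta$ on $C$ restrict, via the coordinate projections, to $v$-gauges $\alpha^{(i)}$ and $\eta^{(i)}$ on each $A_i$, and
\[
\alpha(x_1,\ldots,x_k) \,=\, \min\big(\alpha^{(1)}(x_1),\ldots,\alpha^{(k)}(x_k)\big),
\]
with the analogous formula for $\eta$. In particular, the condition $\alpha \le \eta$ on $C$ is equivalent, by plugging in elements supported in a single coordinate, to $\alpha^{(i)} \le \eta^{(i)}$ on each $A_i$. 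So it suffices to treat the simple case.

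Assume therefore that $C$ is a simple $F$-algebra with center $K$, and let $v_1,\ldots,v_r$ be all the extensions of $v$ to $K$. By Theorem~\ref{thm:gaugeismin} there exist $v_i$-gauges $\alpha_i$ and $\eta_i$ on $C$ such that
\[
\alpha \,=\, \min(\alpha_1,\ldots,\alpha_r) \quad\text{and}\quad \eta \,=\, \min(\eta_1,\ldots,\eta_r).
\]
Fix any index $k \in \{1,\ldots,r\}$. Since $\eta = \min(\eta_1,\ldots,\eta_r) \le \eta_k$ pointwise, the hypothesis $\alpha \le \eta$ gives $\alpha \le \eta_k$. Since $\eta_k$ is a $v_k$-gauge on the simple $F$-algebra $C$, Theorem~\ref{thm:gaugeineq} applies and yields $\eta_k = \alpha_k$. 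As this holds for every $k$, we conclude
\[
\eta \,=\, \min(\eta_1,\ldots,\eta_r) \,=\, \min(\alpha_1,\ldots,\alpha_r) \,=\, \alpha.
\]

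The only subtlety worth flagging is matching the indexings of the $v_i$-components of $\alpha$ and of $\eta$; but both decompositions are canonically indexed by the (finite) set of extensions of $v$ to $K$, and Corollary~\ref{cor:compunique} guarantees that a $v_i$-gauge appearing as a minimand is uniquely determined by its associated valuation $v_i$, so no reordering issue arises. Consequently no further obstacle remains, and the result follows from Theorem~\ref{thm:gaugeineq} essentially by inspection once the decomposition of Theorem~\ref{thm:gaugeismin} is in hand.
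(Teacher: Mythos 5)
Your proof is correct and follows essentially the same route as the paper: reduce to the simple case via the component gauges of Prop.~\ref{gaugesemisimple}, then apply Th.~\ref{thm:gaugeismin} to decompose $\alpha$ and $\eta$ into their $v_i$-components and use Th.~\ref{thm:gaugeineq} with $\alpha \le \eta \le \eta_k$ to get $\alpha_k = \eta_k$ for each $k$. The indexing worry you flag is vacuous, since Th.~\ref{thm:gaugeineq} already names the limit as the $v_k$-component of $\alpha$ for the same valuation $v_k$ to which $\eta_k$ is attached.
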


\begin{proof}
It suffices to check this  for the restrictions of $\alpha$
and $\eta$ on the simple components of $C$.  Therefore, we may assume that
$C$ is a simple $F$-algebra.  Then, let $v_1, \ldots, v_r$ be all the
extensions of $v$ to $Z(C)$, and let $\alpha_i$ (resp.~$\eta_i$)
be the $v_i$-component of $\alpha$ (resp. $\eta$).  Since, for each $i$
we have
$\alpha \le \eta \le \eta_i$, Th.~\ref{thm:gaugeineq} shows that
$\alpha_i = \eta_i$.  Hence,
$$
\alpha\, = \, \min\big(\alpha_1, \ldots, \alpha_r\big)  \,=\, \min
\big(\eta_1, \ldots, \eta_r\big) \, = \, \eta.
$$
\end{proof}

\begin{corollary}\label{cor:coarscomponents}
Let $\alpha$ be a $v$-gauge on the simple $F$-algebra $A$,
with $v_i$-components~$\alpha_i$, for
${i = 1,\ldots, r}$, and suppose $\Gamma_\alpha$ lies in
the divisible hull of $\Gamma_v$.  Let $w$ be any
valuation on $F$ which is a coarsening of $v$, and let
$w_1, \ldots, w_\ell$ be all the extensions of $w$ to $K$.  Let $\beta$~be
the $w$-coarsening of $\alpha$, and let $\beta_j$ be the $w_j$-component
of $\beta$ for $j = 1,\ldots, \ell$.  For $i \in \{1,\ldots, r\}$, let
$j(i)\in \{1,\ldots, \ell\}$ be the index such that $w_{j(i)}$
is the $w$-coarsening of $v_i$ $($i.e., $W_{j(i)} = {W\!\cdot\! V_i})$.  
Then, $\beta_{j(i)}$~is the
$w_{j(i)}$-coarsening of $\alpha_i$.
\end{corollary}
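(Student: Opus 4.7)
The plan is to express $\beta$ directly as a minimum of $w_{j(i)}$-gauges obtained by coarsening the $v_i$-components of $\alpha$, and then invoke Theorem~\ref{thm:gaugeineq} (applied to $\beta$) to identify these with the $\beta_{j(i)}$. First, by Theorem~\ref{thm:gaugeismin}, $\alpha = \min(\alpha_1,\ldots,\alpha_r)$. Let $\varepsilon\colon \Gamma \to \Gamma/\Delta$ be the quotient map by the convex subgroup $\Delta$ corresponding to the coarsening $w$ (so $w = \varepsilon\circ v$), extended by $\varepsilon(\infty)=\infty$. Then $\beta = \varepsilon\circ \alpha$ by construction. Since $\varepsilon$ is weakly order-preserving on $\Gamma\cup\{\infty\}$, it commutes with $\min$, so
\[
\beta(a) \,=\, \min_{1\le i\le r}\alpha_i'(a),\qquad\text{where}\qquad \alpha_i' \,=\, \varepsilon\circ\alpha_i.
\]

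The key step is to verify that each $\alpha_i'$ is a $w_{j(i)}$-gauge on $A$. The coarsening is well-defined because $\Gamma_{\alpha_i}\subseteq\divh{\Gamma_{v_i}}=\divh{\Gamma_v}\subseteq\Gamma$, so $\varepsilon$ is defined on all of $\Gamma_{\alpha_i}$. Since $\alpha_i$ is a $v_i$-gauge, the axiom $\alpha_i(ca) = v_i(c)+\alpha_i(a)$ with $a = 1$ gives $\alpha_i|_K = v_i$; hence $\alpha_i'|_K = \varepsilon\circ v_i$. One checks directly that $\varepsilon\circ v_i$ has valuation ring $V_i\!\cdot\! W = W_{j(i)}$ and so equals $w_{j(i)}$. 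By \cite[Prop.~4.3]{TW2} (coarsenings of gauges are gauges), $\alpha_i'$ is a $w_{j(i)}$-gauge on $A$.

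To finish, apply Theorem~\ref{thm:gaugeineq} to the $w$-gauge $\beta$ on the simple $F$-algebra $A$, whose $w_j$-components in the sense of Theorem~\ref{thm:gaugeismin} are $\beta_1,\ldots,\beta_\ell$. For each $i$, we have $\beta \le \alpha_i'$ (as $\beta$ is the pointwise minimum) and $\alpha_i'$ is a $w_{j(i)}$-gauge, so Theorem~\ref{thm:gaugeineq} yields $\alpha_i' = \beta_{j(i)}$, which is exactly the assertion that $\beta_{j(i)}$ is the $w_{j(i)}$-coarsening of $\alpha_i$. The main subtlety is the second step, verifying that $\alpha_i'$ is a $w_{j(i)}$-gauge on $A$; the rest is a mechanical appeal to the uniqueness supplied by Theorem~\ref{thm:gaugeineq}. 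A notable byproduct: if $j(i)=j(i')$ for distinct $i,i'$, then the conclusion forces $\alpha_i' = \alpha_{i'}' = \beta_{j(i)}$, so different $v_i$-components of $\alpha$ sharing a common $w$-coarsening on $K$ automatically have identical $w$-coarsenings as gauges on $A$.
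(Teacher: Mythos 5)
Your proposal is correct and follows essentially the same route as the paper's proof: coarsen each component $\alpha_i$ by $\varepsilon$ to get a $w_{j(i)}$-gauge (via \cite[Prop.~4.3]{TW2}), observe that $\beta = \varepsilon\circ\alpha \le \varepsilon\circ\alpha_i$, and then invoke Th.~\ref{thm:gaugeineq} to identify $\varepsilon\circ\alpha_i$ with $\beta_{j(i)}$. The only cosmetic difference is that you phrase the inequality through $\beta = \min_i(\varepsilon\circ\alpha_i)$, whereas the paper simply applies $\varepsilon$ to $\alpha\le\alpha_i$; the substance is identical.
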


\begin{proof}
Let $\alpha_{i,w}$ denote the $w_{j(i)}$-coarsening of $\alpha_i$.
Let $\Delta$ be the convex subgroup  of $\Gamma$ associated to~$w$.
Recall
that $\beta = \varepsilon\circ \alpha$, where $\varepsilon\colon
\Gamma \to \Gamma /\Delta$ is the canonical surjection. This $\varepsilon$
 is compatible with
the orderings on $\Gamma$ and $\Gamma/\Delta$.  Likewise, $w_{j(i)} =
\varepsilon \circ v_i$ and $\alpha_{i,w} = \varepsilon\circ \alpha_i$.
Now fix any $i$.
Since~$\alpha \le  \alpha_i$,
$$
 \beta\, = \, \varepsilon\circ \alpha \, \le \,
\varepsilon\circ \alpha_i \,=\, \alpha_{i,w},
$$
i.e., $\beta\le \alpha_{i,w}$. Since $\alpha_{i,w}$ is a
$w_{j(i)}$-gauge, Th.~\ref{thm:gaugeineq} shows that
$\alpha_{i,w} = \beta_{j(i)}$.
\end{proof}

We next determine when given $v_i$-gauges $\eta_1, \ldots, \eta_r$
yield a $v$-gauge as $\min\big(\eta_1, \ldots, \eta_r\big)$. For
 $i,j\in \{1,\ldots,r\}$ let $v_{ij}$ denote the finest common
coarsening
of $v_i$~and~$v_j$. That is, $v_{ij}$ is the valuation on $K$ with
associated valuation ring $V_{ij}
={V_i\!\cdot\!V_j}$.

\begin{theorem}\label{thm:compatible}\label{coarseningtheorem}
Suppose $v$ is defectless in the simple $F$-algebra $A$.
For $i = 1,\ldots ,r$,
let $\eta_i$ be a $v_i$-gauge
on $A$ with $\Gamma_{\eta_i}\subseteq \divh{\Gamma_v}$.
 Let $\alpha = \min\big(\eta_1, \ldots, \eta_r)$.
Then, $\alpha$ is a $v$-gauge on $A$ if and only if
$\eta_i$ and $\eta_j$ have the same $v_{ij}$-coarsening for all
pairs $i$, $j$.
When this occurs, each~$\eta_i$ is the $v_i$-component of~$\alpha$.
\end{theorem}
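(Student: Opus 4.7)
For the forward direction, the plan is: if $\alpha$ is a $v$-gauge, then Cor.~\ref{cor:compunique} immediately gives $\eta_i = \alpha_i$ (the $v_i$-component of $\alpha$), which settles the final claim of the theorem. Then, fixing $i, j$ and setting $w = v_{ij}|_F$ (a coarsening of $v$), I apply Cor.~\ref{cor:coarscomponents}: the $v_{ij}$-coarsening of $\eta_i$ and the $v_{ij}$-coarsening of $\eta_j$ are both equal to the $v_{ij}$-component of the $w$-coarsening of $\alpha$, so they coincide.

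For the backward direction, first note that $\alpha = \min(\eta_1, \ldots, \eta_r)$ is manifestly a surmultiplicative $v$-value function. The plan is to construct a graded $\gr(F)$-algebra homomorphism
\[
\phi = (\hat f_1, \ldots, \hat f_r)\colon\gr_\alpha(A) \,\longrightarrow\, \tprod_{i=1}^r \gr_{\eta_i}(A),
\]
using Lemma~\ref{surmultprod.lem} (applied to $\id_A$, since $\alpha \le \eta_i$), and to show it is an isomorphism. Injectivity is automatic: for a nonzero homogeneous $\tilde a^\alpha$ with $\alpha(a) = \gamma$, some index $k$ has $\eta_k(a) = \gamma$, and hence $\hat f_k(\tilde a^\alpha) \ne 0$. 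Once $\phi$ is known to be an isomorphism, a dimension count using defectlessness of $v$ in $K$ (Prop.~\ref{prop:deflessredtocent}) yields $[\gr_\alpha(A):\gr(F)] = [A:K][K:F] = [A:F]$, so $\alpha$ is a $v$-norm; and each $\gr_{\eta_i}(A)$ is graded simple by \cite[Cor.~3.7]{TW1}, making $\gr_\alpha(A)$ graded semisimple and $\alpha$ a $v$-gauge. The last statement of the theorem then follows from Cor.~\ref{cor:compunique}.

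To reach surjectivity of $\phi$, I plan to Henselize, in the style of the proof of Th.~\ref{thm:gaugeismin}. Pass to $(F_h, v_h)$ and write $B = A \otimes_F F_h = \prod_{i=1}^r B_i$ with $B_i = A \otimes_K L_i$ and $(L_i, w_i)$ a Henselization of $(K, v_i)$. Defectlessness of $v$ in $A$ gives, via Prop.~\ref{prop:deflessredtocent} (combined with Prop.~\ref{prop:gaugeimpliesdefectless}), defectlessness of $v_i$ in $A$; so $\tilde\eta_i := \eta_i \otimes v_{i,h}$ is a $v_{i,h}$-gauge on $B_i$, and by Prop.~\ref{gaugesemisimple} the map $\tilde\alpha := \min_i(\tilde\eta_i)$ is a $v_h$-gauge on $B$. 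A direct calculation on $a \otimes 1 = \sum_i e_i(a \otimes 1)$ shows $\tilde\alpha|_A = \alpha$. Since $L_i/K$ is immediate, $\gr_{\tilde\eta_i}(B_i) \cong_g \gr_{\eta_i}(A)$, hence $\gr_{\tilde\alpha}(B) \cong_g \prod_i \gr_{\eta_i}(A)$ (also using $\gr(F_h) = \gr(F)$); under this identification, the inclusion $\gr_\alpha(A) \hookrightarrow \gr_{\tilde\alpha}(B)$ is precisely $\phi$.

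The main obstacle is the surjectivity step itself, which reduces to a weak approximation statement in $A$: given $a_i \in A$ with $\eta_i(a_i) = \gamma$ for each $i$, find $a \in A$ with $\eta_i(a - a_i) > \gamma$ for all $i$. For pairwise independent $v_i$'s this is automatic, but in our generality the $v_i$ may share nontrivial common coarsenings $v_{ij}$, and it is precisely the equality of the $v_{ij}$-coarsenings of $\eta_i$ and $\eta_j$ that permits gluing partial approximations. I plan to prove this by induction on $r$: use the coarsening/residue machinery of \S1.a (Prop.~\ref{coarsergaugering} and Prop.~\ref{gaugealphazero}) to descend to the common coarsening, solve the approximation on the residue algebra there (where the shared coarsening value collapses the obstruction), and lift back, with the compatibility hypothesis controlling the obstruction at each stage.
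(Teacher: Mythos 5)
Your forward direction and the skeleton of your converse are the same as the paper's: equality of the $v_{ij}$-coarsenings via Cor.~\ref{cor:compunique} and Cor.~\ref{cor:coarscomponents}, and for the converse the map $\Psi\colon \gr_\alpha(A) \to \prod_{i=1}^r\gr_{\eta_i}(A)$, injectivity from $\alpha=\min(\eta_1,\ldots,\eta_r)$, then the dimension count using defectlessness of $v$ in $K$ and graded semisimplicity of the factors. The Henselization detour is correct but buys nothing: since $\tilde\alpha$ is a $v_h$-gauge, $[\gr_{\tilde\alpha}(B):\gr(F)]=[A:F]$, while $[\gr_\alpha(A):\gr(F)]\le[A:F]$ with equality exactly when $\alpha$ is a norm; so surjectivity of the inclusion $\gr_\alpha(A)\hookrightarrow\gr_{\tilde\alpha}(B)$ is literally equivalent to surjectivity of $\Psi$, and the problem is untouched.

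The genuine gap is that surjectivity step, which you only announce as a plan. The paper's proof hits the generators $(0,\ldots,0,\tilde b,0,\ldots,0)$ ($\tilde b$ in slot $k$) by the scalar trick $a=cb$ with $c\in K^\times$: the compatibility hypothesis is consumed precisely in showing that $\delta_i=\eta_k(b)-\eta_i(b)$ satisfies $\delta_i-\delta_j\in\Delta_{ij}$, and then Lemma~\ref{lem:approx} — whose proof rests on Ribenboim's general approximation theorem for compatible value tuples with respect to possibly dependent valuations, plus residue-level weak approximation for incomparable valuations — produces $c$ with $v_i(c)>\delta_i$ for $i\ne k$, $v_k(c)=0$ and $v_k(c-1)>0$. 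Your substitute, an induction on $r$ that "descends to the common coarsening, solves the approximation on the residue algebra, and lifts back," is not a proof and has concrete obstacles: for $r>2$ there is no single common coarsening (the $v_{ij}$ vary with the pair and form a tree, and the induction scheme is unspecified); the $\eta_i$ are not valuations, so without reducing to scalars from the center there is no multiplicativity with which to "control the obstruction," and lifting an approximation back from the residue algebra is itself an approximation problem of the same type; and no approximation theorem for dependent valuations is invoked, although some such field-theoretic input is unavoidable — it is exactly where the hypothesis on the $v_{ij}$-coarsenings enters. (Even your remark that the independent case is "automatic" needs the small argument of coordinate-wise weak approximation with respect to a fixed $K$-basis.) As written, the converse implication is therefore not established.
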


\begin{proof}  $\Rightarrow$  Suppose our $\alpha$ is a $v$-gauge
on $A$.  By Cor.~\ref{cor:compunique}, each $\eta_i$ is the
$v_i$-component  of $\alpha$.  Fix  indices $i, \, j$,  let
$w = v_{ij}|_F$, and let $\beta$ be the $w$-coarsening of $\alpha$.
The $v_{ij}$-coarsenings
of $\eta_i$ and $\eta_j$ must be the same,
since by Cor.~\ref{cor:coarscomponents} they coincide with the
$v_{ij}$-component
of $\beta$.

$\Leftarrow$  Suppose  each
$\eta_i$ and $\eta_j$ have the same $v_{ij}$-coarsening.
Now, $\alpha = \min\big(\eta_1, \ldots, \eta_r\big)$ is clearly
a surmultiplicative $v$-value function on $A$.  Consider the
graded $\gr(F)$-algebra homomorphism
\begin{equation}\label{gradedpsi}
\Psi\colon \gr_\alpha(A) \,\longrightarrow\, \tprod\limits_{i = 1}^r
\gr_{\eta_i}(A) \ \ \  \text{given by}\ \ \
a+ A_{>\alpha(a)}^{\alpha_{\phantom 1}} \,\mapsto\,
\big(a + A_{> \alpha(a)}^{\eta_1},
\ldots, a+A_{>\alpha(a)}^{\eta_r}\big).
\end{equation}
Then, $\Psi$ is well-defined and injective because 
$\alpha = \min\big(\eta_1, \ldots, \eta_r\big)$.
We will show below that $\Psi$ is an isomorphism.
It then follows that $\gr_\alpha(A)$ is semisimple,
as each $\gr_{\eta_i}(A)$ is semisimple.
Moreover,  $v$~is defectless in $K$ by Prop.~\ref{prop:deflessredtocent}
 since it is defectless in $A$, whence
\begin{align*}
\DIM{\gr_\alpha(A)}{\gr(F)} \, &= \,
\tsum\limits_{i = 1}^r\,\DIM{\gr_{\eta_i}(A)}{\gr(F)} \, = \,
\tsum\limits_{i=1}^r\,\DIM{\gr_{\eta_i}(A)}{\gr_{v_i}(K)}\,
\DIM{\gr_{v_i}(K)}{\gr(F)}\\
&= \tsum\limits_{i = 1}^r\, \DIM AK \, \DIM{\gr_{v_i}(K)}{\gr(F)}
\, = \, \DIM AK \,\DIM KF \, = \, \DIM AF.
\end{align*}
Hence, $\alpha$ is $v$-gauge.

To prove surjectivity of $\Psi$, we use the following approximation lemma,
for which we use the  notation: let $\Delta_{ij}$ be the convex
subgroup of the divisible hull $\Gamma$ of~$\Gamma_{v}$ associated to $v_{ij}$;
so $v_{ij}$ is a map $K\to \Gamma/\Delta_{ij}\, \cup \{\infty \}$. For any $\delta_1,\ldots, \delta_n\in \Gamma$ we say that the $n$-tuple $(\delta_1,$ \ldots, $\delta_n)$ is {\it compatible} in $\Gamma^n$ if for all $i,j$ we have $\delta_i - \delta_j \in \Delta_{ij}$.
\renewcommand{\qed}{\relax}
\end{proof}

\begin{lemma}
\label{lem:approx}
With the notation just above, fix some $k \in \{1, \ldots, n\}$, and let
$(\delta_1,$ \ldots, $\delta_n)$ be a compatible $n$-tuple in $\Gamma^n$
with $\delta_k = 0$.  Then, there is $c\in K^\times$ with
$v_i (c) >\delta_i$
for each $i \ne k$,  $v_k(c) = 0$, and   $v_k(c-1)>0$.
\end{lemma}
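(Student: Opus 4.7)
The plan: Since $v_k(c-1)>0$ forces $v_k(c)=v_k(1)=0$, the three required conditions collapse to the two requirements $c\in 1+J(V_k)$ and $v_i(c)>\delta_i$ for all $i\ne k$. Because the $v_i$ are distinct extensions of $v$ to $K$, they are pairwise incomparable, and the conclusion is a classical approximation statement for incomparable valuations with the usual dependence compatibility.

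The first thing I would do is verify that the prescribed conditions are consistent at every common coarsening $v_{ij}$. For the pair $(i,k)$, the $v_k$-condition forces $v_{ik}(c)=0$; the $v_i$-condition $v_i(c)>\delta_i$ projects via $\Gamma\to\Gamma/\Delta_{ik}$ to $v_{ik}(c)\ge\overline{\delta_i}$, and the hypothesis $\delta_i-\delta_k=\delta_i\in\Delta_{ik}$ gives $\overline{\delta_i}=0$, matching the $v_k$-side. For pairs $(i,j)$ with $i,j\ne k$, the conditions project to $v_{ij}(c)\ge\overline{\delta_i}=\overline{\delta_j}$, consistent by $\delta_i-\delta_j\in\Delta_{ij}$. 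If some $\delta_i\notin\Gamma_{v_i}$, I would first replace it by any $\delta_i'\in\Gamma_{v_i}$ with $\delta_i'\ge\delta_i$, which exists since $\Gamma_v$ (hence $\Gamma_{v_i}$) is unbounded whenever $v$ is nontrivial; replacing $\delta_i$ by $\delta_i'$ strengthens the target and preserves compatibility since $\Delta_{ik}$ is convex and contains all of $\Gamma_{v_i}\cap\Delta_{ik}$.

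Next I would carry out the approximation itself by induction on $r$. The base case $r=1$ is trivial: $c=1$ works. For the inductive step, after relabelling so that $k=1$, the induction hypothesis applied to $v_1,\ldots,v_{r-1}$ produces $c'\in K^\times$ with $v_1(c'-1)>0$ and $v_i(c')>\delta_i$ for $i=2,\ldots,r-1$. I then modify $c'$ to also achieve $v_r(c)>\delta_r$ while preserving the earlier conditions. The modification uses a ``separator'' $e\in K^\times$ constructed from the incomparability of $v_r$ with each $v_i$ ($i<r$): one finds $e$ with $v_r(e)$ very negative and $v_i(e)\ge 0$ appropriate, then rescales so that the correction $c=c'+e^{-N}$ (or a similar multiplicative adjustment $c=c'(1+e')$ with $v_1(e')>0$ and $v_r(e')$ large negative relative to $v_r(c')$) achieves the $v_r$-bound without disturbing the others. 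Here the compatibility $\delta_i-\delta_r\in\Delta_{ir}$ is exactly what makes such a separator exist: it guarantees that the residue valuations $v_r/v_{ir}$ and $v_i/v_{ir}$ on $\overline{K}^{\,v_{ir}}$ are independent, so the ordinary (independent) approximation theorem applies on each residue field and lifts to $K$.

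The main obstacle is the induction step, namely constructing the correction term that improves the $v_r$-value enough while leaving the other $v_i$-values untouched; this is where the compatibility hypothesis is essential and is the reason the na\"\i ve independent approximation theorem does not apply directly. Once the correct form of the separator is chosen, everything else is bookkeeping with coarsenings and the filtration inequalities.
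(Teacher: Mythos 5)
There is a genuine gap: your induction step is not actually carried out, and the correction mechanism you describe does not achieve the goal. You need $v_r(c)>\delta_r$, i.e.\ a \emph{large} value at $v_r$, while keeping $c\equiv 1$ at $v_1=v_k$. An additive correction $c=c'+e^{-N}$ with $v_r(e^{-N})$ large leaves $v_r(c)=v_r(c')$ unchanged (the minimum is attained at the uncorrected term), so it cannot raise the $v_r$-value; and a multiplicative correction $c=c'u$ with $v_1(u-1)>0$ is useless in exactly the hard case: if $v_1$ and $v_r$ are dependent, then $v_1(u)=0$ forces $v_{1r}(u)=0$, so $v_r(u)\in\Delta_{1r}$, and the same confinement holds (relative to $\Delta_{ir}$) for the other $v_i$. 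Since the inductive hypothesis gives you no control whatsoever over $v_r(c')$, the needed correction $\delta_r-v_r(c')$ may lie outside the reachable set, and the argument breaks down. Moreover, your stated reason that compatibility of the $\delta_i$ ``guarantees that the residue valuations $v_r/v_{ir}$ and $v_i/v_{ir}$ are independent'' is a misattribution: that independence is automatic, because $v_{ir}$ is by definition the \emph{finest} common coarsening; compatibility of the targets only ensures the system of conditions is consistent, it does not by itself produce the approximating element. Your preliminary step of replacing $\delta_i$ by some $\delta_i'\in\Gamma_{v_i}$ with $\delta_i'\ge\delta_i$ also needs care, since arbitrary such choices can destroy the condition $\delta_i'-\delta_j'\in\Delta_{ij}$ for pairs $i,j\ne k$.

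In effect you are trying to reprove, by an unspecified induction, precisely the nontrivial approximation theorem that the paper invokes. The paper's proof sidesteps your difficulties as follows: using convexity, $\bigl||\delta_i|-|\delta_j|\bigr|\le|\delta_i-\delta_j|$ gives $|\delta_i|-|\delta_j|\in\Delta_{ij}$, and since $\Gamma/\Gamma_v$ is torsion one can pick $m$ with all $m|\delta_i|\in\Gamma_v$; the tuple $(m|\delta_1|,\ldots,m|\delta_n|)$ is then compatible in Ribenboim's sense, and his general approximation theorem for incomparable valuations yields $d\in K^\times$ with $v_i(d)=m|\delta_i|$ for every $i$ (in particular $v_k(d)=0$). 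The weak approximation theorem (surjectivity of $V_1\cap\ldots\cap V_n\to \ov K^{\,v_1}\times\ldots\times\ov K^{\,v_n}$) then gives $t$ with $v_i(t)>0$ for $i\ne k$ and residue $\ov d^{\,-1}$ at $v_k$, and $c=td$ works. If you want to keep your structure, you must either cite Ribenboim's theorem (as the paper does) or supply a genuine proof of the induction step that handles the dependent case, with explicit control of the value of the partially constructed element at the new valuation modulo the relevant convex subgroups.
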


\begin{proof}
For each pair of indices $i$, $j$, we have
$$
0 \, \le \, \big||\delta_i|  - |\delta _j|\big| \, \le \,
|\delta _i - \delta_j|.
$$
Since $\delta_i -\delta_j\in \Delta_{ij}$ and $\Delta_{ij}$ is convex,
it follows that $|\delta_i| - |\delta_j| \in \Delta_{ij}$.
Furthermore, as $\Gamma/\Gamma_{v}$ is a torsion group, there is
$m \in \N$ with each $m|\delta_i| \in \Gamma_{v}$.  Hence,
each $m|\delta_i|\in \Gamma_{v_i}$ and $m|\delta_i|- m|\delta_j| \in
\Delta _{ij}$.  These are the precise conditions needed for
$\big(m|\delta_1|, \ldots,
m|\delta_n|\big)$ to be compatible in $\Gamma_{v_1}\times \ldots\times
\Gamma_{v_n}$ in the terminology of Ribenboim's paper
\cite{R}.  Since this compatibility holds, the general approximation
theorem for incomparable valuations on $K$ \cite[Th.~5]{R}
 says that there exists
$d\in K^\times$ with
$$
v_i(d) \, =\, m \, |\delta_i|, \qquad \text{for all \,$i$}.
$$
In particular, $v_k(d) = m\,|\delta _k| = 0$.  Now,
let $V_i$ be the valuation ring of $v_i$, and
let ${T = V_1\cap \ldots\cap V_n
 \subseteq K}$. A weaker approximation theorem for
incomparable valuations on $K$ (see
~\cite[Th.~3.2.7(3), p.~64]{EP})
says that the canonical map $\rho\colon T\to
\ov K^{\,v_1} \times \ldots \times \ov K^{\,v_n}$ is surjective.  Therefore,
there is $t\in T$ with $\rho(t) =\big( \ov 0 , \ov 0, \ldots,  \ov d\,\inv,
 \ldots,\ov 0\big)$, i.e., $v_i(t)>0$ for $i \ne k$ and $v_k(t) = 0$
with $\ov t = \ov d{\,\inv}$ in $\ov K^{\,v_k}$.  Let $c = td$.  Then
for $i \ne k$ we have
$$
v_i(c) \, =\ v_i(t) + v_i(d) \,> \, 0 + m|\delta_i| \, \ge \delta_i,
$$
 hence $v_i(c) >\delta _i$.  Also, $v_k(c) = v_k(t) + v_k(d) = 0$,
and in $\ov K^{\,v_k}$,
$$
\ov c \, = \, \ov t\cdot \ov d \, = \,\ov d\, \inv \ov d \, = \, \ov 1.
$$
 Thus, $c$ has all the required properties. 
\end{proof}

\begin{proof}[Proof of Th.~\ref{thm:compatible} completed:]
It remains only to prove the surjectivity of the map $\Psi$
in \eqref{gradedpsi}. Fix any $k\in \{1,\ldots, r\}$, and take any $b\in A\setminus \{0\}$. Let
$$
\delta_i \, = \, \eta_k(b) - \eta_i(b) \qquad\text{for \ } i = 1,\ldots, r.
$$
For each pair of indices $i$, $j$, let $\Delta_{ij}$ be the convex
subgroup
of $\divh{\Gamma_F}$ associated to the finest common coarsening
$v_{ij}$ of $v_i$ and $v_j$ on $K$.
Then, since $\eta_i$ and $\eta_j$ are assumed to have the same
$v_{ij}$-coarsening, $\eta_i(b)$ and $\eta_j(b)$ have the same
image in $\Gamma/\Delta_{ij}$; hence,
$$
\delta_i - \delta_j\, = \, \eta_j(b) - \eta_i(b) \,\in\, \Delta_{ij}.
$$
Thus, $(\delta_1, \ldots, \delta_r)$ is a compatible $r$-tuple in
$\divh{\Gamma_F}^r$. Since $\delta_k = 0$, Lemma~\ref{lem:approx} yields $c\in K^\times$ with
$v_i(c) >\delta_i$ for all $i \ne k$ and $v_k(c) = 0$ with
$\ov c = \ov 1$ in
$\ov K^{\,v_k}$. Let $a = cb \in A$.  Then, for $i \ne k$,
$$
\eta_i(a) \,=\, v_i(c) + \eta_i(b) \, > \, \delta_i + \eta_i(b) \, = \,
\eta_k(b),
$$
so $\eta_i(a) > \eta_k(b)$.
But
$$
\eta_k(a) \,=\,v_k(c) + \eta_k(b) \, = \, \eta_k(b).
$$
Hence, $\alpha(a) = \min\big(\eta_1(a), \ldots,\eta_r(a)\big) = \eta_k(b)$.
Moreover,
$$
\eta_k(a-b) \, = \, \eta_k(cb-b) \, = \, v_k(c-1) + \eta_k(b)
\, > \, \eta_k(b).
$$
Thus, for $\tilde a = a+A_{>\alpha(a)}^\alpha \in \gr_\alpha(A)$
and $\tilde b = b+ A_{>\eta_k(b)}^{\eta_k}\in \gr_{\eta_k}(A)$, we have
$$
\Psi(\tilde a) \, = \, \big(a+ A_{>\alpha(a)}^{\eta_1}, \ldots,
a+ A_{>\alpha(a)}^{\eta_r}\big)\, = \,
\big(0, \ldots, 0, \begin{array}[t]{c}\tilde b\\k\end{array}, 0,
\ldots,0\big)\in \tprod\limits_{i=1}^r
\gr_{\eta_i}(A).
$$
Since for arbitrary $k$ and $b$ such elements generate
$\prod_{i=1}^r
\gr_{\eta_i}(A)$, the map $\Psi$ is surjective.  This completes the proof.
\end{proof}

\begin{corollary}\label{cor:viindep}
Suppose $v$ is defectless in $A$,  and suppose the extensions
$v_1, \ldots, v_r$ of $v$
to $K$ are pairwise independent. Take  any $v_i$-gauges $\eta_i$ on $A$,
$i = 1,
\ldots, r$.  Then $\min\big(\eta_1, \ldots, \eta_r\big)$ is a $v$-gauge
on $A$ with
components $\eta_1, \ldots \eta_r$.  In particular, this  holds whenever
$v$ has rank $1$.
\end{corollary}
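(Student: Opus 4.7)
The plan is to reduce this corollary to an immediate application of Theorem~\ref{thm:compatible}. That theorem characterizes when $\min(\eta_1,\ldots,\eta_r)$ is a $v$-gauge in terms of a compatibility condition: for each pair $i,j$, the $v_{ij}$-coarsenings of $\eta_i$ and $\eta_j$ must coincide, where $v_{ij}$ is the finest common coarsening of $v_i$ and $v_j$ on $K$. So the entire task is to verify this compatibility under the independence hypothesis.

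First I would translate "pairwise independent" into the language of coarsenings: $v_i$ and $v_j$ being independent means $V_i \cdot V_j = K$, so $v_{ij}$ is the trivial valuation on $K$. In the setup preceding Theorem~\ref{thm:compatible}, the convex subgroup $\Delta_{ij}$ of $\divh{\Gamma_v}$ associated to $v_{ij}$ is then all of $\divh{\Gamma_v}$. Next I would note that a $v_i$-gauge $\eta_i$ on $A$ automatically takes values in $\divh{\Gamma_v}$ (possibly after replacing $\Gamma$ by $\divh{\Gamma_v}$ as the paper remarks is harmless), so $\Gamma_{\eta_i}\subseteq\Delta_{ij}$. Hence the canonical projection $\varepsilon\colon \Gamma \to \Gamma/\Delta_{ij}$ collapses every value of $\eta_i$ to $0$. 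Therefore the $v_{ij}$-coarsening $\varepsilon\circ\eta_i$ is the trivial value function (sending nonzero elements of $A$ to $0$ and $0$ to $\infty$), and the same is true of $\varepsilon\circ\eta_j$. So the compatibility condition holds trivially for every pair, and Theorem~\ref{thm:compatible} yields that $\alpha=\min(\eta_1,\ldots,\eta_r)$ is a $v$-gauge with components $\eta_1,\ldots,\eta_r$.

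For the last sentence, I would argue that if $v$ has rank $1$, then $\divh{\Gamma_v}$ has rank $1$; since each $\Gamma_{v_i}$ embeds in $\divh{\Gamma_v}$ and $v_i$ is nontrivial (it extends the nontrivial $v$), each $v_i$ has rank exactly $1$. Two distinct rank-one valuations on $K$ have no nontrivial common coarsening (as the only proper coarsening of a rank-one valuation is the trivial one), so $v_1,\ldots,v_r$ are automatically pairwise independent, and the first part of the corollary applies.

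I do not anticipate a serious obstacle here; the main point is the conceptual observation that independence is exactly what forces the common coarsening $v_{ij}$ to be trivial, which makes the compatibility condition in Theorem~\ref{thm:compatible} vacuous. The only mildly delicate point to watch is confirming that $\Gamma_{\eta_i}\subseteq \divh{\Gamma_v}$ so that the coarsening projection really does annihilate all values of $\eta_i$; this is handled by the standing convention, recorded just before Proposition~\ref{coarsergaugering}, that one may take $\Gamma=\divh{\Gamma_v}$.
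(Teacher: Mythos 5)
Your proposal is correct and follows essentially the same route as the paper: the paper's proof is precisely that independence of $v_i$ and $v_j$ makes the finest common coarsening $v_{ij}$ trivial, so the compatibility condition of Th.~\ref{thm:compatible} holds automatically. You merely spell out details the paper leaves implicit (that the trivial $\Delta_{ij}$-coarsening of any gauge is the trivial value function, and that distinct rank-one extensions are automatically independent), which is fine.
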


\begin{proof}
This is immediate from the preceding theorem. For when $v_i$ and $v_j$
are independent valuations, their finest common coarsening
$v_{ij}$ is the
trivial valuation, so the compatibility condition on
$\eta_i$ and $\eta_j$ holds automatically.
\end{proof}

\section{Dubrovin valuation rings and Gr\"ater rings}

In this section we study the connection between gauges and Dubrovin
valuation rings.
The best general reference for Dubrovin theory is
the book \cite{MMU}.
 Let $F$ be a field and let $A$ be a central simple
$F$-algebra. A subring $B$ of $A$ is called a
\emph{Dubrovin valuation ring} of $A$ if there is an ideal $J$ of
$B$ such that the following hold:
\begin{enumerate}
\item[(1)]  $B/J$ is simple Artinian;
\item[(2)] for each $s \in A \setminus B$ there exist $b, c \in B$ such
that $bs, sc \in B \setminus J$.
\end{enumerate}
By \cite[Lemma~5.2, p.~22]{MMU}, the ideal $J$ is the only maximal ideal
of $B$; therefore,
$J  = J(B)$, the Jacobson ideal of $B$. Moreover, the center
$Z(B) = F \cap B$ is a valuation ring of $F$ and
$J \cap F = J(Z(B))$ (see \cite[Lemma~7.1, p.~35]{MMU}).
  We review a few of the nontrivial
properties  of Dubrovin
valuation  rings  that we will    use  repeatedly below. 
Proofs of all of them can be found in Chapters 5 and 6 of
\cite {MMU}.
Let $S$ be an overring of $B$ in $A$, i.e., a ring with
$B\subseteq S \subseteq A$. Then, $S$ is a Dubrovin valuation
ring of $A$; $J(S)$ is a prime ideal of $B$;  $S$ is the left
(and right)
localization of $B$ with respect to the elements of $B$ regular mod $J(S)$;
and $B/J(S)$ is a Dubrovin valuation ring of $S/J(S)$.
Also, $S$ is the central localization $S = B\otimes_{Z(B)}W = {B\!\cdot\!W}$,
where the valuation ring $W = Z(S)$ is the localization of $Z(B)$ at its
prime ideal $J(S) \cap F$.
Moreover, every prime
ideal of $B$ has the form  $J(S)$ for some such $S$.
If $R$ is a subring
of $B$ with $J(B) \subseteq R$, then $R$ is a Dubrovin valuation ring of
$A$ if and only if $R/J(B)$ is a Dubrovin valuation ring of~$B/J(B)$.
Examples of Dubrovin valuation rings include
\begin{itemize}
\item[$\scriptstyle{\bullet}$]
total valuation ring in division algebras $D$, i.e., subrings
$T$ of $D$ such that $d$ or
$d\inv$ lies in $T$ for every $d \in D^\times$;
\item[$\scriptstyle{\bullet}$]
matrix rings $\mat_n(B)$ for any Dubrovin valuation ring $B$;
\item[$\scriptstyle{\bullet}$]
rings $eBe$ for any Dubrovin ring $B$ of $A$ and any nonzero
idempotent $e$ of $A$;
\item[$\scriptstyle{\bullet}$]
 Azumaya algebras over
commutative valuation rings.
\end{itemize}
Associated to a Dubrovin valuation ring $B$ there is
 its \emph{residue ring} $\overline{B} = B/J(B)$, which is simple
Artinian.  This $B$ also has a \emph{value group}
$\Gamma_B = \mathsl{st}(B)/B^\times$, where
$\mathsl{st}(B) = \{x \in A^\times \; | \; xBx^{-1} = B \}$;
the value group
$\Gamma_B$ is a totally ordered abelian group.
It was proved in \cite{W} that
there is a
strong connection between Dubrovin valuation rings and invariant valuation
rings (i.e., the valuation rings associated to a valuation
on a division algebra), which shows up
with passage to the Henselization of the valuation of the center:
 Let $(F_h,v_h)$ be a Henselization
of $(F,v)$. The ring $A \otimes_F F_h$ is a central simple $F_h$-algebra;
hence,
\begin{equation}\label{eq:defnB}
A \otimes_{F} F_h \,\cong\, \mat_{n_B}(D_h),
\end{equation}
 where $n_B$ is a positive
integer and $D_h$ is a central division $F_h$-algebra.  
Thus, $n_B$ is the matrix size of the algebra $A \otimes_F F_h$.
The Henselian valuation
$v_h$ has a unique extension
to a valuation $w$ on~$D_h$. By \cite[Th.~B]{W} or
\cite[Lemma~11.4, p.~59]{MMU},
\begin{equation}\label{eq:gammaB}
\Gamma_B \,=\,  \Gamma_w\quad \text{in} \ \ \divh{\Gamma_v} 
\end{equation} and
\begin{equation}\label{eq:deftB}
\overline{B} \,\cong \,\mat_{t_B}(\overline{D_h}^{\,w}),
\end{equation}
 where  the  positive
integer $t_B$ is the matrix size of $\ov B$.
Moreover, $n_B/t_B$ is always an integer that appears in
the \emph{Ostrowski Theorem} for Dubrovin valuation rings, which says that
\begin{equation}\label{defectdubrovin}
[A:F] = [\overline{B}: \overline{F}^{\,v}]\cdot
|\Gamma_B:\Gamma_v| \cdot \big(n_B/t_B\big)^2 \cdot \partial_B,
\end{equation}
where the \emph{defect} $\partial_B = {\ov p}^d$, for
$\ov p = \charac(\overline{F}^{\,v})$ and $d$ a non-negative integer,
with ${\ov p}^d = 1$ if $\ov p = 0$
(see \cite[Th.~C]{W}). It follows from \eqref{defectdivision} applied to
$D_h$  and
\eqref{defectdubrovin} that $\partial_B = \partial_{D_h/F_h}$.
Hence, $v$ is defectless in $A$ if and only if $\partial_B = 1$ for any
Dubrovin valuation ring $B$ of $A$ extending $V$.

Dubrovin valuation rings have good  properties
with respect to extension from the center. More precisely,
if $V$ is a valuation ring of $F$ then there always exists a Dubrovin
valuation ring $B$ of the central simple
$F$-algebra $A$ extending $V$, i.e., $V = B \cap F$, see
\cite[Th.~9.4, p.~50]{MMU}.
Moreover, by \cite[Th.~A]{W} or \cite[Th.~9.8, p.~52]{MMU}
there is as much uniqueness as possible, i.e.,~all Dubrovin
ring extensions of $V$ to $A$ are
conjugate in $A$. But the number of extensions of $V$ to
Dubrovin valuation rings of $A$ is usually infinite;  the exception
occurs only when
 $A$ is a division algebra and $V$ can be extended to a total
valuation ring $T$ of $A$. In this special case,
 the number of total valuation rings of $A$ extending~$V$ is given
by $n_B/t_B$. In order to obtain a
better understanding of the Ostrowski
Theorem (equivalently, a better interpretation of the integer
$n_B/t_B$ for arbitrary Dubrovin valuation rings),  Gr\"ater introduced in
\cite{G} the Intersection Property for a finite number of Dubrovin
valuation rings as follows: Let $B_1, \ldots, B_n$ be Dubrovin valuation
rings of $A$ and let $R = B_1\cap \ldots \cap B_n$. Let
$\mathcal{B}(B_i)$ denote the set of all overrings of $B_i$ in $A$. Then
$B_1, \ldots,B_n$ have the \emph{Intersection Property} (IP) if the map
\begin{equation}\label{mapIP}
\begin{array}{rcl}
\varphi\colon \mathcal{B}(B_1) \cup \ldots \cup \mathcal{B}(B_n) &
 \rightarrow & \mathsl{Spec}(R)\\
S \qquad\qquad& \mapsto & J(S)\cap R
\end{array}
\end{equation}
is a well-defined order-reversing bijection, where
$\mathsl{Spec}(R)$ is the set of prime ideals of $R$. Actually, if one
supposes only that
$\varphi$ is well-defined (i.e., each ideal $J(S) \cap R$  is 
a prime ideal of $R$),  then  in fact $\varphi$~is an order-reversing bijection
(see \cite{Z}).  Note that if $B_j \supseteq B_i$ for some $i,j$, then we may
delete $B_j$ from the list of $B$'s and the ring $R$
and the domain and target of $\varphi$ are unchanged.  Thus, in working
with the IP, we may delete all such redundant $B_j$ and assume that the
$B_i$ are pairwise incomparable.

At the same time that Intersection Property was introduced by Gr\"ater,
Morandi was working independently on a general approximation theorem for
a finite set of
Dubrovin valuation rings (see~\cite{M3}). He found the following condition:
Let $B_1, \ldots, B_n$ be
pairwise incomparable Dubrovin valuation
rings. For each $i \neq j$,
let $B_{ij}$ be the subring of $A$  generated by  $B_i$ and $B_j$;
so $B_{ij}$ is a Dubrovin valuation ring, since it is an
overring of $B_i$.
Set
$\widetilde{B_i} = B_i/J(B_{ij})$ and $\widetilde{B_j} = B_j/J(B_{ij})$,
which are Dubrovin valuation rings of $\ov{B_{ij}} = B_{ij}/J(B_{ij})$.
The condition needed for the approximation theorem to hold for
$B_1, \ldots, B_n$ is that
the valuation rings $Z(\widetilde{B_i})$ and $Z(\widetilde{B_j})$ be
 independent   in the field $Z(\overline{B_{ij}})$, i.e., 
${Z(\widetilde{B_i})\!\cdot\! Z(\widetilde{B_j})} = 
Z(\overline{B_{ij}})$. Gr\"ater proved that
this condition is equivalent to the Intersection Property
for $B_1, \ldots, B_n$
(see \cite[Cor.~6.2, Prop.~6.3, Cor.~6.7]{G} or \cite[Cor.~16.9,
p.~93]{MMU}).
It follows that
\begin{align}\label{eq:pairwiseIP}
\begin{split} 
&\text{$B_1, \ldots, B_n$ satisfy the IP if and only if each pair
$B_i,B_j$ satisfies the IP,} \\
&\text{for all $i, j\in \{1,\ldots, n\}$
with $i \ne j$. }
\end{split}
\end{align}

We call a subring $R$ of a central simple $F$-algebra $A$
a {\it Gr\"ater ring}  if $R = B_1 \cap \ldots \cap B_n$
where $B_1, \ldots, B_n$ is a family of incomparable
 Dubrovin valuation
rings of $A$ with the IP and $R$ is integral over $Z(R)$.
This name is appropriate because of the remarkable properties
Gr\"ater proved about such rings in \cite{G} and \cite{G2}.
(His results appear also in \cite{MMU}.)
He showed that a Gr\"ater ring is a semilocal B\'ezout ring
whose center is a finite intersection of valuation rings of
$F$. When $R= B_1\cap \ldots \cap B_n$ as above,
then the localizations of the $R$ with respect to its
maximal ideals exist and coincide with the $B_i$.
Thus in particular,
\begin{equation} \label{eq:numofBi}
\text{ the number $n$ of $B_i$} \, = \, \text{the number of
maximal ideals of $R$.}
\end{equation}
Moreover,
if $T$ is any finite intersection of valuation rings of $F$
then there exists a Gr\"ater ring $R$ of $A$ with $Z(R) = T$.
Further, Gr\"ater  proved  a very strong uniqueness property:
If $R'$ is another Gr\"ater ring of $A$ with $Z(R') = T
= Z(R)$, then there is $q\in A^\times$ with $R' = q R q\inv$.
See \cite[Th.~16.14, p.~94; Th.~16.15, p.~ 96]{MMU} for proofs
of these properties.

There are also striking properties when the center
of a Gr\"ater ring is a
valuation ring $V$:  Let  $R = B_1\cap \ldots\cap B_n$ be 
a Gr\"ater ring with center $V$ (and the $B_i$  incomparable);
then, each $B_i$ has
center~$V$, by \cite[Lemma~ 16.13, pp.~93--94]{MMU}.
Moreover, if $B_1', \ldots, B_k'$ is another family of
Dubrovin valuation rings of $A$ each with center $V$
and having the~IP, then the $B_i'$ are incomparable
(since overrings of $B_i'$ are central localizations) and
this family can be enlarged with further Dubrovin valuation rings
$B_{k+1}',\ldots,  B_m'$ each with center $V$ so that
$R' = B_1' \cap \ldots\cap B_m'$ is a Gr\"ater ring with center~$V$
(see \cite[Th.~ 16.14, p.~ 94]{MMU}).
The conjugacy result noted above shows that $R' \cong R$, hence
$m = n$ by~\eqref{eq:numofBi}. Thus, this number $n$ depends only on $V$ and $A$,
and Gr\"ater defined it to be the {\it extension number} of
$V$ in $A$.  Thus, the extension number is the number of
maximal ideals in any Gr\"ater ring of $A$ with center $V$,
and it equals the number of incomparable Dubrovin valuation
rings in any family whose intersection is a Gr\"ater ring
with center $V$ (see \eqref{eq:numofBi}); it is also the maximum number of
Dubrovin rings with center $V$ which have the IP.
 It turns out  that the
extension number  also equals the
quotient  $n_B/t_B$ of the integers $n_B$ and $t_B$ given in
\eqref{eq:defnB} and \eqref{eq:deftB}, for
any Dubrovin valuation ring $B$ of $A$, see \cite[Prop.~19.2, p.~108]{MMU}.
From the conjugacy of such rings $B$, it is clear that $n_B$ and
$t_B$ do not depend on the choice of $B$.
Moreover, it follows
from  the definition of
$n_B$~and~$t_B$ that the extension number depends only on the valuation
ring
$V$ of the center $F$ of $A$ and the  Brauer equivalence class of $A$. Let
\begin{equation}
\xi_{V,[A]} \, = \, \text{the extension number of $V$ to $A$}
\, = \, n_B/t_B.
\end{equation}
 When the algebra  $A$ in question is clear, we write  $\xi_V$ for
$\xi_{V,[A]}$.

Now let $W$ be any (valuation) ring with $V \subseteq W \subseteq F$, 
and let ${S = W\!\cdot\! B}$. Then $S$ is
  a Dubrovin valuation ring of $A$ containing $B$ and $W = S \cap F$.
Let $\widetilde{V} = V/J(W)$, which is a
valuation ring of $\ov W = W/J(W)$,
 and let
$\widetilde{B} = B/J(S)$, which is a Dubrovin valuation ring of
the simple $\ov W$-algebra $\ov S = S/J(S)$.
 Let
\begin{equation}\label{eq:deflVW}
\ell_{V,W} \, = \,\text{the number of extensions of
$\widetilde{V}$ to valuation rings of  $Z(\overline{S})$}.
\end{equation}
 This number does not depend on the choice
of the Dubrovin extensions $B$ of $V$ and $S$ of $W$,
by the conjugacy of $B$ with center $V$ and the property that
overrings of $B$ are obtained by central localization. The following fundamental
relation was given in \cite[Th.~E]{W}:
\begin{equation} \label{teoremaE}
\xi_{V,[A]} \,=\,
\xi_{W,[A]}\, (n_{\widetilde{B}}/t_{\widetilde{B}}) \,\ell_{V,W}
\, = \,\xi_{W,[A]}\, \xi_{Z(\tilde B),[\ov S]} \,\ell_{V,W}.
\end{equation}
Hence,
\begin{equation}\label{eq:xidiv}
\xi_{W,[A]}\, \big| \, \xi_{V,[A]}\qquad\text{ for any valuation ring $W$
with $V \subseteq W \subseteq F$.}
\end{equation}
There is another interpretation of $\ell_{V,W}$ that will be needed later:
Let $w$ be the valuation on $F$ associated to $W$, let $(F_{h,w},w_h)$ be
the Henselization of $(F,w)$,  let $D_{h}$ be the associated division
algebra of $A \otimes _F F_{h,w}$, and let $\ov {D_{h}}^{\,w'}$ be the
residue division algebra of $D_{h}$ for the valuation
$w'$ on~$D_{h}$ extending $w_h$ on $F_{h,w}$.  Let $u$ be the valuation
of $\widetilde V$ on $\ov F^{\,w}$.  Then,
\begin{equation}\label{eq:lVWchar}
\ell_{V,W}\, = \,\text{the number of extensions of $u$ to
valuations of
$Z(\ov {D_{h}}^{\,w'})$}.
\end{equation}
This holds because $\ov S \cong \mat_{t_S}(\ov {D_{h}}^{\, w'})$,
see \eqref{eq:deftB},
  so the $\ov F^{\,w}$-algebras
$\ov S$ and $\ov {D_{h,w}}^{\, w'}$ have isomorphic centers.

Note that the following conditions are equivalent:
%\begin{equation}\label{}
\begin{enumerate}
\item[(a)]
Some (so every) Dubrovin valuation ring
$B$ of $A$ extending $V$ is integral over $V$.
\item[(b)]$\xi_{V,[A]} = 1$.
\item[(c)] Every Gr\" ater ring $C$ of $A$ with $C\cap F = V$
is a Dubrovin valuation ring.
\end{enumerate}
%\end{equation}
These conditions do not hold in general, but for given  $V$ and 
$A$, they hold for some nontrivial coarsening of $V$,
see Prop.~\eqref{prop:maxint} below.

\begin{example}
Let $A$ be a central simple $F$-algebra, and let $V$ be a rank $2$ valuation
ring of $F$.  Let $W$ be the rank $1$ valuation ring with
$V\subsetneqq W \subsetneqq F$. Then $\xi_{W,[A]} = 1$ by 
Prop.~\ref{prop:maxint}(iv) below.
Let $S$ be a Dubrovin valuation ring of $A$ extending $W$.  So,
$S$ is integral over $W$.  Let $\ov S = S/J(S)$, which is a simple
$\ov F = W/J(W)$-algebra.  Suppose the valuation ring $U =
V/J(W)$ of~$\ov F$ has exactly $k$  different extensions
$U_1, \ldots , U_k$ to $Z(\ov S)$.
The map $B \mapsto B/J(S)$ gives a one-to-one correspondence
between the Dubrovin valuation rings $B$ of $A$ extending $V$
and lying in $S$, and the Dubrovin valuation rings~$T$ of
$\ov S$ with $T \cap \ov F = U$.  For each such $T$, the
intersection $T\cap Z(\ov S)$ is a valuation ring of~$J(S)$
extending $U$, so it is one of the $U_j$.  Moreover,
$T$ is integral over $U_j$ since $U_j$ has rank $1$. 
Let $B_1, \ldots, B_n$ be Dubrovin valuation rings of
$A$ extending $V$ with each $B_i \subseteq S$.
Let $\tilde{B_i} = B_i/ J(S)$, and let $\tilde {B_i} \cap
Z(\ov S) = U_{j(i)}$.  Then, $B_1, \ldots, B_n$ have the IP
if and only if $j(1), \ldots, j(n)$ are all different.
The ring $C = \bigcap_{i =1}^n B_i$ is a Gr\"ater ring if and
only if further $C$ is integral over $V$, if and only if
 $\bigcap_{i =1}^n \tilde {B_i}$ is integral over
$U$, if and only if $n = k$ and $\{j(1),j(2), \ldots, j(n)\}
= \{1,\ldots, n\}$.  Thus, $\xi_{V,[A]} = k$.
\end{example}

In general, there is no valuation associated to a Dubrovin valuation ring
$B$ of $A$.  However, a significant exception occurs when $B$ is integral
over $Z(B)$:
Morandi introduced in \cite{M2} a type of value function associated to any
Dubrovin valuation ring integral over its center. Let
${\alpha\colon A \rightarrow \Gamma \cup \{\infty\}}$ be a surmultiplicative
$v$-value function. We say that $\alpha$ is a
\emph{Morandi value function} if
\begin{enumerate}
\item[(1)] $A_0$ is a simple ring;
\item[(2)] $\Gamma_\alpha = \alpha(\mathsl{st}(\alpha)),$ where
$\mathsl{st}(\alpha) =
\{a \in A^\times \; | \; \alpha(a^{-1}) = -\alpha(a)\}.$
\end{enumerate}
If $v = \alpha|_F$, then $v$ is a valuation on $F$ and
we call $\alpha$ a $v$-Morandi value function.
Morandi showed in \cite[Th.~2.4]{M2} (or see \cite[Th.~23.3, p.~135]{MMU}) the following:
If $\alpha$ is a Morandi value function on $A$,  then its associated ring
$$
R_\alpha \,= \,\{x \in A \; | \; \alpha(x) \geq 0 \}
$$ is a
Dubrovin valuation ring integral over its center 
and 
\begin{equation}\label{eq:Morvalgp}
\Gamma_{R_\alpha}\, =\, \Gamma_\alpha. 
\end{equation}
 Conversely, to every
Dubrovin valuation ring~$B$ of $A$ integral over its center, there exists
a Morandi value function $\alpha$ on $A$ such that $R_\alpha = B$;
moreover, $\alpha$ is uniquely determined by $B$ (see \cite[Th.~2.3,
Prop.~2.6]{M2}
or \cite[Th.~23.2, p.~ 134]{MMU}).
(By contrast, we will see in
Example~\ref{ex:notdetermined} below  that if $\beta$ is a gauge on
$A$, then its  gauge ring $R_\beta$ does not always determine $\beta$.)
 The connection between gauges and Morandi value functions
was shown in \cite[Prop.~2.5]{TW1}: If
$A$ is a central simple $F$-algebra and $v$ is a valuation on $F$
defectless in $A$, then a surmultiplicative $v$-value function $\alpha$
is a Morandi value function if and only if $\alpha$ is a gauge with $A_0$
simple. The following result generalizes this for simple  but not
necessarily central simple algebras. If $\alpha$ is a gauge on
a semisimple algebra $A$, then
since $\gr_\alpha(A)$ is graded semisimple, the
degree zero part $A_0$ of $\gr_\alpha(A)$ must be semisimple
(cf.~\cite[Prop.~2.1]{TW1}). But 
$A_0$ is
 not necessarily simple, even if $A$ is
simple, as is illustrated in Ex.~\ref{ex;endgauge}. Let
\begin{equation}\label{eq:defomega}
\omega(\alpha) \, = \, \text{the number of simple components of
$A_0$}.
\end{equation}
\begin{theorem}  \label{morandiIP}
Suppose $v$ is a valuation on $F$ defectless in a simple $F$-algebra $A$.
Let $v_1, \ldots, v_r$ be all the extensions of $v$ to $K=Z(A)$.
\begin{enumerate}
\item[(i)] For $i\in\{1, \dots, r\}$, let $\alpha_i$ be a $v_i$-Morandi
value
function on $A$ and let $B_i$ be the Dubrovin valuation ring associated
to $\alpha_i$. Let $\alpha = \min \big(\alpha_1, \ldots, \alpha_r \big).$
Then $\alpha$ is a $v$-gauge on $A$ if and only if $B_1, \ldots, B_r$
have the IP. When this occurs, $\omega(\alpha) = r$. 
\item[(ii)] Let $\alpha$ be a $v$-gauge on $A$ with $\omega(\alpha)=r$.
Then there are uniquely determined $v_i$-Morandi value functions
$\alpha_i$ for $i \in\{ 1, \ldots, r\}$ such that
$\alpha = \min \big(\alpha_1, \ldots, \alpha_r\big).$
\end{enumerate}
\end{theorem}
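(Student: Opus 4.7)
My plan for (i) is to reduce, via Theorem~\ref{coarseningtheorem}, the statement that $\alpha$ is a $v$-gauge to a pairwise condition on the $B_i$, and then identify that pairwise condition as the IP. First, since $v$ is defectless in $A$, Proposition~\ref{prop:deflessredtocent} gives that each $v_i$ is defectless in $A$, so by \cite[Prop.~2.5]{TW1} each Morandi value function $\alpha_i$ is a $v_i$-gauge on the central simple $K$-algebra $A$ with simple degree-zero part $A_0^{\alpha_i}$. Moreover $\Gamma_{\alpha_i} = \Gamma_{B_i} \subseteq \divh{\Gamma_{v_i}} \subseteq \divh{\Gamma_v}$ by~\eqref{eq:Morvalgp}. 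Theorem~\ref{coarseningtheorem} then applies: $\alpha$ is a $v$-gauge iff, for every pair $i \ne j$, the $v_{ij}$-coarsenings of $\alpha_i$ and $\alpha_j$ coincide.

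Fix such a pair, and let $\beta_i$ and $\beta_j$ denote the respective $v_{ij}$-coarsenings. By Proposition~\ref{coarsergaugering}, their gauge rings are the central localizations $B_i \cdot V_{ij}$ and $B_j \cdot V_{ij}$. Every element of $B_i$ is integral over $V_i \subseteq V_{ij}$, so $B_i \cdot V_{ij}$ is a Dubrovin valuation ring of $A$ with center $V_{ij}$ and is integral over $V_{ij}$. Hence $\beta_i$ is a $v_{ij}$-Morandi value function (and similarly $\beta_j$), and because such functions are uniquely determined by their gauge rings (\cite[Th.~23.2, p.~134]{MMU}), the equality $\beta_i = \beta_j$ is equivalent to $B_i \cdot V_{ij} = B_j \cdot V_{ij}$.

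The step I expect to be the main obstacle is the equivalence between $B_i \cdot V_{ij} = B_j \cdot V_{ij}$ and the pairwise IP for $B_i, B_j$. Both products lie in the smallest common overring $B_{ij} = B_i \cdot B_j$ and contain $V_{ij} = V_i \cdot V_j$, so their equality forces the common value to coincide with $B_{ij}$, i.e., it forces $Z(B_{ij}) = V_{ij}$. I would then connect this to the Morandi--Gr\"ater independence condition $Z(\widetilde{B_i}) \cdot Z(\widetilde{B_j}) = Z(\overline{B_{ij}})$ that characterizes the IP for the pair $B_i, B_j$ (see \cite[Cor.~6.7]{G} and \cite[Cor.~16.9, p.~93]{MMU}) by passing to the residue ring $\overline{B_{ij}}$ and exploiting that the image of $V_{ij}$ lands in $Z(\overline{B_{ij}})$, combining with~\eqref{eq:pairwiseIP} (the reduction of the full IP to the pairwise case) to obtain the equivalence in (i). For the last assertion, when $\alpha$ is a $v$-gauge, Theorem~\ref{thm:gaugeismin} gives $A_0^\alpha \cong \prod_{i=1}^r A_0^{\alpha_i}$ with each factor simple, so $\omega(\alpha) = r$.

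For (ii), given a $v$-gauge $\alpha$ on $A$ with $\omega(\alpha) = r$, Theorem~\ref{thm:gaugeismin} provides $v_i$-components $\alpha_i$ satisfying $\alpha = \min(\alpha_1, \ldots, \alpha_r)$ and $A_0^\alpha \cong \prod_{i=1}^r A_0^{\alpha_i}$. Each $A_0^{\alpha_i}$ is a nonzero semisimple algebra (the degree-zero part of the graded simple $\gr_{\alpha_i}(A)$), hence contributes at least one simple component to $A_0^\alpha$; the equality $\omega(\alpha) = r$ then forces each $A_0^{\alpha_i}$ to be simple. Applying \cite[Prop.~2.5]{TW1} once more, each $\alpha_i$ is identified as a $v_i$-Morandi value function, and uniqueness follows from Corollary~\ref{cor:compunique}.
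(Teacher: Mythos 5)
Your reduction of (i) to the pairwise condition of Th.~\ref{thm:compatible}, the identification of the $v_{ij}$-coarsenings as the Morandi value functions of the rings $B_i\cdot V_{ij}$, the forward implication (equal coarsenings $\Rightarrow Z(B_{ij})=V_{ij} \Rightarrow$ independence of $Z(\widetilde{B_i})$, $Z(\widetilde{B_j})$ in $Z(\ov{B_{ij}})$ $\Rightarrow$ pairwise IP, then \eqref{eq:pairwiseIP}), the count $\omega(\alpha)=r$, and all of part (ii) follow essentially the paper's own route and are sound (modulo writing out the small lemma that extensions of the independent rings $\widetilde{V_i},\widetilde{V_j}$ of $\ov{V_{ij}}$ to the finite extension $Z(\ov{B_{ij}})$ are again independent).

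The genuine gap is the converse half of (i): you never prove that the pairwise IP forces $B_i\cdot V_{ij}=B_j\cdot V_{ij}$, and the method you sketch for it cannot close this direction. Passing to $\ov{B_{ij}}$ only works upward: from $Z(B_{ij})=V_{ij}$ you can deduce independence of $Z(\widetilde{B_i})$ and $Z(\widetilde{B_j})$, but you cannot go back down, because independent valuation rings of the finite extension $Z(\ov{B_{ij}})$ may restrict to non-independent (even equal) valuation rings of $\ov{Z(B_{ij})}$ --- e.g.\ the two independent extensions of the $5$-adic valuation to $\Q(i)$ restrict to the same valuation of $\Q$. So Gr\"ater's independence criterion alone does not yield $Z(B_{ij})=V_{ij}$, and the implication ``IP $\Rightarrow$ localizations agree'' is simply false without integrality: the total valuation rings $B_1,B_2$ of Ex.~\ref{ex:notdetermined} have the IP and a common center $V$, yet $B_1\cdot V = B_1 \neq B_2 = B_2\cdot V$. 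What is needed --- and what the paper uses --- is exactly the integrality you established but then never exploit in this direction: by \cite[Th.~16.8, p.~92]{MMU} the IP passes to the overrings $B_i\cdot V_{ij}$ and $B_j\cdot V_{ij}$; these are Dubrovin valuation rings with the same center $V_{ij}$ and integral over it, so $\xi_{V_{ij},[A]}=1$, and then (using this, or \cite[Lemma~16.5, p.~90]{MMU}) two such rings having the IP must coincide. Only with that step do the $v_{ij}$-coarsenings agree (via the uniqueness of Morandi value functions, as you note), so that Th.~\ref{thm:compatible} applies to conclude that $\alpha$ is a $v$-gauge.
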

\begin{proof}
For each $i$ and $j$, let $V_i$ be the valuation ring of $v_i$,
 and let $V_{ij}
= {V_i\!\cdot\! V_j}$, with its associated valuation $v_{ij}$, which is the finest
common coarsening of $v_i$ and $v_j$.  Let $\tilde {V_i} = V_i/J(V_{ij})
\subseteq \ov {V_{ij}}$, and define $\tilde{V_j}$ analogously.  Note
that $\widetilde {V_i}$ and $\tilde{V_j}$ are independent in
$\ov{V_{ij}}$, i.e., ${\tilde {V_i} \!\cdot\!\tilde{V_j}} = \ov{V_{ij}}$, since~
${V_i\!\cdot\! V_j} = V_{ij}$.

(i) Suppose $\alpha$ is a $v$-gauge on $A$. By Th.~\ref{thm:compatible},
for every pair $i,j$, the gauges
$\alpha_i$ and $\alpha_j$ have the same $v_{ij}$-coarsening,
call it $\alpha_{ij}$.  So, for the gauge rings, we have
$R_{\alpha_i} \subseteq R_{\alpha_{ij}}$ and
$R_{\alpha_j} \subseteq R_{\alpha_{ij}}$.
Since
$B_i = R_{\alpha_i}$ and likewise for $j$, we must have
$B_{ij} \subseteq R_{\alpha_{ij}}$.
 Hence,
$$
Z(B_{ij}) \,=\, B_{ij} \cap K
\,\subseteq\, R_{\alpha_{ij}} \cap K \, = \,
Z(R_{\alpha_{ij}}) \,=\, V_{ij}.
$$
 Since we always
have $V_{ij} = {V_i\!\cdot\! V_j}\subseteq Z(B_{ij})$, we obtain
$V_{ij} = Z(B_{ij})$.
The field $Z(\ov{B_{ij}})$ is a finite-degree extension of the
field $\ov{V_{ij}}$.
Since the valuation rings
$\widetilde{V_i}$ and $\widetilde{V_j}$ are independent in
$\overline{V_{ij}} \subseteq Z(\overline{B_{ij}})$ and
$Z(\widetilde{B_i}) \cap \overline{V_{ij}} = \widetilde{V_i}$, it follows
that the valuation rings
$Z(\widetilde{B_i})$ and $Z(\widetilde{B_i})$ are independent in
$Z(\overline{B_{ij}})$. Hence, $B_i$ and $B_j$ have the IP. Therefore,
$B_1, \ldots, B_r$ have the IP by \eqref{eq:pairwiseIP}.
Moreover, since by Th.~\ref{thm:gaugeismin} 
\begin{equation}\label{eq:grsimple}
\gr_\alpha(A) \,\cong_g\, \gr_{\alpha_1}(A) \times \ldots\times
\gr_{\alpha_r}(A)
\end{equation} 
 and each $A^{\alpha_i}_0$ is the simple residue ring of
$R_{\alpha_i}$, we have $\omega(\alpha) = 
\sum_{i = 1}^r\omega(\alpha_i) = \sum _{i=1}^r 1 = r$.

Conversely, suppose $B_1, \ldots, B_r$ have the IP.
For any distinct $i,j\in \{1,\ldots, r\}$ let
$\Delta_{ij}$ be the convex subgroup of the divisible hull
$\Gamma$ of $\Gamma_v$ associated to $V_{ij}$ and let
$\theta_{ij}\colon \Gamma \rightarrow \Gamma/\Delta_{ij}$ be the canonical
map. Since $B_i$ is the gauge ring $R_{\alpha_i}$,
by Prop.~\ref{coarsergaugering},
$R_{\theta_{ij} \circ \alpha_i} = {B_i\!\cdot\!V_{ij}}$; likewise,
${R_{\theta_{ij} \circ \alpha_j} = {B_j\!\cdot\!V_{ij}}}$. Since 
${B_i\!\cdot\!V_{ij}}$ and
${B_j\!\cdot\!V_{ij}}$ are Dubrovin valuation rings of $A$ integral over $V_{ij}$,
it follows that $\theta_{ij} \circ \alpha_i$ and $\theta_{ij} \circ \alpha_j$
are Morandi value functions. Moreover,
since $B_i$ and $B_j$ have the IP, their overrings
${B_i\!\cdot\!V_{ij}}$ and ${B_j\!\cdot\!V_{ij}}$ also have 
the IP by \cite[Th.~16.8, p.~92]{MMU}. 
But the integrality of ${B_i\!\cdot\!V_{ij}}$  over $V_{ij}$ implies that
$\xi_{V_{ij},[A]} = 1$.  By using this or
\cite[Lemma~16.5, p.~90]{MMU}, it follows 
that ${{B_i\!\cdot\!V_{ij}} = {B_j\!\cdot\!V_{ij}}}$. Therefore,  
$ \theta_{ij} \circ \alpha_i = {\theta_{ij} \circ \alpha_j}$, because a
Morandi value function is completely determined by its associated Dubrovin
valuation ring (see \cite[Prop.~23.6, p.~135]{MMU}). Thus, it follows from 
Th.~\ref{thm:compatible} that $\alpha$~is a $v$-gauge.

(ii) By Th.~\ref{thm:gaugeismin} there exist $v_i$-gauges $\alpha_i$ on
$A$ for $i\in\{1,\ldots, r\}$ such that $\alpha \, =\,
\min\big(\alpha_1, \ldots, \alpha_r\big)$ and
\eqref{eq:grsimple} holds.
Thus $\omega(\alpha) = \omega(\alpha_1) + \ldots + \omega(\alpha_r).$
Since $\omega(\alpha) = r$,  we must have
each $\omega(\alpha_i) = 1$; it follows from \cite[Prop.~2.5]{TW1} that
each $\alpha_i$ is a $v_i$-Morandi value function. The uniqueness
of the $\alpha_i$ follows
from Cor.~\ref{cor:compunique}.
\end{proof}

For  proofs of some of the following theorems  we need the 
notions of jump rank and jump prime ideals, defined as follows:
Let $A$ be a central simple $F$-algebra and let $V$ be the valuation
ring of a nontrivial valuation $v$ on $F$.  For each nonzero prime
ideal $P$ of $V$, let $v_P$ be the valuation on $F$ with valuation 
ring $V_P$, let field $F_{h,P}$ be a Henselization of $F$ with 
respect to $v_P$, and let~$n_P$~be the matrix size of 
$A\otimes_F F_{h,P}$. So, $1\le n_P \le \deg A$.  For prime ideals
$P \subseteq Q$  we have $n_P \le n_Q$ since $F_{h,P}$ embeds in 
$F_{h,Q}$.  We say that $P$ is a {\it jump prime ideal} of $v$ for $A$
if $n_P < n_Q$ for every prime ideal $Q \supsetneqq P$; we then say that
$v_P$ is a {\it jump valuation} of $v$ for $A$.  The {\it
jump rank} of $v$ for $A$ is defined to be 
$$
j(v, A) \, = \, \text{the number of jump prime ideals of $v$ for $A$}.
$$
Note that if a positive integer $m =n_P$ for some $P$, and $\frak P$
is the union of all the prime ideals $Q$ with $n_Q = m$, then 
$\frak P$ is a prime ideal of $V$, since the $Q$'s are linearly 
ordered, and $n_{\frak P} = m$ since $F_{h,\frak P}$ is the direct limit of 
the $F_{h,Q}$; so $\frak P$ is the unique jump prime ideal of $v$
with $n_{\frak P} = m$.  Thus, 
$$
j(v,A) \,=\, \big|\{n_P\mid 
\text{$P$ is a nonzero prime ideal of $V$}\}\big|,
$$
 and $1\le j(v,A) \le \deg A$.  
The jump rank is  useful for induction
arguments, since it is always finite  even when the valuation 
$v$ has infinite rank.  If ${P_1 \subsetneqq P_2 \subsetneqq \ldots 
\subsetneqq P_{j(v,A)}}$ are all the distinct jump prime ideals of $v$ 
for $A$,  we call $P_i$ the $i$-th jump prime of $v$ for $A$.  Note that
$P_{j(v,A)} = J(V)$.  If~$v$~ is the trivial valuation on $F$, we set
$j(v, A) = 0$. 
More information on the jump rank can be found in~ 
\cite {W} or \cite{MMU}.

\begin{proposition}\label{prop:maxint}
Let $F$ be a field, and let $v$ be a nontrivial valuation on $F$
with valuation ring $V$.  Let $A$ be a central simple $F$-algebra,
and let $B$ be a Dubrovin valuation ring of $A$ with center $V$.
\begin{enumerate}
\item[(i)]
There is a unique nonzero prime ideal $\frak P$ of $V$ maximal with the 
property that
${B\!\cdot\! V_{\frak P}}$ is integral over $V_{\frak P}$. 
\item[(ii)]
Let $Q$ be any prime ideal of $V$. Then, ${B\!\cdot\!V_Q}$ is integral over
$V_Q$ if and only if $Q \subseteq \frak P$.
\item[(iii)]
 $\frak P$ is a jump prime ideal of $v$ for $A$.
\item[(iv)] 
If $j(v,A) = 1$, then $\frak P = J(V)$, so $B$ is 
integral over $V$. 
\item[(v)]
If $\frak P \ne J(V)$, then 
$\ell_{V, V_{\frak P}} >1$. 
\end{enumerate} 
\end{proposition}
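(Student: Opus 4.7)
For (i) and (ii) I will set $\mathcal{I}=\{Q\in\mathrm{Spec}(V):B\cdot V_Q\text{ integral over }V_Q\}=\{Q:\xi_{V_Q,[A]}=1\}$, using the equivalence (a)$\Leftrightarrow$(b) stated just before Ex.~4.4. The divisibility \eqref{eq:xidiv} makes $\mathcal{I}$ downward-closed in the totally ordered set $\mathrm{Spec}(V)$; and if $\{Q_\alpha\}\subseteq\mathcal{I}$ is a chain with union $\mathfrak{Q}$, any $b\in B$ is integral over each $V_{Q_\alpha}$, so its monic minimal polynomial over $F$ has coefficients in $\bigcap_\alpha V_{Q_\alpha}=V_{\mathfrak{Q}}$ (valuation rings being integrally closed), forcing $b$ to be integral over $V_{\mathfrak{Q}}$. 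Hence $\mathfrak{Q}\in\mathcal{I}$, and $\mathcal{I}$ admits a maximum element $\mathfrak{P}$; this gives (ii), and the nonzero-ness of $\mathfrak{P}$ required in (i) will emerge from (iii).

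The main work is (iii). Let $Q\supsetneq\mathfrak{P}$; maximality gives $\xi_{V_Q,[A]}>1$, and the goal is $n_Q>n_{\mathfrak{P}}$. Suppose for contradiction $m:=n_Q/n_{\mathfrak{P}}=1$, so that $D_{h,\mathfrak{P}}\otimes_{F_{h,\mathfrak{P}}}F_{h,Q}$ is the division algebra $D_{h,Q}$. Let $v'$ be the Henselian coarsening of $v_Q$ on $F_{h,Q}$ that restricts to $v_{\mathfrak{P}}$ on $F$, and let $u=v_Q/v_{\mathfrak{P}}$ be the corresponding residue valuation on $\overline{F}^{\,v_{\mathfrak{P}}}$. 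Then $v'$ extends uniquely to $D_{h,Q}$ with division-algebra residue, and a standard (defectless) base-change computation identifies this residue with $\overline{D_{h,\mathfrak{P}}}^{\,w'}\otimes_{\overline{F}^{\,v_{\mathfrak{P}}}}(\overline{F}^{\,v_{\mathfrak{P}}})^h_u$. The division-algebra property of this tensor forces its commutative factor $Z(\overline{D_{h,\mathfrak{P}}}^{\,w'})\otimes(\overline{F}^{\,v_{\mathfrak{P}}})^h_u$ to be a single Henselian field $L_1$ (so $\ell_{V_Q,V_{\mathfrak{P}}}=1$ by \eqref{eq:lVWchar}) and also forces $\overline{D_{h,\mathfrak{P}}}^{\,w'}\otimes L_1$ itself to be a division algebra. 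Since $L_1$ is Henselian and $\overline{S}=\mat_{t_{\mathfrak{P}}}(\overline{D_{h,\mathfrak{P}}}^{\,w'})$, a direct matrix-size count then yields $\xi_{Z(\widetilde{B_Q}),[\overline{S}]}=1$. Substituting into \eqref{teoremaE} gives $\xi_{V_Q,[A]}=1\cdot 1\cdot 1=1$, the desired contradiction; hence $\mathfrak{P}$ is a jump prime (in particular nonzero), completing (i) and (iii). The principal obstacle here is the base-change/Henselian bookkeeping in identifying the residues and carrying out the matrix-size count.

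Finally, (iv) is immediate from (iii): if $j(v,A)=1$, the unique jump prime must be $J(V)$, so $\mathfrak{P}=J(V)$ and $B$ is integral over $V$. For (v), I argue by induction on the rank of $V$ (which is preserved under finite algebraic extensions of the residue). Suppose $\mathfrak{P}\neq J(V)$ and $\ell_{V,V_{\mathfrak{P}}}=1$; by \eqref{teoremaE} we then have $\xi_{V,[A]}=\xi_{Z(\widetilde{B}),[\overline{S}]}>1$, so the inductive hypothesis applied to the pair $(W^*,\overline{S})$, with $W^*=Z(\widetilde{B})$ the unique extension of $\widetilde{V}=V/\mathfrak{P}$ to $Z(\overline{S})$ (having strictly smaller rank since $\mathfrak{P}\neq 0$), produces a maximal prime $\mathfrak{P}'\subsetneq J(W^*)$ of $W^*$ with $\widetilde{B}\cdot W^*_{\mathfrak{P}'}$ integral over $W^*_{\mathfrak{P}'}$. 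The uniqueness hypothesis $\ell_{V,V_{\mathfrak{P}}}=1$ gives a bijection between primes of $W^*$ and primes of $V$ between $\mathfrak{P}$ and $J(V)$; let $Q$ be the prime of $V$ matching $\mathfrak{P}'$, so $\mathfrak{P}\subsetneq Q\subsetneq J(V)$. Uniqueness persists to the coarser $\widetilde{V_Q}$, so $\ell_{V_Q,V_{\mathfrak{P}}}=1$, and the choice of $\mathfrak{P}'$ gives $\xi_{Z(\widetilde{B_Q}),[\overline{S}]}=1$; then \eqref{teoremaE} yields $\xi_{V_Q,[A]}=1$, placing $Q\in\mathcal{I}$ strictly above $\mathfrak{P}$, contradicting maximality. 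Hence $\ell_{V,V_{\mathfrak{P}}}>1$.
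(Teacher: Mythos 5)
Your reduction of (ii) to the divisibility \eqref{eq:xidiv}, your chain-union argument producing a maximum element $\mathfrak{P}$ of $\{Q : B\cdot V_Q \text{ integral over } V_Q\}$, and your deduction of (iv) from (iii) are all fine, and they parallel the paper. But the proposal has a genuine hole at its foundation: you never prove that $\mathfrak{P}$ is \emph{nonzero}, i.e.\ that there exists some nonzero prime $Q$ with $B\cdot V_Q$ integral over $V_Q$. Your set $\mathcal{I}$ always contains $(0)$ (since $B\cdot F = A$ is integral over $F$), so the maximum could a priori be $(0)$, and your claim that nonzero-ness ``emerges from (iii)'' does not hold: the argument in (iii) only shows $n_Q > n_{\mathfrak{P}}$ for $Q \supsetneqq \mathfrak{P}$, and when $\mathfrak{P}=(0)$ this yields no contradiction whatsoever (it is perfectly possible that $n_Q > n_{(0)}$ for every nonzero $Q$). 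This existence statement is exactly the nontrivial input the paper takes from \cite[Prop.~12.4, p.~72]{MMU} (together with (v)); it cannot be gotten for free, and your proof of (v) secretly needs it again, since the contradiction there requires the maximal prime $\mathfrak{P}'$ for the residue pair $(\widetilde{B},\overline{S})$ to be nonzero (if $\mathfrak{P}'=0$ your $Q$ equals $\mathfrak{P}$ and nothing is contradicted).

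Two further problems. In (iii), your identification of the residue of $D_{h,Q}$ with $\overline{D_{h,\mathfrak{P}}}^{\,w'}\otimes_{\overline{F}^{\,v_{\mathfrak{P}}}}(\overline{F}^{\,v_{\mathfrak{P}}})^h_u$ is flagged by you as a ``defectless'' base-change computation, but the proposition carries no defectlessness hypothesis and must hold for defective algebras; without defect control the residue can be strictly smaller than this tensor product and the ensuing conclusions ($\ell_{V_Q,V_{\mathfrak{P}}}=1$ and the matrix-size count giving $\xi_{Z(\widetilde{B_Q}),[\overline{S}]}=1$) do not follow. The implication you actually need, namely $n_Q=n_{\mathfrak{P}}$ and $\xi_{V_{\mathfrak{P}},[A]}=1$ imply $\xi_{V_Q,[A]}=1$, is true and follows purely from \eqref{teoremaE} rewritten as $n_B = n_S\,(t_B/t_S)(n_{\tilde B}/t_{\tilde B})\,\ell_{V,W}$ together with $t_S \mid t_B$ as in \eqref{eq:tbts} (all factors are positive integers), which is essentially the contrapositive of the paper's own argument for (iii); so this step is repairable, but not by the route you wrote. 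Finally, in (v) your induction is on the rank of $V$, which may be infinite; the paper introduces the jump rank $j(v,A)$ precisely because it is always finite, and it proves (iii) \emph{from} (v) (quoted from \cite{MMU}) rather than the reverse. As written, your reversed order collapses onto the missing existence statement and a non-well-founded induction.
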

 
\begin{proof}
(i) and (v) The existence of $\frak P$ with the maximal property
is given in \cite[Prop.~12.4, p.~72]{MMU}, where it is also proved   
that if $\frak P \ne J(V)$ (so $B$ is not integral over
$V$), then $\ell_{V, V_{\frak P}} >1$. The prime ideal $\frak P$
is unique with the maximal property since the prime ideals of 
the valuation ring $V$ are linearly ordered.  
 
(ii) For a prime ideal $Q$ of $V$, if $Q \supsetneqq \frak P$, 
then ${B \!\cdot\! V_Q}$ is not integral over $V_Q$ by the maximality
of~$\frak P$.  But if $Q \subseteq\frak P$, then $V_Q \supseteq
V_{\frak P}$, so by \eqref{eq:xidiv} 
$\xi_{V_Q,[A]} \le \xi_{V_{\frak P},[A]} = 1$.
Hence, $\xi_{V_Q,[A]} = 1$, showing that ${B \!\cdot\!V_Q}$ is integral over
$V_Q$.

(iii) If $\frak P = J(V)$,
then $\frak P$ is a jump prime ideal of $v$ for $A$.  
Assume now that $\frak P \ne J(V)$.
Let $W = V_{\frak P}$
and let $S= {B \!\cdot\! W}$, which is a Dubrovin valuation
ring of $A$ with center $W$.  Let $\tilde B = B/J(S)$, 
which is a Dubrovin valuation ring of $\ov S = 
S/J(S)$.  Note that 
\begin{equation}\label{eq:tStB}
\overline{\widetilde{B}} \,= \,\big(B/J(S)\big)\,\big/\, J(B/J(S)) 
\,\cong\, B/J(B) \,=\, \overline{B}.
\end{equation}
Hence, these residue rings have the same matrix size, i.e., 
$t_{\tilde B} = t_B$.
We have  $\overline{S} =\mat_{t_S}(E)$ 
for some division ring $E$.
If $C$ is a Dubrovin valuation ring of $E$ with center the valuation 
ring $Z(\tilde B)$, then $\mat_{t_S}(C)$ and $\tilde B$ are Dubrovin
valuation rings of $\ov S$ with the same center.  Hence,
$\tilde B \cong \mat_{t_S}(C)$, which implies that 
$\ov{\tilde B} \cong \mat_{t_S}(\ov C)$.  So, for the matrix sizes,
$t_{\tilde B} = t_S\,t_C$.  Thus, we have 
\begin{equation}\label{eq:tbts}
t_S \mid  t_{\tilde B} \,=\,  t_B
\end{equation}
(cf. \cite[Th. E(ii)]{W}). Since $\xi_{V, [A]} = n_B/t_B$, 
formula~\eqref{teoremaE} above can
be restated 
$$
n_B \, = \, n_S(t_B/t_S) (n_{\tilde B}/t_{\tilde B})\,\ell_{V,W}.
$$
Since $t_B/t_S \ge 1$ by \eqref{eq:tbts} and $n_{\tilde B}/t_{\tilde B}
\ge 1$, and $\ell_{V,W} > 1$ from (v), we have $n_B >n_S$.  

Now let~$P'$~be any prime ideal of $V$ with 
$\frak P \subsetneqq P'$, and let $V' = V_{P'}$
and $B' = {B \!\cdot\! V'}$. The prime ideals $Q$ of $V'$ coincide with the prime
ideals of $V$ lying in $P'$, and for any such $Q$, we have
$V'_Q = V_Q$ and ${B'\!\cdot\! V'_Q} = {B\!\cdot\! V_Q}$.  Hence $\frak P$ is 
maximal among the prime ideals $Q$ of $V'$ with ${B'\!\cdot\! V'_Q}$~
integral over~$V'_Q$.  Therefore, the argument just given showing that 
$n_B > n_S$ shows likewise that $n_{B'}> n_S$. Since this is true for every 
$P' \supsetneqq \frak P$, this $\frak P$ is a jump prime ideal of $v$ for $A$.

(iv) If $j(v,A) = 1$, then $J(V)$ is the only jump prime ideal of $v$, 
so $\frak P = J(V)$ and $B = {B\!\cdot\! V_\frak P}$~is integral over
$V_\frak P = V$.
\end{proof}

The following general setup occurs repeatedly in the proofs of the next
three theorems:\\

\begin{setup}\label{setup}
%\noindent \begin{note}\textbf{ Setup.} \label{setup}
Let $F$ be a field with a valuation $v$ and let $A$ be a central simple
$F$-algebra with a $v$-gauge $\alpha$.  So, $v$ is defectless in $A$
by Prop.~\ref{prop:gaugeimpliesdefectless}.
Let $w$ be a coarsening of $v$.  Then $w$ is also defectless in~$A$
by Prop.~\ref{prop:coarsdefless}.
Let $V$ be the valuation ring of $v$, and $W$ the valuation ring of $w$.
Let $\tilde V = V/J(W)$, which is a valuation ring of
$\ov F^{\,w} = W/J(W)$, and let $u$ be the valuation associated to
$\tilde V$.
Let $\beta$ be the coarsening of
$\alpha$ such that $\beta|_F = w$. By Prop.~\ref{gaugealphazero},
$\alpha$ induces a $u$-gauge $\alpha_0$ on~$A^{\beta}_0$. Let
$C_1, \ldots , C_{\omega(\beta)}$ be the simple components of
$A^{\beta}_0$. Let~$\alpha_0^i$~be the restriction of
$\alpha_0$ to $C_i$ for~$i = 1, \ldots, \omega(\beta)$.
As noted after Prop.~\ref{gaugesemisimple} above, each
$\alpha_0^i$ is a $u$-gauge on $C_i$ and
\begin{equation} \label{alpha0construcao}
\alpha_0(a_1, \ldots, a_{\omega(\beta)}) \ = \
\min\big(\alpha_0^1(a_1) , \ldots,
\alpha_0^{\omega(\beta)}(a_{\omega(\beta)})\big),
\end{equation}
for all $a_1 \in C_1 , \ldots, a_{\omega(\beta)} \in C_{\omega(\beta)}.$
Moreover,
\begin{equation} \label{alphazeroci}
\gr_{\alpha_0}(A^{\beta}_0) \, \cong_g\, \gr_{\alpha_0^1}(C_1)\times
\ldots\times \gr_{\alpha_0^{\omega(\beta)}}(C_{\omega(\beta)}).
\end{equation}
 By \eqref{gradedalphazero}, the associated graded algebras of $\alpha$
and $\alpha_0$ have the same degree zero part.  It then follows from
\eqref{alphazeroci} that
\begin{equation}\label{omega}
\omega(\alpha) \,=\, \omega(\alpha_0) \,=\, \omega(\alpha_0^1) + \ldots +
\omega(\alpha_0^{\omega(\beta)}).
\end{equation}
We next  obtain a description of the simple components $C_i$.
Let $(F_{h,w}, w_h)$ be a Henselization of~$(F,w)$. Consider the scalar
extension $A_h = A \otimes_F F_{h,w} \cong \mat_n(D_h)$, where $D_h$~
is a central division $F_{h,w}$-algebra. Then, $w_h$ is defectless in
$D_h$ by definition as $w$ is defectless in~$A$, and
the Henselian valuation
$w_h$ extends to a valuation $w'$ on $D_h$. Let~$R_{w'}$~be the invariant
valuation ring of $D_h$ associated to $w'$ and let
$\overline{D_h} = R_{w'}/J(R_{w'})$, the residue division ring. On the
other hand, it follows from \cite[Cor.~1.26]{TW1} that 
$\beta_h = \beta \otimes w_h$ is a $w_h$-gauge on $A_h$ and
$\gr_{\beta_h}(A_h) \cong_g \gr_{\beta}(A)$. Write
${A_h = \End_{D_h}(M)}$ for some finite-dimensional $D_h$-vector space~
$M$.
Since $w_h$ is Henselian and is defectless in $D_h$,
\cite[Th.~3.1]{TW1} says that
$\beta_h$ is an End-gauge as in Ex.~\ref{ex;endgauge}, i.e.,  
 there is \mbox{a~$w'$-norm~$\eta$} on $M$ such that 
${\gr_{\beta_h}(A_h) = \grEnd_{\gr_{w'}(D_h)}(\gr_{\eta}(M))}$.
By \cite[Prop.~2.1]{TW1},
\begin{equation}\label{decomposicaoazerobeta}
A_0^\beta \,=\, \big(A_h\big)^{\beta_h}_0\, = \,
\left(\grEnd_{\gr_{w'}(D_h)}(\gr_{\eta}(M))\right)_0
\,\cong\, \tprod_{i=1}^{k} \mat_{r_i}(D_0),
\end{equation}
where  $D_0$ is the degree zero part of $\gr_{w'}(D_h)$, which is
$\overline{D_h}$;
also, $k$ is the number of cosets of~$\Gamma_{w'}$ in~$\Gamma_{\eta}$.
By comparing \eqref{decomposicaoazerobeta} with the
decomposition $A_0^\beta = C_1 \times \ldots \times C_{\omega(\beta)}$
from \eqref{alphazeroci},
we conclude that ${k = \omega(\beta)}$ and that each $C_i = \mat_{r_i}(D_0)$,
after re-indexing if necessary. Thus,
${Z(C_1) = \ldots = Z(C_{\omega(\beta)}) = Z(\overline{D_h})}$. Let
 $K$ denote this common field. (We also have $K = Z(\overline{S})$ 
 for any Dubrovin valuation ring $S$ of $A$ with $Z(S) =W$.) As noted in 
\eqref{eq:lVWchar}, there are
$\ell_{V,W}$~extensions of $u$ to $K$, which we denote
$u_1, \ldots, u_{\ell_{V,W}}$.
Let $U_j$ be the valuation ring of $u_j$.  By
Th.~\ref{thm:gaugeismin} applied to the $u$-gauge $\alpha_0^i$ on $C_i$,
 there exist $u_j$-gauges $\alpha_0^{ij}$ on~
$C_i$ for $j \in\{ 1,
\ldots, \ell_{V,W}\}$ such that
\begin{equation} \label{alpha0i}
\alpha_0^i(a) \ = \ \min\big(\alpha_0^{i1}(a) , \ldots,
\alpha_0^{i\ell_{V,W}}(a)\big)\qquad\text{for all \ } a \in C_i,
\end{equation}
and also
\[
\gr_{\alpha_0^i}(C_i) \ \cong_g \, \gr_{\alpha_0^{i1}}(C_i)\times
\ldots\times \gr_{\alpha_0^{i\ell_{V,W}}}(C_i).
\]
We thus have
\begin{equation}\label{omegai}
\omega(\alpha_0^i) \,= \  \omega(\alpha_0^{i1}) +  \ldots +
\omega(\alpha_0^{i\ell_{V,W}}).
\end{equation}
Hence, with \eqref{omega},
\begin{equation}\label{eq:omegaalpha}
\omega (\alpha)\, =\, \omega(\alpha_0) \,
= \, \tsum_{i = 1}^{\omega(\beta)}\,
\tsum _{j = 1}^{\ell_{V,W}}\omega(\alpha_0^{ij}).
\end{equation}

Because the $C_i$ are Brauer equivalent central simple
$K$-algebras, we have
\begin{equation}\label{xiuij1}
\xi_{U_j, [C_i]} \,= \, \xi_{U_j, [C_1]}\qquad \text{for all }
i \in \{1,\ldots, \omega(\beta)\}, \ j\in \{1, \ldots, \ell_{V,W}\}.
\end{equation}
We claim also that
\begin{equation}\label{xiuij2}
\xi_{U_j, [C_i]} \, = \, \xi_{U_1,[C_i]}\qquad \text{for all }
i \in \{1,\ldots, \omega(\beta)\}, \ j\in \{1,\ldots, \ell_{V,W}\}.
\end{equation}
We check this for
$\ov {D_h}$, which is Brauer equivalent to $C_i$.  
  Recall from \cite[Prop.~1.7]{JW} that ${K = Z(\ov{D_h})}$~is a normal field
extension of $\ov{F_{h,w}}^{\, w_h} = \ov F^{\,w}$, and hence
the Galois group $\Gal(K/\ov F^{\,w})$ acts transitively on the
set $\{u_1, \ldots, u_{\ell_{V,W}}\}$ of all extensions
of $u$ on $\ov F^{\,w}$ to $K$.  Choose $\tau \in \Gal(K/ \ov F^{\,w})$
with $u_j = u_1\circ \tau$.  By \cite[Prop.~1.7]{JW}, there is $d\in D_h^\times$
whose associated automorphism $\ov {\imath}_d\colon \ov {D_h} \to \ov {D_h}$
($\ov a \mapsto \ov {dad\inv}$ for $a\in R_{w'}$) restricts to
$\tau$ on $K$.  Then, $\ov \imath_d$ takes Dubrovin valuation rings of
$\ov {D_h}$ with center $U_j$ to those with center $U_1$;
so $\xi_{U_j, [\ov {D_h}]} = \xi_{U_1, [\ov{D_h}]}$.  Thus, from
the Brauer equivalence of $C_i $ and $\ov {D_h}$,
$$
\xi_{U_j, [C_i]} \, = \,\xi_{U_j, [\ov{D_h}]} \, = \,\xi_{U_1, [\ov{D_h}]}
 \, = \,\xi_{U_1, [C_i]},
$$
as claimed, proving \eqref{xiuij2}
%\end{note}
\end{setup}

The following diagram illustrates some of the objects considered here.
We write $v \ge w$ to indicate that $v$ is a refinement of $w$; likewise
for $\alpha \ge \beta$. 

\begin{equation*} \xymatrixcolsep{0 pc}  \xymatrixrowsep{1 pc}
\xymatrix{
 A             & \alpha \ar@{-}[dd]& \geq & \beta \ar@{-}[dd]   \ar@{-}[rrrrd]\\
               &        &      &                       &&&& A_0^\beta  \ar@{-}[d]    
& \alpha_0  &&&&&&& C_i \ar@{-}[d]& \alpha_0^{i1},& \ldots &, \alpha_0^{i\ell_{V,W}} \\
 F \ar@{-}[uu] &  v     & \geq &  w   \ar@{-}[rrrrd]   &&&& Z(A_0^\beta)  \ar@{-}[d] 
&           &&&&&&& K  \ar@{-}[d] &u_1,\ar@{-}[u] \ar@{-}[dr]& \ldots &, u_{\ell_{V,W}} \ar@{-}[u] \\
               &        &      &                       &&&& \overline{W}             
&  u \ar@{-}[uu] &&&&&&& \overline{W} & &u \ar@{-}[ru] &           }
\end{equation*}

\begin{theorem} \label{minimalgauge1}
Let $F$ be a field with a valuation $v$ and associated valuation ring $V$,
and let $A$ be a central simple
$F$-algebra. Then, for any $v$-gauge $\alpha$ on
$A$,
$$
\omega(\alpha)\,  \ge\, \xi_{V,[A]}.
$$
\end{theorem}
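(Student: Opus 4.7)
The plan is to induct on the positive integer $\xi_{V,[A]}$, exploiting the coarsening machinery of Setup~\ref{setup} together with Gr\"ater's formula \eqref{teoremaE} and the structural identities \eqref{xiuij1}--\eqref{xiuij2}. The base case $\xi_{V,[A]}=1$ is immediate, since $A_0^\alpha$ is a nonzero semisimple ring and therefore has at least one simple component, so $\omega(\alpha)\ge 1$.

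For the inductive step, I would assume $\xi_{V,[A]}>1$ and fix a Dubrovin valuation ring $B$ of $A$ with center $V$. By Prop.~\ref{prop:maxint}, there is a unique nonzero prime ideal $\frak P$ of $V$ maximal with the property that ${B\cdot V_\frak P}$ is integral over $V_\frak P$; since $\xi_{V,[A]}>1$, we must have $\frak P\ne J(V)$. Taking $W=V_\frak P$ with associated valuation $w$, Prop.~\ref{prop:maxint} yields both $\xi_{W,[A]}=1$ and $\ell_{V,W}>1$, so formula \eqref{teoremaE} collapses to
\[
\xi_{V,[A]} \,=\, \ell_{V,W}\cdot \xi_{Z(\tilde B),[\ov S]}.
\]

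Next I would invoke Setup~\ref{setup} with this $w$: this produces the $w$-coarsening $\beta$ of $\alpha$, the simple components $C_1,\ldots,C_{\omega(\beta)}$ of $A_0^\beta$ with common center $K$, the $\ell_{V,W}$ extensions $U_1,\ldots,U_{\ell_{V,W}}$ of $\tilde V$ to $K$, and the $u_j$-gauges $\alpha_0^{ij}$ on $C_i$. After renumbering so that $Z(\tilde B)=U_1$, the identities \eqref{xiuij1} and \eqref{xiuij2}, combined with the Brauer equivalence of $\ov S$ with each $C_i$ (all of them matrix rings over $\ov{D_h}$), give
\[
\xi_{U_j,[C_i]}\,=\,\xi_{U_1,[C_1]}\,=\,\xi_{Z(\tilde B),[\ov S]}\,=\,\xi_{V,[A]}\big/\ell_{V,W}\,<\,\xi_{V,[A]}.
\]
This strict inequality is exactly what lets the induction hypothesis apply to each $u_j$-gauge $\alpha_0^{ij}$ on the central simple $K$-algebra $C_i$, delivering $\omega(\alpha_0^{ij})\ge \xi_{U_j,[C_i]}$.

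Finally, substituting into \eqref{eq:omegaalpha} and using the trivial bound $\omega(\beta)\ge 1$ closes the induction:
\[
\omega(\alpha)\,=\,\tsum_{i=1}^{\omega(\beta)}\tsum_{j=1}^{\ell_{V,W}}\omega(\alpha_0^{ij})\,\ge\, \omega(\beta)\cdot \ell_{V,W}\cdot \xi_{Z(\tilde B),[\ov S]}\,\ge\, \ell_{V,W}\cdot \xi_{Z(\tilde B),[\ov S]}\,=\,\xi_{V,[A]}.
\]
The main obstacle is securing $\ell_{V,W}>1$, which is precisely what the existence of the maximal integrality prime $\frak P\ne J(V)$ from Prop.~\ref{prop:maxint}(v) provides; without this strict inequality the argument would stall at the tautological bound $\omega(\alpha)\ge 1$ and the induction would never get off the ground.
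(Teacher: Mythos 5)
Your proof is correct and follows the same inductive strategy as the paper: induction on $\xi_{V,[A]}$, passage to a coarsening $w$ with $\xi_{W,[A]}=1$ and $\ell_{V,W}\ge 2$, Setup~\ref{setup}, formula \eqref{teoremaE}, the induction hypothesis applied to the $u_j$-gauges $\alpha_0^{ij}$, and the summation \eqref{eq:omegaalpha}. The one place you diverge is in how $W$ is produced and how the uniformity of the residue extension numbers is obtained: the paper starts from a full IP family $R_1,\ldots,R_{\xi_V}$ of Dubrovin rings whose intersection is a Gr\"ater ring (via \cite[Th.~16.14]{MMU}), passes to the minimal integral overrings $S_t$, uses \cite[Cor.~16.6]{MMU} to see that all $S_t$ coincide, and then applies \eqref{teoremaE} once for each $U_j$ (choosing $R_t$ with $Z(\widetilde{R_t})=U_j$) to get $\xi_V=\xi_{U_j,[\ov S]}\,\ell_{V,W}$ for every $j$ directly. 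You instead work with a single Dubrovin ring $B$, take $W=V_{\frak P}$ from Prop.~\ref{prop:maxint}(i),(v) (correctly noting $\frak P\ne J(V)$ when $\xi_V>1$, whence $\xi_{W,[A]}=1$ and $\ell_{V,W}>1$), apply \eqref{teoremaE} once, and then spread the resulting value to all $U_j$ via \eqref{xiuij2} (together with the Brauer equivalence of the $C_i$ with $\ov S$). This is legitimate — \eqref{xiuij2} is established in Setup~\ref{setup}, and the paper itself remarks that \eqref{xiuij2} could replace the repeated use of \eqref{teoremaE} — so your route buys a modest simplification by avoiding the Gr\"ater-family existence and coincidence results from \cite{MMU}, at no loss of rigor.
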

\begin{proof}
We write $\xi_V$ for $\xi_{V,[A]}$.
The proof is by induction on $\xi_V$. If $\xi_V = 1$, then we clearly have
$\omega(\alpha) \ge \xi_V$. We can thus assume that 
 $\xi_V > 1.$ By \cite[Th.~16.14, p.~94]{MMU}, there exist Dubrovin valuation
rings $R_1, \ldots , R_{\xi_V}$ of $A$ having the IP such that each
$Z(R_t) = V$ and ${R_1 \cap \ldots \cap R_{\xi_V}}$ is a Gr\"ater
ring integral over~$V$.
 For each $t \in\{ 1,\ldots, \xi_V\}$, 
 by Prop.~\ref{prop:maxint}(i) and (v) there exists a Dubrovin
valuation ring $S_t$ of $A$ containing $R_t$  and minimal with the property
that $S_t$ is integral over ${W_t = Z(S_t)}$ and we have 
$\ell_{V,W_t} \geq 2$. Now by 
\cite[Cor.~16.6, p.~91]{MMU}, we have $S_1 = \ldots = S_{\xi_V}$,
hence
${W_1 = \ldots = W_{\xi_V}}$.  Let $S = S_t$ and $W = W_t$, for all $t$.
The integrality of $S$ over $W$ yields that $\xi_{W,[A]} = 1$.
We use the valuation $w$ of $W$ in Setup~$\ref{setup}$;             
so, $\beta$ is the $w$-coarsening of $\alpha$. 
Let ${\tilde {R_t} = R_t/J(S)}$. Since $\bigcap _{t = 1}^{\xi_V}R_i$~
is integral over $V$, we have $\bigcap_{t=1}^{\xi_V} \tilde {R_t}$
is integral over $\tilde V$.
 The valuation rings
$Z(\widetilde{R_1}), \ldots, Z(\widetilde{R_{\xi_V}})$
must include all the
extensions of $\widetilde{V}$ to $K = Z(\overline{S})$, because
${Z(\widetilde{R_1}) \cap \ldots \cap Z(\widetilde{R_{\xi_V}})}$ is integral
over $\widetilde{V}$. These extensions are thus the valuation rings
$U_1, \ldots, U_{\ell_{V,W}}$ of the extensions $u_1, \ldots ,
u_{\ell_{V,W}}$ of $u$
 in  Setup~\ref{setup}, but with possible 
repetitions. For any ${j \in\{ 1,\ldots, \ell_{V,W}\}}$, choose
an $R_t$ with $Z(\tilde{R_t}) = U_j$.
It follows from \eqref{teoremaE} with $B = R_t$ that
\begin{equation}\label{xiv}
\xi_V \,=\,  \xi_{W,[A]} \,  \xi_{U_j,[\overline{S}]} \, \ell_{V,W}
\, =\,  \xi_{U_j,[\overline{S}]} \, \ell_{V,W}. 
\end{equation}
 Since $\ell_{V,W} \geq 2$, we conclude that each
$\xi_{U_j,[\overline{S}]} < \xi_V$. Since each $C_i$ is Brauer equivalent
to $\overline{S}$, we have $\xi_{U_j,[\overline{S}]} = \xi_{U_j,[C_i]}$.
Since each $\alpha_0^{ij}$ constructed in the Setup~\ref{setup} is a
$u_j$-gauge on $C_i$, we have by the induction hypothesis
$\omega(\alpha_0^{ij}) \geq \xi_{U_j,[C_i]}$. The equality \eqref{xiv}
(or \eqref{xiuij2})
shows that $\xi_{U_j,[C_i]} = \xi_{U_1,[C_i]}=\xi_{U_1,[\ov S]}$ for all 
$i,j$.
 Thus, it follows from \eqref{eq:omegaalpha} that
\[
\omega(\alpha)\,=\,
\tsum_{i = 1}^{\omega(\beta)}\,
\tsum _{j = 1}^{\ell_{V,W}}\omega(\alpha_0^{ij})
 \geq\ \omega(\beta) \, \ell_{V,W} \,
\xi_{U_1,[\overline{S}]} \ \geq\ \ell_{V,W} \,
\xi_{U_1,[\overline{S}]} \,=\, \xi_V,
\]
which completes the proof.
\end{proof}

\begin{definition}\label{def:minimal}
Let $v$ be a valuation on a field $F$, and let $\alpha$ be a 
$v$-gauge on a central simple $F$-algebra $A$.
Then, we call $\alpha$  a {\it minimal gauge} if 
$\omega(\alpha)= \xi_{V,[A]}$.
\end{definition}

\begin{remark}\label{rem:omega1} 
Note that if $\omega(\alpha) = 1$, i.e., $A^\alpha_0$ is a simple 
ring, then by \cite[Prop.~2.5]{TW1} $\alpha$ is a Morandi 
value function on $A$ with associated Dubrovin valuation ring $R_\alpha$.
Conversely,  if $\alpha'$ is a Morandi value function on $A$ 
with $\alpha'|_F = v$ and $v$ is defectless in $A$, then 
by  \cite[Prop.~2.5]{TW1} $\alpha'$~is a $v$-gauge and  
$R_{\alpha'}$ is the Dubrovin valuation ring associated
to $\alpha'$; so, $A_0^{\alpha'}$ is the simple 
ring~$R_{\alpha'}/J(R_{\alpha'})$,
whence $\omega(\alpha') = 1$.

\end{remark}

\begin{theorem} \label{coarserminimal}
Let $F$ be a field with a valuation $v$ and associated valuation
ring $V$.  Let
$A$ be a central simple
$F$-algebra with a $v$-gauge $\alpha$. Let $\beta$ be a coarsening of
$\alpha$, and let $w=\beta|_F$, which is a valuation on $F$ coarser
than $v$.  Let $W$ be the valuation ring of $w$. Then,
$$
\omega(\alpha)/\omega(\beta) \,\ge\, \xi_{V,[A]}/\xi_{W,[A]}. 
$$
Consequently, if $\alpha$ is a minimal gauge, then so is $\beta$.
\end{theorem}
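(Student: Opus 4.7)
The plan is to deploy Setup~\ref{setup} essentially verbatim, and then combine two ingredients: the formula \eqref{eq:omegaalpha} expressing $\omega(\alpha)$ as a double sum of $\omega(\alpha_0^{ij})$, and the formula \eqref{teoremaE} for extension numbers under coarsening, $\xi_{V,[A]}=\xi_{W,[A]}\,\xi_{Z(\tilde B),[\ov S]}\,\ell_{V,W}$. Theorem~\ref{minimalgauge1}, applied one level down to the $u_j$-gauges $\alpha_0^{ij}$ on the $C_i$, provides the one nontrivial estimate that glues the two formulas together.

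First, I would recall from Setup~\ref{setup} that $A_0^\beta\cong C_1\times\ldots\times C_{\omega(\beta)}$ with the $C_i$ Brauer-equivalent central simple $K$-algebras (where $K=Z(\ov S)$ for any Dubrovin valuation ring $S$ of $A$ with center $W$), and that the $u_j$-gauges $\alpha_0^{ij}$ satisfy
\[
\omega(\alpha)\;=\;\tsum_{i=1}^{\omega(\beta)}\tsum_{j=1}^{\ell_{V,W}}\omega(\alpha_0^{ij})
\]
by \eqref{eq:omegaalpha}. The key observation, already recorded in \eqref{xiuij1} and \eqref{xiuij2}, is that $\xi_{U_j,[C_i]}$ does not depend on $i$ or $j$; indeed, Brauer equivalence of the $C_i$ kills the $i$-dependence, while the transitive Galois action of $\Gal(K/\ov F^{\,w})$ on $\{u_1,\ldots,u_{\ell_{V,W}}\}$ kills the $j$-dependence. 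Call this common value $\xi_{U_1,[\ov S]}$.

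Next, by Theorem~\ref{minimalgauge1} applied to the $u_j$-gauge $\alpha_0^{ij}$ on the central simple $K$-algebra $C_i$, we get $\omega(\alpha_0^{ij})\ge \xi_{U_j,[C_i]}=\xi_{U_1,[\ov S]}$. Summing over $i$ and $j$,
\[
\omega(\alpha)\;\ge\;\omega(\beta)\,\ell_{V,W}\,\xi_{U_1,[\ov S]}.
\]
On the other hand, formula \eqref{teoremaE} applied to any Dubrovin extension $B$ of $V$ inside $S$ yields $\xi_{V,[A]}=\xi_{W,[A]}\,\xi_{Z(\tilde B),[\ov S]}\,\ell_{V,W}$, and since $Z(\tilde B)$ is one of the $U_j$'s the middle factor is $\xi_{U_1,[\ov S]}$. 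Dividing gives
\[
\xi_{V,[A]}/\xi_{W,[A]}\;=\;\ell_{V,W}\,\xi_{U_1,[\ov S]},
\]
and the desired inequality $\omega(\alpha)/\omega(\beta)\ge \xi_{V,[A]}/\xi_{W,[A]}$ drops out.

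Finally, for the consequence, assume $\alpha$ is minimal, so $\omega(\alpha)=\xi_{V,[A]}$. The inequality just proved gives $\omega(\beta)\le \xi_{W,[A]}$, while Theorem~\ref{minimalgauge1} applied to the $w$-gauge $\beta$ gives the reverse inequality $\omega(\beta)\ge \xi_{W,[A]}$. Hence $\omega(\beta)=\xi_{W,[A]}$, so $\beta$ is minimal. The main (and really only) obstacle is keeping the indexing consistent and justifying the $i,j$-independence of $\xi_{U_j,[C_i]}$; both points are already handled in Setup~\ref{setup}, so the proof is essentially a bookkeeping exercise once those tools are in hand.
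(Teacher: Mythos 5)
Your proposal is correct and follows essentially the same route as the paper's own proof: Setup~\ref{setup} plus \eqref{eq:omegaalpha}, the $i,j$-independence from \eqref{xiuij1} and \eqref{xiuij2}, Th.~\ref{minimalgauge1} applied to the $\alpha_0^{ij}$, and \eqref{teoremaE} with $Z(\tilde B)=U_j$ to identify $\xi_{V,[A]}/\xi_{W,[A]}=\ell_{V,W}\,\xi_{U_1,[C_1]}$, followed by the same two-inequality argument for minimality of $\beta$. No substantive differences.
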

\begin{proof}
We use the notation of Setup~\ref{setup} with the
$v, \alpha, w, \beta$  given here.
By Th.~\ref{minimalgauge1}, we have
$\omega(\alpha_0^{ij}) \geq \xi_{U_j,[C_i]}$ for $i\in\{1,\ldots,
\omega(\beta)\}$
and $j \in\{ 1,\ldots, \ell_{V,W}\}$.
Equations \eqref{xiuij1} and \eqref{xiuij2}  above show that $\xi_{U_j,[C_i]} =
\xi_{U_1,[C_1]}$
for each $i\in\{1,\ldots, \omega(\beta)\}$ and $j \in\{ 1, \ldots,
\ell_{V,W}\}$. It then
follows from~\eqref{eq:omegaalpha} that
\[
\omega(\alpha) \,\geq\, \omega(\beta)\,\ell_{V,W}\,\xi_{U_1,[C_1]}.
\]
Choose a Dubrovin valuation ring $B$ of $A$ extending $V$, and let
$S = {B\!\cdot\!W}$.  Then, for the Dubrovin valuation ring
$\tilde B= B/J(S)$ the valuation ring
$Z(\tilde B)$ is an extension of $V/J(W)$ to $Z(\ov S)$,
so $Z(\tilde B) = U_j$ for some $j$.  Then, 
\eqref{teoremaE}~with this $B$ yields
 $\xi_{V,[A]} = \xi_{W, [A]}\ell_{V,W}\xi_{U_j,[\ov S]}
= \xi_{W, [A]}\ell_{V,W}\xi_{U_1,[C_1]}$. Hence, 
$$
{\omega(\alpha)}\big/{\omega(\beta)} \,\ge\,
\ell_{V,W}\,\xi_{U_1,[C_1]}
\, =\,{\xi_{W, [A]}\,\ell_{V,W}\,\xi_{U_1,[C_1]}}\big/{\xi_{W, [A]}} \,=\,
{\xi_{V,[A]}}\big/{\xi_{W, [A]}}.
$$

Suppose $\alpha$ is a minimal gauge. Then, $\omega(\alpha) = \xi_{V,[A]}$. 
The inequality just proved then shows that $\omega(\beta) \le \xi_{W, [A]}$.
The reverse inequality is given  by Th.~\ref{minimalgauge1}. 
Hence, $\omega(\beta) = \xi_{W, [A]}$, showing that $\beta$ is a minimal gauge.
\end{proof}

The next theorems give the fundamental connection 
between minimal gauges and Gr\"ater rings.

\begin{theorem}\label{teoremaprincipal}
Let $F$ be a field with a valuation $v$ and let $A$ be a central simple
$F$-algebra with a minimal $v$-gauge $\alpha$. Then, the gauge ring
$R_\alpha = \{ a\in A \mid
\alpha(a) \ge 0\}$ is a Gr\"ater ring of $A$ with center
the valuation ring $V$ of $v$.
\end{theorem}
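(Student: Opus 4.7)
My plan is to proceed by induction on the integer $\xi := \xi_{V,[A]}$. For the base case $\xi = 1$, minimality of $\alpha$ forces $\omega(\alpha) = 1$, so $A_0^\alpha$ is simple; by Remark~\ref{rem:omega1} this means $\alpha$ is a Morandi value function, so its gauge ring $R_\alpha$ is a Dubrovin valuation ring of $A$ integral over its center $V$, i.e., a Gr\"ater ring with a single Dubrovin in its decomposition.

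Suppose now $\xi > 1$. By Prop.~\ref{prop:maxint}(i,v), there is a prime ideal $\frak P \subsetneq J(V)$ of $V$ maximal with the property that ${B\!\cdot\!V_\frak P}$ is integral over $V_\frak P$ for any (equivalently every) Dubrovin $B$ of $A$ extending $V$; moreover $\ell_{V,W} \ge 2$ where $W = V_\frak P$. Let $w$ be the valuation of $W$ and $\beta$ the $w$-coarsening of $\alpha$. By Th.~\ref{coarserminimal}, $\beta$ is a minimal $w$-gauge with $\omega(\beta) = \xi_{W,[A]} = 1$, so the base case applied to $\beta$ shows that $R_\beta$ is a Dubrovin valuation ring of $A$ with center $W$ and $C := A_0^\beta = R_\beta/J_\beta$ is a simple $\ov F^{\,w}$-algebra with center $K$. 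Following Setup~\ref{setup}, the $u$-gauge $\alpha_0$ on $C$ (where $u = v/w$) decomposes as $\alpha_0 = \min(\alpha_0^1, \ldots, \alpha_0^{\ell_{V,W}})$ with $\alpha_0^j$ a $u_j$-gauge on the central simple $K$-algebra $C$, $u_1, \ldots, u_{\ell_{V,W}}$ being the extensions of $u$ to $K$. Combining \eqref{eq:omegaalpha} with \eqref{xiuij2} and the identity $\xi = \ell_{V,W}\cdot\xi_{U_j,[C]}$ (from \eqref{teoremaE} with $\xi_{W,[A]} = 1$), the minimality $\omega(\alpha) = \xi$ forces $\omega(\alpha_0^j) = \xi_{U_j,[C]}$ for every $j$, so each $\alpha_0^j$ is itself minimal. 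Since $\xi_{U_j,[C]} = \xi/\ell_{V,W} < \xi$, the inductive hypothesis applies and each $R_{\alpha_0^j}$ is a Gr\"ater ring of $C$, say $R_{\alpha_0^j} = B_{j,1} \cap \ldots \cap B_{j,\xi_{U_j,[C]}}$ with the $B_{j,k}$ incomparable Dubrovin rings of $C$ of center $U_j$ satisfying the IP.

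Using the standard bijection between subrings $R$ of $A$ with $J_\beta \subseteq R \subseteq R_\beta$ and subrings of $C$, each $B_{j,k}$ lifts to a Dubrovin valuation ring $\widehat B_{j,k}$ of $A$; a direct computation using $R_\beta \cap F = W$, the reduction $W \twoheadrightarrow \ov F^{\,w}$, and the fact that $U_j \cap \ov F^{\,w} = \tilde V$ yields $Z(\widehat B_{j,k}) = V$ for every $j,k$. Since the projection $\pi_\beta \colon R_\beta \twoheadrightarrow C$ satisfies $\pi_\beta(R_\alpha) = R_{\alpha_0} = \bigcap_{j,k} B_{j,k}$ and $J_\beta \subseteq R_\alpha$, one obtains $R_\alpha = \bigcap_{j,k} \widehat B_{j,k}$; integrality of $R_\alpha$ over $V$ is automatic for any gauge ring, and the $\widehat B_{j,k}$ are pairwise incomparable since Dubrovin rings with a common center are equal or incomparable and the $\widehat B_{j,k}$ are distinct (as their residues $B_{j,k}$ are). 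The crux---and main obstacle---of the proof is to verify the Intersection Property for $\{\widehat B_{j,k}\}$. By the pairwise criterion \eqref{eq:pairwiseIP} this reduces to two cases. For pairs within a single cluster ($j = j'$, $k \ne k'$), the IP is inherited from the IP of $\{B_{j,k}\}_k$ in $C$ provided by induction, using that $\widehat B_{j,k}\!\cdot\!\widehat B_{j,k'} \subseteq R_\beta$. For pairs across clusters ($j \ne j'$), I would use that $\widehat B_{j,k}\!\cdot\!\widehat B_{j',k'} \subseteq R_\beta$ (since $R_\beta$ is the central localization of each $\widehat B_{j,k}$ at $J(W)$) and adapt the argument from the proof of Th.~\ref{morandiIP}(i): the compatibility of $\alpha_0^j$ and $\alpha_0^{j'}$ given by Th.~\ref{thm:compatible}, combined with the independence of the distinct extensions $U_j, U_{j'}$ of $\tilde V$ to $K$, should yield the required independence of $Z(\widetilde{\widehat B_{j,k}})$ and $Z(\widetilde{\widehat B_{j',k'}})$ in the residue center $Z(\overline{\widehat B_{j,k}\!\cdot\!\widehat B_{j',k'}})$.
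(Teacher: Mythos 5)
Your reduction is sound up to its last step: the base case, the choice of $\frak P$ via Prop.~\ref{prop:maxint}, the counting argument forcing $\omega(\beta)=\xi_{W,[A]}=1$ and $\omega(\alpha_0^j)=\xi_{U_j,[C]}$ so that each $\alpha_0^j$ is minimal, the identification $R_\alpha=\bigcap_{j,k}\widehat B_{j,k}$, the computation $Z(\widehat B_{j,k})=V$, and the within-cluster lifting of the IP are all fine (that lifting is exactly [MMU, Prop.~16.4], the citation the paper itself uses). The gap is where you locate it, the cross-cluster IP, and the key ingredient you invoke there is false in general: the distinct extensions $U_j, U_{j'}$ of $\tilde V$ to $K$ need not be independent for your choice $W=V_{\frak P}$. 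Pairwise independence of the $u_j$ is what Lemma~\ref{lemma3existence} provides, but only when $J(W)$ is the $(j(v,A)-1)$-st jump prime, i.e.\ when there is no jump prime strictly between $J(W)$ and $J(V)$; your $\frak P$ (maximal with ${B\!\cdot\!V_{\frak P}}$ integral) is a jump prime by Prop.~\ref{prop:maxint}(iii), but it need not be the penultimate one. Indeed, for a valuation ring $Y$ with $V\subseteq Y\subsetneqq W$, formula \eqref{teoremaE} gives $\xi_{Y,[A]}=\xi_{Z(\widetilde{B\cdot Y}),[\ov S]}\,\ell_{Y,W}$, which can exceed $1$ even with $\ell_{Y,W}=1$; in that case all the $U_j$ refine the unique extension of $Y/J(W)$ to $K$, hence share a nontrivial common coarsening and are not independent, while $\frak P$ remains maximal for integrality. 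Note moreover that when independence does hold, the cross-cluster IP follows at once from the Gr\"ater--Morandi criterion recalled before \eqref{eq:pairwiseIP} (the ring generated by the two constituents has center containing ${U_j\!\cdot\!U_{j'}}=K$), with no need of Th.~\ref{thm:compatible}; so the real difficulty is precisely the non-independent case, and there the compatibility of $\alpha_0^j$ and $\alpha_0^{j'}$ only controls the full gauge rings, $R_{\alpha_0^j}\!\cdot\!U_{jj'}=R_{\alpha_0^{j'}}\!\cdot\!U_{jj'}$, and says nothing directly about the individual constituents $B_{j,k}\!\cdot\!U_{jj'}$ versus $B_{j',k'}\!\cdot\!U_{jj'}$, which is what the IP criterion requires. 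As it stands this step does not close.

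This is exactly the difficulty the paper's proof is engineered to avoid: it inducts on the jump rank and coarsens at the $(j(v,A)-1)$-st jump prime rather than at $\frak P$. There $\omega(\beta)=\xi_{W,[A]}$ may be large, but [W, Cor.~E] forces $j(u_j,C_i)\le 1$, hence $\xi_{U_j,[C_i]}=1$, so each residue piece is a single Dubrovin ring given by a Morandi value function; the within-cluster IP then comes from Th.~\ref{morandiIP}, and the cross-cluster IP is free because rings from different clusters sit inside different incomparable members $R_i\ne R_q$ of the Gr\"ater decomposition of $R_\beta$, so [MMU, Th.~16.8] applies --- a shortcut unavailable in your setup, where $\omega(\beta)=1$ puts every $\widehat B_{j,k}$ inside the single ring $R_\beta$. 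To salvage your induction on $\xi_{V,[A]}$ you would need a genuinely new argument for the IP of two Dubrovin rings of $C$ with non-independent centers $U_j\ne U_{j'}$ arising from compatible minimal gauges, or else move the coarsening point to the penultimate jump prime, which essentially converts your proof into the paper's.
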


\begin{proof}
We may assume that the valuation $v$ on $F$ is nontrivial.
The proof is by induction on the jump rank $j(v,A)$.
 If $j(v,A) = 1$, then any
Dubrovin valuation ring of $A$ extending $V$ is integral over $V$
by Prop.~\ref{prop:maxint}(iv). Thus, $1 = \xi_{V,[A]} =\omega(\alpha)$,
hence $\alpha$ is a Morandi value function
by Remark~\ref{rem:omega1}. Therefore, $R_{\alpha}$
is a Dubrovin valuation ring integral over $V$,
so $R_\alpha$ is a Gr\"ater ring.

We can thus assume
$j(v,A) > 1$. Let $Q$ be the $(j(v,A)-1)$-st jump prime ideal of $v$ for $A$,
let $W =V_Q$ be the corresponding valuation ring, and let $w$ be the 
valuation on $F$ associated to $W$.  
We write $\xi_V$ for $\xi_{V,[A]}$ and $\xi_W$ for 
$\xi_{W,[A]}$.
The jump prime ideals of 
$w$ for $A$ are the same as the jump prime ideals of $v$ for 
$A$, except that $J(V)$~is excluded. 
 Thus, $j(w,A) = j(v,A)- 1.$ Let $\beta$ be a coarsening of $\alpha$ such
that $\beta|_F = w$. We use these $v,w,\alpha, \beta$ in Setup~\ref{setup}. 
Since $\alpha$ is a minimal gauge, by
Th.~\ref{coarserminimal} $\beta$ is also a
minimal gauge. Thus, by the induction hypothesis,
$R_\beta$~is a Gr\"ater ring with center $W$; hence,
$R_\beta = \bigcap_{i=1}^{\xi_W}
R_{i},$ where $R_1, \ldots , R_{\xi_W}$ are Dubrovin valuation rings
having the IP such that each $Z(R_i) = W.$ By \cite[Lemma~3.2]{M3}, there 
is an isomorphism
\begin{equation}\label{isomorfismoabeta}
{A^{\beta}_0} \,\xrightarrow{\, \sim \,}\, {\tprod_{i=1}^{\xi_W}R_{i}/J(R_{i})}
\ \ \  \text{given  by}\ \ \  {x + J(R_\beta)} \, \mapsto \,
 {\big(x+ J(R_{1}), \ldots, x +
J(R_{\xi_W})\big).}
\end{equation} 

After re-indexing if necessary, we can write
$C_i = R_{i}/J(R_{i}),$ as in  Setup \ref{setup}. By 
\cite[Cor.~E]{W}, jump prime ideals of $u_j$ for $C_i$
pull back to jump primes ideals of $v$ for $A$ properly 
containing $Q$; hence,
\[
j(u_{j}, C_i) \,\le \, j(v,A) - j(w,A) \,=\, 1. 
\]
Therefore, $\xi_{U_j,[C_i]} =1$  for all $i,j$ by Prop.~\ref{prop:maxint}(iv). 
 Thus,  formula \eqref{teoremaE} reduces
to $\xi_V = \xi_W \,\ell_{V,W}$. Since $\omega(\alpha) = \xi_V$ and 
${\omega(\beta) = \xi_W}$, it follows from \eqref{eq:omegaalpha}
that $\omega(\alpha_0^{ij}) = 1$ for all ${i \in\{ 1,\ldots, \xi_W\}}$ and
${j \in\{ 1,\ldots, \ell_{V,W}\}}$. Hence each $\alpha_0^{ij}$ is a Morandi
value function by Remark~\ref{rem:omega1}. Let
$S_{ij} =$ $ \{ x \in C_i \; | \; \alpha_0^{ij}(x) \geq 0\}$, which is a
Dubrovin valuation ring of $C_i$ with $Z(S_{ij})=U_i$.  Let
\[
B_{ij} \,=\, \{a \in A \; | \; a + J(R_i) \in S_{ij} \},
\]
which is a Dubrovin valuation ring of $A$ with $Z(B_{ij}) = V$. We prove
that the set of Dubrovin valuation rings $B_{ij}$ for
$i = 1,  \ldots, \xi_W$ and $j = 1,  \ldots, \ell_{V,W}$ have the IP.
By Th.~\ref{morandiIP}, $S_{i1} , \ldots , S_{i\ell_{V,W}}$ have the IP
for any $i$. Thus, $B_{i1} , \ldots , B_{i \ell_{V,W}}$ have the IP for
each $i$, by \cite[Prop.~16.4, p.~90]{MMU}. Now let 
$i,q \in \{1,  \ldots , \xi_W \}$ with $i \neq q$. Note that
$B_{ij} \subseteq R_i$ and $B_{qr} \subseteq R_q$ for
$j,r \in\{ 1,  \ldots, \ell_{V,W}\}$. Since $R_i$~and~$R_q$ are incomparable 
and
have the IP, $B_{ij}$ and $B_{qr}$ are also incomparable and have the IP
by \cite[Th.~16.8, p.~92]{MMU}.  Therefore, the Dubrovin valuation rings
$\{B_{ij} \}_{i,j}$ are pairwise incomparable and have the IP. 

We claim that 
$R_\alpha = \bigcap_{i=1\  j=1}^{\xi_W \ \,\ell_{V,W}} B_{ij}$.
Since the gauge ring $R_\alpha$ is integral over 
its center $V$, it then follows that the intersection of the 
$B_{ij}$ is a Gr\"ater ring, which  completes the proof that
$R_\alpha$ is a Gr\"ater ring.

To prove the claim, note first that
\begin{equation}\label{inclusaoRalpha}
J(R_\beta) \,\subseteq\, J(R_\alpha) \,\subseteq \,
R_\alpha \,\subseteq\, R_\beta,
\end{equation}
because $\alpha(x)\geq 0$ implies $\beta(x) \geq 0$ and $\beta(x)>0$
implies $\alpha(x)>0$. Likewise, for each
${i \in\{ 1,  \ldots, \xi_W\}}$ and $j \in\{ 1, \ldots, \ell_{V,W}\}$ we
have
\begin{equation*}
J(R_i) \,\subseteq \,J(B_{ij}) \,\subseteq B_{ij}\, \subseteq R_i.
\end{equation*}
Hence, using \eqref{isomorfismoabeta} for the first equality as $A_0^\beta = 
R_\beta/J(R_\beta)$
\begin{equation}\label{inclusaoBij}
J(R_\beta) \,=\, \tbigcap_{i=1}^{\xi_W} J(R_i) \,\subseteq \,
\tbigcap_{i, j = 1}^{\xi_W, \ell_{V,W} } B_{ij} \,\subseteq \,
\tbigcap_{i=1}^{\xi_W} R_i \,= \,R_\beta.
\end{equation}
Let $a \in R_\beta \backslash J(R_\beta)$. In view of  \eqref{inclusaoRalpha} and
\eqref{inclusaoBij}, the proof of the claim will be completed by showing 
that
$a \in R_\alpha$ if and only if
$a \in\bigcap_{i=1 \  j=1}^{\xi_W \ \,\ell_{V,W}} B_{ij}$. We identify
$A_0^\beta$ with ${R_{1}/J(R_{1}) \times \ldots \times
R_{\xi_W}/J(R_{\xi_W})}$, via the isomorphism~\eqref{isomorfismoabeta}.
Thus, by the definition of the gauge $\alpha_0$ on $A_0^\beta = 
R_\beta/J(R_\beta)$ and by~\eqref{alpha0construcao},  
\begin{equation}\label{alphaamin}
\alpha(a)\,=\,\alpha_0\big(a + J(R_\beta)\big)
          \, = \, \alpha_0 \big(a + J(R_1), \ldots , a +J(R_{\xi_W})\big)
          \, = \, \min_{1 \leq i \leq \xi_W}\big(\alpha_0^i(a+ J(R_i))\big).
\end{equation}
Hence, $\alpha(a) \geq 0$ if and only if $\alpha_0^i(a+ J(R_i)) \geq 0$
for $i =
1, \ldots, \xi_W$. By \eqref{alpha0i}, the gauge ring
$R_{\alpha_0^i} = \bigcap_{j=1}^{\ell_{V,W}}S_{ij}.$ Thus,
$a \in R_\alpha$ if and only if
$a + J(R_i) \in \bigcap_{j=1}^{\ell_{V,W}}S_{ij},$ for
$i = 1, \ldots, \xi_W.$ But $a + J(R_i) \in S_{ij}$ if and only if
$a \in B_{ij}$. Therefore, $a \in R_\alpha$ if and only if
$a \in \bigcap_{i=1 \  j=1}^{\xi_W \ \,\ell_{V,W} } B_{ij}$.
This proves the claim, which, as noted above, implies that 
$R_\alpha$ is a Gr\"ater ring.
\end{proof}

\begin{remark}
Note that the Dubrovin valuation rings $B_{ij}$
 of Th.~\ref{teoremaprincipal} are uniquely determined
by~$\alpha$, because they are the localizations of $R_\alpha$ re its
maximal ideals.
\end{remark}

The following result shows that Th.~\ref{teoremaprincipal} has a converse.

\begin{theorem}\label{teoremaprincipalvolta}
Let $F$ be a field with a valuation $v$ and associated valuation 
ring $V$. Let $A$ be a central simple
$F$-algebra such that $v$ is defectless in $A$.  Then a subring 
$C$ of $A$ is a Gr\"ater ring of~$A$ with center $V$
if and only if $C = R_\alpha$ for some 
minimal $v$-gauge~$\alpha$ on~$A$.
\end{theorem}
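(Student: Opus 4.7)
The direction $(\Leftarrow)$ is exactly Th.~\ref{teoremaprincipal}, so the work is all in $(\Rightarrow)$. The plan is to sidestep any direct construction and instead reduce to conjugating a known minimal gauge. Since $v$ is defectless in $A$, Th.~\ref{existenceminimalgauges} (from \S4) produces some minimal $v$-gauge $\alpha_0$ on $A$. By Th.~\ref{teoremaprincipal} just proved, $R_{\alpha_0}$ is itself a Gr\"ater ring of $A$ with center $V$. Gr\"ater's uniqueness-up-to-conjugation for Gr\"ater rings with a prescribed center (\cite[Th.~16.15, p.~96]{MMU}) then supplies $q \in A^\times$ with $C = q\hsp R_{\alpha_0}\hsp q\inv$.

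The candidate gauge is $\alpha\colon A \to \Gamma \cup \{\infty\}$ defined by $\alpha(a) = \alpha_0(q\inv a\hsp q)$. The inner automorphism $\iota_q\colon a \mapsto q\inv a\hsp q$ is an $F$-algebra automorphism of $A$, so it fixes $F$ pointwise and carries the filtration of $A$ defined by $\alpha$ bijectively onto the filtration defined by $\alpha_0$. Consequently $\alpha$ inherits the axioms of a surmultiplicative $v$-value function from $\alpha_0$, and the image of a splitting base of $A$ for $\alpha_0$ under $\iota_q\inv$ is a splitting base for $\alpha$, so $\alpha$ is a $v$-norm. Moreover $\iota_q$ induces a graded $\gr(F)$-algebra isomorphism $\gr_{\alpha}(A) \cong_g \gr_{\alpha_0}(A)$, from which it follows that $\gr_\alpha(A)$ is graded semisimple, so $\alpha$ is a $v$-gauge, and that $A^\alpha_0$ and $A^{\alpha_0}_0$ have the same number of simple components, i.e., $\omega(\alpha) = \omega(\alpha_0) = \xi_{V,[A]}$. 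Hence $\alpha$ is a minimal $v$-gauge.

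It remains to verify $R_\alpha = C$, which is immediate:
\[
R_\alpha \,=\, \{a \in A \mid \alpha_0(q\inv a\hsp q) \geq 0\} \,=\, \{a \in A \mid q\inv a\hsp q \in R_{\alpha_0}\} \,=\, q\hsp R_{\alpha_0}\hsp q\inv \,=\, C.
\]

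Nothing in this argument is delicate once the inputs are in hand; the real content has been pushed into two external results. The principal obstacle to this route is Th.~\ref{existenceminimalgauges}, i.e., showing that defectlessness is sufficient for the existence of a \emph{minimal} gauge (not just some gauge), which is precisely what \S4 is devoted to establishing by an independent argument.
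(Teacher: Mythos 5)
Your proposal is correct and is essentially the paper's own argument: the paper likewise invokes Th.~\ref{existenceminimalgauges} to obtain a minimal $v$-gauge $\beta$, applies Th.~\ref{teoremaprincipal} to see $R_\beta$ is a Gr\"ater ring with center $V$, uses the conjugacy of Gr\"ater rings from \cite[Th.~16.15, p.~96]{MMU} to write $C = qR_\beta q\inv$, and takes $\alpha = \beta\circ\imath_q$ so that $R_\alpha = C$. Your extra remarks verifying that composition with an inner automorphism preserves the gauge axioms and $\omega$ are fine but routine, just as the paper treats them.
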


The proof requires the existence of minimal gauges on defectless
$F$-algebras, which will be proved in \S4.

\begin{proof}
If $\alpha$ is a minimal $v$-gauge on $A$, we have seen in 
Th.~\ref{teoremaprincipal} that $R_\alpha$ is a Gr\"ater ring with 
center~$V$.

For the converse, let $C$ be a Gr\"ater ring of $A$
with center $V$.  Because $v$ is defectless in $A$, 
Th.~\ref{existenceminimalgauges}~below shows that there exists 
a minimal $v$-gauge $\beta$ on $A$.
By Th.~\ref{teoremaprincipal},
its gauge ring $R_\beta$ is a Gr\"ater ring of $A$ with center $V$.
By \cite[Th.~16.15, p.~96]{MMU}, $C$ and $R_\beta$ are conjugate in $A$, 
i.e.,
$C = q R_\beta q^{-1}$ for some $q \in A^\times$. Composition of $\beta$
with the inner automorphism $\imath_q\colon A \rightarrow A$ defined by
$x \mapsto q^{-1} x q$ yields a minimal $v$-gauge
$\alpha = \beta \circ \imath_q$ on~$A$ such that
$R_\alpha = q R_\beta q^{-1} = C$.
\end{proof}

 Note  that the gauge 
$\alpha$ of  the theorem is not uniquely determined by~$C$, as the
example in \S5~below demonstrates.

\section{Existence of minimal gauges}

In this section we prove the existence  of minimal
gauges on defectless semisimple algebras. First, we extend the concept 
of minimal gauge to semisimple
algebras.

\begin{proposition}\label{gaugeminimalsimple}
Let $(F,v)$ be a valued field and $A$ be a finite-dimensional simple
$F$-algebra. Let $v_{1}, \ldots, v_{r}$ be all the extensions of
$v$ to $Z(A)$, and let $V_i$ be the valuation ring of $v_i$. Let $\alpha$
be a $v$-gauge on $A$.
Then,
\begin{equation}\label{omegasimples}
\omega(\alpha) \,\geq\, \xi_{V_1, [A]} + \ldots + \xi_{V_r, [A]}.
\end{equation}
\end{proposition}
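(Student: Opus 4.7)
The plan is to reduce to the central simple case already handled in Th.~\ref{minimalgauge1} via the decomposition of Th.~\ref{thm:gaugeismin}. The key observation is that $\omega(\alpha)$ is additive over the graded simple components of $\gr_\alpha(A)$, and those components are precisely indexed by the extensions $v_1, \ldots, v_r$ of $v$ to $K = Z(A)$.

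First I would invoke Th.~\ref{thm:gaugeismin} to obtain $v_i$-gauges $\alpha_1, \ldots, \alpha_r$ on $A$ (viewed as a central simple $K$-algebra) with $\alpha = \min(\alpha_1, \ldots, \alpha_r)$ and a graded $\gr(F)$-algebra isomorphism
\[
\gr_\alpha(A) \,\cong_g\, \gr_{\alpha_1}(A) \times \ldots \times \gr_{\alpha_r}(A).
\]
Taking degree-zero parts of both sides gives an isomorphism of semisimple $\ov F^{\,v}$-algebras
\[
A_0^\alpha \,\cong\, A_0^{\alpha_1}\times \ldots \times A_0^{\alpha_r}.
\]
Since the number of simple components of a direct product is the sum of the numbers of simple components of the factors, this yields
\[
\omega(\alpha) \,=\, \tsum_{i=1}^r \omega(\alpha_i).
\]

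Next I would apply Th.~\ref{minimalgauge1} to each $\alpha_i$: each $\alpha_i$ is a $v_i$-gauge on the central simple $K$-algebra $A$, so $\omega(\alpha_i) \ge \xi_{V_i,[A]}$. Summing over $i$ gives
\[
\omega(\alpha) \,=\, \tsum_{i=1}^r \omega(\alpha_i) \,\ge\, \tsum_{i=1}^r \xi_{V_i,[A]},
\]
which is the desired inequality \eqref{omegasimples}.

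There is no real obstacle here: the two ingredients (the decomposition theorem for gauges on simple algebras and the lower bound $\omega \ge \xi$ in the central simple case) do essentially all the work, and the proof is just a matter of chaining them together and observing additivity of the number of simple components across a direct product. One only has to be a bit careful that the $\omega(\alpha_i)$ appearing on the right side of the decomposition of $A_0^\alpha$ really does agree with the number of simple components of the degree-zero part of the $i$-th factor $\gr_{\alpha_i}(A)$, which is immediate from the definition \eqref{eq:defomega}.
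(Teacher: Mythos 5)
Your proposal is correct and follows the same route as the paper: it uses the components $\alpha_i$ from Th.~\ref{thm:gaugeismin}, reads off $\omega(\alpha)=\sum_i\omega(\alpha_i)$ from the graded decomposition \eqref{eq:grisocomps}, and then applies Th.~\ref{minimalgauge1} to each $v_i$-gauge $\alpha_i$ on the central simple $K$-algebra $A$. No gaps.
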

\begin{proof}
Let $\alpha_i$ be the $v_i$-component of $\alpha$ for $i = 1, \ldots,
r$.  Then, by the graded algebra isomorphism~\eqref{eq:grisocomps} and 
the inequality of Th.~\ref{minimalgauge1} for each $i$,
\begin{equation}\label{eq:omeganoncentral}
\omega(\alpha) \, = \, \omega(\alpha_1) + \ldots + \omega(\alpha_r)
\, \ge \,\xi_{V_1, [A]} + \ldots + \xi_{V_r, [A]}.
\end{equation}
\end{proof}

\begin{definition}\label{def:minimalss}
Let $v$ be a valuation on a field $F$, and let 
$\alpha$ be a $v$-gauge on a simple (finite-dimensional)
$F$-algebra $A$. We say that $\alpha$ is a {\it minimal} 
 $v$-gauge on  $A$ if we have equality
in~\eqref{omegasimples}. Note that \eqref{eq:omeganoncentral}
shows that $\alpha$ is a minimal $v$-gauge if and only if each 
component $\alpha_i$
of $\alpha$ is a minimal $v_i$-gauge.
More generally, if $A$ is semisimple, say
$A = A_1 \times \ldots \times A_k$ with each $A_i$ simple,
and $\beta$ is a $v$-gauge
on $A$, we say that $\beta$ is a \emph{minimal $v$-gauge} on $A$ if each
$\beta_i = \beta |_{A_i}$ (as in \eqref{betai}) is a minimal $v$-gauge
on $A_i$.
\end{definition}

\begin{theorem} \label{existenceminimalgauges}
If $v$ is a valuation on $F$ defectless in a finite-dimensional
semisimple $F$-algebra $A$, then there
exists a minimal $v$-gauge $\alpha$ on $A$ with
$\Gamma_\alpha \subseteq \divh{\Gamma_v}$.
\end{theorem}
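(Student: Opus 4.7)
The plan is to reduce to the central simple case in two steps and then induct on $\xi := \xi_{V,[A]}$.

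\emph{Step 1: Reduction to simple $A$.} By Proposition~\ref{gaugesemisimple} and Definition~\ref{def:minimalss}, a $v$-gauge on $A = A_1\times\cdots\times A_k$ is minimal iff each factor is minimal, so it suffices to treat simple $A$.

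\emph{Step 2: Reduction to central simple $A$.} If $A$ is simple with center $K$ and $v_1,\ldots,v_r$ are the extensions of $v$ to $K$, then by Definition~\ref{def:minimalss} and Theorem~\ref{thm:compatible} a minimal $v$-gauge on $A$ is obtained as $\alpha = \min(\alpha_1,\ldots,\alpha_r)$ from minimal $v_i$-gauges $\alpha_i$ on $A$ (as a central simple $K$-algebra) that are pairwise $v_{ij}$-coarsening compatible. I plan to arrange compatibility by producing all the $\alpha_i$ from a single Gr\"ater ring $C$ of $A$ with center $T := V_1\cap\cdots\cap V_r$ (which exists by \cite[Th.~16.14, p.~94]{MMU}); its intersection decomposition partitions into Gr\"ater rings $C_i$ of $A$ over $V_i$, and since $C_i\cdot V_{ij} = C\cdot V_{ij} = C_j\cdot V_{ij}$, the associated $v_i$-gauges (produced by the central simple case below) share their $v_{ij}$-coarsenings.

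\emph{Step 3: Central simple induction on $\xi$.} In the base case $\xi = 1$, any Dubrovin valuation ring $B$ of $A$ with $Z(B)=V$ is integral over $V$; Morandi's theorem \cite[Th.~2.3]{M2} gives a Morandi value function $\mu_B$ on $A$ with $R_{\mu_B}=B$, and \cite[Prop.~2.5]{TW1}, using defectlessness of $v$ in $A$, makes $\mu_B$ a $v$-gauge with $\omega(\mu_B)=1=\xi$; thus $\mu_B$ is minimal with $\Gamma_{\mu_B}\subseteq\divh{\Gamma_v}$ by \eqref{eq:gammaB}. For the inductive step $\xi>1$, Proposition~\ref{prop:maxint} yields the maximal nonzero prime $\mathfrak P \subsetneqq J(V)$ with $B\cdot V_{\mathfrak P}$ integral over $V_{\mathfrak P}$; set $W := V_{\mathfrak P}$ with valuation $w$ and $S := B\cdot W$, so that $S$ is a Dubrovin valuation ring of $A$ with $Z(S)=W$, integral over $W$, and $\mu_S$ is a $w$-gauge on $A$ with $\omega(\mu_S)=1$. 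The residue $\overline S$ is central simple over $K := Z(\overline S)$, and $u := v/w$ has $\ell := \ell_{V,W}\ge 2$ extensions $u_1,\ldots,u_\ell$ to $K$ (Proposition~\ref{prop:maxint}(v)), each with $\xi_{U_j,[\overline S]} = \xi/\ell < \xi$ by \eqref{teoremaE}. Since $v$ defectless in $A$ forces $u$ defectless in $\overline S$, the induction hypothesis applied to the simple $\overline W$-algebra $\overline S$ (via the reductions above) produces a minimal $u$-gauge $\rho$ on $\overline S$ with $\omega(\rho) = \ell\cdot(\xi/\ell) = \xi$.

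\emph{Step 4: The lifting, which is the main obstacle.} I must combine $\mu_S$ and $\rho$ into a $v$-gauge $\alpha$ on $A$ whose $w$-coarsening is $\mu_S$ and whose induced $u$-gauge on $A_0^{\mu_S}=\overline S$ (cf.~Proposition~\ref{gaugealphazero}) is $\rho$. The plan is to fix a group splitting $s\colon \divh{\Gamma_w}\to\divh{\Gamma_v}$ of the canonical projection (available because these are divisible, hence $\mathbb Q$-vector spaces), giving the lexicographic decomposition $\divh{\Gamma_v}=\divh{\Delta_w}\oplus s(\divh{\Gamma_w})$ with $s(\divh{\Gamma_w})$ dominant. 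Choose a splitting base $(m_1,\ldots,m_n)$ of $A$ for $\mu_S$ whose images in $\overline S$ refine to a splitting base of $\overline S$ for $\rho$, and set $\alpha(m_k) := s(\mu_S(m_k))+\rho(\overline{m_k})$, extending to $A$ by the $v$-norm formula \eqref{definicaonorma}. The substantive verifications are that $\alpha$ is surmultiplicative and that $\gr_\alpha(A)$ is graded semisimple; I expect these to reduce, via the identification of $R_\alpha$ with the preimage of $R_\rho$ under $S\to\overline S$ and the lexicographic structure on $\divh{\Gamma_v}$, to the corresponding properties of $\mu_S$ and $\rho$. Once $\alpha$ is shown to be a $v$-gauge, the identification $A_0^\alpha\cong A_0^\rho$ gives $\omega(\alpha)=\omega(\rho)=\xi$, so $\alpha$ is minimal.
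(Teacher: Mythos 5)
Your overall architecture (reduce to simple, then to central simple, then induct downward through a coarsening $w$ with $S = B\!\cdot\! W$ integral over $W$ and recurse on $\overline S$) parallels the paper, but two of your steps contain genuine gaps, and the second one is the heart of the theorem. In Step 4 you must produce a $v$-gauge $\alpha$ whose $w$-coarsening is $\mu_S$ and whose induced $u$-gauge on $A_0^{\mu_S}\cong\overline S$ is the prescribed $\rho$. Your recipe $\alpha(m_k) := s(\mu_S(m_k))+\rho(\overline{m_k})$ is not even well defined: only base elements with $\mu_S(m_k)=0$ have images in $\overline S=A_0^{\mu_S}$, so $\rho(\overline{m_k})$ has no meaning for the rest of a splitting base. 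More seriously, even after repairing the formula, defining a value function by values on a base and the norm formula \eqref{definicaonorma} gives no surmultiplicativity and no graded semisimplicity for free; this is exactly the hard content, and it does not ``reduce to the corresponding properties of $\mu_S$ and $\rho$.'' The paper cannot do this lift directly over $(F,w)$ either: it first passes to the Henselization of $(F,w)$ (so that, by \cite[Th.~3.1]{TW1}, the coarse gauge is an End-gauge), builds the lift in Prop.~\ref{prop1existence} via a generalized crossed-product decomposition of $D$ over the centralizer of an inertial lift of the decomposition field, an $\alpha$-$v$-norm End-gauge (Lemma~\ref{lemma1existence}), and Ribenboim-type approximation, and only then descends by Lemma~\ref{lemma2existence} using immediacy of the Henselization; moreover the lift there is not of a prescribed $\rho$ at all, but of a gauge with $\omega=1$ on the centralizer, with the count $\omega(\varphi)=r\,\omega(\alpha)\,\omega(\beta)$ coming out of the construction. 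Your claim that $v$ defectless in $A$ forces $u$ defectless in $\overline S$ also needs an argument (you cannot invoke Prop.~\ref{gaugealphazero}, since that presupposes the $v$-gauge you are trying to build).

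Step 2 has a separate gap: you infer the $v_{ij}$-coarsening compatibility required by Th.~\ref{thm:compatible} from the equalities ${C_i\!\cdot\! V_{ij}} = {C_j\!\cdot\! V_{ij}}$ of localized Gr\"ater rings. But compatibility is an equality of value functions, not of their rings, and a gauge is not determined by its gauge ring --- the paper's Example~\ref{ex:notdetermined} exhibits infinitely many nonisomorphic minimal gauges with the same gauge ring; only when the coarsened gauge has $\omega=1$ (Morandi case, $\xi_{V_{ij},[A]}=1$) does the ring determine it, and that need not hold for $V_{ij}$. There is also a circularity risk: arranging the constructed $\alpha_i$ to have gauge ring $C_i$ in the first place is essentially Th.~\ref{teoremaprincipalvolta}, whose proof relies on the existence theorem you are proving. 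The paper instead handles the non-central case by an induction on the splitting rank of $v$ in $Z(A)$, using Lemma~\ref{lemma4existence} to show the finest common coarsening of two extensions is either trivial or a splitting valuation, and Prop.~\ref{prop2existence} to build each $\alpha_i$ with a \emph{prescribed} coarsening $\beta_{j(i)}$, which is what actually yields the compatibility hypotheses of Th.~\ref{thm:compatible}.
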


The general method in proving the theorem is to build up 
$\alpha$ inductively from minimal gauges for $A$ for 
valuations on $F$ coarser than $v$.  The proof will begin after 
Prop.~\ref{prop:jump1} and be completed after 
Lemma~\ref{lemma4existence}.

\begin{lemma}\label{lemma2existence}
Let valuation $w$ be a coarsening of $v$ on $F$. Let $A$ be a semisimple 
$F$-algebra with a $w$-gauge $\beta$. Let $F'$ be a field containing $F$ 
with a valuation $w'$ which is an immediate extension of ~$w$, and let 
$v'$ be the extension of $v$ to $F'$ that refines $w'$, so $v'$ is an 
immediate extension of $v$. Let $A' = A \otimes_F F'$ and let 
$\beta' = \beta \otimes w',$ which is a $w'$-gauge on $A'$. Suppose 
$A'$ has a $v'$-gauge $\alpha'$ whose $w'$-coarsening is $\beta'$. 
Then $\alpha = \alpha' |_A$ is a $v$-gauge on $A$ with $w$-coarsening 
$\beta$, and $\gr_\alpha (A) \cong_g \gr_{\alpha'}(A')$.
\end{lemma}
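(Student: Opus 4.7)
My plan is to prove $\alpha = \alpha'|_A$ is a $v$-gauge on $A$ by showing the inclusion $A \hookrightarrow A'$ induces a graded $\gr_v(F)$-algebra isomorphism $\hat f \colon \gr_\alpha(A) \to \gr_{\alpha'}(A')$. The easy preliminaries come first: $\alpha$ is a surmultiplicative $v$-value function on $A$ by restriction from $\alpha'$; its $w$-coarsening equals $\beta$ (use $\beta'|_A = \beta$ since $\beta' = \beta \otimes w'$, together with $\beta' = \varepsilon \circ \alpha'$, where $\varepsilon \colon \Gamma \to \Gamma/\Delta$ is the canonical projection corresponding to the $w$-coarsening, so that $\beta(a) = \beta'(a\otimes 1) = \varepsilon(\alpha'(a\otimes 1)) = \varepsilon(\alpha(a))$); and $\hat f$ is injective by Lemma~\ref{surmultprod.lem}, since $\alpha'(a) = \alpha(a)$ for all $a \in A$.

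Next I would exploit the immediacy of $w'$ over $w$: because $\beta' = \beta \otimes w'$ is a $w'$-gauge on $A'$ and $\gr_{w'}(F') = \gr_w(F)$, the analogue of \cite[Cor.~1.26]{TW1} applied to this immediate extension yields a graded isomorphism $\gr_\beta(A) \cong_g \gr_{\beta'}(A')$ induced by inclusion. This $\beta$-isomorphism is the main auxiliary tool.

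The main obstacle is surjectivity of $\hat f$, which I would handle by an ultrametric lifting argument that uses the $\beta$-isomorphism. Given $a' \in A'$ with $\alpha'(a') = \gamma$, set $\lambda = \varepsilon(\gamma) = \beta'(a')$. The $\beta$-isomorphism in degree $\lambda$ supplies $b \in A$ with $\beta(b) = \lambda$ and $\beta'(b \otimes 1 - a') > \lambda$; this strict inequality in $\Gamma/\Delta$ forces $\alpha'(b \otimes 1 - a')$ to lie in a coset of $\Delta$ strictly above $\lambda$, so $\alpha'(b \otimes 1 - a') > \gamma$ in $\Gamma$. Then the ultrametric equality
\[
\alpha'(b \otimes 1) \,=\, \alpha'\big(a' + (b \otimes 1 - a')\big) \,=\, \min\big(\gamma,\, \alpha'(b \otimes 1 - a')\big) \,=\, \gamma
\]
(valid because $\gamma = \alpha'(a') < \alpha'(b \otimes 1 - a')$) gives $\alpha(b) = \gamma$ and $\hat f(\widetilde{b}^{\,\alpha}) = \widetilde{a'}^{\,\alpha'}$, proving surjectivity of $\hat f$ on each homogeneous component.

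Finally, since $v'/v$ is immediate we have $\gr_v(F) = \gr_{v'}(F')$, and the $v'$-gauge $\alpha'$ satisfies $\dim_{\gr_{v'}(F')} \gr_{\alpha'}(A') = [A':F'] = [A:F]$; the isomorphism $\hat f$ then forces $\dim_{\gr_v(F)} \gr_\alpha(A) = [A:F]$, so $\alpha$ is a $v$-norm, and graded semisimplicity transfers from $\gr_{\alpha'}(A')$ via $\hat f$. Hence $\alpha$ is a $v$-gauge on $A$ with the asserted properties.
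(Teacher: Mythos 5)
Your proof is correct, and it is organized genuinely differently from the paper's, although both arguments hinge on the same key input: that the canonical map $\gr_\beta(A) \to \gr_{\beta'}(A')$ is a graded isomorphism because $w'$ is immediate over $w$ (\cite[Cor.~1.26]{TW1}). The paper uses that isomorphism to identify the components $A^\beta_\lambda = {A'}^{\beta'}_\lambda$ together with the induced residue value functions $\alpha_\lambda = \alpha'_\lambda$, and then invokes \cite[Prop.~4.3]{TW2} twice --- once to see each $\alpha'_\lambda$ is a $u'$-norm, and once to conclude that $\alpha$ is a $v$-norm from the facts that $\beta$ is a $w$-norm and each $\alpha_\lambda$ is a $u$-norm --- after which the isomorphism $\gr_\alpha(A) \cong_g \gr_{\alpha'}(A')$ is read off componentwise. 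You reverse the logical order: you first prove the $\alpha$-level graded isomorphism by an explicit homogeneous lifting (your key step, that $\beta'(b\otimes 1 - a') > \varepsilon(\gamma)$ forces $\alpha'(b\otimes 1 - a') > \gamma$, is sound because $\varepsilon$ is order-preserving with convex kernel, and the choice of $b$ is exactly surjectivity of the $\beta$-level isomorphism in degree $\varepsilon(\gamma)$), and only then deduce that $\alpha$ is a $v$-norm by a dimension count over $\gr_v(F) = \gr_{v'}(F')$, using the immediacy of $v'$ over $v$ granted in the statement. This buys you independence from \cite[Prop.~4.3]{TW2} and is somewhat more self-contained, at the cost of redoing by hand what that proposition and the two-step filtration formalism (as in Prop.~\ref{gaugealphazero}) package; the remaining points --- surmultiplicativity, transfer of graded semisimplicity through the isomorphism, and the identification of the $w$-coarsening of $\alpha$ with $\beta$ --- are handled essentially as in the paper.
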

\begin{proof}
Let $u = v/w$, which is the valuation on $\overline{F}^w$ induced  
by $v$ on $F$; likewise let $u'= v'/w'$ on $\overline{F'}^{w'},$ which 
coincides with $u$ under the canonical isomorphism 
$\overline{F}^w \cong \overline{F'}^{w'}.$ Because $w'$ is a immediate 
extension of $w$, by \cite[Cor. 1.26]{TW1}, 
$\beta'= \beta \otimes w'$ is a $w'$-gauge on $A'$ with
\[
\gr_{\beta'}(A') \,\cong_g\, 
\gr_{\beta}(A) \otimes_{\gr_w(F)} \gr_{w'}(F') \,\cong_g\, \gr_\beta(A).
\]
For $\Gamma = \divh{\Gamma_{\alpha'}}$ and 
$\Lambda = \divh{\Gamma_{\beta'}}$, let 
$\varepsilon \colon \Gamma \rightarrow \Lambda$ be the map associated 
to the $w$-coarsening of $v$. For each $\lambda \in \Lambda$ we have 
the $\lambda$-component ${A'}_\lambda^{\beta'}$ of $\gr_{\beta'}(A')$, 
and the $u'$-value function $\alpha'_{\lambda}$ on ${A'}_\lambda^{\beta'}$ 
defined by
\[
\alpha'_\lambda (a + A'^{\beta'}_{> \lambda}) \,=\, \left \{
\begin{array}{cl}
\alpha'(a) & \text{\; if \:} \beta'(a) = \lambda,\\
\infty & \text{\; if \:} \beta'(a)> \lambda.
\end{array}\right.
\]
Since $\alpha'$ is a $v'$-norm, each $\alpha'_\lambda$ is a $u'$-norm 
on ${A'}_\lambda^{\beta'}$ by \cite[Prop. 4.3]{TW2}. Likewise 
$\alpha = \alpha' |_A$ induces the $u$-value function $\alpha_\lambda$ 
on $A_\lambda^\beta$. Since $\beta = \beta' |_A$, we can view 
${A}_\lambda^{\beta} \subseteq {A'}_\lambda^{\beta'}$ via the canonical 
inclusion; then clearly 
$\alpha_\lambda = \alpha'_\lambda |_{{A}_\lambda^{\beta}}$. But since 
the canonical inclusion $\gr_\beta(A) \hookrightarrow \gr_{\beta'}(A')$ 
is a graded isomorphism, we have  
${A}_\lambda^{\beta} = {A'}_\lambda^{\beta'}$.  So, 
$\alpha_\lambda = \alpha'_\lambda$, which is a $u$-norm on 
${A}_\lambda^{\beta}$. Since in addition  $\beta$ is a $w$-norm on $A$, by 
\cite[Prop. 4.3]{TW2} $\alpha$ is a $v$-norm on $A$. Moreover, 
$\alpha = \alpha' |_A$ is surmultiplicative since $\alpha'$ is 
surmultiplicative. There is a canonical algebra monomorphism 
$\iota\colon \gr_\alpha(A) \hookrightarrow \gr_{\alpha'}(A')$. 
Take any~  
$\gamma \in \Gamma$. It follows from the definitions that
$A_{\gamma}^\alpha = 
\big(A_{\varepsilon(\gamma)}^\beta\big)
_\gamma^{\alpha_{\varepsilon(\gamma)}}$.
 Since 
$A_{\varepsilon(\gamma)}^\beta = {A'}_{\varepsilon(\gamma)}^{\beta'}$
and $\alpha_{\varepsilon(\gamma)} = \alpha'_{\varepsilon(\gamma)}$, 
we thus have
\[
A_{\gamma}^\alpha \,=\, 
\big(A_{\varepsilon(\gamma)}^\beta\big)_\gamma^{\alpha_{\varepsilon(\gamma)}}
\, =\,
 \big({A'}_{\varepsilon(\gamma)}^{\beta'}\big)_\gamma^{{\alpha'}_{\varepsilon(\gamma)}} 
\,=\, {A'}_{\gamma}^{\alpha'};
\]
hence, $\iota$ is a graded isomorphism. Thus, $\gr_\alpha(A)$ is graded 
semisimple, since this is true for $\gr_{\alpha'}(A')$, as $\alpha'$ is 
a $v'$-gauge. Therefore, $\alpha$ is a $v$-gauge. The $w$-coarsening of 
$\alpha = \alpha' |_A$ is $\beta'|_A = \beta$.
\end{proof}

\begin{lemma}\label{lemma3existence}
Let $A$ be a central simple $F$-algebra with $j(v,A) = n>1$. Let $P$ be 
the $(n-1)$-st jump prime ideal of $v$ for $A$, and let $W= V_P$ with its 
associated valuation $w$. Let $S$ be a Dubrovin valuation ring of $A$ with 
$Z(S) = W$, and let $\overline{S} = S/J(S)$. Let $u = v/w$, the residue 
valuation on~$\overline{F}^{\,w}$ induced by $v$, and let $u_1, \ldots, u_r$ 
be the valuations on $Z(\overline{S})$ extending $u$. Then 
$u_1, \ldots, u_r$ are pairwise independent valuations.  
\end{lemma}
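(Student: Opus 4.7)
The plan is to argue by contradiction. Suppose for some $i\neq j$ that $u_i$ and $u_j$ are not independent. Then they admit a common nontrivial coarsening $u'$ on $L := Z(\overline S)$, whose valuation ring $U'$ strictly contains both $U_i$ and $U_j$. First I would analyze $u_0 := u'|_{\overline F^{\,w}}$: since $L/\overline F^{\,w}$ is a finite algebraic extension, $u_0$ trivial would force $u'$ trivial, a contradiction; and since distinct extensions of $u$ to a finite algebraic extension of $\overline F^{\,w}$ are pairwise incomparable, $u_0 = u$ would force $u'$ to be one of the $u_k$'s, hence $u_i = u' = u_j$, contradicting $i\neq j$. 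So $u_0$ is a strict nontrivial coarsening of $u = v/w$, corresponding via pullback to a prime ideal $Q$ of $V$ with $P := J(W) \subsetneq Q \subsetneq J(V)$.

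The crucial structural observation is that since $P = P_{n-1}$ and $J(V) = P_n$ are consecutive jump prime ideals of $v$ for $A$, the prime $Q$ is not a jump prime of $v$ for $A$, hence $n_Q = n_V$. Tracing through the jump definition for the coarsening $v_Q$ of $v$, one finds that the jump prime ideals of $v_Q$ for $A$ are exactly $\{P_1, \ldots, P_{n-1}, Q\}$, so $Q$ is the unique jump prime of $v_Q$ for $A$ that properly contains $P$.

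To derive a contradiction, I count the extensions of $u_0 = v_Q/w$ to $L$ in two ways. For each $k \in \{1, \ldots, r\}$, let $\mathfrak q_k$ be the unique prime of $U_k$ lying over $Q/J(W)$; then $(U_k)_{\mathfrak q_k}$ is an extension of $u_0$ to $L$, and the hypothesis on $u'$ gives $(U_i)_{\mathfrak q_i} = (U_j)_{\mathfrak q_j}$, so the number of \emph{distinct} such extensions is strictly less than $r$. On the other hand, by \cite[Cor.~E]{W} applied to $(v, w)$, the jump primes of $u_k$ for $\overline S$ pull back to jump primes of $v$ for $A$ properly containing $P$; there being only one such, we have $j(u_k, \overline S) \le 1$ and hence $\xi_{U_k, [\overline S]} = 1$ by Prop.~\ref{prop:maxint}(iv). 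Similarly, applying \cite[Cor.~E]{W} to $(v_Q, w)$ with $Q$ the unique jump of $v_Q$ above $P$ yields $\xi_{(U_k)_{\mathfrak q_k}, [\overline S]} = 1$. Substituting into formula~\eqref{teoremaE} we obtain $\xi_{V, [A]} = \xi_{W, [A]} \cdot r$ and $\xi_{V_Q, [A]} = \xi_{W, [A]} \cdot \ell_{V_Q, W}$.

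To conclude, I would apply \eqref{teoremaE} to the tower $V \supseteq V_Q$: by \cite[Cor.~E]{W} applied to $(v, v_Q)$, the analogous intermediate factor is $1$, giving $\xi_{V, [A]} = \xi_{V_Q, [A]} \cdot \ell_{V, V_Q}$. The main obstacle is to show $\ell_{V, V_Q} = 1$, which I expect to follow from $n_Q = n_V$ combined with an Ostrowski/matrix-size analysis (comparing $t_B$ and $t_{B_Q}$ via the identification $D_V \cong D_Q \otimes_{F_{h, V_Q}} F_{h, V}$, which is forced by $n_Q = n_V$). Once $\ell_{V, V_Q} = 1$ is established, we obtain $\xi_{V, [A]} = \xi_{V_Q, [A]}$, hence $\ell_{V_Q, W} = r$, contradicting the earlier strict inequality. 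Thus no common nontrivial coarsening can exist, and the $u_j$'s are pairwise independent.
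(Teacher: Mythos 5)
Your reduction to a prime $Q$ with $J(W)\subsetneqq Q\subsetneqq J(V)$, the observation that $n_Q=n_{J(V)}$ since $P$ and $J(V)$ are consecutive jump primes, and the counting of extensions of $u_0=v_Q/w$ are all sound (one small point you should make explicit: every extension of $u_0$ to $Z(\overline S)$ is a coarsening of some $u_k$, via composite valuations, so the rings $(U_k)_{\mathfrak q_k}$ really exhaust them and your count is $\ell_{V_Q,W}$). The genuine gap is the step you yourself flag: $\ell_{V,V_Q}=1$ is asserted only as an expectation, and the route you sketch (identifying $D_V\cong D_Q\otimes_{F_{h,V_Q}}F_{h,V}$ and then comparing residue data) is not carried out and is not the easiest way to get it. It can be closed with the identity already in the paper: for $S'={B\!\cdot\!V_Q}$ and $\widetilde B=B/J(S')$, rewrite \eqref{teoremaE} as $n_B=n_{S'}\,(t_B/t_{S'})\,(n_{\widetilde B}/t_{\widetilde B})\,\ell_{V,V_Q}$; since $Q$ is not a jump prime you have $n_B=n_{S'}$, and since $t_{S'}\mid t_{\widetilde B}=t_B$ by \eqref{eq:tbts}, every factor on the right is a positive integer, so all equal $1$, in particular $\ell_{V,V_Q}=1$. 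With that inserted, your contradiction $\ell_{V_Q,W}=r$ versus $\ell_{V_Q,W}<r$ goes through.

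Once repaired, your argument is essentially the contrapositive of the paper's, run with heavier machinery. The paper fixes an arbitrary valuation ring $Y$ with $V\subseteq Y\subsetneqq W$, divides two instances of \eqref{teoremaE} to get $n_B/n_T=(n_{\widetilde B}/n_{\widetilde T})(\ell_{V,W}/\ell_{Y,W})$, uses $n_B=n_T$ (no jump primes strictly between $P$ and $J(V)$) and $\ell_{V,W}\ge\ell_{Y,W}$ to conclude $\ell_{V,W}=\ell_{Y,W}$; this says directly that distinct $U_i$, $U_j$ have distinct coarsenings at every intermediate level $Y$, which is exactly the statement whose negation you assume. Your version needs the additional inputs $\xi_{U_k,[\overline S]}=1$ and $\xi_{(U_k)_{\mathfrak q_k},[\overline S]}=1$ via \cite[Cor.~E]{W} and Prop.~\ref{prop:maxint}(iv), plus the $\ell_{V,V_Q}=1$ step above, whereas the paper's cancellation at the level of $\xi_{W,[A]}$ avoids all of these; so the paper's route is shorter, but your approach is correct in substance once the missing step is proved.
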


\begin{proof}
Let $T$ and $B$ be Dubrovin valuation rings of $A$ with 
$B \subseteq T \subsetneqq S$ and $Z(B) = V$. Let $Y = Z(T)$ and let 
$y$ be the valuation of $Y$. We have  
$V \subseteq Y \subsetneqq W$. Let $n_B$ and $t_B$ be as given in 
\eqref{eq:defnB} and \eqref{eq:deftB}. Let
$
\widetilde{B} = B/J(S)
$
which is a Dubrovin valuation ring of $\ov S = S/J(S)$.
Recall from~\eqref{eq:tbts} that $t_{\tilde B} = t_B$.
(The proof of \eqref{eq:tbts} is valid for any overring $S$ of a 
Dubrovin valuation ring $B$.) Using this
and $\xi_{V,[A]} = n_B/t_B$, formula \eqref{teoremaE}
yields
$$
n_B\, = \,\xi_{W,[A]}\, n_{\tilde B}\,\ell_{V,W}.
$$ 
Likewise, by replacing $B$ by $T$, we have 
$$
n_T\, = \,\xi_{W,[A]}\, n_{\tilde T}\,\ell_{Y,W}.
$$ 
Hence, 
\begin{equation}\label{eq:TvsB}
n_B/n_T \, = \, \big(n_{\tilde B}/n_{\tilde T}\big) \,\big(\ell_{V,W}/
\ell_{Y,W}\big).
\end{equation}
Because $y$ is a coarsening of $v$, the Henselization 
$F_{h,y}$ embeds in $F_{h,v}$. Hence $n_B/n_T$ is a positive
integer, as likewise is $n_{\tilde B}/n_{\tilde T}$.  Also, because 
the valuation ring $\tilde V = V/J(W)$ is a refinement  of 
$\tilde Y = Y/J(W)$ in $\ov W$, this  $\tilde V$ has at least as many extensions
to $Z(\ov S)$ as $\tilde W$, i.e., $\ell_{V,W}\ge\ell_{Y,W}$.  But,
since there are no jump prime ideals between $J(Y)$ and $J(V)$, we 
have $n_B = n_T$. Thus, in \eqref{eq:TvsB} the left side equals 
$1$ and the right side is a product of positive integers.  Hence,
$\ell_{V,W} = \ell_{Y,W}$.

The last equality says that the number of 
extensions $U_i$ of $U = V/J(W)$ to $Z(\overline{S})$ equals the number 
of extensions of the coarser valuation $Y/J(W)$ to $Z(\overline{S})$. 
Hence, any two distinct $U_i$ and $U_j$ have distinct coarsenings to 
extensions of $Y/J(W)$. Because this is true for every valuation ring 
$Y$ with $V \subseteq Y \subsetneqq W$, the finest common coarsening 
of $U_i$ and $U_j$ must be the trivial valuation ring. Hence, $U_i$ and 
$U_j$ are independent valuation rings in $Z(\overline{S})$, so their 
corresponding valuations $u_i$ and $u_j$ are independent.
\end{proof}

The next proposition is the most difficult step in the proof of 
Th.~\ref{existenceminimalgauges}.  Here is the setup for the proposition:
Let $A$ be a central simple $F$-algebra with $v$ defectless in $A$ and 
$j(v,A)=2$. Let $W = V_P$, where $P$ is the first jump prime ideal of 
$v$ for $A$, and let $w$ be the valuation of $W$. Assume 
that $w$ is Henselian. 
Let $\beta$ be a $w$-gauge of $A$ with 
$\Gamma_\beta \subseteq \divh{\Gamma_w}$. Write $A = \End_D(M)$, where 
$D$~is the division algebra associated to $A$, and $M$ is a 
finite-dimensional right $D$-vector space. Let~$y$~be the valuation on 
$D$ extending $w$ on $F$, and let $\overline{D} = \overline{D}^{\,y}$. Let 
$u$ be the residue valuation~$v/w$ on~$\overline{F}^{\,w}$ induced by $v$, 
and let $u_1, \ldots, u_r$ be the extensions of $u$  to $Z(\overline{D})$. 
Let field $S$ be the separable closure of $\overline{F}^{\,w}$ in 
$Z(\overline{D})$. Recall from \cite[Prop.~1.7]{JW} that $Z(\ov D)$ is normal
over $\ov F^{\,w}$ and that $S$ is abelian Galois over $\ov F^{\,w}$.   
Let $K$ be the decomposition field of $u_1 |_S$ over 
$u$ (so $K$ is also the decomposition field of each $u_i |_S$ over $u$, 
as $\Gal(S/\overline{F}^{\,w})$ is abelian). For basic properties of
decomposition fields, see \cite[pp.~133-136]{Ef}. 
Let $L$ be a subfield of $D$ 
that is an inertial lift of $K$ over $F$.  That is, $\ov L^{\,y} = K$ and
$\DIM LF = \DIM K{\ov F^{\,w}}$. 
Such an $L$ exists (and is 
unique up to isomorphism) because  $w$ is Henselian and $K$ is separable 
over $\ov F^{\,w}$,  cf.~\cite[p.~135]{JW}. 
 Let $v_1, \ldots, v_r$  
~be the extensions of $v$ to $L$. Let~$C$~be the centralizer  $C_D(L)$.

\begin{proposition}\label{prop1existence} 
In the situation just described,
let $\alpha$ be a $v_1$-gauge on $C = C_D(L)$
with ${\Gamma_\alpha \subseteq \divh{\Gamma_v}}$.
Then, $A$ has a $v$-gauge $\varphi$ with 
$w$-coarsening $\beta$ such that 
$\Gamma_\varphi \subseteq \divh{\Gamma_v}$ and
\[
\omega(\varphi) \,=\, r\hsp \omega(\alpha) \hsp\omega(\beta),
\]
where $r$ is the number of extensions of 
$u$ to $Z(\ov D)$. Moreover $v_1$ on $L$ 
has extension number $1$ in~$C$. Hence, $\alpha$ exists and can be chosen 
with 
$\omega(\alpha) = 1$.
\end{proposition}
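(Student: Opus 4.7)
My plan addresses the three conclusions of the proposition in a convenient order, with the Henselian hypothesis on $w$ used throughout to invoke \cite[Th.~3.1]{TW1}: $\beta$ is an End-gauge from a $y$-norm $\eta$ on $M$, so Setup~\ref{setup} gives $A_0^\beta\cong\prod_{i=1}^{\omega(\beta)}\mat_{r_i}(\overline{D})$ with common center $Z(\overline{D})$.

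First I would establish $\xi_{V_1,[C]}=1$. The defining property of $L$ as inertial lift of the decomposition field $K$ of $u_1|_S$ forces, after passing to the Henselization $(L_{h,v_1},v_{1,h})$, that the central division $L_{h,v_1}$-algebra $E$ with $C\otimes_L L_{h,v_1}\cong\mat_{n_C}(E)$ has residue center $Z(\overline{E}^{\,w'})=\overline{L}^{\,v_1}$ (where $w'$ extends $v_{1,h}$). Combined with defectlessness of $v_1$ in $C$ (inherited from $v$ in $A$ via Prop.~\ref{prop:deflessredtocent}) and the Ostrowski formula \eqref{defectdubrovin}, this yields $n_C=t_C$, i.e.~$\xi_{V_1,[C]}=1$; equivalently, $j(v_1,C)\le 1$ and Prop.~\ref{prop:maxint}(iv) applies. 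Existence of $\alpha$ with $\omega(\alpha)=1$ is then immediate: any Dubrovin valuation ring $B$ of $C$ with center $V_1$ is integral over $V_1$, so by \cite[Th.~23.2]{MMU} it carries a Morandi value function $\alpha$ with $R_\alpha=B$; by Remark~\ref{rem:omega1}, $\alpha$ is a $v_1$-gauge with $\omega(\alpha)=1$, and $\Gamma_\alpha=\Gamma_B\subseteq\divh{\Gamma_v}$ by \eqref{eq:Morvalgp}.

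For the construction of $\varphi$, I would assemble the components $\alpha_0^{ij}$ of a $u$-gauge $\alpha_0$ on $A_0^\beta$ (in the sense of Setup~\ref{setup}) from the single gauge $\alpha$. Since $L$ is inertial and $K\subseteq Z(\overline{D})$, the identification $\overline{C}^{\,y|_C}=C_{\overline{D}}(K)=\overline{D}$, together with Prop.~\ref{gaugealphazero} applied to the $w|_L$-coarsening of $\alpha$ (which equals $y|_C$ by Henselianness), produces a $u_1$-gauge $\overline{\alpha}$ on $\overline{D}$ with $\omega(\overline{\alpha})=\omega(\alpha)$. The transitive action of $\Gal(Z(\overline{D})/\overline{F}^{\,w})$ on $\{u_1,\ldots,u_r\}$ is realized by inner automorphisms of $D$ \cite[Prop.~1.7]{JW}, and conjugating $\overline{\alpha}$ yields $u_j$-gauges on $\overline{D}$ for each $j$, all with the same $\omega$-value. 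Lemma~\ref{lemma1existence} then lifts each to a $u_j$-gauge $\alpha_0^{ij}$ on $C_i\cong\mat_{r_i}(\overline{D})$. By Lemma~\ref{lemma3existence} the $u_j$ are pairwise independent on $Z(\overline{D})$, so Cor.~\ref{cor:viindep} combines $\alpha_0^{i1},\ldots,\alpha_0^{ir}$ into a $u$-gauge $\alpha_0^i$ on each $C_i$, and Prop.~\ref{gaugesemisimple} assembles these into a $u$-gauge $\alpha_0$ on $A_0^\beta$ with $\omega(\alpha_0)=r\hsp\omega(\alpha)\hsp\omega(\beta)$ by \eqref{eq:omegaalpha}.

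Finally, $\varphi$ is obtained as the refinement of $\beta$ whose induced degree-zero gauge in the sense of \eqref{alphazero} equals $\alpha_0$; this inverts the coarsening of Prop.~\ref{gaugealphazero}, with $\varphi(a)$ read off from $\beta(a)$ together with the value of $\alpha_0$ on the leading term $\widetilde a^\beta$ (identified with an element of $A_0^\beta$ after a shift). Then $\omega(\varphi)=\omega(\alpha_0)=r\omega(\alpha)\omega(\beta)$ by \eqref{omega}. The main obstacle is the first stage, the verification $\xi_{V_1,[C]}=1$: it requires a careful analysis of how the decomposition-field property of $K$ propagates through the Henselization of $L$ at $v_1$ to constrain the residue center of $E$. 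The remaining steps — Galois transfer, End-lifting, combining via independence, and layer-by-layer refinement — follow standard patterns developed earlier in this section.
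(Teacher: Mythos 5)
Your high-level picture (work over the Henselian coarsening $w_L$, exploit the decomposition-field property of $K$ and the inertial lift $L$, use independence of the $u_j$ and Morandi value functions for the $\omega(\alpha)=1$ statement) matches the paper's, but two steps that you treat as routine are genuine gaps. First, your verification of $\xi_{V_1,[C]}=1$ rests on an unproved claim about the residue center of $E$ and on an invalid inference: knowing that $Z(\ov E^{\,w'})$ equals the residue field, together with defectlessness and Ostrowski's formula \eqref{defectdubrovin}, does \emph{not} force $n_C=t_C$. The paper's own Example~\ref{ex:notdetermined} refutes exactly this implication: there $D\otimes_F F_h\cong\mat_2(F_h)$, so the associated division algebra and its residue are the (trivial) base fields and $v$ is defectless, yet $\xi_{V,[D]}=2$. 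What is actually needed is the iterated relation \eqref{teoremaE} across the Henselian coarsening: $\xi_{W_L,[C]}=1$ since $w_L$ is Henselian, $\ell_{V_1,W_L}=1$ because $u_1$ is the \emph{unique} extension of $u_1|_K$ to $Z(\ov D)$ (decomposition-field property), and, crucially, $\xi_{U_1,[\ov D]}=1$; this last equality is where the hypothesis $j(v,A)=2$ enters, via the jump-pullback result \cite[Cor.~E]{W} giving $j(u_1,\ov D)=1$ and then Prop.~\ref{prop:maxint}(iv). Your sketch never uses $j(v,A)=2$, and without it the conclusion $\xi_{V_1,[C]}=1$ is false in general. (The paper implements this by showing a specific Dubrovin ring $B$ of $C$ with $Z(B)=V_1$ satisfies: $\tilde B\cong\tilde T$ is integral over $U_1$, $U_1$ is integral over $U_1\cap K$, $Y\cap C$ is integral over $W_L$, and then invoking \cite[Prop.~12.2, p.~70]{MMU}.)

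Second, your final step --- ``$\varphi$ is the refinement of $\beta$ whose induced degree-zero gauge is $\alpha_0$'' --- is precisely the hard part of the proposition and cannot be obtained by inverting Prop.~\ref{gaugealphazero}. The residue gauge $\alpha_0$ only prescribes values on $A_0^\beta$; a refinement of $\beta$ requires compatible value data on \emph{every} homogeneous component $A^\beta_\lambda$, together with a coherent choice of lifts of the degrees $\lambda\in\Gamma_\beta$ into $\divh{\Gamma_v}$, after which one must still prove surmultiplicativity, the $v$-norm property, and graded semisimplicity; your recipe does not even define $\varphi(a)$ when $\beta(a)\neq 0$. This is exactly what the paper's construction supplies: it decomposes $D=\bigoplus_{\rho\in G} d_\rho C$, chooses lifts $\mu_j$ of the $\pi_j$ and $\gamma_\rho$ of $y(d_\rho)$, forms the End-gauge $\psi=\End(\eta)$ on $A'=\End_C(M)$ via Lemma~\ref{lemma1existence}, and proves that $\varphi=\psi|_A$ is a $v$-norm by splitting $A$ into $1$-dimensional $C$-subspaces carrying $v_\rho$-norms $\alpha_\rho$ and running an approximation/surjectivity argument (as in the proof of Th.~\ref{thm:compatible}) that uses the fact that $w_L$ is the finest common coarsening of any two distinct $v_\rho$. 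Your intermediate assembly of $u_j$-gauges on the $C_i$ by conjugation transfer, Cor.~\ref{cor:viindep} and Prop.~\ref{gaugesemisimple} is sensible and mirrors what the paper's computation of $\omega(\psi)$ produces a posteriori, but it does not substitute for this missing lifting construction, which is where $\omega(\varphi)=r\hsp\omega(\alpha)\hsp\omega(\beta)$ and $\Gamma_\varphi\subseteq\divh{\Gamma_v}$ actually get proved.
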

\begin{proof}
Let $A' = \End_C(M) \cong A \otimes_F L$, which contains 
$A = \End_D(M)$ canonically, as an $F\text{-}$subalgebra. We will build 
$\varphi$ as the restriction to $A$ of a suitable End-gauge on $A'$.

Since $K$ is Galois over $\overline{F}^{\,w}$, its inertial lift $L$ is 
Galois over $F$, with $\Gal(L/F) \cong \Gal(K/\overline{F}^{\,w})$, 
cf.~\cite[p.~135]{JW}. Let $G =  \Gal(L/F)$.  Since $\Gal(S/K)$ is 
the decomposition group for $u_1|_S$ over $u$ and every valuation on $S$
has a unique extension to the purely inseparable field extension $Z(\ov D)$
of $S$, we have    
\begin{align*}
r \, &= \, \big|\{\text{extensions of $u$ from $\ov F^{\,w}$ to $Z(\ov D)$}\}\big| \, = \,
\big|\{\text{extensions of $u$ from $\ov F^{\,w}$ to $S$}\}\big|\\
 &= \, \IND{\Gal(S/\ov F^{\,w})}{\Gal(S/K)} \, = \, |G| \, = \, \DIM LF.
\end{align*}
For each $i \in \{1,2, \ldots, r\}$, since $K$ is the decomposition field 
of each $u_i|_S$ over $u$, this $u_i|_S$ is the unique extension of 
$u_i|_K$ to $S$, cf.~\cite[Prop.~15.1.2(b), p.~134]{Ef}.
Hence, $u_1|_K, \ldots, u_r|_K$ are all distinct valuations
of $K$.   
Let $w_L = y|_L$, which is the unique extension of 
the Henselian valuation $w$ to $L$. The 
valuations $v_i$ of $L$ extending $v$ on $F$ are the composite valuations 
$v_i = u_i|_K \ast w_L$. Therefore, there are $r$ distinct $v_i$, since 
the $u_i|_K$ are distinct. The group 
$G$ acts transitively on the $v_i$ since it acts transitively on the 
$u_i |_K$, and this action is simply transitive as $|G| = r$. Since 
$\DIM LF = r$ the Fundamental Inequality shows that each $v_i$ is an 
immediate extension of $v$. Thus,
\begin{equation}\label{eq:grL}
\gr_{v_i}(L) \,\cong_g\, \gr_{v}(F) \qquad \text{ for } i=1, \ldots, r.
\end{equation}
Note that since $D$ is Brauer equivalent to $A$, $j(v,D) = j(v,A) = 2$
and $v$ has the same jump prime ideals for $D$ as for $A$; these
are $j(W)$ and $j(V)$.  The valuation ring $Y$ of $y$ on $D$ is a
Dubrovin valuation ring of $D$ with $Z(Y) = Y \cap F = W$ and 
$Y/J(Y) = \ov D$.  By Lemma~\ref{lemma3existence} (with $D$ (resp.~$Y$) for the 
$A$ (resp.~$S$) of the lemma)
the valuation rings $u_1, \ldots, u_r$ of $Z(\ov D)$ are 
pairwise independent; hence, $u_1|_K, \ldots, u_r|_K$ are pairwise independent;
hence, the finest common coarsening 
of any distinct $v_i$ and $v_j$ is $w_L$.  
 Let $\Gamma = \divh{\Gamma_v}$ 
and $\Lambda = \divh{\Gamma_w}$. So each $\Gamma_{v_i} \subseteq 
\Gamma$, and for the valuation $y$ on $D$ 
extending $w$ we have $\Gamma_y \subseteq \Lambda$.
Since $w$ is a coarsening of $v$
on $F$, there is an epimorphism $\Gamma_v \to \Gamma_w$.  Let 
$\varepsilon\colon \Gamma \rightarrow \Lambda$ be the unique 
extension of this 
map to $\Gamma$; then,
$\varepsilon$ is surjective.  
 Let $\Gamma_{C,y}$ 
be the value group of $y|_C$.

By Skolem-Noether, for each $\rho \in G$ there is $d_\rho \in D^\times$ 
with $d_\rho \ell d_\rho^{-1} = \rho(\ell)$  for all $\ell \in L$. Since 
$d_\rho L d_\rho^{-1} = L$, we have $d_\rho C d_\rho^{-1} = C$. Moreover,
\[
D \,=\,\tbigoplus_{\rho \in G} d_\rho C \,=\, \tbigoplus_{\rho \in G} C d_\rho.
\]
(This is a standard fact about generalized crossed product algebras 
(see \cite[Th.~1.3]{T} or \cite[p.~156]{JW})).
Since $y$ is a valuation on the division algebra $D$, there 
is a canonical epimorphism $\Gamma_y \to 
\Gal\big(Z(\overline{D})/\overline{F}^{\,w}\big)$ induced by 
conjugation by elements of $D^\times$ (see 
\cite[Prop.~1.7]{JW}). Let ${\zeta\colon
\Gamma_y \to \Gal(K/\overline{F}^{\,w})}$ be the composition
of epimorphisms
\[
\zeta\colon \Gamma_y \,\longrightarrow\,
 \Gal\big(Z(\overline{D})/\overline{F}^{\,w}\big) \,\longrightarrow\, 
\Gal(K/\overline{F}^{\,w}).
\]
Since $K = \overline{L}$, we have $\Gamma_{C,y} \subseteq \ker(\zeta)$. 
Thus,
\[
r \,=\, [L:F] \,=\, [D:C] \,\geq\, | \Gamma_y:\Gamma_{C,y}|\, \geq \,
| \Gamma_y:\ker(\zeta)| \,=\, | \Gal(K/\overline{F}^{\,w})| \,=\,
 [K:\overline{F}^{\,w}] \,=\, r.
\]
So, equality holds throughout, showing that $D$ is totally ramified 
over $C$ and $\ker(\zeta) = \Gamma_{C,y}$. Since $\zeta(y(d_\rho)) =\rho$ 
for all $\rho \in G$, the values $y(d_\rho)$ are distinct modulo 
$\Gamma_{C,y}$. Thus, there is a disjoint union decomposition
\begin{equation}\label{eq:Gammay}
\Gamma_y \,=\, \tbigsqcup_{\rho \in G} y(d_\rho)+ \Gamma_{C,y}.
\end{equation}
Let $\delta_\rho = y(d_\rho) \in \Gamma_y$ for all $\rho \in G$. 
The disjoint union decomposition for $\Gamma_y$ in \eqref{eq:Gammay} 
shows that for any $d = \sum_{\rho \in G} d_\rho c_\rho \in D$ with all 
$c_\rho \in C$,
\[
y\big(\tsum_{\rho \in G} d_\rho c_\rho\big) \,=\, 
\min\limits_{\rho \in G}\big(y(d_\rho)+ y(c_\rho)\big)\,=\, 
\min\limits_{\rho \in G}\big(y(c_\rho)+ \delta_\rho\big).
\]
Take any $\rho \in G$, and define  $v_\rho \colon L \to 
\Gamma\cup \{ \infty\}$ by 
\[
v_\rho(\ell) \,=\, v_1(d_\rho^{-1}\ell d_\rho) \,=\, v_1(\rho^{-1}(\ell))
\qquad\text{for all} \ \ell \in L.
\]
Then, $v_\rho$ is a valuation of $L$ extending $v$ on $F$. Since $G$ acts 
simply transitively on $\{v_1, \ldots, v_r\}$ it follows that 
$\{v_1, \ldots, v_r\} = \{v_\rho \, | \, \rho \in G\}$ and the $v_\rho$ 
are distinct for distinct choices of $\rho$.

Because $w$ on $F$ is Henselian, the $w$-gauge on $A = \End_D(M)$ is an 
End-gauge by \cite[Th.~3.1]{TW1}, i.e., $\beta = \End(\theta)$ 
as in Ex.~\ref{ex;endgauge} for  
some $y$-norm $\theta\colon M \rightarrow \Lambda \cup \{\infty\}.$ Let 
$(m_1, \ldots , m_n)$ be a splitting base for $\theta$ of the $D$-vector 
space $M$, and let
\[
\pi_j \,=\,\theta(m_j) \qquad \text{for }\, j=1, \ldots, n.
\]
So $\theta( \sum_{j=1}^n m_jd_j) = 
\min\limits_{1 \leq j \leq n}(\pi_j + y(d_j))$ for all $d_j \in D$. Hence, 
for any $c_{j\rho} \in C$ for $j=1,\ldots, n$ and $\rho \in G$,
\begin{eqnarray*}
\theta\big(\tsum_{j=1}^n \tsum_{\rho \in G} m_j d_\rho c_{j\rho}\big) 
& = & 
\min\limits_{1 \leq j \leq n} 
\big(\pi_j +y(\tsum_{\rho \in G}d_\rho c_{j\rho}) \big)\\
& = & 
\min\limits_{1 \leq j \leq n} 
\big(\pi_j + \min\limits_{\rho \in G} 
\big(\delta_\rho + y(c_{j\rho})\big)\big)\\
& = & 
\min\limits_{1 \leq j \leq n,\; \rho \in G}
 \big(\pi_j + \delta_\rho + y(c_{j\rho})\big).
\end{eqnarray*}
This shows that $(m_jd_\rho)_{j=1, \,\rho \in G}^n$ is a splitting base 
for $\theta$ as a $y|_C$-norm on $M$. Since we can adjust the~$m_j$ by 
multiplication by any element of $D^\times$, we may assume that 
$\pi_i = \pi_j$ whenever ${\pi_i + \Gamma_y = \pi_j+ \Gamma_y}$.

For each $\rho \in G$, pick $\gamma_\rho \in \Gamma$ with
\[
\varepsilon(\gamma_\rho) \,=\, \delta_\rho.
\]
For $j=1, \ldots, n$, pick $\mu_j  \in \Gamma$ with
\[
\varepsilon(\mu_j) \,=\, \pi_j.
\]
Choose the $\mu_j$ so that $\mu_i = \mu_j$ whenever $\pi_i = \pi_j$. 
We now use the $v_1$-gauge $\alpha$ on $C$ to define an 
\lq\lq$\alpha$-$v_1$-norm" $\eta\colon M \rightarrow 
\Gamma \cup \{\infty\}$ as in Lemma~\ref{lemma1existence}. For all $c_{j\rho} \in C$, set
\[
\eta\big(\tsum_{j=1}^n \tsum_{\rho \in G} m_j d_\rho c_{j\rho}\big) 
=\, \min\limits_{1 \leq j \leq n,\; \rho \in G} \big(\mu_j + \gamma_\rho + \alpha(c_{j\rho})\big).
\]
Since $\alpha$ is a $v_1$-gauge on $C$, its coarsening 
$\varepsilon \circ \alpha$ is a $w_L$-gauge on $C$ by 
\cite[Prop.~4.3]{TW2}.  But since the valuation $w_L$ on 
$L$  extends to a valuation on $C$, by \cite[Cor.~3.2]{TW1}
that valuation is the only $w_L$-gauge on $C$;    
 hence, $\varepsilon \circ \alpha = y|_C$. Thus,
\[
\varepsilon \circ \eta
\big(\tsum_{j=1}^n \tsum_{\rho \in G} m_j d_\rho c_{j\rho}\big) 
\,=\, \min\limits_{1 \leq j \leq n,\; \rho \in G} 
\big(\pi_j + \delta_\rho + y(c_{j\rho})\big) \,=\,
 \theta\big(\tsum_{j=1}^n \tsum_{\rho \in G} m_j d_\rho c_{j\rho}\big),
\]
i.e., $\varepsilon \circ \eta = \theta$. Now, let $\psi = \End(\eta)$, 
the $v_1$-$\End$-gauge on $A' = \End_C(M)$ determined by $\eta$, as in 
Lemma~\ref{lemma1existence}. So, for $f \in A'$,
\[
\psi(f) \,=\, \min\limits_{m \,\in M \setminus\{0\}}
\big(\eta(f(m)) - \eta(m)\big) \,=\, 
\min\limits_{1 \leq j \leq n, \, \rho \in G}
\big(\eta(f(m_jd_\rho)) - \mu_j - \gamma_\rho\big).
\]
Let $\varphi = \psi|_A$. We will
show that $\varphi$ is the desired  $v$-gauge on $A$.

We claim first  that $\varphi$ is a $v$-norm. For this, take
any~$f \in A$ and 
write
\begin{equation}\label{eq:fmj}
f(m_j) \,=\, \tsum_{i=1}^n m_i d_{ij} \,=\, 
\tsum_{i=1}^n \tsum_{\sigma \in G} m_i d_\sigma c_{ij\sigma},
\end{equation}
where each $d_{ij} \in D$ and each $c_{ij\sigma}\in C$. For any 
$\sigma, \rho \in G$, we have 
$d_{\sigma\rho}^{-1}d_{\sigma}d_{\rho}$ centralizes $L$, so lies in~$C$. 
That is,
$d_{\sigma}d_{\rho} = d_{\sigma\rho}t_{\sigma, \rho}$, for some 
$t_{\sigma, \rho} \in C^\times$. Then,
\begin{align}\label{eq:varphif}
\begin{split}
\varphi(f) \, = \, \psi(f) \, &= \  
\min\limits_{1 \leq j \leq n, \, \rho \in G}
\Big(\eta\big(\big(\tsum_{i=1}^n \tsum_{\sigma \in G} 
m_i d_\sigma c_{ij\sigma}\big)d_\rho\big) - \mu_j - \gamma_\rho\Big) 
\\
&= \ \min\limits_{1 \leq j \leq n, \, \rho \in G}
\Big(\eta\big(\tsum_{i=1}^n \tsum_{\sigma \in G} 
m_i d_{\sigma\rho}(t_{\sigma, \rho}d_\rho^{-1} c_{ij\sigma}d_\rho)\big) 
- \mu_j - \gamma_\rho\Big)
\\
&= \ \min\limits_{1 \leq j \leq n, \, \rho \in G}
\Big( \min\limits_{1 \leq i \leq n, \, \sigma \in G}
\big( \alpha(t_{\sigma, \rho}d_\rho^{-1} c_{ij\sigma}d_\rho) 
+ \mu_i + \gamma_{\sigma\rho}\big) - \mu_j - \gamma_\rho\Big)
\\
&= \ \min\limits_{1 \leq i, j \leq n; \, \rho, \sigma \in G}
\big(\alpha(t_{\sigma, \rho}d_\rho^{-1} c_{ij\sigma}d_\rho) 
+ \mu_i  - \mu_j  + \gamma_{\sigma\rho}-  \gamma_\rho\big).
\end{split}
\end{align}

The choice of $D$-base $(m_j)_{j=1}^n$ of $M$ gives an isomorphism 
$A = \End_D(M) \cong \mat_n(D)$, which we use to interpret formula 
\eqref{eq:varphif}. In $A$ we have the \lq\lq matrix units" $e_{ij}$ 
for $i,j \in \{1,\ldots, n\},$ defined by
\[
e_{ij}(m_j) \,=\, m_i \qquad \text{ and } \qquad e_{ij}(m_k) \,=\, 0 \  
\text{ for } k \neq j.
\]
We also have an embedding $\lambda\colon D \rightarrow A$ given by 
$d \mapsto \lambda_d$, where $\lambda_d (m_j) = m_j d$ for all $j$. 
(So, $\lambda_d (m_jb) = m_j db$ for all $b \in D$, so  
$\lambda_d \circ  \lambda_b =  \lambda_{db}$. Clearly  
$\lambda_d \circ e_{ij} = e_{ij} \circ  \lambda_d$ for all $i,j$ and 
all $d \in D$.) This $\lambda$ corresponds to the diagonal embedding 
of $D$ in $\mat_n(D)$.
 The $f$ in \eqref{eq:fmj} above can be described as 
$f = \sum_{i=1}^n  \sum_{j=1}^n \sum_{\sigma \in G} 
e_{ij} \circ \lambda_{d_\sigma} \circ \lambda_{c_{ij\sigma}}$. Thus, 
formula \eqref{eq:varphif} becomes
\begin{equation}\label{eq:varphisum}
\varphi\big(\tsum_{i=1}^n  \tsum_{j=1}^n \tsum_{\sigma \in G} e_{ij} 
\circ \lambda_{d_\sigma} \circ \lambda_{c_{ij\sigma}}\big) \,=\,
 \min\limits_{1 \leq i, j \leq n; \, \rho, \sigma \in G}
\big(\alpha(t_{\sigma, \rho}d_\rho^{-1} c_{ij\sigma}d_\rho) + \mu_i  - \mu_j  + \gamma_{\sigma\rho}-  \gamma_\rho\big).
\end{equation}
This holds for all choices of $c_{ij\sigma}$ in $C$.  In particular,
fixing $i,j, \sigma$, 
\[
\varphi( e_{ij} \circ \lambda_{d_\sigma} \circ \lambda_c ) \,=\, 
\min\limits_{ \rho \in G}
\big(\alpha(t_{\sigma, \rho}d_\rho^{-1} c d_\rho) 
+ \mu_i  - \mu_j  + \gamma_{\sigma\rho}-  \gamma_\rho\big), 
\quad \text{for all }  c \in  C,
\]
so formula \eqref{eq:varphisum} shows that
\begin{equation}\label{eq:varphisum2}
\varphi\big(\tsum_{i=1}^n  \tsum_{j=1}^n \tsum_{\sigma \in G} 
e_{ij} \circ \lambda_{d_\sigma} \circ \lambda_{c_{ij\sigma}}\big) 
\,=\, \min\limits_{1 \leq i, j \leq n; \, \sigma \in G}
\varphi( e_{ij} \circ \lambda_{d_\sigma} \circ \lambda_{c_{ij\sigma}} ).
\end{equation}
Now, $A$ is a right $C$-vector space via $\lambda$, i.e., 
$f \cdot c = f \circ \lambda_c$ for $f \in A$ and $c \in C$, and 
${(e_{ij}\circ\lambda_{d_\sigma})_{1 \leq i, j \leq n; \, \sigma \in G}}$ 
is a $C$-base of $A$. Formula \eqref{eq:varphisum2} shows that the direct 
sum decomposition of $A$ into $1$-dimensional right $C$-vector spaces  
$A = \bigoplus_{i, j,\sigma}(e_{ij} \circ \lambda_{d_\sigma})\cdot C$ is 
a splitting decomposition for the $v$-value function $\varphi$ on $A$,
i.e., 
$$
\gr_\varphi(A) \,\cong_g \,\tbigoplus\limits_{i,j,\sigma} \gr_\varphi
\big((e_{ij} \circ \lambda_{d_\sigma})\cdot C\big).
$$  
Therefore, to show that $\varphi$ is a $v$-norm on $A$, it suffices to 
show that its restriction to each $1$-dimensional $C$-subspace 
$(e_{ij} \circ \lambda_{d_\sigma})\cdot C$ is a $v$-norm.
For this, fix $i,j \in \{1,\ldots, n\}$ and $\sigma \in G$, let 
$h = e_{ij} \circ \lambda_{d_\sigma}$, and let $H = {h \!\cdot\! C}$. For 
$c \in C$, we have
\begin{equation}\label{eq:phiH} 
\varphi(h \!\cdot\! c) \,=\, \varphi(h \circ \lambda_c) \,=\, 
  \min\limits_{ \rho \in G}( \alpha_\rho (c)),
\end{equation} 
where
\[
\alpha_\rho (c) \,=\, 
\alpha(t_{\sigma, \rho}d_\rho^{-1} c d_\rho) + \tau_\rho, \quad 
\text{with } \ 
\tau_\rho\,=\,  \mu_i  - \mu_j  + \gamma_{\sigma\rho}-  \gamma_\rho.
\]
Since $\alpha$ is a $v_1$-value function on $C$, we have for any 
$c \in C$ and $\ell \in L$,
\[
\alpha_\rho (c\ell) \,=\, 
\alpha(t_{\sigma, \rho}d_\rho^{-1} c d_\rho d_\rho^{-1} \ell d_\rho) 
\,=\, \alpha_\rho(c) + v_1(d_\rho^{-1} c d_\rho) \,=\, 
\alpha_\rho(c) + v_\rho(\ell).
\]
Hence, $\alpha_\rho$ is a $v_\rho$-value function on $C$. The function 
$g\colon C \rightarrow C$ given by 
$c \mapsto t_{\sigma, \rho}d_\rho^{-1} c d_\rho$ is an $F$-vector space 
isomorphism with $\alpha_\rho(c) = \alpha(g(c)) + \tau_\rho$ for all 
$c\in C$.
So, for any $\gamma \in \Gamma$, $g$~maps 
$C^{\alpha_\rho}_{ \geq \gamma}$ bijectively to 
$C^{\alpha}_{ \geq (\gamma - \tau_\rho)}$ and 
$C^{\alpha_\rho}_{ > \gamma}$ bijectively to 
$C^{\alpha}_{ > (\gamma - \tau_\rho)}$. Hence, $g$ induces a 
bijection $\gr(g)\colon \gr_{\alpha_\rho}(C) \rightarrow 
\gr_{\alpha}(C)(-\tau_\rho)$, in the grade shift notation of 
\eqref{eq:shift}; clearly $g$ is
 a graded 
$\gr_v(F)$-vector space isomorphism. Since 
$\gr_{v_1}(L) = \gr_v(F) = \gr_{v_\rho}(L)$ by \eqref{eq:grL},
and $\alpha$ is a $v_1$-norm on $C$, we have
\begin{eqnarray*}
 \dim_{\gr_{v_\rho}(L)} \gr_{\alpha_\rho}(C) 
& = &  
\dim_{\gr_{v}(F)} \gr_{\alpha_\rho}(C)  \ = \   
\dim_{\gr_{v}(F)} \big(\gr_{\alpha}(C)(-\tau_\rho)\big)\\
& = &  
\dim_{\gr_{v}(F)} \gr_{\alpha}(C)  \ = \   
\dim_{\gr_{v_1}(L)} \gr_{\alpha}(C) \  = \  \dim_L C.
\end{eqnarray*}
Thus, $\alpha_\rho$ is a $v_\rho$-norm on $C$. Hence, 
$\varphi_\rho \colon H \rightarrow \Gamma \cup \{\infty\}$ given by 
$ \varphi_\rho ({h \!\cdot\! c}) = \alpha_\rho(c)$ is a $v_\rho$-norm on  
the $1$-dimensional $C$-vector space $H$, for every $\rho \in G$.

We work back from the $\varphi_\rho$ to $\varphi|_H$.  Since 
$\varphi|_H = \min_{\rho\in G}(\varphi_\rho)$ by \eqref{eq:phiH},
there is a graded $\gr_v(F)$-vector space monomorphism
$$
\Phi\colon \gr_{\varphi}(H) \, \hookrightarrow \, \tbigoplus_{\rho\in G}
\gr_{\varphi_{\rho}}(H)
$$
given by 
$$
\tilde b^{\,\varphi} \, = \, b + H^\varphi_{>\varphi(b)} \, \mapsto \, 
\big( \ldots, \,b+H^{\varphi_\rho}_{>\varphi(b)}, \, \ldots\big) \qquad 
\text{for all $b \in H\setminus\{0\}$}.
$$
Once we verify that $\Phi$ is surjective, we will have 
\begin{align*}
\dim_{\gr_v(F)} \gr_\varphi(H) \, &=\,  \tsum_{\rho\in G}\dim_{\gr_v(F)}
\gr_{\varphi_\rho}(H) \, = \, \tsum_{\rho\in G}
\dim_{\gr_{v_\rho(L)}}\gr_{\varphi_{\rho}}(H)\\ 
&= \,\tsum_{\rho\in G}
\dim_L H \, = \, |G|\dim_L H \, = \, \dim_F H. \qquad\qquad\qquad\qquad
\end{align*}
Hence, $\varphi|_H$ is a $v$-norm on $H$.

For the surjectivity of $\Phi$, consider  the $w_L$-coarsening of $\varphi_\rho$. We have seen that 
$\varepsilon \circ \alpha = y|_C$ as $w_L$-gauges on $C$. Also the 
equation $d_{\sigma}d_{\rho} = d_{\sigma\rho}t_{\sigma, \rho}$ yields for 
the valuation $y$,
\[
y(t_{\sigma, \rho}) + y(d_{\sigma\rho})   - y(d_{\rho}) 
\,=\, y(d_{\sigma}) \quad \text{for all } \sigma, \rho \in G.
\]
Hence, for $c \in C$,
\begin{eqnarray*}
\varepsilon \circ \varphi_\rho ({h\! \cdot\! c}) 
& = & 
\varepsilon \circ \alpha (t_{\sigma, \rho}d_\rho^{-1} c d_\rho) 
+ \varepsilon (\tau_\rho)  \ = \   
y(t_{\sigma, \rho}d_\rho^{-1} c d_\rho) + \pi_i  - \pi_j  
+ y(d_{\sigma\rho})-  y(d_\rho)\\
& = &  
y(c) +  \pi_i  - \pi_j  + y(d_{\sigma}).
\end{eqnarray*}
Thus, $\varepsilon \circ \varphi_\rho$ is the same for each 
$\rho \in G$. Since $w_L$ is the finest common coarsening of 
$v_\rho$~and~$v_{\rho'}$ for all distinct $\rho, \rho' \in G$,
as noted at the beginning of the proof,
 it follows by an argument just like that 
for surjectivity of $\Psi$ in the proof of Th.~\ref{thm:compatible} that
$\Phi$ is surjective.  Hence $\varphi|_H$ is a $v$-norm on $H$, 
as noted above. 
Since this is true for each subspace 
$H = (e_{ij} \circ \lambda_{d_\sigma})\cdot C$ in the splitting 
decomposition of $A$ for $\varphi$, this $\varphi$ must be a $v$-norm on $A$,
as claimed. 

To see that $\varphi$ is not only a $v$-norm but actually a $v$-gauge, 
observe  that 
$\varphi = \psi|_A$ is surmultiplicative on $A$ since the  
$v_1$-gauge $\psi$ is surmultiplicative on all of $A'$. Moreover, the 
inclusion $A \hookrightarrow A'$ yields a canonical graded monomorphism 
$\iota\colon \gr_\varphi(A) \hookrightarrow  \gr_\psi(A')$. Because 
$\varphi$ is a $v$-norm and $\psi$ is a $v_1$-norm and 
$\gr_v(F) =  \gr_{v_1}(L)$, we have
\[
[\gr_\varphi(A): \gr_v(F)] \,=\, [A:F] \,=\, [A':L] \,=\,
 [\gr_\psi(A'):\gr_{v_1}(L)] \,=\, [\gr_\psi(A'):\gr_v(F)].
\]
Hence, $\iota$ is a graded isomorphism. Therefore, $\gr_\varphi(A)$ is 
graded semisimple, since this is true for $\gr_\psi(A')$, as $\psi$ is 
a gauge. Thus, $\varphi$ is a $v$-gauge on $A$.

Since $\psi$ and $\beta$ are End-gauges, we have, for all $f \in A$,
\[
\varphi(f) =\, \psi(f) \,=\, \min\limits_{m \,\in M \setminus\{0\}}
\big(\eta(f(m)) - \eta(m)\big) \quad \text{and} \quad 
\beta(f) \,=\, \min\limits_{m \,\in M \setminus\{0\}}
\big(\phi(f(m)) - \phi(m)\big).
\]
We observed earlier that $\varepsilon \circ \eta = \theta$. It follows 
that $\varepsilon \circ \varphi = \beta$, i.e., $\beta$ is the 
$w$-coarsening of $\varphi$. Also,
from \eqref{eq:varphisum},
\[
\Gamma_\varphi \,\subseteq\, 
 \tbigcup_{i,j=1}^n \tbigcup_{\sigma, \rho \,\in G} 
\big(\mu_i  - \mu_j  + \gamma_{\sigma\rho}-  
\gamma_\rho + \Gamma_\alpha \big) \,\subseteq\, \Gamma \,=\, \divh{\Gamma_v}.
\]

Since $\gr_\varphi(A) \cong_g \gr_\psi(A')$, we have 
$\omega(\varphi) = \omega(\psi)$. We compute $\omega(\psi)$. 
Partition $\{1,\ldots, n\}$ into a disjoint union 
$\bigsqcup_{\ell = 1}^k S_\ell$ according to the coset of 
$\Gamma_y$ containing $\theta(m_j)$. That is, if $j \in S_\ell$, then 
$S_\ell = \{j' \in \{1,\ldots, n\} \, |
 \, \theta(m_{j'}) - \theta(m_j) \in \Gamma_y\}$. Recall that the 
$\pi_j = \theta(m_j)$ 
have been chosen so that $\pi_j = \pi_{j'}$ if and only if 
 $j$ and $j'$ lie in the 
same $S_\ell$. Since $\beta = \End (\theta)$, by the comments with
\eqref{eq:end0} the number 
of sets $S_\ell$ equals $\omega(\beta)$.

We use the $S_\ell$ to decompose $\gr_\eta(M)$. Since 
$(m_jd_\rho)_{j=1, \,\rho \in G}^n$ is the $C$-base of $M$ used in 
building~$\eta$ and 
$\widetilde{m_jd_\rho} = \widetilde{m_j}\hsp\widetilde{d_\rho}$, by 
Lemma~\ref{lemma1existence} 
$(\widetilde{m_j}\hsp\widetilde{d_\rho})_{j=1, \,\rho \in G}^n$  is a 
$\gr_\alpha(C)$-base of $\gr_\eta(M)$. For $\ell = 1,  \ldots , k$ and~ 
$\rho \in G$, let
\[
\grN_{\ell\rho} \,=\, \tbigoplus_{j \in S_\ell} 
\widetilde{m_j}\hsp\widetilde{d_\rho}\gr_\alpha(C).
\]
So, $\gr_\eta(M) = \tbigoplus_{\ell = 1}^k \tbigoplus_{\rho \in G} 
\grN_{\ell\rho},$ and each $\grN_{\ell\rho}$ is a graded right
$\gr_\alpha(C)$-submodule of $\gr_\eta(M)$. We show that their grade 
sets do not overlap. If $\gamma \in \Gamma_{\grN_{\ell\rho}}$, then 
$\gamma = \deg (\widetilde{m_j}\hsp\widetilde{d_\rho}\hsp \widetilde{c})$ for 
some $j \in S_\ell$ and  $c \in C^\times$ with~ 
$\widetilde{m_j}\hsp\widetilde{d_\rho}\hsp \widetilde{c} \neq 0$. So, 
$\gamma = \mu_j + \gamma_\rho + \alpha(c) \in \Gamma$. Then, as 
$\varepsilon(\gamma_\rho) = \delta_\rho = y(d_\rho)$ and 
$\varepsilon \circ \alpha = y|_C$,
\[
\varepsilon(\gamma) \,=\, \pi_j + \delta_\rho+ y(c) \,=\,
 \pi_j + y(d_\rho c) \,\in\, \pi_j + \Gamma_y.
\]
Now likewise let $\gamma' \in \Gamma_{\grN_{\ell'\rho'}}$, say 
$\gamma' = 
\deg (\widetilde{m_{j'}}\hsp\widetilde{d_{\rho'}}\hsp \widetilde{c'}) 
= \mu_{j'} + \gamma_{\rho'} + \alpha(c')$. If $\gamma = \gamma'$, then 
$\varepsilon(\gamma) = \varepsilon(\gamma')$, so 
$\pi_j + \Gamma_y = \pi_{j'} + \Gamma_y$. Hence, $S_\ell = S_{\ell'}$, 
so $\ell = \ell'$ and $\pi_j = \pi_{j'}$. Then the equality 
$\varepsilon(\gamma) = \varepsilon(\gamma')$  yields 
$y(d_\rho) + y(c) = y(d_{\rho'}) + y(c').$ Since the $y(d_\rho)$ are 
distinct modulo $\Gamma_{C,y}$, it follows that $\rho = \rho'$. Thus, 
$\grN_{\ell\rho} = \grN_{\ell'\rho'}$ whenever their grade sets intersect. 
Hence, $\Gamma_{\gr_\eta(M)} =
 \bigsqcup_{\ell =1}^k \bigsqcup_{\rho \in G} \Gamma_{\grN_{\ell\rho}}$, 
a disjoint union, so each homogeneous element of $\gr_\eta(M)$ lies in 
some $\grN_{\ell\rho}$. 

Let $\grE = \grEnd_{\gr_\alpha(C)}(\gr_\eta(M)) \cong_g \gr_\psi(A')$ by 
Lemma~\ref{lemma1existence}. If $f \in \grE_0$, then $f$ is a 
degree-preserving map, so $f$ must map each $\grN_{\ell\rho}$ to itself, 
by the disjointness of the grade sets $\Gamma_{\grN_{\ell\rho}}$. Thus,
\begin{equation}\label{eq:E0}
\grE_0 \,\cong\, \tprod_{\ell =1}^k \tprod_{\rho \in G}  
\big(\grEnd_{\gr_\alpha(C)}(\grN_{\ell\rho})\big)_0.
\end{equation}
Since the $\pi_j$ are the same for all $j$ in $S_\ell$, the $\mu_j$ 
are likewise the same by hypothesis, hence the base elements 
$\widetilde{m_j}\,\widetilde{d_\rho}$ of $\grN_{\ell \rho}$ all have the 
same degree $\mu_j+\gamma_\rho$. Therefore, as graded $\gr_\alpha(C)$-modules,
\[
\grN_{\ell \rho} \,\cong_g\,
 \gr_\alpha(C)^{|S_\ell||G|}(\mu_j + \gamma_\rho) \qquad 
\text{for any } j \in S_\ell,
\]
in the grade shift notation of \eqref{eq:shift}.
Hence,
\[
\grEnd_{\gr_\alpha(C)} \big( \grN_{\ell \rho} \big) \,\cong_g\, 
\grEnd_{\gr_\alpha(C)} \big( \gr_\alpha(C)^{|S_\ell||G|}  \big) 
\,\cong_g\, \mat_{|S_\ell||G|}(\gr_\alpha(C)).
\]
So, in degree $0$, 
$\big(\grEnd_{\gr_\alpha(C)} \big( \grN_{\ell \rho} \big)\big)_0 
\cong \mat_{|S_\ell||G|}(\gr_\alpha(C)_0)$, and the number of its simple  
components coincides with the number of simple components of 
$\gr_\alpha(C)$, which is $\omega(\alpha)$. So, from \eqref{eq:E0},
\[
\omega(\varphi) \,=\, \omega(\psi) \,=\, 
\text{ number of simple components of } \grE_0 \,=\,
 k \,|G|\, \omega(\alpha) \,=\, \omega(\beta) \,r\, \omega(\alpha). 
\]

To complete the proof, we show that $\alpha$ can be chosen with 
$\omega(\alpha) = 1$. Let $T$ be a Dubrovin valuation ring of 
$D$ with $Z(T) = V$.  Then $T \subseteq Y$ since the valuation ring $Y$
of $y$ on $D$ is the unique Dubrovin valuation ring of $D$ with 
center $W$.  Let $\tilde T = T/J(Y)$, which is a Dubrovin
valuation ring of $Y/J(Y) = \ov D$.  The valuation of the 
valuation ring $Z(\tilde T)$ is an extension to $Z(\ov D)$
of $u = v/y$. So this valuation is one of the $u_i$; after
renumbering if necessary, we may assume that it is $u_1$.
Since jump prime ideals
of $u_1$ for $\ov D$ pull back to 
jump prime ideals of $v$ for $D$ strictly containing~$J(W)$ by~
\cite[Cor.~E]{W}, and since by hypothesis $J(V)$ is the only such 
jump prime ideal for $D$, we have $j(u_1,\ov D) = 1$.  Hence,
by Prop.~\ref{prop:maxint}(iv) $\tilde T$ is integral over its 
center, which is the valuation ring $U_1$ of~$u_1$.  
Now  let $B$ be a Dubrovin valuation ring of $C$ with center $Z(B) = V_1$, 
the valuation ring of~ $v_1$ on $L = Z(C)$. Then $B \subseteq Y \cap C$, 
since the 
valuation ring $Y\cap C$ is the unique Dubrovin valuation ring of $C$ with 
center $W_L$, the valuation ring of $w_L$. 
Note that   
$\ov C^{\, y}= \overline{D}$, since $D$ is totally ramified over $C$ re $y$.
Let $\widetilde{B} = B/J(Y\cap C)$, 
which is a Dubrovin valuation ring of $\ov C^{\, y}$, so of
$\ov D$. The valuation
of the valuation ring $Z(\tilde B)$ restricts on $\ov L^{\,y} = K$
to $v_1/w_L = u_1|_K$.  Hence, $Z(\tilde B) = U_1 = Z(\tilde T)$,
as $u_1$ is the unique extension of $u_1|_K$ to $Z(\ov D)$.   
Because $\tilde B$ and $\tilde T$ are Dubrovin valuation rings
of $\ov D$ with the same center, we have $\tilde B \cong \tilde T$.  
Therefore $\tilde B$ is integral over $U_1$, since this is true 
for $\tilde T$. But also $U_1$ is integral over 
$U_1\cap K$, as $u_1$ is the unique extension of 
$u_1|_K$ to~$Z(\ov D)$; hence, $\tilde B$ is integral over 
$U_1 \cap K$.      
Moreover, by \cite[p.~390]{W3} 
 the valuation  
ring~$Y\cap C$ of the division ring $C$ is integral over its center 
$Y\cap L = W_L$,
 It follows by  
\cite[Prop.~12.2, p.~70]{MMU} that $B$ is integral over $V_1$. 
Let $\alpha$ 
be the Morandi value function on $C$  with $R_\alpha = B$. 
To see that $\alpha$ is 
a $v_1$-gauge we must check
 that $v_1$~on~$L$ is defectless in~$C$. 
For this, note that by~ 
Th.~\ref{tensorhens}, there is an $F_{h,v}$-isomorphism 
\[
L \otimes_F F_{h,v} \cong L_{h,v_1} \times \ldots \times L_{h, v_r}.
\]
Since $\DIM LF = r$, each factor $L_{h,v_i}$
must be 1-dimensional over $F_{h,v}$, i.e.,
isomorphic to $F_{h,v}.$ 
Note that as $C = C_D(L)$, the algebras $D\otimes_F L$ and 
$C$ are Brauer equivalent.  Hence, $C \otimes_L L_{h,v_1}$
is Brauer equivalent to $D \otimes_F L_{h,v_1} \cong
D\otimes_F F_{h,v}$.
Since $v$ on $F$ is defectless in $D$, $v_1$ on $L$ is defectless in 
$C$. It follows by Remark~ \ref{rem:omega1} that $\alpha$ is a $v_1$-gauge 
on $C$ with 
$\omega(\alpha) =1$.
\end{proof}

There is a easier version of Prop.~\ref{prop1existence} for jump rank 1 
that we will need later:

\begin{proposition}\label{prop:jump1}
 Let $A$ be a central simple $F$-algebra. Suppose 
the valuation $v$ on $F$ is defectless in $A$. Let  valuation $w$ 
on $F$ be a nontrivial coarsening of $v$, and assume that $w$ is Henselian. 
Suppose $j(v,A) = 1$. Let $\beta$ be a $w$-gauge on $A$ with 
$\Gamma_\beta \subseteq \divh{\Gamma_w}$. Then, there is a $v$-gauge 
$\varphi$ on $A$ with $w$-coarsening $\beta$ such that 
$\omega(\varphi) = \omega(\beta)$ and 
$\Gamma_\varphi \subseteq \divh{\Gamma_v}$. 
\end{proposition}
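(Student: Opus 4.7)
My plan is to reduce to the case where $(F,v)$ is Henselian by passing to a $v$-Henselization, and then to build $\varphi$ as an End-gauge whose defining norm on the underlying $D$-vector space is a compatible lift of the norm defining $\beta$. Concretely, let $(F', v') = (F_{h,v}, v_h)$ be a $v$-Henselization of $F$. Since $F$ is already $w$-Henselian, $w$ extends uniquely to a Henselian valuation $w' = w_h$ on $F'$, and both $v'/v$ and $w'/w$ are immediate extensions (Henselization preserves immediacy for every coarsening). Let $A' = A \otimes_F F'$ and $\beta' = \beta \otimes w'$; by \cite[Cor.~1.26]{TW1}, $\beta'$ is a $w'$-gauge on $A'$ with $\gr_{\beta'}(A') \cong_g \gr_\beta(A)$, so in particular $\omega(\beta') = \omega(\beta)$. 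By Lemma~\ref{lemma2existence}, it therefore suffices to construct a $v'$-gauge $\varphi'$ on $A'$ with $w'$-coarsening $\beta'$, satisfying $\omega(\varphi') = \omega(\beta)$ and $\Gamma_{\varphi'} \subseteq \divh{\Gamma_v}$.

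In this $v'$-Henselian setting, write $A' = \End_{D'}(M')$ where $D'$ is the associated central division $F'$-algebra; both $v'$ and $w'$ extend uniquely to valuations $z$ and $y$ on $D'$, with $y$ a coarsening of $z$. Because $w'$ is Henselian and defectless in $A'$ (by Prop.~\ref{prop:gaugeimpliesdefectless}), \cite[Th.~3.1]{TW1} gives $\beta' = \End(\theta)$ for some $y$-norm $\theta$ on $M'$. Choose a splitting base $(m_1, \ldots, m_n)$ of $M'$ for $\theta$, set $\pi_i = \theta(m_i)$, and partition $\{1, \ldots, n\}$ by the equivalence $i \sim j$ iff $\pi_i - \pi_j \in \Gamma_y$; by Example~\ref{ex;endgauge} the number of equivalence classes equals $\omega(\beta') = \omega(\beta)$.

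Let $\varepsilon\colon \divh{\Gamma_v} \to \divh{\Gamma_w}$ be the canonical projection. I build coherent lifts $\mu_i \in \divh{\Gamma_v}$ of $\pi_i$ as follows: for each equivalence class pick a representative $i_\ell$ and any lift $\mu_{i_\ell}$ of $\pi_{i_\ell}$; for every other $j$ in that class, write $\pi_j = \pi_{i_\ell} + y(d_j)$ with $d_j \in D'^\times$ and set $\mu_j = \mu_{i_\ell} + z(d_j)$. Then $\varepsilon(\mu_j) = \pi_j$, and since $\varepsilon(\Gamma_z) = \Gamma_y$ one checks that $\mu_i - \mu_j \in \Gamma_z$ iff $i \sim j$. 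Define $\eta\colon M' \to \divh{\Gamma_v} \cup \{\infty\}$ by $\eta(\sum_i m_i d_i) = \min_i(\mu_i + z(d_i))$; a direct verification shows $\eta$ is a $z$-norm on $M'$ with splitting base $(m_i)$ satisfying $\varepsilon \circ \eta = \theta$.

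Set $\varphi' = \End(\eta)$. Since $v$ is defectless in $A$, $v'$ is defectless in $D'$ and $z$ is defectless over $v'$; hence $\varphi'$ is a $v'$-gauge on $A'$ by \cite[Prop.~1.19]{TW1}. From $\varepsilon \circ \eta = \theta$ and the End-gauge formula, $\varepsilon \circ \varphi' = \beta'$, so $\beta'$ is the $w'$-coarsening of $\varphi'$. By formula \eqref{eq:Gammaalpha}, $\omega(\varphi')$ equals the number of cosets of $\Gamma_z$ among $\{\mu_1, \ldots, \mu_n\}$, which by coherence of the lifts equals the number of equivalence classes, namely $\omega(\beta)$. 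Finally $\Gamma_{\varphi'} \subseteq \divh{\Gamma_v}$ since each $\mu_i$ lies in $\divh{\Gamma_v}$ and $\Gamma_z \subseteq \divh{\Gamma_v}$. Applying Lemma~\ref{lemma2existence} returns the desired $\varphi = \varphi'|_A$. The main care point is the coherent choice of the $\mu_i$; an arbitrary lifting would generally produce $\omega(\varphi') > \omega(\beta)$, so it is crucial to route the lifts through the representatives $i_\ell$ and use $z$ to ``fill in'' the remaining lifts compatibly.
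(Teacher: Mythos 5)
Your core construction (writing $\beta$ as an End-gauge $\End(\theta)$, choosing a splitting base, and lifting the values $\pi_i$ coherently so that $\mu_i-\mu_j\in\Gamma_z$ exactly when $\pi_i-\pi_j\in\Gamma_y$) is exactly the right idea and is essentially the paper's construction. The gap is in the wrapper: you pass to the $v$-Henselization $F'=F_{h,v}$ and assert that the $w$-coarsening $w'$ of $v_h$ is an \emph{immediate} extension of $w$ (``Henselization preserves immediacy for every coarsening''). This is false in general: $\Gamma_{w'}=\Gamma_w$, but the residue field $\ov{F'}^{\,w'}$ is (up to isomorphism) the Henselization of $\ov F^{\,w}$ with respect to $u=v/w$, which is a proper separable extension of $\ov F^{\,w}$ unless $u$ is already Henselian; indeed the paper's proof of Prop.~\ref{prop:coarsdefless} records that this coarsening is inertial over the $w$-Henselization, not equal to it. Consequently Lemma~\ref{lemma2existence} does not apply (its hypothesis is precisely immediacy of $w'/w$), the isomorphism $\gr_{\beta'}(A')\cong_g\gr_\beta(A)$ fails in general (one gets $\gr_\beta(A)\otimes_{\gr_w(F)}\gr_{w'}(F')$, whose degree-zero part can have more simple components), and so both the identity $\omega(\beta')=\omega(\beta)$ and the final descent $\varphi=\varphi'|_A$ are unjustified as written. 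A telling symptom is that your argument never actually uses the hypothesis $j(v,A)=1$, without which the statement is false (e.g.\ one always has $\omega(\varphi)\ge\xi_{V,[A]}$, which can exceed $\omega(\beta)$ when the index drops on passing to $F_{h,v}$).

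The repair is to delete the Henselization detour altogether, which is what the paper does: since $w$ is already Henselian on $F$, one has $A=\End_D(M)$ and $\beta=\End(\theta)$ for a $y$-norm $\theta$ on $M$ directly over $F$, where $y$ is the unique extension of $w$ to $D$. The hypothesis $j(v,A)=1$ enters exactly here: it gives $\ind(A\otimes_F F_{h,v})=\ind(A\otimes_F F_{h,w})=\ind(A)$ (using $F_{h,w}=F$), so by Morandi's theorem the valuation $v$ extends to a valuation $z$ on $D$ itself. Your coherent-lift construction of $\eta$ and $\varphi=\End(\eta)$ then runs verbatim over $F$ (the paper implements the coherence by first normalizing the splitting base so that $\pi_i=\pi_j$ whenever $\pi_i+\Gamma_y=\pi_j+\Gamma_y$ and then choosing equal lifts, which is equivalent to your routing through class representatives), and no descent lemma is needed. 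If you do want to argue over an extension field, you must use the $w$-Henselization (which is trivial here) rather than the $v$-Henselization, precisely so that Lemma~\ref{lemma2existence} is available; that is how the paper organizes the inductive steps in Th.~\ref{existenceminimalgauges} and Prop.~\ref{prop2existence}, where $w$ is not assumed Henselian.
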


\begin{proof}
View 
$A =\End_D(M)$ where $D$ is a division ring and $M$ is a finite-dimensional 
right $D$-vector space. Since $w$ is Henselian, it has a unique extension 
to a valuation $y$ on $D$. Moreover, by \cite[Th.~3.1]{TW1}, $\beta$ is an 
End-gauge, as in Ex.~\ref{ex;endgauge}, say 
$\beta = \End(\theta)$ for some $y$-norm $\theta$ on $M$. Let 
$(m_1, \ldots, m_n)$ be a splitting base of $M$ for $\theta$, and let 
$\pi_j = \theta(m_j)$ for $j =1, \ldots, n$. Let 
$\Lambda = \divh{\Gamma_w}$ and $\Gamma = \divh{\Gamma_v}$, and 
$\varepsilon\colon \Gamma \rightarrow \Lambda$ the epimorphism induced by 
the canonical map $ \Gamma_v \rightarrow \Gamma_w$.  
From~\eqref{eq:Gammaalpha}, $\Gamma_\beta  =  
\Gamma_{\gr_\beta(A)} = \bigcup_{i, j = 1}^n \pi_i - \pi _j  + 
\Gamma_w$.  By changing $\theta$ by replacing each $\pi_j$ by 
$\pi_j - \pi_1$ (which does not change $\End(\theta)$) we may 
assume that each $\pi_j \in \Gamma_\beta \subseteq 
 \divh{\Gamma_w}$. Also, by adjusting the~$m_j$ by multiples in 
$D^\times$, we may assume that $\pi_i = \pi_j$ whenever 
$\pi_i + \Gamma_y = \pi_j + \Gamma_y$. So, the number of distinct 
$\pi_i$ equals the number of cosets of $\Gamma_y$ in $\Gamma_\beta$. 
This number equals $\omega(\beta)$ by~\eqref{eq:end0}.

Because $j(v,A) = 1$, we have $\ind(A \otimes_F F_{h,v}) = 
\ind(A \otimes_F F_{h,w})$ where $F_{h,v}$ (resp.~$F_{h,w}$)
 is a Henselization of $F$ with respect to $v$ (resp.~$w$).
Since $F_{h,w} = F$ as $w$ is assumed Henselian, it follows that 
 $\ind(A \otimes_F F_{h,v}) = \ind(A)$. Hence, by  
Morandi's theorem \cite[Th.~3]{M1} 
the valuation $v$ on~$F$ extends to a valuation $z$ on $D$. Pick any 
$\mu_1, \ldots, \mu_n \in \Gamma$ with $\varepsilon (\mu_j) = \pi_j$ 
for all $j$ and $\mu_j = \mu_i$ whenever $\pi_j = \pi_i$. Let $\eta$ be 
the $z$-norm on $M$ given by
\[
\eta \big(\tsum_{j=1}^n m_j d_j \big) \,=\, 
\min\limits_{1  \leq j \leq n} \big( \mu_j + z(d_j)\big) 
\qquad \text{for all } d_1, \ldots, d_n \in D.
\]
Let $\varphi = \End(\eta)$, which is a $v$-gauge on $A$ since $v$ is
defectless in $A$ (see Ex.~\ref{ex;endgauge}). Then $\varepsilon \circ \varphi = \beta$ since 
$\varepsilon \circ \eta = \theta$. Also, if 
$\pi_i + \Gamma_y = \pi_j + \Gamma_y$, then $\pi_i = \pi_j$, so 
$\mu_i  =\mu_j$ by the choice of the $\mu_i$, so 
$\mu_i + \Gamma_z = \mu_j + \Gamma_z$. Thus, by \eqref{eq:end0},
\[
\omega(\varphi) \,=\, 
\big| \{ \text{cosets of } \Gamma_z \text{ in } \Gamma_\varphi\}\big|
\, =\, \big| \{ \text{cosets of } \Gamma_y \text{ in } \Gamma_\theta\}\big| 
\,=\, \omega(\beta).
\]
Because each $\mu_j \in \Gamma = \divh{\Gamma_v}$ and  
$\Gamma_z \subseteq \divh{\Gamma_v}$, we have by 
\eqref{eq:Gammaalpha}
$$
\qquad \qquad\qquad \qquad
\Gamma_\varphi\, =\, \tbigcup_{i,j = 1}^n (\mu_i - \mu_j) + 
\Gamma_z \,\subseteq \,\divh{\Gamma_v}. \qquad\qquad\qquad\qquad
\qquad \qquad\qquad\qquad
\qedhere
$$
\end{proof} 

\begin{proof}[Proof of Th.~\ref{existenceminimalgauges} 
$($Central simple case$)$]
Suppose $A$ is central simple. We argue by induction on the jump rank 
$j(v,A)$. 
If $j(v,A) = 0$, then  $v$ is the trivial valuation on $F$, and 
the trivial gauge  on~$A$ is a minimal $v$-gauge. Ê
If $j(v,A) = 1$, then 
$\xi_{V,[A]}=1$ by Prop.~\ref{prop:maxint}(iv); so for any Dubrovin valuation 
ring~$B$ of $A$ with $Z(B) = V,$ $B$ is integral over $V$. Let 
$\alpha$ be the associated Morandi value function of~$B$. 
Then $\Gamma_\alpha = \Gamma_B$ by \eqref{eq:Morvalgp}, and 
$\Gamma_B \subseteq \divh{\Gamma_v}$ by \eqref{eq:gammaB}; hence
$\Gamma_\alpha \subseteq \divh{\Gamma_v}$.
Since $v$ is defectless in $A$, by Remark \ref{rem:omega1}
$\alpha$ is a $v$-gauge with  
 $\omega(\alpha) =1$, so $\alpha$ is a minimal $v$-gauge.

Now suppose $j(v,A) = n >1$. Let $P$ be the $(n-1)$-st jump prime ideal 
of $v$ for $A$, and let ${W = V_P}$, with associated valuation $w$. Then 
$w$ is defectless in $A$ since $v$ is defectless in~$A$, by~ 
Prop.~\ref{prop:coarsdefless}. Since $j(w,A) = n-1$, by induction 
there is a minimal $w$-gauge $\beta$ on $A$ 
with~${\Gamma_\beta \subseteq \divh{\Gamma_{w}}}$.
Let~$(F', w_h)$~be the Henselization of $(F,w)$, let $v'$ be the 
valuation of $F'$ refining $w_h$ and restricting to $v$ on $F$. Let 
$A' = A \otimes_{F}F'$, which is a central simple $F'$-algebra, and let 
$\beta' = \beta \otimes w_h$, which is a $w_h$-gauge on~$A'$ with 
$\gr_{\beta'}(A') \cong_g \gr_{\beta}(A)$ by \cite[Cor.~1.26]{TW1}. Note  
however that $\beta'$ 
need not be a minimal gauge even though $\beta$ is minimal.

We claim that $j(v', A') = 2$. To see this, let valuation $y$ on $F'$ be 
any coarsening of $v'$, and let $z = y|_F$, which is a coarsening of $v$. 
If $y =w_h$ or $y$ is coarser than $w_h$, then $y$ is Henselian, as  
$w_h$~is Henselian (see \cite[Cor.~4.1.4, p.~ 90]{EP}). Then the 
Henselization $F'_{h,y} = F'$, so of course  
$\ind(A'\otimes_{F'} F'_{h,y}) = \ind(A')$. Suppose instead that $y$ is 
properly finer than $w_h$, so $z$ is a refinement of $w$. We show that 
then the Henselizations $(F'_{h,y}, y_h)$ and $(F_{h,z}, z_h)$ are 
isomorphic. For this, note first that since the $w\text{-}$coarsening 
$w_1$ of $z_h$ in $F_{h,z}$ is Henselian and restricts to $w$ in $F$, 
there is an $F$-homomorphism $\eta_1\colon F' \rightarrow F_{h, z}$ with 
$w_h = w_1 \circ \eta_1$. The Henselian valuation $z_h$ on $F_{h,z}$ pulls 
back to a valuation $z'$ on $F'$ that refines $w_h$ with 
$z'|_F = z = y|_F$. Hence, $z'= y$. Because $z_h$ is Henselian and pulls 
back to $y$, there is an $F$-monomorphism 
$\eta_2\colon F'_{h,y} \rightarrow F_{h, z}$ with $y_h = z_h \circ \eta_2$. 
But also since $y_h$ is Henselian with $y_h|_F = y|_F =z$, there is an 
$F$-monomorphism $\eta_3\colon F_{h, z} \rightarrow F'_{h,y}$ with 
$z_h = y_h \circ \eta_3$. Thus, $\eta_2 \circ \eta_3$ is an 
$F$-homomorphism $F_{h, z} \rightarrow F_{h, z}$ with 
$z_h = z_h \circ \eta_2 \circ \eta_3$. By the uniqueness in the universal 
property for the Henselization (recalled in the Appendix below),  
we must have 
$\eta_2 \circ \eta_3 = \id_{F_{h,z}}$, so $\eta_3$~is an isomorphism 
$(F_{h,z}, z_h) \cong (F'_{h,y}, y_h)$. In particular, 
$(F_{h,v}, v_h) \cong (F'_{h,v'}, v'_h)$. So, as there are no jump 
prime ideals of $v$ for $A$  between $P = J(W)$ and~$J(V)$,
\[
\ind(A'\otimes_{F'} F'_{h,y}) \,=\, \ind(A\otimes_F F_{h,z}) \,=\, 
\ind(A\otimes_F F_{h,v}) \,=\,  \ind(A'\otimes_{F'} F'_{h,v'});
\]
 this value is strictly smaller than $\ind(A') = 
\ind(A\otimes_{F} F')$ since $P$ is  a jump prime ideal. Thus,  
$j(v', A') = 2$, as claimed. The calculation also shows that $w_h$ is 
the first jump valuation for $v'$ in~$A'$. Note also that $v'$ is 
defectless in $A'$, since this depends on the defectlessness of the 
associated division algebra of $A'\otimes_{F'} F'_{h,v'}$ re $v_h'$; 
but  $A'\otimes_{F'} F'_{h,v'} \cong A\otimes_F F_{h,v}$ and $v$ is defectless
in $A$. Thus, the hypotheses  
of Prop.~\ref{prop1existence} are satisfied for the field $F'$ with 
valuations $v'$ and $w_h$ and central simple $F'$-algebra $A'$ with 
$w_h$-gauge $\beta'$.

By Prop.~\ref{prop1existence}, $A'$ has a $v'$-gauge $\varphi$ whose 
$w_h$-coarsening is $\beta'$ with $\omega(\varphi) = r \omega(\beta')$ 
and $\Gamma_{\varphi} \subseteq \divh{\Gamma_{v'}}$. Let $D'$ be the 
associated division algebra of $A'$, let $w_{D'}$ be the valuation on 
$D'$ extending $w_h$ on $F'$, and let 
$\overline{D'} = \overline{D'}^{\,w_{D'}}$. Let $u'$ be the residue 
valuation $v'/w_h$ on $\overline{F'}^{\,w_{h}}$ determined by $v'$. The 
integer~$r$ in the formula for $\omega(\varphi)$ is the number of 
extensions of $u'$ to $Z(\overline{D'})$. Hence, by \eqref{eq:lVWchar}, 
$r = \ell_{V,W}$. Let $\alpha = \varphi|_A$. By Lemma~\ref{lemma2existence},
 $\alpha$ is a $v$-gauge on $A$ with $w$-coarsening $\beta$, and 
$\gr_\alpha(A) \cong_g \gr_\varphi(A')$. Hence,
\begin{equation}\label{eq:omegavarphi}
\omega(\alpha) \,=\, \omega(\varphi) \,=\, r \omega(\beta) \,=\,
 \ell_{V,W}\, \omega(\beta) \,=\,\ell_{V,W}\,\xi_{W,[A]}.
\end{equation}
The last equality in \eqref{eq:omegavarphi} holds since $\beta$ is a 
minimal gauge. Since $\xi_{V,[A]} \geq \xi_{W,[A]}\ell_{V,W}$ by 
\eqref{teoremaE}, it follows from \eqref{eq:omegavarphi} that 
$\omega(\alpha) \leq \xi_{V,[A]}$.  But we always have 
$\omega(\alpha) \geq \xi_{V,[A]}$ by Th.~\ref{minimalgauge1}; so 
$\omega(\alpha)= \xi_{V,[A]}$, showing that $\alpha$ is a minimal gauge. 
Furthermore, $\Gamma_\alpha = \Gamma_\varphi \subseteq \divh{\Gamma_{v'}} 
= \divh{\Gamma_{v}}$.  This completes the proof of the central simple case
of Th.~\ref{existenceminimalgauges}.  The rest of the proof of the theorem
will be given after Lemma~\ref{lemma4existence} below.
\end{proof}

\begin{proposition}\label{prop2existence}
Let $A$ be a central simple $F$-algebra with $v$ defectless in $A$. 
Let $w$ be a valuation on $F$ that is coarser than $v$, and let 
$\beta$ be a minimal $w$-gauge on $A$ with 
$\Gamma_\beta \subseteq \divh{\Gamma_w}$. Then there is a minimal 
$v$-gauge $\alpha$ with $w$-coarsening $\beta$ and 
$\Gamma_\alpha \subseteq \divh{\Gamma_v}.$
\end{proposition}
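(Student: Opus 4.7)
The plan is to mirror the inductive scheme of the central simple case of Th.~\ref{existenceminimalgauges}, but now starting from the given coarser minimal gauge $\beta$ and refining it one jump prime at a time. I will induct on $d$, the number of jump prime ideals of $v$ for $A$ strictly above $J(W)$. If $d=0$, then since $J(V)$ is always a jump prime ideal of $v$ for $A$ and $J(V)\supsetneq J(W)$ whenever $V\ne W$, necessarily $v=w$, and I take $\alpha=\beta$. So suppose $d\ge 1$, let $P$ be the smallest jump prime ideal of $v$ for $A$ strictly above $J(W)$, and set $W^*=V_P$ with associated valuation $w^*$. Then $W^*\subsetneq W$ and only $d-1$ jump prime ideals of $v$ for $A$ strictly contain $J(W^*)$, so it suffices to produce a minimal $w^*$-gauge $\beta^*$ on $A$ with $w$-coarsening $\beta$ and $\Gamma_{\beta^*}\subseteq\divh{\Gamma_{w^*}}$; the induction hypothesis applied to $(w^*,\beta^*)$ then furnishes the desired minimal $v$-gauge $\alpha$, whose $w$-coarsening, being the $w$-coarsening of $\beta^*$, equals $\beta$.

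To construct $\beta^*$ I first Henselize at $w$. Let $(F',w_h)$ be a Henselization of $(F,w)$, let $w^{*'}$ be the extension of $w^*$ to $F'$ refining $w_h$, and set $A'=A\otimes_FF'$ and $\beta'=\beta\otimes w_h$. By \cite[Cor.~1.26]{TW1}, $\beta'$ is a $w_h$-gauge on $A'$ with $\gr_{\beta'}(A')\cong_g\gr_\beta(A)$, so $\omega(\beta')=\omega(\beta)=\xi_{W,[A]}$. The argument used to establish $j(v',A')=2$ in the proof of the central simple case of Th.~\ref{existenceminimalgauges}, now carried out with $w^*$ in place of $v$, shows $j(w^{*'},A')=2$ with first jump valuation $w_h$ and top jump $w^{*'}$: the minimality of $P$ guarantees no jump prime of $v$ for $A$ lies strictly between $J(W)$ and $P$, so only the jumps at $w_h$ and $w^{*'}$ survive after Henselization. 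Applying Prop.~\ref{prop1existence} to the data $(F',w^{*'},w_h,A',\beta')$ and invoking its \emph{Moreover} clause to take the auxiliary gauge on $C_{D_h}(L)$ with $\omega=1$ yields a $w^{*'}$-gauge $\varphi$ on $A'$ whose $w_h$-coarsening is $\beta'$, satisfying
\[
\omega(\varphi)\,=\,r\,\omega(\beta')\,=\,\ell_{W^*,W}\,\xi_{W,[A]}\quad\text{and}\quad\Gamma_\varphi\,\subseteq\,\divh{\Gamma_{w^*}},
\]
with the identification $r=\ell_{W^*,W}$ coming from two applications of~\eqref{eq:lVWchar} to the same data $u=w^*/w$ and $D_h$.

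Set $\beta^*=\varphi|_A$. Lemma~\ref{lemma2existence} gives that $\beta^*$ is a $w^*$-gauge on $A$ with $w$-coarsening $\beta$ and $\gr_{\beta^*}(A)\cong_g\gr_\varphi(A')$, so $\omega(\beta^*)=\omega(\varphi)=\ell_{W^*,W}\,\xi_{W,[A]}$. To check minimality I use~\eqref{teoremaE} for the pair $W^*\subseteq W$: for a Dubrovin valuation ring $B$ of $A$ extending $W^*$ and $S=B\!\cdot\!W$,
\[
\xi_{W^*,[A]}\,=\,\xi_{W,[A]}\cdot\xi_{Z(\widetilde B),[\overline S]}\cdot\ell_{W^*,W}.
\]
By \cite[Cor.~E]{W}, jump prime ideals of the valuation of $Z(\widetilde B)$ for $\overline S$ pull back to jump prime ideals of $v$ for $A$ properly containing $J(W)$ and contained in $P$; by the minimality of $P$ there is at most one such, so the jump rank of that valuation is at most $1$, forcing $\xi_{Z(\widetilde B),[\overline S]}=1$ by Prop.~\ref{prop:maxint}(iv) (or trivially if the jump rank is zero). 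Hence $\xi_{W^*,[A]}=\ell_{W^*,W}\,\xi_{W,[A]}=\omega(\beta^*)$, so $\beta^*$ is a minimal $w^*$-gauge, completing the induction step.

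The principal obstacle will be the jump-rank identity $j(w^{*'},A')=2$. Verifying it rests on the Henselization isomorphism $(F'_{h,y},y_h)\cong(F_{h,z},z_h)$ (for $y$ a coarsening of $w^{*'}$ strictly finer than $w_h$ and $z=y|_F$) established in the proof of the central simple case of Th.~\ref{existenceminimalgauges}; once that tool is in hand the rest of the argument is bookkeeping of the invariants $\omega$, $\xi$, and $\ell$ as one moves between $A$, $A'$, $\overline S$, and the residue data. The hypothesis $\Gamma_\beta\subseteq\divh{\Gamma_w}$ propagates automatically: $\Gamma_\varphi\subseteq\divh{\Gamma_{w^*}}$ by Prop.~\ref{prop1existence}, hence $\Gamma_{\beta^*}\subseteq\divh{\Gamma_{w^*}}$, and by induction $\Gamma_\alpha\subseteq\divh{\Gamma_v}$.
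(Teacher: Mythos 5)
Your inductive step contains a genuine gap at the assertion that $j(w^{*\prime},A')=2$. The jump rank of $w^{*\prime}$ for $A'$ records the distinct matrix sizes attached to the nonzero primes of its valuation ring: coarsenings of $w^{*\prime}$ at or above $w_h$ are Henselian, so they all give the matrix size of $A'=A\otimes_F F_{h,w}$, namely $n_{J(W)}$, while coarsenings strictly finer than $w_h$ restrict to refinements $z$ of $w$ with $J(W)\subsetneq J(Z)\subseteq P$, and the minimality of $P$ does give $n_z=n_P$ for all of these. Hence $j(w^{*\prime},A')=2$ if and only if $n_{J(W)}<n_P$, i.e., if and only if $J(W)$ is itself a jump prime ideal of $v$ for $A$ --- and nothing in the hypotheses guarantees that. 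If $J(W)$ is not a jump prime, the jump prime carrying the value $n_{J(W)}$ strictly contains $J(W)$, hence contains $P$, so $n_{J(W)}=n_P$ and $j(w^{*\prime},A')=1$; then Prop.~\ref{prop1existence}, whose hypotheses ($j=2$ with the Henselian coarsening sitting at the first jump prime) are used essentially, e.g.\ through Lemma~\ref{lemma3existence}, cannot be invoked. The simplest failing instance is $j(v,A)=1$ with $w$ any nontrivial proper coarsening of $v$; the trivial-$w$ case (not absorbed by your base case $d=0$) fails for the same reason, since the jump count sees only nonzero primes.

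The missing situation is exactly the one the paper treats by a separate subcase. When no jump occurs at $J(W)$, apply Prop.~\ref{prop:jump1} to $(F',w^{*\prime},w_h,\beta')$ instead, obtaining $\varphi$ with $\omega(\varphi)=\omega(\beta')$; minimality of $\beta^*=\varphi|_A$ then follows because $n_{J(W)}=n_P$ forces, via the factorization $n_B=n_S\,(t_B/t_S)\,(n_{\widetilde B}/t_{\widetilde B})\,\ell_{W^*,W}$ from the proof of Prop.~\ref{prop:maxint}(iii), both $\ell_{W^*,W}=1$ and $\xi_{Z(\widetilde B),[\overline S]}=1$, whence $\xi_{W^*,[A]}=\xi_{W,[A]}=\omega(\beta)=\omega(\beta^*)$ by \eqref{teoremaE}; trivial $w$ should be dispatched separately via the central simple case of Th.~\ref{existenceminimalgauges}, as in the paper. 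With this case distinction added --- plus the routine check, as in the paper's proof of Th.~\ref{existenceminimalgauges}, that $w^{*\prime}$ is defectless in $A'$ before using Prop.~\ref{prop1existence} --- your induction on the number of jump primes above $J(W)$ does close up, and in the jump case your bookkeeping ($r=\ell_{W^*,W}$ via \eqref{eq:lVWchar}, $\xi_{Z(\widetilde B),[\overline S]}=1$ from the minimality of $P$, descent by Lemma~\ref{lemma2existence}) coincides with the paper's argument.
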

\begin{proof}
Let $V$ (resp.~$W$) be the valuation ring of $v$ (resp.~$w$). 
We argue by induction on the  
jump rank $j(v,A)$. Clearly, $j(w,A) \leq j(v,A)$. If $j(w,A) = 0$, then 
$w$ is the trivial valuation on $F$ and $\beta$~is the trivial $w$-gauge 
on $A$.  By the case of Th.~\ref{existenceminimalgauges} already proved,
there is a minimal $v$-gauge $\alpha$ on~$A$; then the trivial gauge 
$\beta$ is a coarsening of $\alpha$. Thus, we may assume that
$1 \le j(w,A) \le j(v,A)$. 

Suppose $j(v,A) = 1$. 
Then $j(w,A) =1$, 
 so $\xi_{W,[A]} = 1$ by Prop.~\ref{prop:maxint}(iv). Hence, 
$\omega(\beta) =1$ because $\beta$ is assumed to be a minimal gauge. 
Therefore, by Remark~\ref{rem:omega1} $R_\beta$ is a Dubrovin 
valuation ring integral over its center, 
which is  $W$, and $\beta$ is the Morandi value function of $R_\beta$. 
Let $B$ be any Dubrovin valuation ring of $A$ with $Z(B) = V$ and 
$B \subseteq R_\beta$. Such a $B$ is obtainable as the inverse 
image in $R_\beta$ of a Dubrovin valuation ring of 
$R_\beta / J(R_\beta)$ whose center is a valuation ring  $U$
of $Z(R_\beta / J(R_\beta))$ satisfying $U \cap (W/J(W)) = V/J(W)$.
Since $j(v,A) = 1$, by Prop.~\ref{prop:maxint}(iv) $B$ is integral over~$V$. 
Let $\alpha$ be the Morandi value function of $B$. Then 
$\Gamma_\alpha = \Gamma_B \subseteq \divh{\Gamma_v}$
(see \eqref{eq:gammaB}).  Since 
$v$ is assumed defectless in $A$, by Remark~\ref{rem:omega1} 
$\alpha$ is a $v$-gauge with
$\omega(\alpha) =1$; so $\alpha$ is  minimal gauge. 
Because $R_\alpha \subseteq R_\beta$ and these Dubrovin valuation rings 
determine their associated gauges, $\beta$~is a coarsening of $\alpha$.

Now suppose $j(v,A) = n>1$. Let $y$ be the $(n-1)$-st jump valuation of 
$v$ for $A$. Suppose first that $w$ is coarser than $y$ or $w =y$. Then, 
as $j(y,A) = n-1$ and $y$ is defectless in $A$ since $v$ is defectless 
in $A$, by induction there is a minimal $y$-gauge $\eta$ with 
$\Gamma_\eta \subseteq \divh{\Gamma_y}$ such that $\beta$ is the 
$w$-coarsening of $\eta$. The proof of 
Th.~\ref{existenceminimalgauges} (central simple case) shows that there 
is a minimal $v$-gauge~  $\alpha$ with 
$\Gamma_\alpha \subseteq \divh{\Gamma_v}$ and $y$-coarsening $\eta$. 
The $w$-coarsening of $\alpha$ is then the $w$-coarsening of~$\eta$, 
which is $\beta$.

Suppose instead that $w$ is properly finer than $y$. Let $(F',w_h)$ be 
the Henselization of $(F,w)$, and let $v'$ be the valuation on $F'$ 
refining $w_h$ and restricting to $v$ on $F$. Let 
$\beta' = \beta \otimes w_h$, which is a $w_h$-gauge on 
$A' = A \otimes_F F'$, but not necessarily minimal, with 
$\Gamma_{\beta'} = \Gamma_{\beta} \subseteq \divh{\Gamma_w} 
= \divh{\Gamma_{w_h}}$. Moreover, we claim that $j(v', A') = 1$. To see 
this, suppose $z$ is any nontrivial valuation of $F'$ with $z$ coarser 
than $v'$ or $z = v'$. If $z$ is coarser than $w_h$ or $z = w_h$, then 
$z$ is Henselian, so the Henselization $F'_{h,z} = F'$ and 
$\ind (A' \otimes_{F'} F'_{h,z}) = \ind (A')$. If $z$ is finer than 
$w_h$, then as in the proof of 
Th.~\ref{existenceminimalgauges} (central simple case) above 
$F'_{h,z} = F_{h, z|_F}$, the Henselization of $F$ re $z|_F$. Since 
there are no jump valuations between $w$ and $v$, so none between $z|_F$ 
and $v$,
\[
\ind (A' \otimes_{F'} F'_{h,z}) \,=\, \ind (A \otimes_{F} F_{h, z|_F}) 
\,=\, \ind (A \otimes_{F} F_{h,v}) \,=\, \ind (A \otimes_{F} F') 
\,=\, \ind (A').
\]
Since the indices are the  same for all $z$, $j(v', A') = 1$, as claimed.

By Prop.~ \ref{prop:jump1}, applied to $F', v', w', \beta'$, there is 
a $v'$-gauge $\alpha'$ of $A'$ with $w_h$-coarsening $\beta'$, such that 
$\omega(\alpha') = \omega(\beta')$, and 
$\Gamma_{\alpha'} \subseteq \divh{\Gamma_{v'}} = \divh{\Gamma_v}$. 
Let $\alpha = \alpha' |_A$. By Lemma~\ref{lemma2existence}, $\alpha$ is 
a $v$-gauge on $A$ with $\gr_\alpha (A) \cong_g \gr_{\alpha'}(A')$, 
so
\[
\omega(\alpha) \,=\, \omega(\alpha') \,=\, \omega(\beta') 
\,=\, \omega(\beta).
\]
Since $\beta$ is a minimal gauge and $\xi_{V,[A]} \geq \xi_{W,[A]}$, 
$\alpha$ must also be a minimal gauge. Also, 
${\Gamma_{\alpha} = \Gamma_{\alpha'} \subseteq \divh{\Gamma_v}}$. 
Finally, since $\beta'$ is the $w_h$-coarsening of $\alpha'$, the 
$w$-coarsening of $\alpha = \alpha' |_A$ is $\beta' |_A = \beta$.
\end{proof}

Let $F \subseteq L$ be fields  with $\DIM LF \leq \infty$, and let $v$ be a
nontrivial 
valuation of $F$ with valuation ring $V$. For each nonzero prime ideal 
$P$ of $V$, let $s(P)$ be the number of valuation rings of~$L$ extending 
the valuation ring $V_P$ of $F$. Clearly, $1 \leq s(P) \leq  \DIM LF$. Also 
for prime ideals ${P \subseteq P'}$ we have ${s(P) \leq s(P')}$. Moreover, 
if $P = \bigcup_{j \in J} P_j$ for prime ideals $P$ and $P_j$, $j \in J,$ 
then ${s(P) = \max_{j \in J} s(P_j).}$ We call $P$ a \emph{splitting prime 
ideal}  of $V$ in $L$ if $s(P) < s(P')$ for all prime ideals 
$P'\supsetneqq P$. Define the \emph{splitting rank} of $v$ in $L$ to be
\[
\srk (v,L)\, =\, \text{ the number of splitting prime ideals of $V$ in $L$.}
\]
Note that the maximal ideal $J(V)$ is always a splitting prime ideal of 
$V$ in $L$, so $\srk (v,L) \geq 1$. If $P$ is a splitting prime ideal, we 
call the valuation of $V_P$ a \emph{splitting valuation} of $V$ in $L$.
If $v$ is the trivial valuation on $F$, define $\srk (v,L) = 0$.

\begin{lemma}\label{lemma4existence}
With the notation above, let $v_1$ and $v_2$ be two different extensions 
of $v$ to $L$. Then either $v_1$ and $v_2$ are independent or the finest 
common coarsening $v_{12}$ of $v_1$ and $v_2$ restricts to a splitting 
valuation for $v$ in $L$.
\end{lemma}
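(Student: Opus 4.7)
Suppose $v_1 \neq v_2$ are not independent, so that their finest common coarsening $v_{12}$ on $L$ is a nontrivial valuation. Set $w = v_{12}|_F$, with valuation ring $W$ and $P = J(W)$. First, observe that $w$ is nontrivial on $F$ (so $P$ is a nonzero prime ideal of $V$): if one had $W = F$, then $F \subseteq V_{12}$ would, by integrality of $V_{12}$ in the (algebraic) extension $L/F$, force $V_{12} = L$, contradicting the nontriviality of $v_{12}$.

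The aim is to show that $P$ is a splitting prime ideal of $V$ in $L$, i.e.\ that $s(P) < s(P')$ for every prime ideal $P' \supsetneqq P$ of $V$. Fix such a $P'$, let $W' = V_{P'}$ so $W' \subsetneqq W$, and let $w'$ be the valuation of $W'$, a proper refinement of $w$ on $F$. Enumerate the extensions of $w$ to $L$ as $u_1, \ldots, u_k$, choosing $u_1 = v_{12}$, so $s(P) = k$. Every extension of $w'$ to $L$ has a unique $w$-coarsening among the $u_i$, so the extensions of $w'$ partition according to which $u_i$ they refine; the standard extension theorem ensures at least one extension of $w'$ refining each $u_i$. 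The heart of the proof is to exhibit \emph{two} distinct extensions of $w'$ refining $u_1 = v_{12}$.

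The two candidates are the compositum valuation rings ${R_j = V_j \!\cdot\! W'}$ for $j = 1, 2$. Each $R_j$ is an overring of $V_j$, hence a valuation ring of $L$ and a coarsening of $v_j$, with $R_j \cap F = W'$. To see that $R_j$ refines $v_{12}$: both $R_j$ and $V_{12}$ lie in the chain of coarsenings of $v_j$, and restriction to $F$ is order-preserving on that chain; since $R_j \cap F = W' \subsetneqq W = V_{12} \cap F$, we conclude $R_j \subsetneqq V_{12}$. To see that $R_1 \neq R_2$: if $R_1 = R_2$, this common valuation ring would contain both $V_1$ and $V_2$, hence would contain $V_{12} = {V_1 \!\cdot\! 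V_2}$ and be a coarsening of $v_{12}$; its restriction to $F$ would then contain $W$, contradicting $R_j \cap F = W' \subsetneqq W$.

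Combining the contributions, $s(P') \geq 2 + (k-1) = k+1 > k = s(P)$, so $P$ is a splitting prime ideal, i.e.\ $w = v_{12}|_F$ is a splitting valuation. The main obstacle is the chain-of-coarsenings comparison in the third paragraph; once one exploits the correspondence between coarsenings of $v_j$ and primes of $V_j$, together with the order-preserving nature of restriction to $F$, both the refinement relation $R_j \subsetneqq V_{12}$ and the distinctness $R_1 \neq R_2$ fall out by comparing the $F$-traces $W'$ and $W$.
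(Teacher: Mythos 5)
Your proof is correct, but it takes a genuinely different route from the paper's. The paper argues by induction on the splitting rank $\srk(v,L)$: taking $w$ to be the $(n-1)$-st splitting valuation, it notes that any $y$ strictly between $w$ and $v$ has the same number of extensions to $L$ as $v$ (there are no splitting primes in between), so $v_1$ and $v_2$ have distinct $y$-coarsenings; this forces $v_{12}|_F$ to equal $w$ or be coarser, and in the latter case one passes to the distinct $w$-coarsenings $w_1\neq w_2$, checks $v_{12}=w_{12}$, and applies the induction hypothesis. You instead verify the splitting-prime condition for $P=J(W)$, $W=V_{12}\cap F$, head-on: for each prime $P'\supsetneqq P$ you count at least $s(P)+1$ extensions of $W'=V_{P'}$, the surplus coming from the observation that the $w'$-coarsenings $V_1\!\cdot\!W'$ and $V_2\!\cdot\!W'$ are two distinct extensions of $W'$ lying under $V_{12}$, while every other extension of $w$ admits at least one refinement extending $w'$ (composite valuations via Chevalley). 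This avoids the induction and the bookkeeping with the penultimate splitting valuation, and it isolates the real mechanism: the two given valuations must separate strictly below $V_{12}$, which inflates the extension count below every proper refinement of $v_{12}|_F$. The one assertion you leave unproved, $R_j\cap F=W'$, is exactly the statement that $V_j\!\cdot\!W'$ is the $w'$-coarsening of $v_j$; it follows from the standard fact that, for the algebraic extension $L/F$, restriction to $F$ gives an order-preserving bijection between the coarsenings of $v_j$ and those of $v$ (because $\Gamma_{v_j}/\Gamma_v$ is torsion), and it is the same identification the paper itself uses in Cor.~\ref{cor:coarscomponents} (where the $w$-coarsening of $v_i$ is written as $W\!\cdot\!V_i$), so your level of detail matches the paper's; your preliminary step that $v_{12}|_F$ is nontrivial (a valuation ring of $L$ containing $F$ must be $L$ when $L/F$ is algebraic) is likewise correct and is needed, though only implicitly, in the paper's argument as well.
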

\begin{proof}
We argue by induction on $n = \srk (v,L)$. If $n =1$, let $w$ be the 
trivial valuation on $F$. If $n > 1$, let $w$ be the $(n-1)$-st splitting 
valuation of~$F$ for $v$ in $L$. Let $y$ be a valuation of~$F$ coarser 
than $v$ and strictly finer than $w$. Because there are no splitting 
valuations between $y$ and $v$, $y$ must have the same number of 
extensions to $L$ as $v$. Hence, $v_1$ and $v_2$ have distinct 
$y$-coarsenings. If~$n=1$, this shows that $v_{12}$ must be the trivial 
valuation, i.e., $v_1$ and $v_2$ are independent valuations. If instead
$n >1$, this shows that $v_{12} |_F$ either equals or is coarser than $w$. 
Since $w$ is a splitting valuation, it suffices to consider the case when 
$v_{12} |_F$ is strictly coarser than $w$. Then the $w$-coarsenings 
$w_1$ of $v_1$ and $w_2$ of $v_2$ are distinct. Hence, the finest common 
coarsening $w_{12}$ of $w_1$ and $w_2$ is coarser than or equals to 
$v_{12}$. But $v_{12}$ is coarser than each of $w_1$ and $w_2$, so coarser 
than or equal to $w_{12}$. Hence, $v_{12} = w_{12}$. Since 
$\srk(w,L) = n-1$, the conclusion of the lemma holds for $w_1$ and $w_2$ 
by induction; hence it also holds for $v_1$ and $v_2$.
\end{proof}

\begin{proof}[Proof of Th.~\ref{existenceminimalgauges} $($completed$)$]
It suffices to prove the theorem for the simple components of a
semisimple $F$-algebra. So, assume $A$ is simple. Let $L =Z(A)$, so 
$\DIM LF \leq \infty$. Let $v_1, \ldots, v_r$~be the valuations on $L$  
extending $v$ on $F$ and let $V_i$ be the valuation ring of $v_i$. The  
argument is by induction on $n = \srk(v,L)$. 
If $n = 0$, then $v$ is the trivial valuation, and the trivial gauge
on $A$ is a minimal $v$-gauge.
Assume now that $n=1$. 
Lemma~\ref{lemma4existence} then shows that the $v_i$ are pairwise 
independent. By the central simple case of 
Th.~\ref{existenceminimalgauges} proved above, for each $i$ there is a 
minimal $v_i$-gauge $\alpha_i$ of $A$ with 
$\Gamma_{\alpha_i} \subseteq \divh{\Gamma_{v_i}} = \divh{\Gamma_v}$. 
(Note that each $v_i$ is defectless in $L$, by 
Prop.~\ref{prop:deflessredtocent}.) Let 
$\alpha = \min(\alpha_1, \ldots, \alpha_r)$. Because the $v_i$ are 
pairwise independent, Cor.~\ref{cor:viindep} shows that $\alpha$ is 
a $v$-gauge on $A$. Since each $\alpha_i$ is a minimal $v_i$-gauge, 
$\alpha$ is a minimal $v$-gauge (see the comments in~ 
Def.~\ref{def:minimalss}).
 Also, 
$\Gamma_{\alpha} \subseteq  \bigcup_{i=1}^r \Gamma_{\alpha_i} 
\subseteq  \divh{\Gamma_v}$.

Now assume  that $n >1$. Let 
$P_1 \subsetneqq P_2 \subsetneqq \ldots  \subsetneqq P_{n-1}
\subsetneqq P_n$ be the splitting prime ideals of $V$ in~$L$, and let 
$W = V_{P_{n-1}}$ with associated valuation $w$. Because $v$ is defectless 
in $A$, $w$ is defectless in $A$, by Prop.~\ref{prop:coarsdefless}. Since 
$\srk(w,L) = n-1$, by induction there is a minimal $w$-gauge $\beta$ on 
$A$ with $\Gamma_\beta \subseteq  \divh{\Gamma_w}$. Let 
$w_1, \ldots , w_\ell$ be the valuations of $L$ extending $w$, and for 
each $j$ let $\beta_j$ be the $w_j$-component of $\beta$. The construction 
of the $\beta_j$ in Th.~\ref{thm:gaugeismin} shows that 
$\Gamma_{\beta_j} \subseteq \Gamma_\beta \subseteq  \divh{\Gamma_w}$ for 
each $j$. By Th.~\ref{thm:compatible}, $\beta_j$ and $\beta_k$ have the 
same $w_{jk}$-coarsening for all $j,k \in \{1,\ldots, \ell \}.$ Since 
$\beta$ is a minimal $w$-gauge and 
$\beta = \min (\beta_1, \ldots, \beta_\ell )$, each $\beta_j$ is a 
minimal $w_j$-gauge.

For each $i \in  \{1,\ldots, r \}$ let $j(i) \in  \{1,\ldots, \ell \}$ 
be the index such that $w_{j(i)}$ is the $w$-coarsening of~$v_i$. For 
each $i$, Prop.~\ref{prop2existence} applied to the valuations $v_i$ 
and $w_{j(i)}$ on $L$ shows that there is a minimal $v_i$-gauge 
$\alpha_i$ with $w_{j(i)}$-coarsening $\beta_{j(i)}$ and 
$\Gamma_{\alpha_i} \subseteq  \divh{\Gamma_{v_i}} =  \divh{\Gamma_v}$. 
Let $\alpha = \min(\alpha_1, \ldots, \alpha_r)$. To see that $\alpha$ is 
a $v$-gauge, we must check that the $\alpha_i$ satisfy the compatibility 
condition of~Th.~\ref{thm:compatible}. For this, take any distinct 
$i,k \in \{1,\ldots, r\}$. If $j(i) = j(k),$ then $w_{j(i)} = w_{j(k)},$ 
which is coarser than both $v_i$ and $v_k$, so coarser than (or equal to) 
$v_{ik}$. Since $v_{ik}|_F$ is a splitting valuation of~$v$ in $L$, the 
choice of $w$ implies that $v_{ik} =  w_{j(i)} = w_{j(k)}.$ So, the 
$v_{ik}$-coarsening of $\alpha_i$ is $\beta_{j(i)},$ which is the same 
as the $v_{ik}$-coarsening $\beta_{j(k)}$ of $\alpha_k$. Suppose now 
instead that $j(i) \neq j(k),$ so $w_{j(i)} \neq w_{j(k)}$. The proof of 
Lemma~\ref{lemma4existence} shows that in this case $v_{ik} = w_{j(i)j(k)}$. 
Hence, the $v_{ik}\text{-}$coarsening of $\alpha_i$ is the 
$w_{j(i)j(k)}\text{-}$coarsening of the $w_{j(i)}$-coarsening 
$\beta_{j(i)}$ of $\alpha_i$. By Th.~\ref{thm:compatible}, this coincides 
with the $w_{j(i)j(k)}\text{-}$coarsening of the $w_{j(k)}$-coarsening 
$\beta_{j(k)}$ of $\alpha_k$. Thus, $\alpha_i$ and $\alpha_k$ have the 
same $v_{ik}\text{-}$coarsening. Since the compatibility condition thus 
holds in all cases, Th.~\ref{thm:compatible} shows that $\alpha$ is a 
$v$-gauge. It is a minimal gauge since each $\alpha_i$ is a minimal gauge.
Moreover,
$\Gamma_\alpha \subseteq \bigcup_{i = 1}^r\Gamma_{\alpha_i}
\subseteq \divh{\Gamma_v}$.
\end{proof}

\section{An example}

In this section we construct an example of a central simple
algebra with multiple non-isomorphic minimal
gauges all having the same gauge ring.

\begin{example}\label{ex:notdetermined}
Let $L$ be a field with
$\charac(L) \ne 2$, let $x$ be transcendental over $L$,
and let ~${F = L(x)((y))}$,
the Laurent series field in one variable over $L(x)$.
Let $w$ be the complete discrete (so Henselian) $y$-adic
valuation on $F$, with $\Gamma_w = \Z$ and
$\ov F^{\,w} = L(x)$.  Let $W$ be the power series
ring $L(x)[[x]]$, which is the valuation ring of $w$.
Let $v$ be the rank $2$ valuation on $F$ that is the composite
of $w$ with the discrete $x$-adic valuation on $\ov F^{\,w}$.
Equivalently $v$ is the valuation on~$F$
obtained by restriction
from the standard rank $2$ Henselian valuation
on $L((x))((y))$.  Thus, $\ov F^{\,v}= L$, $\Gamma_{v} =
\Z \times \Z$ with  right-to-left lexicographic
ordering, $v(x) = (1,0)$, $v(y) = (0,1)$, and
$\gr_v(F) = L[X,X\inv,Y,Y\inv]$, a twice
iterated Laurent polynomial ring, where $X = \tilde x$
and $Y = \tilde y$.  Let $V$ be the valuation ring of $v$.
Note that
 $w$ is the rank~$1$ coarsening of $v$,
and the epimorphism $\varepsilon\colon \Gamma_{v} \to
\Gamma_{w}$
given by $v(c) \mapsto w(c)$ for~$c\in F^\times$ is
the projection $(\ell, m) \mapsto m$.  Since we will be
working primarily with $v$, we write
$\ov F$ for~$\ov F^{\,v}$.

Let
$$
D \, = \, \big (1+x,y/F),
$$
a quaternion  algebra over $F$ with its standard $F$-base $(1, i,j,k)$,
where $i^2 = 1+x$, $j^2 = y$, and $k = ij = -ji$.
Because $w(1+x) = 0$ and $\ov{1+x}^{\,w}$ is not a square in
$\ov F^{\,w}$,   the
valuation $w$ has a unique and inertial extension to
the field $F(i)$.  Therefore, every norm from $F(i)$ to $F$
has $w$-value in $2\Gamma_w$.  Since $w(y) = 1$, $y$ is not
such a norm.  Hence, $D$ is a division algebra.  The Henselian
valuation $w$ on $F$ therefore has a unique extension to a
valuation $\beta$ on $D$, with $\beta(1) = \beta(i) =0$
and $\beta(j) = \beta(k) = \half$.  Since $\beta(j) \notin
\Gamma_w$ one can see that $D$ is totally ramified over
$F(i)$ for $\beta$, while $F(i)$ is inertial over $F$; indeed,
 $\Gamma_\beta = \half \Z$ and $\ov D^{\,\beta}
= \ov {F(i)}^{\,\beta} = L(x)(\sqrt{1+x})$, and for
all $a,b,c,d\in F$,
\begin{equation}\label{eq:betaformula}
\textstyle
\beta(a + bi + cj+dk) \, = \, \min\big(
\beta(a + bi), \beta(cj+dk)\big) \, =\,
 \min\big(w(a), w(b), w(c) + \half, w(d) + \half\big).
\end{equation}

Let $K = F(t)$, where $t^2= 1+x$.  Thus,
$K$ is a quadratic extension field of
$F$, and since $\ov {1+x} = \ov 1$ in $\ov F$,
$v$ has two extensions to $K$ that are distinguished by whether
$\ov t = \ov 1$ or $\ov{-1}$ in $\ov K$.  Let $v'$ denote
the extension of $v$ to $K$ with $\ov t = 1$.
Then, $\ov K = \ov F = L$ and
$\Gamma_{v'} = \Gamma_v = \Z\times \Z$, so
$\gr(K) = \gr(F)$.  Note
that as $x = (t-1)(t+1)$ and $v'(t+1) =0$, we
have $v'(t-1) = v(x) = (1,0)$. The rank~$1$ coarsening
of $v'$ is the unique, unramified, extension $w'$ of
$w$ to $K$, with $\ov K^{w'} = L(x)(\sqrt{1+x})$
and $\Gamma_{w'} = \Z$.
Also, $K$ is a splitting
field of~$D$, as $K \cong F(i)$, which is a maximal
subfield of $D$.  Explicitly, let $S = \mat_2(K)$, and
view~$D$ as an
$F$-subalgebra of $S$ by identifying
$$
1 \, =\,  \left(\begin{smallmatrix} 1\  0\\
\vphantom {2^X}0\  1
 \end{smallmatrix}\right), \quad
i \, =\,  \left(\begin{smallmatrix} t \ \ \, 0\\
\vphantom {2^X}0\  -t
 \end{smallmatrix}\right), \quad
j \, =\,  \big(\begin{smallmatrix} 0 \  y\\
\vphantom {2^X}1\  0
\end{smallmatrix}\big), \quad
k \, =\,  \big(\begin{smallmatrix}  \ \,0 \ \ t y\\
\vphantom {2^X}-t\   \, \,0
\end{smallmatrix}\big).
$$
Give $\Q \times \Q$ the right-to-left lexicographic
ordering.  Fix any $\gamma \in \Q$ with $0<\gamma< \half$,
and let $\delta = (\gamma, \half) \in \Q \times \Q$.
Let $\alpha'$ be the $v$-gauge on $S$ given by
$$
\alpha'\big(\begin{smallmatrix} p \ q\\
\vphantom {2^X}r\   \, s
\end{smallmatrix}\big)\, = \, \min\big(v'(p),
\ v'(q) - \delta,\
v'(r) + \delta, \ v'(s)\big).
$$
Indeed, let $M = K\text{-} \textsl{span}\big\{
\big(\begin{smallmatrix} 1\\ \vphantom {2^X}0
\end{smallmatrix}\big),
\big(\begin{smallmatrix} 0\\ \vphantom {2^X}1
\end{smallmatrix}\big)\big\}$, and identify
$S = \End_K(M)$.  Then $\alpha'$ is the $v'$-gauge
$\End(\eta)$
as in Ex.~\ref{ex;endgauge}, where $\eta\colon M \to \Q \times \Q
\cup \{\infty\}$ is the $v'$-norm on $M$
given by $\eta\big(\begin{smallmatrix} p\\ \vphantom {2^x}q
\end{smallmatrix}\big) = \min\big(v'(p),v'(q)+\delta\big)$.
Thus,
$$
\gr_{\alpha'}(S) \, =\, \grEnd_{\gr(K)}(\gr(M)) \,\cong_g
\, \mat_2(\gr(K))(0,\delta),
$$
in the notation of \eqref{eq:shiftmatrix}.
Let $\alpha = \alpha'|_D$, which is a surmultiplicative
$v$-value function on $D$ since the gauge $\alpha'$
on $S$ is surmultiplicative.
While $\alpha'$ is a $v'$-gauge, we must still
verify that $\alpha$ is a $v$-gauge.  For this,
note that  for any
$z = a + bi +cj+dk\in D$ with $a,b,c,d\in F$, we have
$z = \big( \begin{smallmatrix}a+bt\ \, (c+dt)y
\\ \!\!\!\!c-dt\vphantom {2^X}\ \ \,\, a-bt
\end{smallmatrix} \big)$ in $S$. Thus,  as
$v(y) = (0,1)$,
\begin{align}\label{eq:alphaeq}
\begin{split}
\alpha(z) \, &= \, \min\big(v'(a+bt), \, v'((c+dt)y)-
\delta,\,
v'(c-dt) +\delta, \, v'(a-bt)\big)\\
&=\, \min\big(v'(a+bt), \, v'(a-bt), \,
v'(c+dt)+(-\gamma, \textstyle\half),
\,v'(c-dt)+ (\gamma, \half)\big).
\end{split}
\end{align}
So, $\alpha(1) = \alpha(i) = 0$ and
$$
\textstyle
\alpha(j) \, = \, \alpha(k) \, = \,
\min\big((-\gamma, \half), (\gamma, \half) \big)\,
= \, (-\gamma, \half).
$$
Since $v'(1-t) = (1,0)$, we have
$$
\textstyle
\alpha(j-k) \, = \, \min \big((-\gamma, \half) + (1,0),
(\gamma, \half)\big) \, = \, (\gamma,\half),
$$
 as $\gamma< \half$. So, in $\gr_\alpha(D) \subseteq
\gr_{\alpha'}(S)$,
\begin{align*}
&\tilde {1+i} \, = \big(\begin{smallmatrix}2\ 0\\
\vphantom {2^X}0 \ 0\end{smallmatrix}\big) \in D_0,
\qquad\tilde {1-i} \, = \big(\begin{smallmatrix}0\ 0\\
\vphantom {2^X}0 \ 2\end{smallmatrix}\big) \in D_0,\\
&\tilde j = \tilde k =
\big(\begin{smallmatrix}0\ \tilde y\\
\vphantom {2^X}0 \ 0\end{smallmatrix}\big)
\in D_{(-\gamma, \half)},\quad\text{ and }
\quad\tilde{j-k} =
\big(\begin{smallmatrix}0\ 0\\
\vphantom {2^X}2 \ 0\end{smallmatrix}\big)
\in D_{(\gamma,\half)}.
\end{align*}
Since  $1+i, \,1-i, \,j, \,j-k$ have images in
$\gr_\alpha(D)$ which are clearly $\gr(F)$-independent,
they comprise a splitting base of $\alpha$ as a
$v$-value function; this shows that $\alpha$ is a
$v$-norm on $D$.   Moreover,
${\DIM{\gr_\alpha(D)}{\gr(F)} = 4 =
\DIM{\gr_{\alpha'}(S)}{\gr(K)}
= \DIM{\gr_{\alpha'}(S)}{\gr(F)}}$, since
${\gr(K) = \gr(F)}$.
Hence, ${\gr_\alpha(D) = \gr_{\alpha'}(S)}$, which is
graded simple.
Thus, $\alpha$~is a $v$-gauge on $D$.
Note that
\begin{equation}\label{eq:GammaDalpha}
\Gamma_{\alpha} \,=\, \Gamma_{\alpha'}
\,=\,\, \Z^2 \cup (\delta + \Z^2) \cup (-\delta + \Z^2).
\end{equation}
Also,   $D_0 = S_0 = L\times L$, so $\omega(\alpha) = 2$.

From \eqref{eq:alphaeq}, we have
\begin{multline*}
\textstyle
\qquad R_\alpha\, = \, \{ a +bi +cj+dk \in D \mid
v'(a+bt) \ge 0, \ v'(a-bt) \ge 0,\\
\textstyle \ v'(c+dt) \ge (\gamma, -\half), \,
v'(c-dt) \ge (-\gamma, -\half) \}.\qquad
\end{multline*}
Let $v'(c+dt) = (\ell, m) \in \Gamma_{v'} = \Z \times \Z$.
Then, $v'(c+dt) \ge (\gamma, -\half)$ if and only
if $m \ge 0$, i.e., $w'(c+dt) \ge 0$ where $w'$
is the rank $1$ coarsening of $v'$.
Likewise, $v'(c-dt) \ge (-\gamma, -\half)$ if and only
if $w'(c-dt) \ge 0$.
Therefore, each of the infinitely many choices of
$\gamma$ yields the same gauge ring for the associated
$v$-gauge $\alpha$.  But different choices of
 $\gamma$ yield non isomorphic gauges since the gauges
have
 different value sets
(see \eqref{eq:GammaDalpha}).  Thus, the gauge ring
$R_\alpha$ does not determine $\alpha$.

We still must show that $\alpha$ is a minimal gauge.
We will show as well that $R_\alpha$ is an intersection
of two total valuation rings $B_1$,~$B_2$, which are the only
Dubrovin valuation rings of $D$
with center $V$.

Since $\charac(\ov F) \ne 2$ and $v'(t) = 0$,
we have $\min\big(v'(a+bt),
v'(a-bt)\big) = \min\big(v(a),v(b)\big)$.
Similarly, $\min(w'(c+dt),w'(c-dt)) =
\min\big(w(c),w(d)\big)$. Thus,
the description of
$R_\alpha$ simplifies to
$$
R_\alpha\, = \, \{a+bi+cj+dk\mid a,b \in V,
 \ c,d \in W \}.
$$
The ring $R_\alpha$ lies in the invariant valuation ring
$R_\beta$ of the valuation $\beta$ on $D$
extending $w$ on $F$.  From \eqref{eq:betaformula}, we have
\[
R_\beta \, = \, \{a+bi+cj+dk\mid a,b, c, d \in W \} \quad \text{and} \quad J(R_\beta) \, = \, \{a+bi+cj+dk\mid a,b \in J(W), \, c,d \in W \}.
\]
Let
\[
\pi\colon R_{\beta} \rightarrow R_{\beta}/J(R_{\beta}) =  L(x)(\sqrt{1+x})
\]
be the canonical projection. Let $U_1$,~$U_2$ be the two valuation
rings of $L(x)(\sqrt{1+x})$
extending the valuation ring $U$ of the $x$-adic valuation on $L(x)$.
 The commutative valuation rings $U_\ell$ for $\ell = 1,2$ are Dubrovin valuation
rings. Since the invariant valuation ring $R_\beta$ is a Dubrovin valuation
ring, the pullback rings $B_\ell = \pi^{-1}(U_\ell)$ for $\ell=1,2$
 are  Dubrovin valuation rings of $D$ with center $V$ and $\tilde{B_\ell}
= B_\ell /J(R_\beta) = U_\ell$.
Moreover, since $R_\beta$ and $U_\ell$ are valuation rings, it is easy
to check that $B_\ell$ is a total valuation ring, i.e., for any
$d\in D \setminus\{0\}$, $d\in B_\ell$ or $d\inv \in B_\ell$.
Let $B = B_1$.  Since $B/J(B) \cong U_1/J(U_1)$, which is a field,
$t_B = 1$.  Also, if $(F_h,v_h)$ is a Henselization of $(F,v)$, then 
$1+x\in F_h^2$, since $\ov {1+x} = \ov 1$ in
$\ov{F_h}^{\,v_h} = \ov F^{\,v}$. Therefore, $F_h$ splits $D$, which
shows that $D\otimes_F F_h \cong \mat_2(F_h)$, hence $n_B = 2$.  Thus,
$$
\xi_{V,[D]} \, = \, n_B/t_B = 2 = \omega(\alpha),
$$
showing that $\alpha$ is a minimal gauge.
Another way to calculate  the extension number  $\xi_{V,[D]}$
 is by using \eqref{teoremaE} with $R_\beta$ for $S$:  Since the valuation
rings $R_\beta$ and
$U_1=\tilde B$ have extension number $1$ and the residue valuation ring
$U = V/J(W)$ has two extensions to $R_\beta/J(R_\beta)$,
$$
\xi_{V,[D]} \, = \, \xi_{R_\beta,[D]}\, \ell_{V,W}  \,
\xi_{U_1,[R_\beta/J(R_\beta)]}\, = \, 1\cdot 2\cdot 1 \, = \,2.  
$$

Any inner automorphism $\iota$ of $D$ maps the invariant valuation ring
$R_\beta$ to itself.  The automorphism  induced by $\iota$ on
$R_\beta/J(R_\beta)$  is one of the two $\ov F^{\,w} = L(x)$-automorphisms
of $L(x)(\sqrt{1+x})$, so it either preserves or interchanges $U_1$ and $U_2$.
Hence, the set of conjugates of $B_1$ in $D$ is $\{B_1,B_2\}$. The $B_\ell$
are therefore the only Dubrovin valuation rings of $D$ with center
$V$.  Since the $B_\ell$ are not integral over $V$ (as  $\xi_{V,[D]} \ne 1$)
the only possible Gr\" ater ring of $D$ with center $V$ is $B_1\cap B_2$.
Since $R_\alpha$ is the gauge ring of a minimal $v$-gauge, it is a  Gr\"ater ring
with center $V$ by Th.~\ref{teoremaprincipal}.   Hence, $R_\alpha = B_1 \cap B_2$.
(This equality can also be verified directly after first showing that
$V + Vi$ is the integral closure of~$V$ in $F(i)$ and  that
the $B_\ell\cap F(i)$ are the valuation rings of $F(i)$ extending
$V$ in $F$.)

Note that our example required a valuation of rank at least $2$.
For if $v$ is a rank $1$ valuation on a field $F$ and $A$ is a
central simple $F$-algebra, then for the valuation ring
$V$ of $v$, we have $j(v,A) = 1$, so $\xi_{V,[A]}= 1$
by Prop.~\ref{prop:maxint}(iv).  Hence, for any minimal  
$v$-gauge $\alpha$ on $A$, we have $\omega(\alpha) = 1$, so 
by Remark~\ref{rem:omega1} the gauge ring $R_\alpha$
is a Dubrovin valuation ring integral over its center $V$, and
$\alpha$ is the Morandi value function determined by $R_\alpha$.
\end{example}

\appendix

\section{Tensor product and Henselization}

It is well known that if $L/F$ is a finite degree field extension, $v$ is
a discrete (rank $1$) valuation on~$F$, and $v_1, \ldots , v_r$ are all
the extensions of $v$ to $L$, then
$L \otimes_F \widehat{F} \cong \prod_{i=1}^r \widehat{L_i}$, where
$\widehat{F}$ (resp.~$\widehat{L_i}$) is the completion of $F$ (resp.~$L$)
with respect to  $v$ (resp.~$v_i$) (see \cite[Ch.~ VI, \S 8, no.~6,
Prop.~11]{B}). In this appendix we prove an analogous
result for valuations of arbitrary rank, replacing the completion by the
Henselization. For separable field extensions, this result is implicit
in \cite[Th.~17.17, p.~135]{E}. We give a full proof here, since this 
result is essential for many of the arguments in this paper. 

Let $F$ be a field with a
valuation $v$. A \emph{Henselization} of $(F,v)$ is a valued field
extension $(F_h, v_h)$ of $(F,v)$ such that $v_h$ is Henselian and for
any extension $(K, w)$ of $(F,v)$ with $w$ Henselian there is a unique
$F$-homomorphism $\eta\colon (F_h,v_h) \rightarrow (K,w)$ such that
$v_h = w \circ \eta$. We thus refer to $(\eta(F_h), w|_{\eta(F_h)})$ as
the {\it Henselization of $(F,v)$ within $(K,w)$}. It is clear from the
definition that a Henselization of $(F,v)$ is unique up to unique
isomorphism. Thus, we sometimes say that $(F_h,v_h)$
is ``the Henselization'' of $(F,v)$. A proof of the existence of a
Henselization can be found in \cite[Th.~5.2.2, p.~121]{EP}.  

\begin{theorem}\label{tensorhens}
 Let $F$ be a field with a valuation $v$. Let $K$ be a finite-degree field
extension of $F$, and let  $v_1, \ldots, v_r$ be all the extensions of
$v$ to $K$. Let $(F_h,v_h)$ be a Henselization of $(F,v)$, and let
$(K_{h,i}, v_{i,h})$ be a Henselization of $(K_{i}, v_{i})$ for
$i =1,  \ldots, r$. Then,
 \[
 K \otimes_F F_h \,\cong\, K_{h,1} \times \ldots \times K_{h,r}.
  \]
\end{theorem}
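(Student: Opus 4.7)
The plan is to produce a canonical $F_h$-algebra homomorphism $\Phi\colon K\otimes_F F_h \to \prod_{i=1}^r K_{h,i}$ and show it is an isomorphism by matching up the maximal ideals on the two sides. For each $i$, the universal property of the Henselization $(F_h, v_h)$ applied to the Henselian valued field $(K_{h,i}, v_{i,h})$ yields a unique $F$-embedding $\sigma_i\colon F_h \hookrightarrow K_{h,i}$ with $v_{i,h}\circ \sigma_i = v_h$; together with the inclusion $\iota_i\colon K\hookrightarrow K_{h,i}$, this produces an $F_h$-algebra map $\Phi_i\colon B := K\otimes_F F_h \to K_{h,i}$ via $a\otimes b\mapsto \iota_i(a)\sigma_i(b)$. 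Each $\Phi_i$ is surjective: its image is the compositum $K\cdot F_h$ inside $K_{h,i}$, which is algebraic over the Henselian $F_h$ and so is itself Henselian; since it contains $K$ with the restricted valuation extending $v_i$, the universal property of $(K_{h,i},v_{i,h})$ forces $K_{h,i} = K\cdot F_h$. Setting $\mathfrak{a}_i := \ker \Phi_i$, we obtain $B/\mathfrak{a}_i\cong K_{h,i}$, so each $\mathfrak{a}_i$ is a maximal ideal of $B$, and $\Phi := (\Phi_1,\ldots,\Phi_r)$ is our candidate isomorphism.

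I would next show that $\mathfrak{a}_i\neq \mathfrak{a}_j$ for $i\neq j$: an equality would produce an $F_h$-algebra isomorphism $K_{h,i}\cong K_{h,j}$ that is the identity on both $K$ and $F_h$. Transporting $v_{i,h}$ through this iso yields a Henselian valuation on $K_{h,j}$ extending $v_h$; but $K_{h,j}/F_h$ is algebraic and $F_h$ is Henselian, so the extension of $v_h$ to $K_{h,j}$ is unique, namely $v_{j,h}$, and restricting to $K$ forces $v_i = v_j$, a contradiction. Since the $\mathfrak{a}_i$ are thus pairwise distinct maximal ideals, the Chinese Remainder Theorem gives a surjection $B \twoheadrightarrow \prod_i B/\mathfrak{a}_i \cong \prod_i K_{h,i}$, which coincides with $\Phi$.

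Finally, I would show $\ker\Phi = \bigcap_i \mathfrak{a}_i = 0$ by proving that (a)~$B$ is reduced and (b)~every maximal ideal of $B$ is one of the $\mathfrak{a}_i$. For (b), if $\mathfrak{m}$ is maximal then $E := B/\mathfrak{m}$ is a finite field extension of $F_h$; the unique extension of $v_h$ to $E$ restricts to some $v_i$ on $K$, and the same compositum argument used above identifies $E\cong K_{h,i}$ and hence $\mathfrak{m}=\mathfrak{a}_i$. For (a), I would use the standard fact that $F_h/F$ is separable algebraic (the Henselization sits inside $F^{\sep}$): writing $F_h$ as a filtered union of finite separable subextensions $F'/F$, and invoking the primitive element theorem to write $F' \cong F[t]/(g)$ with $g$ separable, each $K\otimes_F F' \cong K[t]/(g)$ is a product of fields (as $g$ factors into distinct irreducibles over $K$), hence reduced; passing to colimits, $B$ is reduced. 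Together, (a) and (b) yield $B = \prod_i B/\mathfrak{a}_i = \prod_i K_{h,i}$. The main obstacle is step (a): without separability of $F_h/F$, $B$ could carry nilpotents and the CRT surjection could have a nontrivial kernel, forcing one instead to verify independently the classical identity $[K\!:\!F] = \sum_i [K_{h,i}\!:\!F_h]$ to rule this out.
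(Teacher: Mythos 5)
Your proof is correct, but it follows a genuinely different route from the paper's. The paper works inside a separable closure $F_\sep$ containing $F_h$: it proves a double-coset decomposition $K\otimes_F E\cong\prod_i \tau_i(K)\!\cdot\!E$ (Lemma~\ref{lemadoublecosets}), applies it with $E=F_h$ using the fact that $\Gal(F_\sep/F_h)$ is the decomposition group, identifies each factor as a Henselization via the compositum lemma (Lemma~\ref{henselizacaocompositum}), and then treats inseparable $K$ by a separate reduction through the separable closure $S$ of $F$ in $K$ and linear disjointness. You instead build the map $\Phi=(\Phi_1,\ldots,\Phi_r)$ directly from the universal property, prove each $\Phi_i$ surjective by the compositum-is-Henselian argument (this step is essentially a reproof of Lemma~\ref{henselizacaocompositum}), get surjectivity of $\Phi$ from CRT once the kernels are shown to be distinct maximal ideals, and get injectivity from reducedness of $K\otimes_F F_h$ together with the fact that every maximal ideal is one of the $\mathfrak a_i$ (for which, to conclude $\mathfrak m=\mathfrak a_i$ rather than merely $B/\mathfrak m\cong K_{h,i}$, you should invoke the uniqueness clause of the universal property to see that the quotient map agrees with $\Phi_i$ on both $K$ and $F_h$). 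Your route avoids the Galois double-coset bookkeeping and handles separable and inseparable $K$ uniformly, since your reducedness argument only needs $F_h/F$ separable; the paper's route, in exchange, exhibits the factors concretely as composita $\tau_i(K)\!\cdot\!F_h$ inside $F_\sep$, indexed by double cosets matched to the $v_i$, a description it reuses later (e.g.\ in the proof of Th.~\ref{thm:gaugeismin}). One remark on your closing caveat: separability of $F_h/F$ is not an obstacle but a standard fact --- the Henselization is the decomposition field of an extension of $v$ to $F_\sep$, hence a separable algebraic (indeed immediate) extension of $F$, see \cite[Th.~5.2.2, p.~121]{EP}, which the paper itself uses --- so the fallback via $\DIM KF=\sum_i\DIM{K_{h,i}}{F_h}$ is unnecessary.
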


The proof will use the following two lemmas:

\begin{lemma}\label{lemadoublecosets}
Let $F \subseteq N$ be fields with $N$ Galois over $F$ $($possibly of
infinite degree$)$, and let ${G = \mathcal{G}(N/F)}$. Let $K$ and $E$ be
subfields of $N$ containing $F$, with $\DIM KF < \infty$. Let
${H = \mathcal{G}(N/K) \subseteq G}$ and $Z = \mathcal{G}(N/E) \subseteq G$.
Let $\tau_1, \ldots, \tau_r$ be representatives of the distinct $Z$-$H$
double cosets of $G$. $($So, $G = \bigsqcup_{i=1}^r Z \tau_i H,$ a disjoint
union.$)$ Then
\[
K \otimes_FE \ \cong\, {\tau_1(K)\!\cdot\! E} \times \ldots \times
{\tau_r(K)\!\cdot\! E}.
\]
\end{lemma}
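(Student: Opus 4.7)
The plan is to reduce the lemma to a primitive-element argument. Since $N/F$ is Galois, every finite subextension is separable, so $K/F$ is finite separable. Choose a primitive element: $K = F(\alpha)$ for some $\alpha \in K$, and let $f(x) \in F[x]$ be its minimal polynomial. Then I identify $K \otimes_F E \cong E[x]/(f(x))$ via $\alpha \otimes 1 \mapsto x$ — this is the one ingredient that lets the whole problem be phrased inside $E[x]$.

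Next I factor $f$ over $E$ as $f = g_1 \cdots g_s$, a product of distinct monic irreducibles (distinctness follows since $f$ is separable). The Chinese Remainder Theorem then gives
\[
K \otimes_F E \;\cong\; E[x]/(f) \;\cong\; \tprod_{j=1}^s E[x]/(g_j),
\]
and each factor $E[x]/(g_j)$ is the field $E(\beta_j)$, where $\beta_j \in N$ is any root of $g_j$. So the task reduces to parametrizing the $g_j$ by the $Z$-$H$ double cosets and identifying the resulting field with $\tau_j(K)\cdot E$.

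The key bookkeeping step is the following: the roots of $f$ in $N$ are exactly the orbit $\{\tau(\alpha) : \tau \in G\}$, and $\tau(\alpha) = \sigma(\alpha)$ iff $\sigma^{-1}\tau \in H$, so the roots of $f$ are in bijection with the left cosets $G/H$. Two roots $\tau(\alpha)$ and $\sigma(\alpha)$ are roots of the same irreducible factor $g_j$ over $E$ iff they are conjugate over $E$, i.e., iff $z\tau(\alpha) = \sigma(\alpha)$ for some $z \in Z$, iff $\tau^{-1}z^{-1}\sigma \in H$ for some $z \in Z$, iff $Z\tau H = Z\sigma H$. Hence the irreducible factors $g_j$ are in bijection with the $Z$-$H$ double cosets in $G$; in particular $s = r$, and picking the representatives $\tau_1,\ldots,\tau_r$ from the hypothesis, I may label the factors so that $\tau_j(\alpha)$ is a root of $g_j$.

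Finally, I identify each factor: $E[x]/(g_j) \cong E(\tau_j(\alpha))$ inside $N$, and since $\tau_j$ fixes $F$,
\[
\tau_j(K) \;=\; \tau_j(F(\alpha)) \;=\; F(\tau_j(\alpha)),
\]
so $\tau_j(K)\cdot E = E(\tau_j(\alpha))$. Assembling these pieces yields
\[
K \otimes_F E \;\cong\; \tprod_{j=1}^r \tau_j(K)\cdot E,
\]
which is the claim. There is no serious obstacle — the only care needed is in checking that double cosets, rather than single or left cosets, are the correct parameter, which is exactly the Galois-theoretic content of matching irreducible factors over $E$ with $Z$-orbits on the roots of $f$.
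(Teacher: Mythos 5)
Your proof is correct and follows essentially the same route as the paper's: primitive element for $K/F$, the identification $K\otimes_F E\cong E[X]/(f)$, the Chinese Remainder Theorem applied to the irreducible factorization of $f$ over $E$, and the matching of the irreducible factors with the $Z$-orbits on the roots, i.e.\ with the $Z$-$H$ double cosets. The double-coset bookkeeping you spell out is exactly the paper's argument, just phrased via cosets of $H$ rather than via the explicit root sets $\mathcal B_i$.
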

\begin{proof}
Since $K$ is separable over $F$, we have $K = F(a)$ for some $a$.
Let $f$ be the minimal polynomial of $a$ over $F$. Then $f$ splits
over $N$, as $N$ is normal over $F$, say $f = (X-a_1) \ldots
(X - a_n) \in N[X]$ where the $a_i$ are distinct and $a_1 = a$. Let
$\mathcal{A} = \{a_1, \ldots,a_n\}$. Let $f = g_1\ldots g_r$ be the
irreducible factorization of $f$ in $E[X],$ and fix a root $b_i$ of
$g_i$ for $i = 1,  \ldots, r.$ The Galois group $G$ acts transitively on
$\mathcal{A}$, but $\mathcal{A}$ decomposes into $r$ disjoint $Z$-orbits,
$\mathcal{A} = \bigsqcup_{i=1}^r \mathcal{B}_i$, where
\[
\mathcal{B}_i \, =\, Z \!\cdot\! b_i \,=\, \{ \text{roots of } g_i 
\text{ in } N \}.
\]
For each $i$, choose $\tau_i' \in G$ with $\tau_i'(a) = b_i$. Then,
as $H = \{ \sigma \in G \; | \; \sigma(a) =a \}$, we have
\[
Z \tau_i' H \,=\, \{ \sigma \in G \; | \; \sigma(a) \in \mathcal{B}_i \},
\quad \quad \text{for } i=1,\ldots,r.
\]
Thus, $Z \tau_1' H, \ldots, Z \tau_r' H$ are all the distinct $Z$-$H$ double
cosets in $G$. Moreover, we may assume that the double coset
representatives $\tau_i'$ coincide with the $\tau_i$ of the lemma, by
replacing $b_i$ by $\tau_i(a)$. Since  $\gcd(g_i,g_j) = 1$ for
$i \neq j$, the Chinese Remainder Theorem yields
\begin{align*}
K \otimes_F E \, &  \cong \, F[X]/(f) \otimes_F E \ \cong\, E[X]/f E[X]\\
              & \cong \, E[X]\big/\big((g_1)\ldots (g_r)\big) \,\cong\, E[X]/(g_1)
\times \ldots \times E[X]/(g_r)\\
              & \cong \, E(b_1) \times \ldots \times E(b_r) \,\cong \,
{\tau_1(K)\!\cdot\! E} \times \ldots \times {\tau_r(K) \!\cdot\! E}.
\end{align*}
\end{proof}

\begin{lemma}\label{henselizacaocompositum}
Let $(F_h,v_h)$  be a Henselization of the valued field $(F,v)$. Let $K$
be any extension field of $F$ lying in the algebraic closure of $F_h$,
and let $w$ be the unique extension of $v_h$ to the compositum ~
${K \!\cdot\! F_h}$. Then, $({K \!\cdot\! F_h}, w)$ is a Henselization of
$(K, w|_K)$.
\end{lemma}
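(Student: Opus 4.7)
The plan is to verify that $(K \cdot F_h, w)$ satisfies the two defining properties of a Henselization of $(K, w|_K)$: that it is a Henselian valued extension, and that it enjoys the universal property.

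First I would note that $F_h/F$ is algebraic (being a Henselization) and $K/F$ is algebraic (as $K$ lies in an algebraic closure of $F_h$, hence of $F$), so $K \cdot F_h$ is algebraic over $F_h$. Since $v_h$ is Henselian, the unique extension $w$ of $v_h$ to the algebraic extension $K \cdot F_h$ is automatically Henselian. This handles the first property.

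Next I would verify the universal property. Let $(N, u)$ be a Henselian valued extension of $(K, w|_K)$. Composing inclusions makes $(N, u)$ also a Henselian extension of $(F, v)$, so by the universal property of $(F_h, v_h)$ there is a unique $F$-embedding $\iota\colon F_h \to N$ with $u \circ \iota = v_h$. The remaining task is to combine $\iota$ with the inclusion $K \hookrightarrow N$ into a single valuation-preserving $K$-embedding $\eta\colon K \cdot F_h \to N$. For this I pass to algebraic closures. Fix an algebraic closure $\overline N$ of $N$; since $u$ is Henselian, it extends uniquely to a valuation $\bar u$ on $\overline N$, and likewise $v_h$ extends uniquely to a valuation $\bar v_h$ on $\overline{F_h}$. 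As $\overline{F_h}$ and $\overline N$ are both algebraic closures of $K$, there is a $K$-isomorphism $\sigma\colon \overline{F_h} \to \overline N$. The valuations $\bar u \circ \sigma$ and $\bar v_h$ on $\overline{F_h}$ both extend $w|_K$; by the classical transitivity of $\Aut(\overline{F_h}/K)$ on valuation extensions in a normal algebraic extension, I can precompose $\sigma$ with a suitable element of $\Aut(\overline{F_h}/K)$ to arrange $\bar u \circ \sigma = \bar v_h$. Then $\sigma|_{F_h}\colon F_h \to \overline N$ is a valuation-preserving $F$-embedding, so by the universal property of $(F_h, v_h)$ applied inside $(\overline N, \bar u)$ it must equal $\iota$ followed by the inclusion $N \hookrightarrow \overline N$. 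Hence $\sigma(F_h) = \iota(F_h) \subseteq N$ and $\sigma(K \cdot F_h) = K \cdot \iota(F_h) \subseteq N$, so $\eta := \sigma|_{K \cdot F_h}$ is the required embedding. Uniqueness follows because any such $\eta$ must restrict to $\iota$ on $F_h$ and to the inclusion on $K$, and these subfields generate $K \cdot F_h$.

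The main obstacle is the simultaneous constraint in constructing $\sigma$: it must restrict to the identity on $K$ and transport $\bar v_h$ to $\bar u$. This is handled by combining the freedom in choosing a $K$-isomorphism of algebraic closures with the transitivity of $\Aut(\overline{F_h}/K)$ on extensions of $w|_K$, after which the universal property of $F_h$ forces $\sigma|_{F_h}$ to coincide with $\iota$ and hence to land in $N$.
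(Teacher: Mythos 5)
Your proof is correct, but it takes a genuinely different route from the paper's. The paper never verifies the universal property directly: after noting that $w$ is Henselian, it takes an abstract Henselization $(K_h,w_h)$ of $(K,w|_K)$, embeds it into $(K\cdot F_h,w)$ via the universal property of $K_h$, observes that the embedded copy must contain the Henselization of $(F,v)$ within $(K\cdot F_h,w)$ --- which is $F_h$ itself, by uniqueness --- and concludes $K\cdot F_h\subseteq K_h\subseteq K\cdot F_h$, forcing equality. That sandwich argument needs only the existence of Henselizations and uniqueness of the Henselization inside a given Henselian field, and it avoids both the passage to algebraic closures and the conjugation theorem. Your argument instead checks the universal property head-on for an arbitrary Henselian extension $(N,u)$ of $(K,w|_K)$; this costs you the transitivity of $\Aut(\overline{F_h}/K)$ on extensions of $w|_K$ (the same conjugation theorem the paper invokes in the proof of Th.~\ref{tensorhens}), but it has the merit of producing the embedding $\eta$ explicitly rather than comparing with an abstractly given $K_h$. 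One small repair: $\overline{N}$ is an algebraic closure of $K$ only when $N$ is algebraic over $K$, which the universal property does not assume; you should instead take a $K$-embedding $\sigma\colon\overline{F_h}\to\overline{N}$ (equivalently, map onto the algebraic closure of $K$ inside $\overline{N}$). With that change, your adjustment of $\sigma$ by an element of $\Aut(\overline{F_h}/K)$, the identification $\sigma|_{F_h}=\iota$ via uniqueness in the universal property of $(F_h,v_h)$ applied to the Henselian field $(\overline{N},\bar u)$, and the uniqueness of $\eta$ (since $K$ and $F_h$ generate $K\cdot F_h$) all go through exactly as you wrote them.
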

\begin{proof}
The valuation $w$ on ${K \!\cdot\!F_h}$ is Henselian since $v_h$ is Henselian.
Let $(K_h,w_h)$ be a Henselization of $(K,w|_K)$. The universal
property shows
that $(K_h,w_h)$ embeds in $({K \!\cdot\! F_h},w)$. Thus, we may assume that
$K \subseteq K_h \subseteq {K \!\cdot\! F_h}$ and
$w_h = w|_{K_h}$. So, $w_h|_F = w|_F =v$.
Since $(F_h, v_h)$ is the Henselization of $(F,v)$
within ${(K \!\cdot\! F_h,w)}$, it is also the Henselization of $(F, v)$ within
$(K_h,w_h)$, by the uniqueness in the universal property for the
Henselization; so, $F_h \subseteq K_h$. Since also $K \subseteq K_h$, we
have ${K \!\cdot\! F_h} \subseteq K_h \subseteq {K \!\cdot\! F_h}$. Hence,
${K_h = K \!\cdot\! F_h}$ and $w = w_h$, showing that $({K \!\cdot\! F_h},w)$ is a
Henselization of $(K,w|_K)$. 
\end{proof}

\begin{proof}[Proof of Th.~\ref{tensorhens}:] Assume first that $K$ is 
separable over $F$. Let  $F_\sep$ be a separable closure of $F$ containing
$F_h$, and let $v_\sep$ be the unique valuation on $F_\sep$ extending the
Henselian valuation $v_h$. Let $G = \mathcal{G}(F_\sep/F)$,
$H = \mathcal{G}(F_\sep/K) \subseteq G$, and
$Z = \mathcal{G}(F_\sep/F_h) \subseteq G$. By \cite[Th.~5.2.2, p.~121]{EP} and 
the universal property of the Henselization, $F_h$ is the decomposition
field for $v_\sep$ over
$v$, so $Z$ is the decomposition subgroup of $G$, i.e.,
\[
Z \,=\,  \{ \sigma \in G \; | \; v_\sep \circ \sigma = v_\sep \}.
\]
Let $\Omega$ be the set of all valuations on $F_\sep$ extending $F$.
Then, $G$ acts transitively on $\Omega$ (see \cite[Th.~3.2.14, p.~68]{EP}), while
the distinct $H$-orbits
of $\Omega$ are $\Omega_1, \ldots, \Omega_r$, where
$\Omega_i = \{w \in \Omega \; | \; w|_K = v_i \}$. For $i=1, \ldots,r$,
choose $\tau_i \in G$ with $v_\sep \circ \tau_i|_K = v_i$. Then,
\[
\{ \sigma \in G \; | \; v_\sep \circ \sigma|_K = v_i \} \,=\,
\{ \sigma \in G \; | \; v_\sep \circ \sigma \in \Omega_i \} \,=\, Z\tau_iH.
\]
So, $G = \bigsqcup_{i=1}^r Z \tau_i H$ is the disjoint $Z$-$H$ double coset
decomposition of $G$. We now apply Lemma~\ref{lemadoublecosets} with
$N =F_\sep$ and $E = F_h$. (So, the $K$, $H$ and $Z$ of the lemma are
the $K$, $H$ and $Z$ here.) By the lemma,
\begin{equation}\label{hensdirectproduct}
K \otimes_F F_h \ \cong\, \tprod_{i=1}^r {\tau_i(K)\!\cdot\! F_h}
\ \cong \,\tprod_{i=1}^r {K \!\cdot\! \tau_i^{-1}(F_h)},
\end{equation}
where the second isomorphism follows by applying $\tau_i^{-1}$ to the
$i$-th factor. Note that the $F$-isomorphism $\tau_i^{-1}$ maps
$(F_h,v_h)$ to $(\tau_i^{-1}(F_h), v_h \circ \tau_i)$. Hence
$(\tau_i^{-1}(F_h), v_h \circ \tau_i)$ is a Henselization of $(F,v)$. The
unique extension of the Henselian valuation $v_h \circ \tau_i$ to
${K\!\hsp \cdot\!\hsp \tau_i^{-1}(F_h)}$ must be
$v_\sep \circ \tau_i|_{K \cdot \tau_i^{-1}(F_h)}$, whose restriction to
$K$ is $v_i$ by the choice of $\tau_i$. Therefore,
${K \!\cdot\! \tau_i^{-1}(F_h)} \cong K_{h,i}$ by~
Lemma~\ref{henselizacaocompositum}. The theorem
(for $K$ separable over $F$) then follows from
\eqref{hensdirectproduct}.

If $K$ is not separable over $F$, let $S$ be the separable closure of
$F$ in $K$, and let $y_i = v_i|_S$ for $i =1, \ldots, r$. Then,
$y_1, \ldots,y_r$ are all the extensions of $v$ to $S$. Since
valuations extend uniquely from $S$ to its purely inseparable extension
$K$, we have $y_i \neq y_j$ for $i \neq j$. As we just proved,
$S \otimes_F F_h \cong \prod_{i=1}^r S_{h,i}$, where $(S_{h,i},y_{i,h})$
is a Henselization of $(S,y_i)$ in $F_\alg$, the algebraic closure
of~$F$. Therefore,
\begin{equation}\label{hensdirectproduct2}
K \otimes_F F_h \, \cong \, K \otimes_S(S \otimes_F F_h) \, \cong
\,\tprod_{i=1}^r K \otimes_F S_{h,i}.
\end{equation}
Because $K$ is purely inseparable over $S$ while $S_{h,i}$ is separable
over $S$, the  fields $K$ and $S_{h,i}$
are linearly disjoint over $S$; so, $K \otimes_S S_{h,i}$ is a field,
which is isomorphic to the
compositum ${K \!\cdot\! S_{h,i}}$ in $F_\alg$. The Henselian valuation
$y_{i,h}$ on $S_{h,i}$ has a unique
extension to the field $K \otimes_S S_{h,i}$ whose restriction to~$K$ is
the unique extension of
$y_i$ to $K,$ which is $v_i$. By Lemma~\ref{henselizacaocompositum},
$K \otimes_S S_{h,i}$ is a Henselization of $K$ with respect to $v_i$.
The theorem thus follows from \eqref{hensdirectproduct2}.
\end{proof}

\end{document}